\newcommand{\thickhline}{%
    \noalign {\ifnum 0=`}\fi \hrule height 1pt
    \futurelet \reserved@a \@xhline
}
\newcolumntype{"}{@{\hskip\tabcolsep\vrule width 1pt\hskip\tabcolsep}}
\newcommand{\TC}{\mathsf{TC}}
\newcommand{\NC}{\mathsf{NC}}
\newcommand{\coNP}{\mathsf{coNP}}
\newcommand{\NP}{\mathsf{NP}}
\newcommand{\Ptime}{\mathsf{P}}
\newcommand{\Sol}{\mathrm{sol}}
\newcommand{\supp}{\mathrm{supp}}
\newcommand{\SAT}{\textsc{Sat}}
\newcommand{\PowWP}{\textsc{PowerWP}}
\newcommand{\PowPP}{\textsc{PowerPP}}
\newcommand{\Int}{\mathrm{Int}}
\newcommand{\KP}{\textsc{Knapsack}}
\newcommand{\ExpEq}{\textsc{ExpEq}}
\DeclareDocumentCommand{\Powerset}{O{} m}{\mathbb{P}_{#1}(#2)}
\newcommand{\norm}[1]{\|#1\|}
\newcommand{\N}{\mathbb{N}}
\newcommand{\Z}{\mathbb{Z}}
\renewcommand{\O}{\mathcal{O}}
\newcommand{\WP}{\textsc{WP}\xspace}
\newcommand{\ms}[1]{\{\!\!\{#1\}\!\!\}}
\title{The complexity of knapsack problems in wreath products}
\author{Michael Figelius}{Universit{\"a}t Siegen, Germany }{}{}{Funded by DFG project LO 748/12-1.}
\author{Moses Ganardi}{Universit{\"a}t Siegen, Germany }{}{}{}
\author{Markus Lohrey}{Universit{\"a}t Siegen, Germany }{}{}{Funded by DFG project LO 748/12-1.}
\author{Georg Zetzsche}{MPI f\"ur Software Systeme, Kaiserslautern, Germany}{}{}{} 
\authorrunning{M. Figelius, M. Ganardi, M. Lohrey and G. Zetzsche}
\keywords{algorithmic group theory, knapsack, wreath product}
\begin{document}
	
\maketitle
	
\begin{abstract}
We prove new complexity results for computational problems in certain wreath products of groups and (as an application)
for free solvable groups. For a finitely generated group we study the so-called power word problem (does a given expression $u_1^{k_1} \ldots u_d^{k_d}$,
where $u_1, \ldots, u_d$ are words over the group generators and $k_1, \ldots, k_d$ are binary encoded integers, evaluate to the group identity?)
and knapsack problem (does a given equation $u_1^{x_1} \ldots u_d^{x_d} = v$, where $u_1, \ldots, u_d,v$ are words over the group generators
and $x_1,\ldots,x_d$ are variables, have a solution in the natural numbers). We prove that the power word problem for wreath products
of the form $G \wr \Z$ with $G$ nilpotent and iterated wreath products of free abelian groups belongs to $\TC^0$. As an application of the latter,
the power word problem for free solvable groups is in $\TC^0$. On the other hand we show that for wreath products $G \wr \Z$, where $G$ is a
so called uniformly strongly efficiently non-solvable group (which form a large subclass of non-solvable groups), 
the power word problem is $\coNP$-hard. For the knapsack problem we show $\NP$-completeness for iterated
wreath products of free abelian groups and hence free solvable groups. Moreover, the knapsack problem for every wreath product
$G \wr \Z$, where $G$ is uniformly efficiently non-solvable, is $\Sigma_2^p$-hard.
\end{abstract}

 \section{Introduction}
  
Since its very beginning, the area of combinatorial group theory \cite{LySch77} is tightly connected to algorithmic questions. 
The word problem for finitely generated (f.g.~for short) groups lies at
the heart of theoretical computer science itself. Dehn \cite{Dehn11} proved its decidability for certain surface groups (before the notion of decidability
was formalized). Magnus \cite{Mag32} extended this result to all one-relator groups. 
After the work of Magnus it took more than 20 years before Novikov \cite{Nov58} and Boone 
\cite{Boo59} proved the existence of finitely presented groups with
an undecidable word problem (Turing tried to prove the existence of such groups but could only provide finitely presented cancellative monoids
with an undecidable word problem).

Since the above mentioned pioneering work, the area of algorithmic group theory has been extended in many different directions. More 
general algorithmic problems have been studied and also the computational complexity of group theoretic problems has been investigated.
In this paper, we focus on the decidability/complexity of two specific problems in group theory that have received considerable attention in recent years:
the knapsack problem and the power word problem. 

\subparagraph{Knapsack problems.} There exist several variants of the classical knapsack problem over the integers \cite{Karp72}.
In the variant that is particularly relevant for this paper, it is asked whether a linear equation 
$x_1 \cdot a_1 + \cdots + x_d \cdot a_d = b$, with $a_1, \ldots, a_d,b \in \Z$, has a solution $(x_1, \ldots, x_d) \in \N^d$.
A proof for the $\NP$-completeness of this problem for binary encoded integers $a_1, \ldots, a_d,b$ can be found in 
\cite{Haa11}. In contrast, if the numbers $a_i,b$ are given in unary notation then the problem falls down into the circuit
complexity class $\mathsf{TC}^0$ \cite{ElberfeldJT11}. In the course of a 
systematic investigation of classical commutative discrete optimization problems 
in non-commutative group theory, Myasnikov, Nikolaev, and Ushakov \cite{MyNiUs14} generalized the above definition of knapsack
to any f.g.~group $G$: The input for the knapsack problem for $G$ ($\KP(G)$ for short) is an equation of the form $g_1^{x_1} \cdots g_d^{x_d} = h$
for group elements $g_1, \ldots, g_d, h \in G$ (specified
by finite words over the generators of $G$) and pairwise different variables $x_1, \ldots, x_d$ that take values in $\N$
and it is asked whether this equation has a solution
(in the main part of the paper, we formulate this problem in a slightly more 
general but equivalent way). In this form, $\KP(\Z)$ is exactly the above knapsack problem for unary encoded
integers studied in \cite{ElberfeldJT11} (a unary encoded integer can be viewed as a word over a generating set
$\{t,t^{-1}\}$ of $\Z$). For the case where $g_1, \ldots, g_d, h$ are commuting matrices over an algebraic number field, 
the knapsack problem has been studied in \cite{BabaiBCIL96}.
Let us emphasize that we are looking for solutions of knapsack equations in the natural numbers. 
One might also consider the variant, where the variables $x_1, \ldots, x_d$ take values in $\Z$.
This latter version can be easily reduced to our knapsack version (with solutions in $\N$), 
but we are not aware of a reduction in the opposite direction.\footnote{Note that the problem whether a given system of linear equations
has a solution in $\N$ is $\NP$-complete, whereas the problem can be solved in polynomial time (using the Smith normal form) if we ask
for a solution in $\Z$. In other words, if we consider the knapsack problem for $\Z^n$ with $n$ part of the input, then looking for solutions
in $\N$ seems to be more difficult than looking for solutions in $\Z$.}
 Let us also mention that the knapsack problem is a special case of the more general rational subset membership problem \cite{LohreySteinbergZetzsche2015a}.

We also consider a generalization of $\KP(G)$: An exponent equation is an equation of the form 
$g_1^{x_1} \cdots g_d^{x_d} = h$ as in the specification of $\KP(G)$, except that the variables $x_1, \ldots, x_d$
are not required to be pairwise different.
{\em Solvability of exponent equations} for $G$ ($\ExpEq(G)$ for short) is the problem
where the input is a conjunction of exponent equations (possibly with shared variables) and the question is whether there is a joint
solution for these equations in the natural numbers. 

Let us give a brief survey over the results that were obtained for the knapsack problem in \cite{MyNiUs14}  and successive papers:
\begin{itemize}
\item Knapsack can be solved in polynomial
time for every hyperbolic group \cite{MyNiUs14}. Some extensions of this result can be found 
in \cite{FrenkelNU15,Loh19hyp}.
\item There are nilpotent groups of class $2$ for which knapsack is undecidable. Examples
are direct products of sufficiently many copies of the discrete Heisenberg group $H_3(\mathbb{Z})$ \cite{KoenigLohreyZetzsche2015a},
and free nilpotent groups of class $2$ and sufficiently high rank \cite{MiTr17}. In contrast, 
knapsack for $H_3(\mathbb{Z})$ is decidable \cite{KoenigLohreyZetzsche2015a}. It follows  that decidability
of knapsack is not preserved under direct products.
\item Knapsack is decidable for every co-context-free group~\cite{KoenigLohreyZetzsche2015a}, i.e., groups where the set of all words over the generators that do not represent the identity
is a context-free language. Lehnert and Schweitzer \cite{LehSch07} have shown that the Higman-Thompson groups are
co-context-free.
\item Knapsack belongs to $\NP$ for all virtually special groups (finite extensions of subgroups
of graph groups) \cite{LohreyZetzsche2016a}. The class of virtually special groups is very rich. It contains all Coxeter groups, 
one-relator groups with torsion, fully residually free groups, and  fundamental groups of hyperbolic 3-manifolds.
For graph groups (also known as right-angled Artin groups) a complete classification of the complexity
of knapsack was obtained in \cite{LohreyZ18}: If the underlying graph contains an induced path or cycle on 4 nodes, then knapsack
is $\NP$-complete; in all other cases knapsack can be solved in polynomial time (even in {\sf LogCFL}).
\item Knapsack is  $\NP$-complete for every wreath products $A \wr \mathbb{Z}$ with $A \neq 1$ f.g. abelian \cite{GanardiKLZ18}
(wreath products are formally defined in Section~\ref{sec-wreath}).
\item Decidability of knapsack is preserved under finite extensions, HNN-exten\-sions over finite associated subgroups 
and amalgamated free products over finite subgroups \cite{LohreyZetzsche2016a}.
\end{itemize}
For a knapsack equation $g_1^{x_1} \cdots g_d^{x_d} = h$ we may consider the set of all solutions
$\{ (n_1, \ldots, n_d) \in \mathbb{N}^d \mid g_1^{n_1} \cdots g_d^{n_d} = g \text{ in } G\}$. In the papers
\cite{Loh19hyp,KoenigLohreyZetzsche2015a,LohreyZ18} it turned out that in many groups the solution
set of every knapsack equation is a {\em semilinear set} (see Section~\ref{sec-prel} for a definition). 
We say that a group is {\em knapsack-semilinear} if for every knapsack equation the set of all solutions is semilinear
and a semilinear representation can be computed effectively (the same holds then also for exponent equations).
Note that in any group $G$ the set of solutions on an equation $g^x = h$ is periodic and hence semilinear. 
This result generalizes to solution sets of knapsack instances of the form $g_1^x g_2^y = h$ (see Lemma~\ref{lemma-2dim}), but there
are examples of knapsack equations with three variables where solution sets (in certain groups) are not semilinear.
Examples of knapsack-semilinear groups are graph groups \cite{LohreyZ18} (which include free groups and free abelian groups),
hyperbolic groups \cite{Loh19hyp}, and co-context free groups 
\cite{KoenigLohreyZetzsche2015a}.\footnote{Knapsack-semilinearity of co-context free groups is not stated in 
\cite{KoenigLohreyZetzsche2015a} but follows immediately from the proof for the decidability of knapsack.}
Moreover, the class of knapsack-semilinear groups is closed under finite extensions, graph products, 
amalgamated free products with finite amalgamated subgroups, HNN-extensions with finite associated subgroups
(see \cite{FL19} for these closure properties)
and wreath products  \cite{GanardiKLZ18}.

\subparagraph{Power word problems.}
In the power word problem for a f.g.~group $G$ ($\PowWP(G)$ for short)
the input consists of an expression $u_1^{n_1} u_2^{n_2} \cdots u_d^{n_d}$,
where $u_1, \ldots, u_d$ are words over the group generators and $n_1, \ldots, n_d$ are binary encoded integers. The problem is then to decide whether the expression $u_1^{n_1}u_2^{n_2}\cdots u_d^{n_d}$
evaluates to the identity in $G$. The power word problem
arises very naturally in the context of the knapsack problem: it allows us to verify a proposed solution for a knapsack equation with binary encoded numbers.
The power word problem has been first studied in \cite{LoWe19}, where it was shown that the power word problem for f.g.~free groups has the same complexity as the word problem and hence can be solved in logarithmic space. Other groups with easy power word problems are f.g.~nilpotent groups and wreath products $A \wr \mathbb{Z}$ with $A$ f.g.~abelian \cite{LoWe19}. In contrast it is shown in \cite{LoWe19}
that the power word problem for wreath products $G \wr \mathbb{Z}$, where $G$ is either finite non-solvable or f.g.~free, is $\coNP$-complete. 
Implicitly, the power word problem appeared also in the work of Ge \cite{Ge93}, where it was shown that one can verify in polynomial time an identity 
 $\alpha_1^{n_1} \alpha_2^{n_2}\cdots \alpha_d^{n_d} = 1$, where the $\alpha_i$ are elements of an algebraic
 number field and the $n_i$ are binary encoded integers. Let us also remark that the power word problem is a special case of the compressed
 word problem \cite{Loh14}, which asks whether a grammar-compressed word over the group generators evaluates to the group identity.

\subparagraph{Main results.}
In this paper, we are mainly interested in the problems $\PowWP(G)$, $\KP(G)$ and $\ExpEq(G)$ for the case where 
$G$ is a wreath product.
We start with the following result:

\begin{theorem} \label{thm:nilpotent-power}
  Let $G$ be a f.g.~nilpotent group. Then $\PowWP(G \wr \mathbb{Z})$ is in $\TC^0$.
  \end{theorem}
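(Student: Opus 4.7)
The plan is to evaluate the power word symbolically in $G\wr\Z$ and certify triviality using $\TC^0$-computable arithmetic. First, for each base factor $u_i$ compute its image $(f_i,n_i)\in G\wr\Z$, where $f_i\colon\Z\to G$ has support of size at most $|u_i|$; this is a $\TC^0$ task since the word problem for a f.g.~nilpotent group is in $\TC^0$. Writing $\alpha_i=\sum_{\ell<i}k_\ell n_\ell$, the whole input evaluates to $(H,T)$ with $T=\sum_i k_i n_i$ and
\[ H(x)\;=\;\prod_{i=1}^{d}\prod_{j=0}^{k_i-1} f_i(x-\alpha_i-j n_i), \]
the products taken in the indicated order. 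Triviality amounts to $T=0$ (iterated binary arithmetic, in $\TC^0$) together with $H\equiv 1_G$; the latter is the real work.

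The key observation is that for any fixed position $x$, only polynomially many factors of $H(x)$ are non-trivial: when $n_i\neq 0$, each $p\in\supp(f_i)$ contributes for at most one $j$, recoverable from $x$ by integer division; when $n_i=0$ the contribution collapses to $f_i(x-\alpha_i)^{k_i}$, and taking $k$-th powers in a class-$c$ nilpotent group is given by Mal'cev-coordinate polynomials of degree at most $c$ in $(k,e_*)$, hence is in $\TC^0$. Thus, for any binary-encoded $x$, one can compute $H(x)\in G$ in $\TC^0$ by first locating the contributing triples and then evaluating a bounded-length product via Hall--Petresco in Mal'cev coordinates.

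The remaining obstacle, which I expect to be the main technical hurdle, is that $\supp(H)$ is the union of polynomially many arithmetic progressions whose total length can be exponential, so one cannot iterate positions directly. The plan is to classify positions by their \emph{type}: the set of pairs $(i,p)$ whose progression $\{\alpha_i+jn_i+p : 0\le j<k_i\}$ contains $x$. Since $H(x)$ depends only on the type, $H\equiv 1_G$ reduces to verifying $v(\tau)=1_G$ for every realized type $\tau$, where $v(\tau)$ is the corresponding ordered $G$-product. By analysing intersections of the progressions and reusing the $\TC^0$ ideas from the abelian case in \cite{LoWe19}, the set of realized types and a representative position for each can be enumerated in $\TC^0$, reducing the whole question to polynomially many $v(\tau)=1_G$ checks. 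The hardest step is this enumeration: bounding the number of realized types polynomially and producing representatives uniformly via $\TC^0$ circuits, combining the combinatorics of arithmetic progressions with binary-encoded parameters with the polynomial nature of bounded-length products in a nilpotent group of bounded class.
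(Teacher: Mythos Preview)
Your setup is correct and matches the paper's: one computes $(f_i,n_i)=u_i$ in $\TC^0$, reduces to checking $T=0$ and $H\equiv 1$, and observes that for any fixed position $x$ the product $H(x)$ has only polynomially many nontrivial factors and is therefore $\TC^0$-computable. The paper proceeds the same way up to this point (packaged as the reduction $\PowWP(G\wr\Z)\in\TC^0(\textsc{Periodic}(G),\PowWP(G))$ from \cite{LoWe19}).

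The gap is in your final step. The number of realized types is \emph{not} polynomially bounded in general, so the proposed enumeration cannot work. Concretely, take $d$ periods $n_1,\ldots,n_d$ that are the first $d$ primes (each $n_i\le |u_i|$, hence polynomially bounded and given in unary), all progressions starting at $0$ with exponent $K$ large enough that they all cover $[0,\prod_i n_i)$. On that interval your type of $x$ is exactly the tuple $(x\bmod n_1,\ldots,x\bmod n_d)$, and by the Chinese remainder theorem every one of the $\prod_i n_i$ residue tuples is realized. Thus there are exponentially many types even in the abelian case, and the ``ideas from the abelian case in \cite{LoWe19}'' you invoke do not establish a polynomial type bound; they establish something different.

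What the paper does instead is prove a \emph{recurrence} statement: for a nilpotent $G$, any product of periodic $G^\omega$-elements whose periods are at most $n$ satisfies a recurrence of order $p(n)$ for some fixed polynomial $p$ (\cref{prop:nilpotent-recurrence}). In the abelian case this is the classical fact that $\sum f_i$ has a recurrence of order $\sum d_i$; the paper lifts it to nilpotent $G$ by working modulo $[G^{\omega,n},G^{\omega,n}]$, observing that this commutator subgroup lies in $[G,G]^{\omega,n^{2c}}$, and inducting on the nilpotency class. The recurrence is precisely what lets one certify $H\equiv 1$ by checking only polynomially many positions, without ever enumerating types. Your plan would be repaired by replacing the type-enumeration idea with this recurrence argument; the interval decomposition you describe is then exactly the preprocessing needed before applying the recurrence on each interval.
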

Theorem~\ref{thm:nilpotent-power} generalizes the above mentioned result from \cite{LoWe19}
(for $G$ abelian) in a nontrivial way. Our proof analyzes periodic infinite words over a nilpotent group $G$. Roughly speaking, we show that one can check in $\TC^0$,
whether a given  list of such periodic infinite words pointwise multiplies to the identity of $G$. We believe that this is a result of independent
interest.  We use this result also in the proof of the following theorem:

\begin{theorem} \label{thm:nilpotent-KP}
  Let $G$ be a finite nontrivial nilpotent group. Then $\KP(G \wr \mathbb{Z})$ is $\NP$-complete.
\end{theorem}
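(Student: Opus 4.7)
The plan is to prove $\NP$-hardness and membership separately.

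For $\NP$-hardness, observe that since $G$ is a nontrivial finite nilpotent group, its center $Z(G)$ is a nontrivial finite abelian group. The inclusion $Z(G) \hookrightarrow G$ induces a subgroup embedding $Z(G) \wr \mathbb{Z} \hookrightarrow G \wr \mathbb{Z}$ that is the identity on the $\mathbb{Z}$-factor and rewrites generators of the base group in terms of generators of $G$. A knapsack instance over $Z(G) \wr \mathbb{Z}$ is solvable over $\mathbb{N}$ if and only if its image under this embedding is, because all partial products stay inside the image subgroup. This yields a polynomial-time reduction from $\KP(Z(G) \wr \mathbb{Z})$ to $\KP(G \wr \mathbb{Z})$, and the former is $\NP$-hard by \cite{GanardiKLZ18}.

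For $\NP$ membership I would use guess-and-verify. Given an equation $g_1^{x_1} \cdots g_d^{x_d} = h$, we nondeterministically guess binary-encoded natural numbers $n_1, \ldots, n_d$ and then verify $g_1^{n_1} \cdots g_d^{n_d} = h$. The verification step is exactly an instance of $\PowWP(G \wr \mathbb{Z})$, which lies in $\TC^0 \subseteq \Ptime$ by Theorem~\ref{thm:nilpotent-power} (a finite nilpotent group is in particular f.g.~nilpotent). It therefore suffices to bound the bit-length of a smallest solution polynomially in the input size.

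Establishing this polynomial bound is the main obstacle, and the plan is to follow the strategy used for the abelian case in \cite{GanardiKLZ18}. First, projecting the knapsack equation onto the $\mathbb{Z}$-factor yields a linear Diophantine equation $\sum_i k_i x_i = k_h$, whose natural-number solution set is semilinear with a representation of polynomial magnitude. Second, one analyses the contribution of the partial products in the base group $\bigoplus_{i \in \mathbb{Z}} G$: if $g_i$ has nontrivial $\mathbb{Z}$-projection then $g_i^{n_i}$ scatters copies of a fixed pattern across a polynomially long interval of $\mathbb{Z}$, while if $g_i$ has trivial $\mathbb{Z}$-projection then its $n_i$-th power depends only on $n_i$ modulo $\ord(g_i)$, which divides $|G|$. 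Combining these observations one shows that any satisfying assignment can be shrunk modulo an effectively computable polynomial to obtain a witness of polynomial bit-length. The technical difficulty, compared to the abelian situation, is that non-commutativity forces a more careful bookkeeping of how the patterns created by the individual $g_i^{n_i}$ interact; this is where the nilpotent class of $G$ is exploited, by working inductively along a central series of $G$ so that at each layer one only has to control an abelian quotient.
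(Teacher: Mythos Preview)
Your $\NP$-hardness argument is correct but unnecessarily roundabout: the paper simply invokes Theorem~\ref{thm:NP-hard-KP}, which states that $\KP(G \wr \Z)$ is $\NP$-hard for \emph{every} nontrivial group $G$, so there is no need to pass through $Z(G)$.

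For $\NP$-membership your approach diverges substantially from the paper's and, as stated, has a genuine gap. The paper does \emph{not} bound the bit-length of a smallest solution and then guess-and-verify. Instead it invokes Proposition~\ref{prop:NP-reduction} (taken from \cite{GanardiKLZ18}): there is a nondeterministic polynomial-time machine that, given a knapsack expression over $G \wr \Z$, outputs an instance of $\ExpEq(G)$ together with finitely many instances of $\textsc{Periodic}(G)$, such that the original expression is solvable iff some computation branch produces only positive instances. Since $G$ is finite, $\ExpEq(G)$ is trivially in $\NP$; since $G$ is nilpotent, $\textsc{Periodic}(G)\in\TC^0$ by Theorem~\ref{thm:nilpotent-membership}. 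This yields $\NP$-membership directly, with no need to control solution sizes.

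Your proposed route --- showing that a solvable instance has a solution of polynomial bit-length --- may well be feasible, but the sketch you give does not establish it. The step ``any satisfying assignment can be shrunk modulo an effectively computable polynomial'' is exactly where the non-commutativity bites: when several $g_i$ have nontrivial $\Z$-projection, the $G$-values deposited at a given position of $\Z$ are an \emph{ordered} product of contributions, and changing one $n_i$ alters this product in a way that is not simply a shift by a fixed element. Saying that one will ``work inductively along a central series so that at each layer one only has to control an abelian quotient'' is not yet an argument; you would have to explain how a solution for the quotient $(G/G_k)\wr\Z$ lifts to one for $G\wr\Z$ with controlled growth, and that lifting is not automatic. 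If you want to push your approach through, the natural tool is precisely Proposition~\ref{prop:nilpotent-recurrence}: it shows that products of periodic $G$-valued sequences satisfy recurrences of polynomially bounded order, which is the nilpotent analogue of the periodicity you invoke in the abelian case. But once you use that machinery you are essentially reproducing the ingredients of Proposition~\ref{prop:NP-reduction} anyway, so the paper's black-box route is both shorter and complete.
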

Next, we consider iterated wreath products.
Fix a number $r \geq 1$ and let us define the iterated wreath products
$W_{0,r} =\Z^r$ and $W_{m+1,r}=\Z^r \wr W_{m,r}$.
By a famous result of Magnus~\cite{Mag39} the free solvable group $S_{m,r}$ of derived length $r$ and rank $m$
embeds into $W_{m,r}$.  Our main results for these groups are:

\begin{theorem}\label{thm:iterated-pwp}
$\PowWP(W_{m,r})$ and hence $\PowWP(S_{m,r})$ belong to $\TC^0$ for all $m \geq 0$, $r \geq 1$.
\end{theorem}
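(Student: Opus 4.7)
Plan: Proceed by induction on $m$. The base case $m=0$ is $W_{0,r}=\Z^r$: a power word $u_1^{n_1}\cdots u_d^{n_d}$ evaluates in $\Z^r$ to $\sum_i n_i\,\sigma(u_i)$, where $\sigma(u_i)\in\Z^r$ is the abelianised letter count of $u_i$; checking that this equals zero amounts to multiplying each binary integer $n_i$ by a polynomially bounded vector and then iterated addition of $d$ binary integers, both classically in $\TC^0$. The $S_{m,r}$ conclusion follows from the $W_{m,r}$ statement via Magnus' embedding $S_{m,r}\hookrightarrow W_{m,r}$: the free generators of $S_{m,r}$ map to constant-length words over the generators of $W_{m,r}$, so translating a power word over $S_{m,r}$ letter-by-letter into a power word over $W_{m,r}$ is an $\mathsf{AC}^0\subseteq\TC^0$ reduction.

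For the inductive step, assume $\PowWP(W_{m,r})\in\TC^0$; the task is to show $\PowWP(\Z^r\wr W_{m,r})\in\TC^0$. This is in the spirit of Theorem~\ref{thm:nilpotent-power}, but with the top factor $\Z$ replaced by the (non-nilpotent for $m\ge 1$) group $W_{m,r}$. Write $\pi$ for the projection $\Z^r\wr W_{m,r}\to W_{m,r}$. Given an input power word $w=u_1^{n_1}\cdots u_d^{n_d}$, one has $w=1$ iff (a) $\pi(u_1)^{n_1}\cdots\pi(u_d)^{n_d}=1$ in $W_{m,r}$ and (b) the accumulated tape function $F_w\colon W_{m,r}\to\Z^r$ is identically zero. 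Condition (a) is an instance of $\PowWP(W_{m,r})$ and hence lies in $\TC^0$ by the inductive hypothesis.

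Condition (b) is the heart of the argument. Parsing each $u_i$ yields a list of letter placements $(p_{i,j},v_{i,j})_{j=1}^{m_i}$, where $p_{i,j}\in W_{m,r}$ is the top-group prefix position at which the $j$-th $\Z^r$-letter of $u_i$ sits and $v_{i,j}\in\Z^r$ is that letter. In $u_i^{n_i}$ started from $s_i=\prod_{\ell<i}\pi(u_\ell)^{n_\ell}$, the $k$-th iterate deposits $v_{i,j}$ at position $s_i\,\pi(u_i)^k\,p_{i,j}$. Although the total number of deposits is exponential, verifying $F_w\equiv 0$ reduces to describing and aggregating, for each pair $((i,j),(i',j'))$ of placements, the \emph{collision set} $\{(k,k')\in[0,n_i)\times[0,n_{i'}):\,s_i\pi(u_i)^k p_{i,j}=s_{i'}\pi(u_{i'})^{k'}p_{i',j'}\}$ and summing the resulting $\Z^r$-contributions.

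The main obstacle is therefore the collision analysis inside $W_{m,r}$: a $\TC^0$ procedure is needed that, given $\alpha,\beta,\gamma\in W_{m,r}$ presented as power words, outputs a polynomial-size semilinear description of $\{(k,k')\in\N^2:\alpha^k=\beta^{k'}\gamma\}$. The plan is a secondary induction exploiting the decomposition $W_{m,r}=\Z^r\wr W_{m-1,r}$: first solve the projected equation in $W_{m-1,r}$ to obtain, via the outer induction hypothesis and a structural analysis of cyclic subgroups of iterated wreath products, a semilinear locus of candidate $(k,k')$; then impose the finitely many $\Z^r$-tape equalities forced on this locus, each a linear Diophantine condition in $(k,k')$. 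With succinct descriptions of all pairwise collision sets in hand, aggregating $v_{i,j}$-contributions reduces to iterated binary integer arithmetic over polynomially many values, which is in $\TC^0$; combined with the oracle check of (a), this yields $\PowWP(W_{m+1,r})\in\TC^0$ and completes the induction.
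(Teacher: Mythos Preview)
Your high-level strategy (induction on $m$, splitting into the $\sigma$-projection and the tape condition, and handling the tape via collision analysis of the support rays) matches the paper's architecture, and your base case and Magnus-embedding reduction are fine. The genuine gap is that your inductive hypothesis is too weak to carry the collision analysis. To describe the collision set $\{(k,k'):\alpha^k=\beta^{k'}\gamma\}$ in $\TC^0$ you must, in particular, be able to \emph{solve} an equation $u^x=v$ in $W_{m,r}$ where $u$ is an ordinary word and $v$ is a power word, and \emph{output} the binary integer $x$; an oracle for $\PowWP(W_{m,r})$ only lets you \emph{verify} guesses, which is useless when $x$ may be exponential. The paper fixes exactly this by strengthening the induction invariant to the \emph{power-compressed power problem} $\PowPP(W_{m,r})$ (given a word $u$ and a power word $v$, output $z$ with $u^z=v$, or report failure) and proving a transfer theorem: $\PowPP(A\wr H)\in\TC^0(\PowPP(H))$ whenever $A$ is abelian and $H$ is torsion-free and \emph{c-tame}. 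Your ``secondary induction'' is an attempt to recover $\PowPP(W_{m,r})$ on the fly, but without carrying it as an explicit hypothesis the argument is circular.

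Two further points where the sketch breaks. First, c-tameness of $W_{m,r}$ (commensurable $g,h$ admit $g^s=h^t$ with $|s|,|t|$ polynomially bounded in $|g|+|h|$) is a separate nontrivial fact (Proposition~\ref{prop:wmr-ctame}), proved by its own induction on $m$ using the unique-roots property; without it the generator of the collision lattice need not be computable in $\TC^0$. Second, the claim that the $\Z^r$-tape equalities on the projected locus are ``linear Diophantine conditions in $(k,k')$'' is only correct when $\sigma(\alpha)=\sigma(\beta)=1$ in $W_{m-1,r}$; if $\sigma(\alpha)\neq 1$ the tape of $\alpha^k$ is spread along a length-$k$ ray and the constraint is not linear. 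The paper sidesteps this entirely: when $\sigma(u)\neq 1$ it solves $\sigma(u)^x=\sigma(v)$ in $H$ via $\PowPP(H)$ to get a \emph{unique} candidate $z$, then checks $u^z=v$ with a single call to $\PowWP(A\wr H)$ (Lemma~\ref{lemma:cyclic-red-pwp}), and the c-tameness proof itself uses unique roots rather than any tape computation.
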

It was only recently shown in \cite{MVW19} that the word problem
(as well as the conjugacy problem) for every
free solvable group belongs to $\TC^0$. Theorem~\ref{thm:iterated-pwp} generalizes this result
(at least the part on the word problem).

\begin{theorem}\label{thm:iterated-kp}
$\ExpEq(W_{m,r})$ and hence $\ExpEq(S_{m,r})$ are $\NP$-complete  for all $m \geq 0$, $r \geq 1$. 
\end{theorem}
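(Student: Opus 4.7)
The plan is to prove NP-hardness and NP-membership separately. NP-hardness reduces to a familiar subproblem, while NP-membership combines Theorem~\ref{thm:iterated-pwp} with an inductive bit-length bound on minimal solutions.

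\emph{NP-hardness.} For $r \geq 1$, $\Z^r$ embeds as a subgroup of $W_{m,r}$, and any exponent equation with parameters in $\Z^r$ has the same $\N$-solutions whether interpreted in $\Z^r$ or in $W_{m,r}$; hence $\ExpEq(\Z^r) \le_p \ExpEq(W_{m,r})$. Written additively, $\ExpEq(\Z^r)$ asks for $\mathbf{x} \in \N^n$ solving a system of linear Diophantine equations whose coefficients are given in unary. A standard reduction from $3$-SAT shows NP-hardness: encode each Boolean variable $y_i$ by $p_i, n_i \in \N$ with $p_i + n_i = 1$, and each clause $\ell_1 \vee \ell_2 \vee \ell_3$ by $v(\ell_1) + v(\ell_2) + v(\ell_3) = 1 + s$ together with $s + t = 2$, where $v$ sends $y_i$ to $p_i$ and $\neg y_i$ to $n_i$ and $s, t$ are slack variables.

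\emph{NP-membership.} An NP machine will guess an assignment and verify it in polynomial time. For a candidate $(n_1, \ldots, n_d) \in \N^d$ with each $n_i$ encoded in binary, verifying an exponent equation $\prod_i g_i^{x_i} = h$ amounts to checking the power word identity $\prod_i g_i^{n_i} h^{-1} = 1$, which lies in $\TC^0 \subseteq \Ptime$ by Theorem~\ref{thm:iterated-pwp}. Hence it suffices to show that every satisfiable instance of $\ExpEq(W_{m,r})$ has a solution of polynomial bit-length; I prove this by induction on $m$. The base case $m = 0$ is ILP feasibility with polynomially bounded data, for which minimal non-negative solutions have polynomial bit-length by classical bounds.

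For the inductive step, set $H = W_{m,r}$, so $W_{m+1,r} = \Z^r \wr H$. An exponent equation $\prod_i (f_i, \sigma_i)^{x_i} = (f_h, \sigma_h)$ in $\Z^r \wr H$ decomposes into (i) a top-level exponent equation $\prod_i \sigma_i^{x_i} = \sigma_h$ in $H$, which has polynomial-bit-length solutions by the inductive hypothesis, and (ii) pointwise equalities of the $\Z^r$-valued function components. Knapsack-semilinearity of $H$, preserved by wreath products as shown in~\cite{GanardiKLZ18}, implies that the coincidence pattern of the translated supports of $f_1, \ldots, f_d, f_h$ (which support points collapse under the various partial products $\sigma_{j_1}^{x_{j_1}} \cdots \sigma_{j_k}^{x_{j_k}}$) admits a semilinear description of polynomial size. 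Part (ii) then reduces to a polynomial-size linear Diophantine system over $\Z^r$, whose minimal non-negative solutions are polynomially bounded by the base case.

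\emph{Main obstacle.} The technical core is the structural analysis in the inductive step: arguing that the interaction between shifts (determined by partial products of the $\sigma_j^{x_j}$) and the supports of the $f_i$ can be captured by polynomially many ``support types,'' each yielding a polynomial-size linear Diophantine subproblem. This extends the NP upper bound for $A \wr \Z$ with abelian $A$ from \cite{GanardiKLZ18} to iterated wreath products whose top group is itself complex, and crucially uses knapsack-semilinearity of $W_{m,r}$ (not merely NP-membership of its exponent-equation problem).
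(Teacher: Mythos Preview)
Your overall architecture matches the paper's: NP-hardness via unary linear Diophantine systems, and NP-membership via guessing a small solution and verifying with Theorem~\ref{thm:iterated-pwp}. The paper likewise reduces NP-membership to showing that $\mathsf{E}_{W_{m,r}}(n)$ is exponentially bounded, then invokes Lemma~\ref{lemma-powWP-to-ExpEq}.

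The gap is in your inductive step. You write that ``knapsack-semilinearity of $H$ \dots\ implies that the coincidence pattern of the translated supports \dots\ admits a semilinear description of polynomial size,'' and that part~(ii) therefore ``reduces to a polynomial-size linear Diophantine system over $\Z^r$.'' Neither claim follows from the cited input. Knapsack-semilinearity from \cite{GanardiKLZ18} gives only that the solution sets are semilinear; as the paper explicitly notes after Theorem~\ref{thm:closure-wreath}, that proof yields \emph{no} usable bound on the magnitude of the semilinear representation in the wreath product. So you cannot conclude polynomial size of the description, and without that your induction collapses. Concretely, the supports of the $\tau$-components live at points $\sigma(g_{1}^{n_1}\cdots g_{i-1}^{n_{i-1}} g_i^{k})$ for $0\le k<n_i$, an a priori exponential number of $H$-elements; there is no immediate reason the resulting pointwise $\Z^r$-constraints collapse to polynomially many linear equations.

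There is a second, related issue with the induction hypothesis itself. You only carry forward ``small solution exists,'' but in the step you must find a valuation that \emph{simultaneously} satisfies the top-level $H$-equation and the function-level constraints. Knowing that the $H$-system alone has some small solution is not enough; you need a magnitude bound on the full semilinear solution set so that intersection with the additional constraints (via something like Lemma~\ref{lem:cap}) still yields a small witness. This is exactly why the paper proves the stronger transfer result Theorem~\ref{thm:abelian-wr} (exponential bound on $\mathsf{E}_{A\wr H}$ from one on $\mathsf{E}_H$), whose proof requires the packed-bundle decomposition of progressions and the explicit Presburger formulas in Section~\ref{sec-constructing-formula}. Your ``Main obstacle'' paragraph correctly identifies that this structural analysis is the heart of the matter, but the proposal asserts its conclusion rather than supplying it.
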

For the proof of Theorem~\ref{thm:iterated-kp} we show that if a given knapsack equation over
$W_{m,r}$ has a solution then it has a solution where all numbers are exponentially bounded in the length of the knapsack instance. 
Theorem~\ref{thm:iterated-kp} then follows easily from Theorem~\ref{thm:iterated-pwp}.
For some other algorithmic results for free solvable groups see \cite{MRUV10}.

Finally, we prove a new hardness results for the power word problem and knapsack problem.
For this we make use so-called {\em uniformly strongly efficiently non-solvable} groups (uniformly SENS groups) that were recently defined
in \cite{BFLW20}. Roughly speaking, a group $G$ is uniformly 
SENS if there exist nontrivial nested commutators of arbitrary depth that moreover, are efficiently computable in a certain sense
(see Section~\ref{sec-SENS} for the precise definition). The essence of these groups is that they allow to carry out Barrington's argument showing the $\NC^1$-hardness of the word problem for a finite solvable group \cite{Bar89}. 
We prove the following:

\begin{theorem} \label{cor:wreath-SENS-power}
   Let the f.g.~group $G = \langle \Sigma\rangle$ be uniformly SENS. Then, $\PowWP(G \wr \mathbb{Z})$ is $\coNP$-hard.
\end{theorem}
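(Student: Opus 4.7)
The plan is to reduce from UNSAT, the $\coNP$-complete problem of Boolean unsatisfiability. Given a formula $\varphi(x_1,\ldots,x_n)$, we build in polynomial time a power word $\mathbf W$ over $G\wr\Z$ that evaluates to the identity iff $\varphi$ is unsatisfiable. The construction has two stages: first we use uniform SENS to encode the evaluation of $\varphi$ on a \emph{single} assignment as a polynomial-size word in $G$; then we use the wreath-product structure to instantiate this encoding in parallel for all $2^n$ assignments, placing the evaluation for assignment $a\in\{0,1\}^n$ at the integer position equal to~$a$.

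\textbf{Stage 1.} The uniform-SENS hypothesis supplies efficiently computable deep nontrivial iterated commutators, which is precisely what Barrington's argument needs to simulate an $\NC^1$-circuit by a polynomial-size group word. Appealing to the construction of~\cite{BFLW20}, we obtain in polynomial time a length-$L$ branching-program-style sequence $w_\varphi$, given by indices $i_1,\ldots,i_L\in[n]$, alternatives $\sigma_\ell^{(0)},\sigma_\ell^{(1)}\in G$ (one pair per position $\ell$), and a fixed nontrivial $\alpha\in G$, such that for every assignment $b\in\{0,1\}^n$,
\[
  \prod_{\ell=1}^L\sigma_\ell^{(b_{i_\ell})}\;=\;\alpha\text{ if }\varphi(b)=1,\quad\text{and}\quad 1_G\text{ if }\varphi(b)=0.
\]

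\textbf{Stage 2.} Let $s$ generate $\Z$ in $G\wr\Z$. For each $\ell\in[L]$ define the polynomial-size power word
\[
  \tilde z_\ell \;=\; \Bigl((\sigma_\ell^{(0)}\,s)^{2^{i_\ell-1}}\,(\sigma_\ell^{(1)}\,s)^{2^{i_\ell-1}}\Bigr)^{2^{n-i_\ell}}\,s^{-2^n}.
\]
A direct computation in the semidirect product shows that $\tilde z_\ell$ lies in the base group $\bigoplus_{\Z}G$ and, as a function $\Z\to G$, equals $\sigma_\ell^{(b)}$ at every $a\in[0,2^n)$ whose $i_\ell$-th binary digit is $b$, and $1_G$ elsewhere. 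The concatenation $\mathbf W=\tilde z_1\tilde z_2\cdots\tilde z_L$ is again a polynomial-size power word. Since the base group multiplies pointwise, $\mathbf W$ equals $\prod_\ell\sigma_\ell^{(b_{i_\ell}(a))}$ at each position $a\in[0,2^n)$—which by Stage~1 is $\alpha^{\varphi(a)}$—and equals $1_G$ outside that range. Hence $\mathbf W=1$ in $G\wr\Z$ iff $\varphi(a)=0$ for all $a$, i.e., iff $\varphi$ is unsatisfiable.

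The main obstacle is Stage~1: one must package the efficiently computable iterated commutators guaranteed by uniform SENS into a full Barrington-style simulator for Boolean formulas, producing a single long word with \emph{single-literal} input slots so that each input occurrence can be realised by the simple per-position gadget of Stage~2, while keeping the size polynomial in $|\varphi|+n$. Stage~2 is then essentially bookkeeping: the key identity is that $(h\,s)^m\,s^{-m}$ writes $h$ at the $m$ consecutive positions $0,1,\ldots,m-1$ of the base group; applying this with doubly-nested binary-encoded exponents produces exactly the bit-pattern used for $\tilde z_\ell$, and pointwise multiplication in $\bigoplus_{\Z}G$ then glues the per-assignment evaluations across all $2^n$ positions simultaneously.
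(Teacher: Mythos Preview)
Your overall strategy---Barrington-style $G$-programs from uniform SENS (Stage~1) plus a parallel evaluation over all assignments in the wreath product (Stage~2)---is exactly the paper's approach. However, Stage~2 as written has a genuine gap: the expression
\[
\tilde z_\ell \;=\; \Bigl((\sigma_\ell^{(0)}\,s)^{2^{i_\ell-1}}\,(\sigma_\ell^{(1)}\,s)^{2^{i_\ell-1}}\Bigr)^{2^{n-i_\ell}}\,s^{-2^n}
\]
is \emph{not} a polynomial-size power word. By definition a power word is a flat sequence $(u_1,k_1,\ldots,u_d,k_d)$ where each $u_j$ is an \emph{explicit} word over the generators; nested exponentiation is not part of the model. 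To flatten $\tilde z_\ell$ you must either write out $2^{n-i_\ell}$ copies of the inner block, or use the inner block $(\sigma_\ell^{(0)} s)^{2^{i_\ell-1}} (\sigma_\ell^{(1)} s)^{2^{i_\ell-1}}$ as the base of the outer power---but that block has length $2^{i_\ell+1}$ as a word. Either way the description is exponential in $n$. The obstruction is intrinsic to the binary encoding: to deposit a value depending on the $i_\ell$-th bit of the position you need a pattern of period $2^{i_\ell}$, and a factor $u^k$ in a power word with $\sigma(u)\ne 0$ produces a pattern of period $|\sigma(u)|\le |u|$.

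The paper's remedy (following \cite{LoWe19}, and visible in the proof of Lemma~\ref{lemma:reduction}) is to replace the binary encoding of assignments by a Chinese-remainder encoding: take the first $n$ primes $p_1,\ldots,p_n$ (polynomially bounded), set $M=\prod_j p_j$, and associate to position $s\in[0,M)$ the assignment $\beta_s(x_j)=1$ iff $s\equiv 0\pmod{p_j}$. The period word for the $\ell$-th slot is then $w_\ell = \sigma_\ell^{(1)} s\,(\sigma_\ell^{(0)} s)^{p_{i_\ell}-1}$, an explicit word of polynomial length $2p_{i_\ell}$; raising it to the binary-encoded exponent $M/p_{i_\ell}$ gives a genuine polynomial-size power word laying the correct value at every position in $[0,M)$. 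By CRT every assignment occurs among the $\beta_s$, so the product of these power words is trivial in $G\wr\Z$ iff $\varphi$ is unsatisfiable---exactly your conclusion. Apart from this encoding change, your argument coincides with the paper's.
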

This result generalizes a result from \cite{LoWe19} saying that $\PowWP(G \wr \mathbb{Z})$ is $\coNP$-hard for the case that $G$
is f.g.~free or finite non-solvable.

\begin{theorem} \label{cor:wreath-SENS}
   Let the f.g.~group $G = \langle \Sigma\rangle$ be uniformly SENS. Then, $\KP(G \wr \mathbb{Z})$ is $\Sigma^p_2$-hard.
\end{theorem}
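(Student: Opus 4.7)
The plan is to reduce from QSAT$_2$, the canonical $\Sigma^p_2$-complete problem: given $\Phi = \exists u_1 \cdots u_n \, \forall v_1 \cdots v_m \, \psi(u, v)$ with $\psi$ a 3-CNF, decide whether $\Phi$ holds. The guiding observation is that an instance $g_1^{x_1} \cdots g_d^{x_d} = h$ of $\KP(G \wr \mathbb{Z})$ is an existential statement over the $x_i$, and once the $x_i$ are fixed the equality check is a power-word instance. By Theorem~\ref{cor:wreath-SENS-power} the latter is $\coNP$-hard, so lifting the $\coNP$-hardness of the power word problem through the existential quantifier provided by the knapsack variables should yield the desired $\Sigma^p_2$-hardness.

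Concretely, the reduction proceeds in three steps. First, I would apply the construction behind Theorem~\ref{cor:wreath-SENS-power} to the 3-CNF $\neg\psi$, treating $u$ as free parameters and $v$ as input variables. Because that construction is uniform in the CNF, it produces a \emph{template} power word $W(u_1, \ldots, u_n)$ in $G \wr \mathbb{Z}$ of polynomial size in which each Boolean parameter $u_i$ occurs only as a localized substitution selecting between $1_G$ and a fixed element $g_i \in G$; by construction $W(u) = 1$ in $G \wr \mathbb{Z}$ iff $\forall v\, \psi(u, v)$. Second, I would replace each such selection in $W$ by $g_i^{x_i}$, where $x_i$ is a fresh knapsack variable. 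Third, I would append a polynomial-size gadget that forces each $x_i$ to behave Booleanly, so that the value of $g_i^{x_i}$ in the final equation is determined by a bit $u_i \in \{0, 1\}$.

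The main obstacle is the last step: enforcing $x_i \in \{0, 1\}$ within a single knapsack equation. If $g_i$ can be chosen to have order $2$ in $G$ (which holds for many natural uniformly SENS groups), then $g_i^{x_i}$ depends only on $x_i \bmod 2$ and no extra gadget is needed. In general one combines two mechanisms available in $G \wr \mathbb{Z}$: a global cursor equation bounding a linear combination of the $x_i$, together with per-variable checks local to dedicated positions $p_i \in \mathbb{Z}$, pairing each $x_i$ with an auxiliary variable $y_i$ and an equation $x_i + y_i = 1$ at position $p_i$ to force $x_i \in \{0, 1\}$. Once these pieces are assembled, correctness is immediate: the resulting single polynomial-size knapsack equation in $G \wr \mathbb{Z}$ is solvable iff some Boolean $u$ satisfies $W(u) = 1$, iff $\exists u\, \forall v\, \psi(u, v)$, iff $\Phi$ is true.
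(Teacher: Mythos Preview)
Your high-level strategy coincides with the paper's: encode the universal block via the CRT-based positional trick behind the $\coNP$-hardness of $\PowWP(G\wr\Z)$, and realise the existential block through knapsack variables. The gap is entirely in the passage from the parameterized power word $W(u)$ to a single knapsack expression, and the gadgets you sketch do not close it.

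Two concrete issues. First, in a $G$-program (and hence in $W$) a Boolean parameter $u_i$ typically appears many times, each occurrence selecting between two arbitrary generators $a_j,b_j\in\Sigma$, not between $1_G$ and one fixed $g_i$. Since in $\KP$ every variable occurs at most once, you cannot write $g_i^{x_i}$ at each occurrence of $u_i$; you need one knapsack variable per occurrence and then a mechanism forcing all of them to encode the same bit. Your sketch does not address this. Second, even for a single occurrence, the ``$x_i+y_i=1$ at a dedicated position $p_i$'' gadget spends the unique occurrence of $x_i$ at $p_i$, so it is no longer available to perform the selection inside $W$. One could try to make the base of the $x_i$-th power an element of $G\wr\Z$ that simultaneously performs the selection and drops a marker at $p_i$, but that element necessarily has nontrivial $\sigma$-part (it must move the cursor inside $W$), so its $x_i$-th power scatters markers across many positions rather than accumulating $x_i$ copies at $p_i$. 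The same difficulty hits the large fixed exponents already present in $W$ (the CRT moduli), which must also become constrained knapsack variables.

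The paper solves all of this with one device you are missing: it works in $(\langle g\rangle^{d}\times G)\wr\Z$, embedded back into $G\wr\Z$ via Lemma~\ref{lemma-wreath-embedding}, using the extra $\langle g\rangle$-coordinates as independent ``pebbles''. A prefix expression plants specific pebbles at positions $0$ and $M'$; the main expression must recollect them, and the only way to do so simultaneously forces (a) the CRT-exponent variables to their intended values, (b) each pair $x_i,x_i'$ to satisfy $x_i=0$ or $x_i'=0$, and (c) all occurrence-variables for the same existential $X$ to agree. This pebble construction is the missing ingredient in your proposal.
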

Recall that for every nontrivial group $G$, $\KP(G \wr \Z)$ is $\NP$-hard \cite{GanardiKLZ18}.

In the main part we also state several corollaries of Theorem~\ref{cor:wreath-SENS-power} and \ref{cor:wreath-SENS}.
For instance, we show that for the famous Thompson's group $F$, $\PowWP(F)$ is $\coNP$-complete and 
$\KP(F)$ is $\Sigma^p_2$-hard.

\section{Preliminaries} \label{sec-prel}
  
 \subparagraph{Complexity theory.}
  
 We assume some knowledge in complexity theory; in particular the reader should be familiar
 with the classes $\Ptime$, $\NP$, and $\coNP$.  The class $\Sigma_2^p$ (second existential
 level of the polynomial time hierarchy) contains all languages $L \subseteq \Sigma^*$ for which there exists a polynomial $p$
 and a language $K \subseteq \Sigma^* \# \{0,1\}^* \# \{0,1\}^*$ in $\Ptime$ (for a symbol $\# \notin \Sigma \cup \{0,1\}$) such that
 $x \in L$ if and only if $\exists y \in \{0,1\}^{\leq p(|x|)} \forall z \in \{0,1\}^{\leq p(|x|)} : x \# y \# z \in K$.
 
 The class $\TC^0$ contains all problems that can be solved by a family of threshold circuits of 
polynomial size and constant depth. In this paper, $\TC^0$ will always
refer to the $\mathsf{DLOGTIME}$-uniform version of $\TC^0$.
A precise definition is not needed for our work; see \cite{Vol99} for details. All we need is that the following arithmetic 
operations on binary encoded integers belong to $\TC^0$: iterated addition and multiplication  (i.e., addition and multiplication 
of $n$ many $n$-bit numbers) and division with remainder.

For languages (or computational problems) $A, B_1,\ldots,B_k \subseteq \{0,1\}^*$ we write 
$A \in \TC^0(B_1,\ldots,B_k)$ ($A$ is $\TC^0$-Turing-reducible to $B_1,\ldots,B_k$) if $A$ can be solved by a family of threshold circuits of polynomial size and constant depth that in addition may also use oracle gates for the languages $B_1,\ldots,B_k$
 (an oracle gate for $B_i$ yields the output $1$ if and only if the string of input bits belongs to $B_i$).

\subparagraph{Arithmetic progressions.}

An \emph{arithmetic progression} is a tuple $\bm{p} = (a + pi)_{0 \leq i \leq k}$ for some
$a,p,k \in \N$ with $p \neq 0$.  We call $a$ the {\em offset}, $p$ the {\em period} and $k+1$ the {\em length} 
of $P$. The {\em support} of $\bm{p}$ is $\supp(\bm{p}) = \{ a + pi \mid 0 \leq i \leq k\}$.
In computational problems we will represent the arithmetic progression $\bm{p}$ by the triple
$(a,p,k+1)$, where the offset $a$ and the length $k+1$ are represented in binary notation
whereas the period $p$ is represented in unary notation (i.e., as the string $\$^p$ for some 
special symbol $\$$).

\subparagraph{Intervals.}
A subset $B$ in a linear order $(A,\le)$ is an {\em interval}
if $a_1 \le a_2 \le a_3$ and $a_1,a_3 \in B$ implies $a_2 \in B$.

\begin{lemma}
	\label{lem:interval-types}
	Let $(A,\le)$ be a linear order, let $\Omega$ be a finite set of colors
	and let $\beta \colon A \to 2^\Omega$ be a mapping such that
	$\{ a \in A \mid \omega \in \beta(a) \}$ is an interval for each $\omega \in \Omega$.
	Then there exists a partition of $A$ into at most $\O(|\Omega|)$ intervals $A_1, \dots, A_k$
	such that $|\beta(A_i)| = 1$ for all $1 \le i \le k$.
	Furthermore, if $A = [0,n]$ and each interval $\{ a \in A \mid \omega \in \beta(a) \}$
	is given by its endpoints (in binary encoding)
	we can compute the endpoints of the intervals $A_1, \dots, A_k$
	in $\TC^0$.
\end{lemma}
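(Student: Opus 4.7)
The plan is to observe that each color's interval contributes only two ``breakpoints'' to $A$, and that the coloring $\beta$ must be constant on any sub-interval not crossed by such a breakpoint. For each $\omega \in \Omega$, write $I_\omega = \{a \in A \mid \omega \in \beta(a)\}$, which by hypothesis is an interval, with left endpoint $l_\omega$ and right endpoint $r_\omega$ (where we allow $l_\omega = -\infty$ or $r_\omega = +\infty$, or $I_\omega = \emptyset$). Let $E \subseteq A$ be the set of points of the form $l_\omega$ or the immediate successor of $r_\omega$ (when it exists); equivalently, $E$ consists of those $a \in A$ where the ``status'' of some color changes between the predecessor of $a$ and $a$. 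Clearly $|E| \le 2|\Omega|$.

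The key claim is that if we partition $A$ by cutting at each point of $E$, then on any resulting piece $A_i$ the value $\beta(a)$ is the same for every $a \in A_i$. Indeed, for any $\omega \in \Omega$ and any $a, a' \in A_i$ with $a \le a'$, either both lie in $I_\omega$ or both lie outside $I_\omega$, since otherwise some endpoint of $I_\omega$ would lie strictly between them, contradicting the fact that no element of $E$ lies in the interior of $A_i$. This yields at most $2|\Omega|+1 = \O(|\Omega|)$ intervals $A_1, \dots, A_k$, as required.

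For the $\TC^0$ statement, take $A = [0,n]$ and suppose each $I_\omega$ is given by a pair of binary-encoded endpoints (with a flag indicating whether $I_\omega$ is empty). We build $E \cup \{0, n+1\}$ by reading off these $\O(|\Omega|)$ numbers, then sort them: sorting $\O(|\Omega|)$ binary integers of length $\O(\log n)$ is well known to lie in $\mathsf{DLOGTIME}$-uniform $\TC^0$, since the rank of each number is computed by counting (a threshold operation) how many others are $\le$ it. From the sorted sequence we read off the endpoints of $A_1, \dots, A_k$ as consecutive pairs. The main (and only mild) obstacle is to ensure that duplicate breakpoints and empty intervals $I_\omega$ are handled correctly and that the final list is output without gaps; this is routine since after sorting we may compare consecutive entries in parallel and suppress repetitions by a further threshold-based rank computation, keeping the whole construction within constant depth and polynomial size.
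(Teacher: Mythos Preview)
Your argument is correct and, for the $\TC^0$ half, essentially matches the paper's construction: collect the $\O(|\Omega|)$ endpoints, sort them in $\TC^0$, and read off the resulting pieces. The paper's partition there is cosmetically different (it outputs each endpoint as a singleton together with the ``gap'' intervals between consecutive endpoints, rather than half-open intervals between breakpoints), but the idea is the same.

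For the existence half, the approaches diverge. The paper argues by induction on $|\Omega|$: assuming a partition into at most $2|\Omega|-1$ pieces for $\Omega \setminus \{\omega\}$, it refines by intersecting with $I_\omega$ and shows that at most two new pieces are created. Your direct breakpoint argument is cleaner and yields the same bound $2|\Omega|+1$; in effect you have unified the existence proof with the $\TC^0$ construction, whereas the paper treats them separately.

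One caveat: your definition of $E$ relies on ``the immediate successor of $r_\omega$'', which tacitly assumes the order is discrete. In a dense linear order no such successor exists, and then cutting at $E$ fails to separate $I_\omega$ from what lies to its right (take $A=\mathbb{Q}$, $I_\omega=[0,1]$). The paper's inductive argument sidesteps this by reasoning directly with intervals rather than endpoints. In practice this does not matter here---every use of the lemma in the paper is over a discrete order ($[0,n]$ or a coset of a cyclic subgroup)---but for the general statement you should either phrase the cuts as Dedekind-style cuts or note that the breakpoint argument is carried out under a discreteness assumption. Also, in your key claim the phrase ``some endpoint of $I_\omega$ would lie strictly between them'' is not quite what you need: the element of $E$ you want is $\mathrm{succ}(r_\omega)$ (or $l_\omega$), and it may coincide with one of $a,a'$ rather than lie strictly between; the conclusion still follows once you fix the convention for how pieces are delimited by $E$.
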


\begin{proof}
We prove that there exists such a partition with at most $2|\Omega|+1$ many intervals
by induction on $|\Omega|$. The case $|\Omega| = 0$ is clear.
Now let $\Omega = \Omega' \cup \{\omega\}$ where $\omega \notin \Omega'$
and let $\beta'(a) = \beta(a) \cap \Omega'$, which still satisfies the condition from the lemma.
By induction we obtain a partition of $A$ into at most $2|\Omega|-1$
intervals $A_1, \dots, A_k$ such that $|\beta'(A_i)| = 1$ for all $1 \le i \le k$.
Now consider the interval $A_0 = \{ a \in A \mid \omega \in \beta(a) \}$.
If $A_i$ is contained in $A_0$ or $A_i$ is disjoint from $A_0$
then $|\beta(A_i)| = |\beta'(A_i)| = 1$.
Otherwise $A_i$ can be partitioned into the intervals $A_i \cap A_0$ and $A_i \setminus A_0$,
which also satisfy $|\beta(A_i \cap A_0)| = |\beta(A_i \setminus A_0)| = 1$.
Since there are at most two such intervals $A_i$ whose symmetric difference with $A_0$ is non-empty,
at most two intervals are added in total.

For the $\TC^0$-statement we take a different approach.
Let $P$ be the set of all (at most $2|\Omega|$) endpoints
of the intervals $\{ a \in A \mid \omega \in \beta(a) \}$ for $\omega \in \Omega$
together with the minimum $0$ and the maximum $n$.
We sort $P$ in $\TC^0$ \cite{ChandraSV84}, say $P = \{a_1, \dots, a_m\}$ with $a_1 < a_2 < \dots < a_m$, and
define the partition
consisting of all singletons $\{a_i\}$ for $1 \le i \le m$
and all ``gap'' intervals $[a_{i-1}+1,a_i-1]$ for $2 \le i \le m$ with $a_{i-1}+1 \leq a_i-1$.
We clearly have $|\beta(\{a_i\})| = 1$.
Now consider $a,b \in [a_{i-1}+1,a_i-1]$ with $a < b$
and assume that $\beta(a) \neq \beta(b)$, i.e. there exists $\omega \in \Omega$
with $\omega \in \beta(a) \setminus \beta(b)$ (or $\omega \in \beta(b) \setminus \beta(a)$).
Let $d$ be the right endpoints of $\{ c \in A \mid \omega \in \beta(c) \}$,
which must satisfy $a \le d < b$.
But then $a_{i-1}+1 \le a \le d < b \le a_i-1$,
and therefore $a_{i-1} < d < a_i$, which is a contradiction.
\end{proof}

 \subparagraph{Semilinear sets.} 

Fix a dimension $d \ge 1$.
All vectors will be column vectors. 
For a vector $\bm{v}=(v_1, \dots , v_d)^{\mathsf{T}}\in \Z^d$ we define its norm
$\norm{\bm{v}}:=\max \{ |v_i| \mid 1 \leq i \leq d \}$ and for a 
matrix $M \in \Z^{c \times d}$ with entries $m_{i,j}$ ($1 \le i \le c$, $1 \le j \le d$) we define the norm
$\norm{M} = \max \{|m_{i,j}| \mid 1 \le i \le c, \, 1 \leq j \leq d \}$. Finally, for a finite set of vectors $A \subseteq \N^d$ let
$\norm{A} = \max \{ \norm{\bm{a}} \mid \bm{a} \in A \}$.

We extend the operations of vector addition and multiplication of a vector by a matrix to sets of vectors in the obvious way.
A linear subset of $\N^d$ is a set of the form
\[
L= L(\bm{b},P) := \bm{b} + P \cdot \N^k
\]
where $\bm{b} \in \N^d$ and $P \in \N^{d \times k}$.
We call a set $S\subseteq \N^d$ \emph{semilinear}, if it is a finite union of linear sets. 
Semilinear sets play a very important role in many areas of computer science and mathematics, e.g. in automata theory and logic.
It is known that the class of semilinear sets is closed under Boolean operations and that the semilinear sets are exactly
the Presburger definable sets (i.e., those sets that are definable in the structure $(\mathbb{N},+)$).

For a semilinear set $S=\bigcup_{i=1}^k L(\bm{b}_i,P_i)$, we call the tuple
$(\bm{b}_1, P_1, \ldots, \bm{b}_k, P_k)$
a \emph{semilinear representation} of $S$.
The magnitude of the semilinear representation $(\bm{b}_1, P_1, \ldots, \bm{b}_k, P_k)$
is $\max \{ \norm{\bm{b}_1}, \norm{P_1} \ldots, \norm{\bm{b}_k}, \norm{P_k}\}$.
The \emph{magnitude} $\|S\|$ of a semilinear 
set $S$ is the minimal magnitude of all semilinear representations for $S$.

\begin{lemma}[\cite{HaaseZ19}]
\label{lem:cap}
If $M_1, \dots, M_k \subseteq \N^d$ are semilinear sets with $\|M_i\| \le s$
then 
\[ \|\bigcap_{i=1}^k M_i\| \le (s \cdot k \cdot d + 1)^{\mathcal{O}(k \cdot d)}.
\]
\end{lemma}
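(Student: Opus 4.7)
The plan is to express the intersection as the nonnegative integer solution set of a single linear Diophantine system and then invoke classical bounds on Hilbert bases and minimal particular solutions of such systems.

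First, since intersection distributes over union and the magnitude of a semilinear set is the maximum magnitude over its linear pieces, I would reduce to the case of a single intersection $\bigcap_{i=1}^k L(\bm{b}_i, P_i)$ of linear sets with $\|\bm{b}_i\|,\|P_i\| \le s$. A vector $\bm{x}\in\N^d$ lies in this intersection iff there exist $\bm{y}_i \in \N^{m_i}$ with $\bm{x} - P_i \bm{y}_i = \bm{b}_i$ for every $i$. Collecting all unknowns into $\bm{z} = (\bm{x},\bm{y}_1,\ldots,\bm{y}_k)$ yields a single linear Diophantine system $A\bm{z} = \bm{c}$ with $kd$ equations, $n = d + \sum_i m_i$ variables, and $\|A\|,\|\bm{c}\| \le s$, whose nonnegative integer solution set projects onto the intersection along the first $d$ coordinates.

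Next, I would invoke a Pottier-style bound: the nonnegative integer solutions of $A\bm{z}=\bm{c}$ form a semilinear set $F + H\cdot\N^{|H|}$, where $F$ consists of $\preceq$-minimal particular solutions, $H$ is the Hilbert basis of the homogeneous system $A\bm{z} = \bm{0}$, and every vector in $F\cup H$ has $\ell_\infty$-norm at most $(1+ns)^{kd}$. Projecting to the $\bm{x}$-coordinates preserves this magnitude bound, and taking the maximum over all (finitely many) choices of linear pieces of the $M_i$ then yields the claimed magnitude bound on $\bigcap_{i=1}^k M_i$ after simplification.

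The main obstacle is the quantitative control of the variable count $n$. Since the magnitude definition only constrains the entries of the period matrices $P_i$ and not the number of their columns, an a priori bound on $m_i$ only reads $m_i \le (2s+1)^d$ (the number of distinct integer vectors of norm at most $s$), which would push a naive estimate of $(1+ns)^{kd}$ to be doubly exponential in $d$ rather than the desired $(skd+1)^{\mathcal{O}(kd)}$. Closing this gap in \cite{HaaseZ19} requires either a sharper Diophantine bound whose dependence on $n$ is milder (e.g.\ polynomial inside the base), or a preprocessing step that replaces each $P_i$ by a submatrix with only polynomially many columns while preserving the linear set (for instance, by extracting a generating subset of the additive monoid that $P_i \cdot \N^{m_i}$ generates modulo its affine hull); this is where the technical work of the proof concentrates.
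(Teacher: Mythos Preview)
The paper does not provide a proof of this lemma: it is stated with a citation to \cite{HaaseZ19} and used as a black box. So there is no ``paper's own proof'' to compare against.

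That said, your sketch is a reasonable outline of how the cited result is obtained, and you have correctly identified the crux. The reduction to a single Diophantine system $A\bm{z}=\bm{c}$ with $kd$ rows and entries bounded by $s$ is exactly the right move, and the Pottier/von zur Gathen--Sieveking style bound on minimal solutions is the right tool. The honest acknowledgement that the column count $n$ is the obstacle is spot on: with only the magnitude hypothesis, the number of period vectors per $P_i$ is a priori unbounded (or at best $(2s+1)^d$), and a naive application of the bound $(1+ns)^{kd}$ overshoots. The resolution in \cite{HaaseZ19} is essentially your second suggestion: one uses a bound on minimal solutions whose dependence on the number of variables is benign (it enters only through the rank of $A$, hence through $kd$, not through $n$), so the exponent stays $\mathcal{O}(kd)$ and the base stays polynomial in $s,k,d$. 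Your write-up would be complete once you invoke such a rank-based bound explicitly rather than the cruder variable-count bound.
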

In the context of knapsack problems (which we will introduce in the next section), we will consider 
semilinear subsets as sets of mappings $\nu  \colon  \{x_1, \ldots, x_d\} \to \N$ for a finite set of variables
$X = \{x_1, \ldots, x_d\}$. Such a mapping $f$ can be identified with the vector $(\nu(x_1), \ldots, \nu(x_d))^{\mathsf{T}}$.
This allows to use all vector operations (e.g. addition and scalar multiplication) on the set
$\N^X$ of all mappings from $X$ to $\N$.

  \section{Groups} \label{sec-groups}
  
 We assume that the reader is familiar with the basics of group theory.
Let $G$ be a group. We always write $1$ for the group identity element.
For $g,h \in G$ we write $[g,h] := g^{-1} h^{-1}gh$ for the commutator of $g$ and $h$
and $g^h$ for $h^{-1} g h$.  For subgroups $A,B$ of $G$ we write $[A,B]$ for the subgroup 
generated by all commutators $[a,b]$ with $a \in A$ and $b \in B$.
The order of an element $g \in G$ is the smallest number $z > 0$ with $g^z = 1$ and $\infty$
if such a $z$ does not exist. The group $G$ is torsion-free, if every $g \in G \setminus \{1\}$ has infinite order.

We say that $G$ is finitely generated (f.g.) if there is a finite subset $\Sigma \subseteq G$
such that every element of $G$ can be written as a product of elements from $\Sigma$; such a $\Sigma$ is called
a finite generating set for $G$. We also write $G = \langle \Sigma \rangle$.
We then have a canonical morphism $h \colon \Sigma^* \to G$ that maps a word over $\Sigma$ to its product in $G$. 
 If $h(w)=1$ we also say that $w=1$ in $G$. For $g \in G$ we write $|g|$ for the length of a shortest word 
  $w \in \Sigma^*$ such that $h(w) = g$. This notation depends on the generating set $\Sigma$.
 We always assume that the generating set $\Sigma$ is symmetric in the sense that $a \in \Sigma$ implies 
 $a^{-1} \in \Sigma$. Then, we can define 
on $\Sigma^*$ a natural involution $\cdot^{-1}$ by $(a_1 a_2 \cdots a_n)^{-1} = a_n^{-1} \cdots a_2^{-1} a_1^{-1}$
for $a_1, a_2, \ldots, a_n \in \Sigma$. This allows to use the notations $[g,h] = g^{-1} h^{-1}gh$ and 
$g^h = h^{-1} g h$ also in case $g,h \in \Sigma^*$.
In the following, when we say that we want to compute a homomorphism $h  \colon  G_1 = \langle \Sigma_1 \rangle \to G_2 = \langle \Sigma_2 \rangle$,
 we always mean that we compute the images $h(a)$ for $a \in \Sigma_1$.
   
 A group $G$ is called {\em orderable} if there exists a linear order $\leq$ on $G$ such that
$g \leq h$ implies $xgy \leq xhy$ for all $g,h,x,y \in G$ \cite{neumann1949,mura1977}.
 Every orderable group is torsion-free (this follows directly from the definition) and has the unique roots property \cite{rolfsen2014}, i.e.,
 $g^n = h^n$ implies $g = h$. The are numerous examples of orderable groups: for instance, 
 torsion-free nilpotent groups, right-angled Artin groups, and diagram groups are all orderable.
 
 \subsection{Commensurable elements} \label{sec-commens} 
 
 Two elements $g,h \in G$ in a group $G$ are called {\em commensurable} if $g^x = h^y$
for some $x,y \in \Z \setminus \{0\}$.
This defines an equivalence relation on $G$,
in which the elements with finite order form an equivalence class.
By \cite[Corollary~1.2]{neumann1949} commensurable elements in an orderable group commute.

\begin{lemma} \label{lemma-commensurable-cyclic}
Let $G$ be an orderable group and let $U \subseteq G$ be a finite set of pairwise commensurable elements.
Then $\langle U \rangle$ is a cyclic subgroup of $G$.
\end{lemma}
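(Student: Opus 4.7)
The plan is to combine the Neumann result quoted immediately before the lemma with the structure theorem for finitely generated torsion-free abelian groups. First I would note that since the elements of $U$ are pairwise commensurable, they pairwise commute in $G$ by Neumann's theorem. Hence $H := \langle U\rangle$ is abelian, and as a subgroup of the orderable (hence torsion-free) group $G$ it is torsion-free as well. Being generated by the finite set $U$, the group $H$ is therefore a finitely generated torsion-free abelian group, so $H \cong \Z^k$ for some $k \ge 0$.

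Next I would translate commensurability across this isomorphism. Writing $U = \{u_1, \ldots, u_n\}$ and letting $\mathbf{a}_i \in \Z^k$ denote the image of $u_i$, the relation $u_i^x = u_j^y$ with $x,y \ne 0$ becomes $x\mathbf{a}_i = y\mathbf{a}_j$ in $\Z^k$, which is precisely the statement that $\mathbf{a}_i$ and $\mathbf{a}_j$ are linearly dependent over $\Q$. If all $\mathbf{a}_i$ are zero then $H$ is trivial and we are done; otherwise all $\mathbf{a}_i$ lie on a single one-dimensional $\Q$-subspace $L$ of $\Q^k$.

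To finish, I would observe that $L \cap \Z^k$ is a rank-$1$ subgroup of $\Z^k$, so the subgroup $\langle \mathbf{a}_1,\ldots,\mathbf{a}_n\rangle$ of $\Z^k$ that equals $H$ has rank at most $1$. But by construction this subgroup is all of $\Z^k$, so $k \le 1$, and therefore $H$ is cyclic.

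I do not anticipate any real obstacle here: the two inputs — commutativity from Neumann and $\Z^k$-structure from the classification of f.g.\ torsion-free abelian groups — both give the hard work for free, and what remains is a short linear-algebra translation. The only minor point to check is the edge case where some $u_i$ equals the identity, but in a torsion-free group the identity is commensurable only with itself (since $1 = u^y$ with $y\ne 0$ forces $u=1$), so either $U = \{1\}$ and $H$ is trivial, or all $u_i$ map to nonzero vectors $\mathbf{a}_i$ as assumed above.
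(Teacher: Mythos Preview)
Your argument is correct, but it follows a different route from the paper's. The paper proceeds by induction on $|U|$: given $g,h \in U$ it uses the unique-roots property of orderable groups to arrange $g^p = h^q$ with $\gcd(p,q)=1$, then takes a B\'ezout combination $a = g^\ell h^k$ (where $kp+\ell q = 1$) so that $g = a^q$ and $h = a^p$, checks that $a$ is still commensurable with the remaining elements of $U$, and invokes the induction hypothesis on $(U\setminus\{g,h\})\cup\{a\}$.

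Your approach instead appeals to the structure theorem for finitely generated torsion-free abelian groups to identify $\langle U\rangle$ with $\Z^k$, and then reduces the question to a one-line rank argument in $\Q^k$. This is slicker and, notably, does not invoke the unique-roots property at all --- you only use that orderable groups are torsion-free (for the classification) and Neumann's commutativity result. The paper's version, on the other hand, is more constructive: it actually produces a generator of $\langle U\rangle$ as an explicit word in the elements of $U$, which is closer in spirit to the algorithmic concerns of the surrounding section, even though the lemma is only used qualitatively.
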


\begin{proof}
Recall that $G$ is torsion-free and has the unique roots property.
We prove the lemma by induction on the size of $U$. The case $|U|=1$ is obvious.
Now assume that $|U| > 1$. By the above mentioned result from \cite{neumann1949} $\langle U \rangle$ is abelian.
Choose arbitrary elements $g,h \in U$ with $g \neq h$.
Since $g$ and $h$ are commensurable, there exist $p,q \in \Z \setminus \{0\}$ with 
$g^p = h^q$. Since $G$ has the unique roots property, we can assume that $\mathrm{gcd}(p,q) = 1$.
Hence, there exist $k,\ell \in \Z$ with $1 = k p + \ell q$. Consider
the group element $a = g^\ell h^k$. We then have $g = g^{k p + \ell q} = g^{\ell q} h^{k q} = a^q$ and similarly
$h = a^p$. We therefore have $\langle g,h\rangle = \langle a \rangle$. Note that $a \neq 1$ since  $\langle g,h\rangle \neq 1$.

We next claim that every $b \in U \setminus \{g,h\}$ is commensurable to $a$.
Since $g$ (resp., $h$) is commensurable to $b$, there exist 
$r,s,t,u \in \Z \setminus \{0\}$ with $g^r = b^s$ and $h^t = b^u$. 
We obtain $a^{rt} = g^{\ell r t} h^{k r t} = b^{\ell s t + k r u}$.
Finally, note that since $rt \neq 0$ and $G$ is torsion-free, we must have $\ell s t + k r u \neq 0$.

We have shown that $V = (U \setminus \{g,h\}) \cup \{a\}$ consists of pairwise commensurable elements. By induction,
 $\langle V \rangle$ is cyclic. Moreover, $\langle g,h\rangle = \langle a \rangle$ implies that 
 $\langle U \rangle = \langle V \rangle$, which proves the lemma.
\end{proof}

  \subsection{Wreath products} \label{sec-wreath}
  
  Let $G$ and $H$ be groups. Consider the direct sum $K = \bigoplus_{h
  \in H} G_h$, where $G_h$ is a copy of $G$. We view $K$ as the set $G^{(H)}$ of
all mappings $f\colon H\to G$ such that $\supp(f) := \{h\in H \mid f(h)\ne
1\}$ is finite, together with pointwise multiplication as the group
operation.  The set $\supp(f)\subseteq H$ is called the
\emph{support} of $f$. The group $H$ has a natural left action on
$G^{(H)}$ given by $h f(a) = f(h^{-1}a)$, where $f \in G^{(H)}$ and
$h, a \in H$.  The corresponding semidirect product $G^{(H)} \rtimes
H$ is the (restricted) \emph{wreath product} $G \wr H$.  In other words:
\begin{itemize}
\item
Elements of $G \wr H$ are pairs $(f,h)$, where $h \in H$ and
$f \in G^{(H)}$.
\item
The multiplication in $G \wr H$ is defined as follows:
Let $(f_1,h_1), (f_2,h_2) \in G \wr H$. Then
$(f_1,h_1)(f_2,h_2) = (f, h_1h_2)$, where
$f(a) = f_1(a)f_2(h_1^{-1}a)$.
\end{itemize}
There are canonical mappings
\begin{itemize}
\item $\sigma \colon G \wr H \to H$ with $\sigma(f,h) = h$ and  
\item  $\tau \colon G \wr H \to G^{(H)}$ with $\tau(f,h) = f$
\end{itemize}
In other words: $g = (\tau(g), \sigma(g))$ for $g \in G \wr H$.
Note that $\sigma$ is a homomorphism whereas $\tau$ is in general not a homomorphism.
Throughout this paper, the letters $\sigma$ and $\tau$ will have the above meaning, which of course
depends on the underlying wreath product $G \wr H$, but the latter will be always clear from the context.

The following intuition might be helpful:
An element $(f,h) \in G\wr H$ can be thought of
as a finite multiset of elements of $G \setminus\{1_G\}$ that are sitting at certain
elements of $H$ (the mapping $f$) together with the distinguished
element $h \in H$, which can be thought of as a cursor
moving in $H$.
If we want to compute the product $(f_1,h_1) (f_2,h_2)$, we do this
as follows: First, we shift the finite collection of $G$-elements that
corresponds to the mapping $f_2$ by $h_1$: If the element $g \in G\setminus\{1_G\}$ is
sitting at $a \in H$ (i.e., $f_2(a)=g$), then we remove $g$ from $a$ and
put it to the new location $h_1a \in H$. This new collection
corresponds to the mapping $f'_2 \colon  a \mapsto f_2(h_1^{-1}a)$.
After this shift, we multiply the two collections of $G$-elements
pointwise: If in $a \in H$ the elements $g_1$ and $g_2$ are sitting
(i.e., $f_1(a)=g_1$ and $f'_2(a)=g_2$), then we put the product
$g_1g_2$ into the location $a$. Finally, the new distinguished
$H$-element (the new cursor position) becomes $h_1 h_2$.

Clearly, $H$ is a subgroup of $G \wr H$. But also $G$ is a subgroup of $G \wr H$. We can identify 
$G$ with the set of all mappings $f \in G^{(H)}$ such that $\supp(f) \subseteq \{1\}$. This copy of $G$ together 
with $H$ generates $G \wr H$.   In particular, if $G = \langle \Sigma \rangle$ and $H = \langle \Gamma \rangle$ 
with $\Sigma \cap \Gamma = \emptyset$ then $G \wr H$ is generated by $\Sigma \cup \Gamma$. 
In this situation, we will also apply the above mappings $\sigma$ and $\tau$ to words over 
$\Sigma \cup \Gamma$.  We will need the following embedding result: 
  
\begin{lemma} \label{lemma-wreath-embedding}
Given a unary encoded number $d$, one can compute in logspace an embedding of $G^d \wr \mathbb{Z}$ into $G \wr \mathbb{Z}$.
\end{lemma}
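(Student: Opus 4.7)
The plan is to unfold each position of the outer lamplighter of $G^d \wr \Z$ into a block of $d$ consecutive positions of the target $G \wr \Z$. Concretely, I would define $\phi \colon G^d \wr \Z \to G \wr \Z$ by $\phi(f,n) = (f',\, dn)$, where $f'(dk+j) = (f(k))_{j+1}$ for all $k \in \Z$ and $0 \le j < d$: the $(j{+}1)$-st coordinate of $f(k) \in G^d$ is stored at position $dk+j$ in the new lamplighter. The cursor $n$ of the outer $\Z$ correspondingly scales to $dn$ so that shifting by one block on the target side simulates shifting by one position on the source side.

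To prove that $\phi$ is a homomorphism, I would just compute both sides of $\phi((f_1,n_1)(f_2,n_2)) = \phi(f_1,n_1)\phi(f_2,n_2)$ using the multiplication rule $(f_1,n_1)(f_2,n_2) = (k \mapsto f_1(k) f_2(k-n_1),\, n_1+n_2)$ from Section~\ref{sec-wreath}: the cursor components agree since $d(n_1+n_2) = dn_1+dn_2$, and at index $dk+j$ both functional components evaluate to $(f_1(k))_{j+1} \cdot (f_2(k-n_1))_{j+1}$ after the index substitution $dk+j - dn_1 = d(k-n_1)+j$. Injectivity is then immediate: $\phi(f,n)=(1,0)$ forces $dn=0$, hence $n=0$, and $f' \equiv 1$ forces each coordinate $(f(k))_{j+1}$ to be trivial, hence $f \equiv 1$.

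Writing $\Sigma$ for a fixed finite generating set of $G$ and $a_i \in G^d$ for the tuple with $a$ in the $i$-th coordinate and $1$ elsewhere, the natural generating set of $G^d \wr \Z$ is $\{t\} \cup \{a_i \mid a \in \Sigma,\, 1 \le i \le d\}$, and the computation of the embedding amounts to outputting the images $\phi(t) = t^d$ and $\phi(a_i) = t^{i-1} a t^{-(i-1)}$ (each a word of length at most $2d-1$ over the generators of $G \wr \Z$). A transducer can produce these by looping over $i \in \{1,\dots,d\}$ and $a \in \Sigma$, using two $O(\log d)$-bit counters to emit the requisite powers of $t$; since $d$ is given in unary, the input has length $\Theta(d)$ and the total output has size $O(d^2|\Sigma|)$, so logarithmic workspace suffices.

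The homomorphism verification is really just bookkeeping with indices under the unfolding, so there is no serious obstacle; the essential point of the lemma is that the unary encoding of $d$ keeps the output polynomial in the input, whereas a binary $d$ would already make $\phi(t) = t^d$ of exponential length and therefore out of reach for a logspace transducer.
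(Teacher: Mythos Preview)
Your proposal is correct and is essentially the same embedding as the paper's: the paper simply writes down the images $h(t)=t^d$ and $h(a)=t^i a t^{-i}$ for $a\in\Gamma_i$ (with $0$-indexed coordinates where you use $1$-indexed), without spelling out the homomorphism check, injectivity, or the logspace analysis. Your additional verification of these points is fine and more complete than what the paper states.
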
 
  
\begin{proof}
Let $G = \langle \Gamma \rangle$ and let $\Gamma_i$ ($0 \leq i \leq d-1$) be pairwise disjoint copies of $\Gamma$, each of which
generates a copy of $G$.
For $G^d \wr \mathbb{Z}$ we take the generating set $\{t,t^{-1}\} \cup \bigcup_{i=0}^{d-1} \Gamma_i$, where $t$ 
generates the right factor $\mathbb{Z}$.
We then obtain an embedding $h  \colon  G^d \wr \mathbb{Z} \to G \wr \mathbb{Z}$ by:
\begin{itemize}
\item $h(t) = t^d$ and $h(t^{-1}) = t^{-d}$,
\item $h(a) = t^i a t^{-i}$ for $a \in \Gamma_i$.
\end{itemize}
This proves the lemma.
\end{proof}
In \cite{MRUV10} it was shown that the word problem of a wreath product $G \wr H$ is $\TC^0$-reducible
to the word problems for $G$ and $H$. Let us briefly sketch the argument. Assume that 
$G = \langle \Sigma \rangle$ and $H = \langle \Gamma \rangle$.
 Given a word $w \in (\Sigma \cup \Gamma)^*$ one has to check whether $\sigma(w)=1$ in $H$ and $\tau(w)(h) = 1$ in $H$
 for all $h$ in the support of $\tau(w)$. One can compute in $\TC^0$ the word $\sigma(w)$ 
 by projecting $w$ onto the alphabet $\Gamma$. Moreover, one can enumerate the support of $\tau(w)$ 
 by going over all prefixes of $w$ and checking which $\sigma$-values are the same. Similarly, one produces
 for a given $h \in \supp(\tau(w))$ a word over $\Sigma$ that represents $\tau(w)(h)$.

\begin{lemma}
\label{lem:tau}
For $g_1, \dots, g_k \in G \wr H$ we have
$\tau(g_1 \cdots g_k) = \prod_{i=1}^k \tau(\sigma(g_1 \cdots g_{i-1}) \, g_i)$.
\end{lemma}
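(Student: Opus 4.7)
The plan is to prove the identity by induction on $k$, directly unwinding the definition of multiplication in $G \wr H$. Throughout, write $g_i = (f_i, h_i)$ and let $p_i := \sigma(g_1 \cdots g_i) = h_1 \cdots h_i$, with $p_0 = 1$.

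The first step is to identify what the $i$-th factor on the right-hand side looks like. Identifying $p_{i-1} \in H$ with the element $(1, p_{i-1}) \in G \wr H$, a single application of the multiplication rule gives
\[
\sigma(g_1 \cdots g_{i-1}) \, g_i \;=\; (1, p_{i-1})(f_i, h_i) \;=\; (f_i', \, p_{i-1} h_i),
\]
where $f_i'(a) = 1 \cdot f_i(p_{i-1}^{-1} a) = f_i(p_{i-1}^{-1} a)$. Hence
\[
\tau(\sigma(g_1 \cdots g_{i-1}) \, g_i)(a) \;=\; f_i(p_{i-1}^{-1} a).
\]
Intuitively, this is just the contribution of $g_i$ ``shifted'' to the cursor position reached after processing $g_1, \ldots, g_{i-1}$.

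For the base case $k = 1$ the claim is $\tau(g_1) = \tau(\sigma(\varepsilon) g_1) = \tau(g_1)$, which is immediate. For the inductive step, apply the multiplication rule to $g_1 \cdots g_k = (g_1 \cdots g_{k-1}) \, g_k$: writing $g_1 \cdots g_{k-1} = (F_{k-1}, p_{k-1})$ we get
\[
\tau(g_1 \cdots g_k)(a) \;=\; F_{k-1}(a) \cdot f_k(p_{k-1}^{-1} a) \;=\; \tau(g_1 \cdots g_{k-1})(a) \cdot \tau(\sigma(g_1 \cdots g_{k-1}) g_k)(a).
\]
By the induction hypothesis, $\tau(g_1 \cdots g_{k-1}) = \prod_{i=1}^{k-1} \tau(\sigma(g_1 \cdots g_{i-1}) g_i)$, and multiplying by the $k$-th factor pointwise in $G^{(H)}$ yields the desired formula.

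There is no real obstacle here; the verification is essentially bookkeeping. The only point to be careful about is that $G$ need not be abelian, so both the product in $G^{(H)}$ and the product displayed in the lemma must be taken in the order $i = 1, 2, \ldots, k$, which is exactly the order in which the multiplication rule of $G \wr H$ composes the shifted contributions.
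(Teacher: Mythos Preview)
Your proof is correct and follows essentially the same route as the paper: both establish the $k=2$ case from the multiplication rule in $G \wr H$ (yielding $\tau(g_1 g_2) = \tau(g_1)\cdot \tau(\sigma(g_1)g_2)$) and then conclude by induction. You simply spell out the induction and the shifted factor $\tau(\sigma(g_1\cdots g_{i-1})g_i)(a)=f_i(p_{i-1}^{-1}a)$ in more detail than the paper does.
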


\begin{proof}
By definition of the wreath product we have (for better readability we write $\circ$ for the multiplication in $G$):
\[
	\tau(g_1 g_2)(h) = \tau(g_1)(h) \circ \tau(g_2)(\sigma(g_1)^{-1} h) = \tau(g_1)(h) \circ \tau(\sigma(g_1) \, g_2)(h)
\]
for all $h \in H$ and therefore $\tau(g_1 g_2) = \tau(g_1) \circ \tau(\sigma(g_1) \, g_2)$, which is the case $k = 2$.
The general statement follows by induction.
\end{proof}
Finally, we need the following result from  \cite{longobardi1998}:
\begin{theorem}[\cite{longobardi1998}] \label{thm-wreath-orderable}
If $G$ and $H$ are orderable then also $G \wr H$ is orderable.\footnote{This holds only for the restricted
wreath product; which is the wreath product construction we are dealing with.}
\end{theorem}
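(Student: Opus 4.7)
The plan is to give the standard construction of a bi-invariant linear order on $G \wr H$ by lexicographically combining a fixed bi-invariant linear order $\leq_H$ on $H$ with an order on the base group $G^{(H)}$ that is built coordinatewise from a fixed bi-invariant linear order $\leq_G$ on $G$. The only substantive point will be that the coordinatewise order on $G^{(H)}$ is invariant under the shift action of $H$; once that is in hand, everything assembles formally.

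First I would define an order $\leq^*$ on $G^{(H)}$ as follows. For distinct $f, f' \in G^{(H)}$, the set $S(f,f') := \{ h \in H \mid f(h) \neq f'(h) \}$ is finite and nonempty, so it has a $\leq_H$-maximum $h^* := \max_{\leq_H} S(f,f')$. Declare $f <^* f'$ iff $f(h^*) <_G f'(h^*)$. This is clearly a linear order. For bi-invariance, fix $g \in G^{(H)}$: pointwise multiplication gives $S(gf, gf') = S(f,f')$ and the same maximum $h^*$, and at that coordinate the comparison $g(h^*) f(h^*)$ vs $g(h^*) f'(h^*)$ agrees with $f(h^*)$ vs $f'(h^*)$ by bi-invariance of $\leq_G$. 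Right-invariance is analogous.

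Next comes the key $H$-invariance of $\leq^*$ under the shift action $(h \cdot f)(a) = f(h^{-1}a)$: since $\supp(h \cdot f - h \cdot f') = h \cdot S(f,f')$ and left multiplication by $h$ preserves $\leq_H$, the new maximum is $h \cdot h^*$, and $(h \cdot f)(h \cdot h^*) = f(h^*)$, so the comparison is preserved. Then I define the order on $G \wr H$ lexicographically: $(f,h) < (f',h')$ iff $h <_H h'$, or $h = h'$ and $f <^* f'$. To check left-invariance I multiply by $(g,k)$ and use the product formula $(g,k)(f,h) = (g \cdot (k \cdot f), kh)$: if $h \neq h'$ then bi-invariance of $\leq_H$ handles the first coordinate; if $h = h'$ I combine left-invariance of $\leq^*$ on $G^{(H)}$ with the shift-invariance established above. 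Right-invariance is an entirely parallel check using $(f,h)(g,k) = (f \cdot (h \cdot g), hk)$, where in the ``$h = h'$'' case the two second factors $h \cdot g$ and $h' \cdot g$ coincide, so right-invariance of $\leq^*$ suffices.

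The only real obstacle is the shift-invariance step, because it is the single place where the interaction between the two given orders on $G$ and $H$ actually matters (the lexicographic assembly in the last step is formal for any semidirect product whose base group carries an action-invariant bi-invariant order). Everything else is bookkeeping of the definitions of the wreath-product multiplication from Section~\ref{sec-wreath}.
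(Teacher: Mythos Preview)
The paper does not prove this theorem; it simply cites it from \cite{longobardi1998}. Your argument is the standard construction and it is correct: the lexicographic order on $G^{(H)}$ via the $\leq_H$-largest coordinate of disagreement is bi-invariant and shift-invariant for exactly the reasons you give, and the final lexicographic assembly on the semidirect product then goes through. One cosmetic point: you write $\supp(h\cdot f - h\cdot f')$, which presumes additive notation in a possibly non-abelian $G$; just use your set $S(h\cdot f, h\cdot f')$ throughout.
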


 \subsection{Knapsack problem} \label{sec-knapsack}
 
 Let $G = \langle \Sigma \rangle$ be a f.g.~group.
Moreover, let $X$ be a set of formal variables that take values
from $\mathbb{N}$. For a subset $Y\subseteq X$, we use $\mathbb{N}^Y$ to denote the set of 
maps  $\nu \colon Y \to \mathbb{N}$, which we call \emph{valuations}.
For valuations $\nu \in\mathbb{N}^Y$ and $\mu \in\mathbb{N}^Z$ such that $Y \subseteq Z$
we say that $\nu$ extends $\mu$ (or $\mu$  restricts to $\nu$) if $\nu(x) = \mu(x)$ for all $x \in Y$.

An \emph{exponent expression} over $G$ is an expression of the form $E = v_0 u_1^{x_1} v_1 u_2^{x_2} v_2  \cdots u_d^{x_d} v_d$
with $d \geq 1$, words $v_0,\ldots, v_d \in \Sigma^*$, non-empty words $u_1, \ldots, u_d \in \Sigma^*$, and variables $x_1, \ldots, x_d$.
Here, we allow $x_i = x_j$ for $i \neq j$. If every variable $x_i$ occurs at most once, then $E$ is called a \emph{knapsack expression}. 
Let $X = \{ x_1, \ldots, x_d \}$ be the set of variables that occur in $E$.
For a homomorphism $h  \colon  G \to G' = \langle \Sigma'\rangle$ (that is specified by a mapping from $\Sigma$ to $(\Sigma'\cup \Sigma'^{-1})^*$),
we denote with $h(E)$ the exponent expression 
$h(u_1)^{x_1} h(v_1) h(u_2)^{x_2} h(v_2)  \cdots h(u_d)^{x_d} h(v_d)$.
 For a valuation $\nu\in\mathbb{N}^Y$ such that $X \subseteq Y$ (in which case we also say
 that $\nu$ is a valuation for $E$), we define 
 $\nu(E) = u_1^{\nu(x_1)} v_1 u_2^{\nu(x_2)} v_2  \cdots u_d^{\nu(x_d)} v_d \in \Sigma^*$.
We say that $\nu$ is a \emph{$G$-solution} for $E$ if $\nu(E)=1$ in $G$.
With $\Sol_G(E)$ we denote the set of all $G$-solutions $\nu \in \mathbb{N}^{X}$ of $E$.   
The \emph{length} of $E$ is defined as $|E| = \sum_{i=1}^d |u_i|+|v_i|$.
We define {\em solvability of exponent equations over $G$}, $\ExpEq(G)$ for short,  as the following decision problem:
\begin{description}
\item[Input] A finite list of exponent expressions $E_1,\ldots,E_n$ over $G$.
\item[Question] Is $\bigcap_{i=1}^n \Sol_G(E_i)$ non-empty?
\end{description}
The {\em knapsack problem for $G$}, $\KP(G)$ for short, is the following 
decision problem:
\begin{description}
\item[Input] A single knapsack expression $E$ over $G$.
\item[Question] Is $\Sol_G(E)$ non-empty?
\end{description}
It is easy to observe that the concrete choice of the generating set $\Sigma$ has no influence
on the decidability and complexity status of these problems.

 We could also restrict to knapsack expressions of the form $u_1^{x_1} u_2^{x_2}  \cdots u_d^{x_d} v$
 (but sometimes it will be convenient to allow nontrivial elements between the powers):
 for $E = v_0 u_1^{x_1} v_1 u_2^{x_2} v_2   \cdots u_d^{x_d} v_d$
and 
\[ E' = (v_0 u_1 v_0^{-1}) ^{x_1} (v_0 v_1 u_2 v_1^{-1} v_0^{-1})^{x_2} \cdots (v_0 \cdots v_{d-1} u_d v_{d-1}^{-1} \cdots v_0^{-1})^{x_d}
v_0 \cdots v_{d-1} v_d
 \]
we have $\Sol_G(E) = \Sol_G(E')$.

For the knapsack problem in wreath products the following result has been shown in \cite{GanardiKLZ18}:
\begin{theorem}[\cite{GanardiKLZ18}] \label{thm:NP-hard-KP}
For every nontrivial group $G$, $\KP(G \wr \mathbb{Z})$ is $\NP$-hard.
\end{theorem}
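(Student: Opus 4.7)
The plan is to give a polynomial-time reduction from the $\NP$-complete Exact Cover by 3-Sets (X3C) problem to $\KP(G \wr \Z)$, using the base group $G^{(\Z)}$ of the wreath product as a vector of counters in $G$ indexed by positions of $\Z$. I fix any nontrivial element $a \in G$; the whole reduction takes place inside the subgroup $\langle a,t\rangle \leq G \wr \Z$.

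The basic building block is the conjugate $g_p := t^p a t^{-p}$, a word of length $2|p|+1$ whose value in $G^{(\Z)}$ places $a$ at position $p$ with cursor at $0$. For a finite set $S \subseteq \Z$ the elements $\{g_p : p \in S\}$ pairwise commute in $G \wr \Z$, so $u_S := \prod_{p \in S} g_p$ lies in $G^{(\Z)}$ and satisfies $u_S^x = \prod_{p \in S} t^p a^x t^{-p}$: raising $u_S$ to the $x$-th power simply scales the $a$-placement at each $p \in S$ by $x$ in the exponent of $a$.

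Given an X3C instance with 3-element sets $S_1,\ldots,S_n \subseteq \{1,\ldots,L\}$, I would set $u_i := u_{S_i}$ and target $v := u_{\{1,\ldots,L\}}$, and consider the polynomial-size knapsack expression $E := u_1^{x_1}\cdots u_n^{x_n} v^{-1}$. Position-wise the equation $E = 1$ reads $a^{c_p} = a$ for every $p \in \{1,\ldots,L\}$, where $c_p := \sum_{i\,:\,p \in S_i} x_i$. When $a$ has infinite order these become literal equations $c_p = 1$; together with $x_i \in \N$ their solutions are exactly the characteristic vectors of exact covers of $\{1,\ldots,L\}$, which immediately yields $\NP$-hardness.

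The main obstacle is the torsion case $q := \ord(a) < \infty$: the coordinate equations degenerate to congruences $c_p \equiv 1 \pmod q$, and the resulting linear system over $\Z/q$ is $\Ptime$-solvable on its own. To recover $\NP$-hardness I would add cursor ballast by replacing each $u_i$ with $u_i t$ and putting an appropriate power of $t$ on the right-hand side, so that the $\sigma$-projection of the equation forces $\sum_i x_i$ to equal exactly the size $L/3$ of any exact cover of $\{1,\ldots,L\}$. A padding of the X3C instance (duplicating sets and elements by a factor determined by the fixed $q$) can then tighten this bound enough that for every $p$ the solutions of $c_p \equiv 1 \pmod q$ with $c_p \leq L/3$ reduce to $c_p = 1$. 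Arranging this uniform padding while keeping $|E|$ polynomial in the X3C input is the most delicate step of the proof.
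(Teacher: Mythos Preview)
The paper does not itself prove this theorem; it is quoted from \cite{GanardiKLZ18} without proof, so there is nothing in the paper to compare against directly.

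On its own merits: your reduction from X3C is clean and correct when $a$ has infinite order. The words $u_i$ and $v$ all lie in the abelian base group $G^{(\Z)}$, the knapsack equation $u_1^{x_1}\cdots u_n^{x_n}=v$ reads coordinatewise as $c_p=1$ for every $p\in\{1,\ldots,L\}$, and this is exactly X3C.

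The torsion case has a genuine gap. Once you replace $u_i$ by $u_it$, the factors $(u_it)^{x_i}$ no longer commute and the cursor moves while the markers are being dropped: in the product $(u_1t)^{x_1}\cdots(u_nt)^{x_n}$ the $i$-th block deposits copies of $a$ at the positions $S_i+\{0,\ldots,x_i-1\}+\sum_{k<i}x_k$, so the coordinate function $\tau(\nu(E))$ is no longer described by the quantities $c_p=\sum_{i:p\in S_i}x_i$ at all. Your subsequent argument (bounding $c_p$ and padding) tacitly assumes that this coordinate picture survives the ballast, but it does not. The padding idea is also in the wrong direction: duplicating the X3C instance increases $L$ and hence the ceiling $L/3$, making the congruence $c_p\equiv 1\pmod q$ \emph{less} restrictive, not more. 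What would actually close the argument---if one could enforce $\sum_i x_i=L/3$ exactly while keeping $c_p=\sum_{i:p\in S_i}x_i$---is the counting identity $\sum_p c_p=3\sum_i x_i=L$ together with $c_p\ge 1$, which already forces every $c_p=1$ with no padding needed; the real difficulty is achieving both requirements simultaneously, and your ballast trick sacrifices the second in order to obtain the first.
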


\subsection{Knapsack-semilinear groups} \label{sec-knapsack-semilinear}

The group $G$ is called {\em knapsack-semilinear} if for every knapsack expression $E$ over $\Sigma$,
the set $\Sol_G(E)$ is a semilinear set of vectors and a semilinear representation can be effectively computed from $E$.
Since semilinear sets are effectively closed under intersection, it follows that 
for every exponent expression $E$ over $\Sigma$,
the set $\Sol_G(E)$ is semilinear and a semilinear representation can be effectively computed from $E$.
Moreover, solvability of exponent equations is decidable for every knapsack-semilinear group.
As mentioned in the introduction, the class of knapsack-semilinear groups is very rich.
An example of a group $G$, where knapsack is decidable but solvability of exponent equations
is undecidable is the Heisenberg group $H_3(\mathbb{Z})$ 
(which consists of all upper triangular $(3 \times 3)$-matrices over the integers, where all diagonal entries
are $1$), see \cite{KoenigLohreyZetzsche2015a}. In particular, $H_3(\mathbb{Z})$ is not knapsack-semilinear.
In order to obtain a non-semilinear solution set, one needs a knapsack instance over $H_3(\mathbb{Z})$ with three variables.
In fact, for two variables we have the following simple fact:

\begin{lemma} \label{lemma-2dim}
Let $G$ be a group and $g_1, g_2, h \in G$ be elements.
\begin{enumerate}[(i)]
\item The solution set $S_1 = \{ (x, y) \in \Z^2 \mid g_1^x g_2^y = 1 \}$ is a subgroup of $\Z^2$.
If $G$ is torsion-free and $\{g_1,g_2\} \neq \{1\}$ then $S_1$ is cyclic.
\item The solution set $S = \{ (x, y) \in \Z^2 \mid g_1^x g_2^y = h \}$
is either empty or a coset $(a,b) + S_1$ of $S_1$ where $(a,b) \in S$ is any solution.
\end{enumerate}
\end{lemma}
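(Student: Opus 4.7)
\medskip

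\noindent\textbf{Proof sketch.} The key observation I would use throughout is the following: if $(x,y) \in S_1$, then $g_2^{y} = g_1^{-x}$, so $g_2^y$ lies in $\langle g_1\rangle$ and in particular commutes with every power of $g_1$. Symmetrically, $g_1^x = g_2^{-y}$ commutes with every power of $g_2$. This collapses all the apparent non-commutativity in the relevant identities.

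For part (i), I would first show that $S_1$ is a subgroup of $\Z^2$. Given $(x_1,y_1),(x_2,y_2)\in S_1$, the above observation gives $g_2^{y_1}=g_1^{-x_1}$, so
\[
g_1^{x_1+x_2}g_2^{y_1+y_2} \;=\; g_1^{x_1+x_2}\,g_2^{y_1}\,g_2^{y_2}\;=\;g_1^{x_1+x_2}\,g_1^{-x_1}\,g_1^{-x_2}\;=\;1,
\]
and inverses are handled analogously, so $S_1$ is closed under addition and inversion and contains $(0,0)$. Now assume $G$ is torsion-free and $\{g_1,g_2\}\neq\{1\}$. Since $S_1$ is a subgroup of $\Z^2$, it is free abelian of rank $0$, $1$, or $2$; it suffices to exclude rank $2$. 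But if $S_1$ has rank $2$, it is of finite index in $\Z^2$, so some positive multiples $(n,0)$ and $(0,m)$ belong to $S_1$, yielding $g_1^n=1$ and $g_2^m=1$. Torsion-freeness forces $g_1=g_2=1$, contradicting the hypothesis.

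For part (ii), suppose $S\neq\emptyset$ and fix $(a,b)\in S$. For $(x,y)\in S$ the identity $g_1^x g_2^y = g_1^a g_2^b$ rearranges to $g_1^{x-a} = g_2^{b-y}$, and then
\[
g_1^{x-a}g_2^{y-b} \;=\; g_2^{b-y}g_2^{y-b}\;=\;1,
\]
so $(x-a,y-b)\in S_1$. Conversely, if $(c,d)\in S_1$ then $g_1^c = g_2^{-d}$ commutes with powers of $g_2$, so
\[
g_1^{a+c}g_2^{b+d} \;=\; g_1^a\,g_2^{-d}\,g_2^{b}\,g_2^{d}\;=\;g_1^a g_2^{b}\;=\;h,
\]
showing $(a,b)+(c,d)\in S$. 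Hence $S=(a,b)+S_1$.

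The proof is essentially a routine verification; the only mildly subtle point is the rank-$2$ exclusion in part (i), where one must observe that a finite-index subgroup of $\Z^2$ must intersect each coordinate axis nontrivially in order to invoke torsion-freeness.
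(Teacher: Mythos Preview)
Your proof is correct and follows essentially the same route as the paper. The only cosmetic difference is in the cyclic part of (i): the paper directly shows that any two $(x,y),(x',y')\in S_1$ are linearly dependent by observing that $y'(x,y)-y(x',y')=(xy'-x'y,0)\in S_1$ forces $g_1^{xy'-x'y}=1$ and hence $xy'-x'y=0$, whereas you phrase the same underlying computation as ``rank $2$ $\Rightarrow$ finite index $\Rightarrow$ nontrivial $(n,0),(0,m)\in S_1$''.
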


\begin{proof}
Clearly $(0,0) \in S_1$, and if $g_1^x g_2^y = 1 = g_1^{x'} g_2^{y'}$
then also $g_1^{x-x'} g_2^{y-y'} = 1$. This shows the first part of statement (i).
Now assume that $G$ is torsion-free and that $g_1 \neq 1$
(the case $g_2 \neq 1$ is analogous).
If $(x,y),(x',y') \in S_1$ then $y'(x,y) - y(x',y') = (xy'-x'y,0) \in S_1$
and hence $g_1^{xy'-x'y}=1$.
Since $G$ is torsion-free this implies that $xy'-x'y=0$,
i.e. $(x,y)$ and $(x',y')$ are linearly dependent, since
$\det \big(\begin{smallmatrix}
x & x'\\
y & y'
\end{smallmatrix} \big)=0$.
This shows that $S_1$ is cyclic.

For (ii) let us assume that $S \neq \emptyset$ and take any solution $(a,b) \in S$,
i.e. $g_1^{a} g_2^{b} = h$.
We first show that $(a,b) + S_1 \subseteq S$.
Take any $(x,y) \in S_1$,
i.e. $g_1^{x} g_2^{y} = 1$.
Then we obtain
$g_1^{a + x} g_2^{b + y} = g_1^a g_1^x g_2^y g_2^b =g_1^a g_2^b = h$
and thus $(a+x,b+y) \in S$.

Finally we claim that $S \subseteq (a,b) + S_1$: Let $(x,y) \in S$,
i.e. $g_1^x g_2^y = h$. Since $g_1^{-a} h g_2^{-b}=1$, we get
$g_1^{x-a}g_2^{y-b}=g_1^{-a}(g_1^xg_2^y)g_2^{-b}=g_1^{-a}hg_2^{-b}=1$
and therefore $(x-a,y-b) \in S_1$. Hence $S = (a,b)+S_1$.
\end{proof}
For a knapsack-semilinear group $G$ and a finite generating set $\Sigma$ for $G$ we define a growth function. For $n \in \N$ let
$\mathsf{Knap}(n)$ (resp., $\mathsf{Exp}(n)$) be the finite set of all knapsack expressions (resp., exponent expression) $E$ over $\Sigma$ such that
$\Sol_G(E) \neq \emptyset$ and $|E| \leq n$.
We define the mapping $\mathsf{K}_{G,\Sigma}  \colon    \N \rightarrow \N$ and $\mathsf{E}_{G,\Sigma}  \colon  \N \rightarrow \N$ as follows:
\begin{eqnarray}
  \mathsf{K}_{G,\Sigma}(n) &=& \max \{ \|\Sol_G(E)\| \mid E \in \mathsf{Knap}(n) \}, \label{def-K} \\
  \mathsf{E}_{G,\Sigma}(n) &=& \max \{ \|\Sol_G(E)\| \mid E \in \mathsf{Exp}(n) \}.  \label{def-E}
\end{eqnarray}
Clearly, if $\Sol_G(E) \neq \emptyset$  and $\|\Sol_G(E)\| \le N$ then $E$ has a $G$-solution $\nu$
such that $\nu(x) \leq N$ for all variables $x$ that occur in $E$. Therefore, if $G$ has a decidable word problem and
we have a computable bound on the function $\mathsf{K}_{G,\Sigma}$ then we obtain a nondeterministic  
algorithm for  $\KP(G)$:
given a knapsack expression $E$ with variables from $X$ we can guess $\nu  \colon  X \to \N$ with $\sigma(x) \le N$ for all variables
$x$ and then verify (using an algorithm for the word problem), whether $\nu$ is indeed a solution.

Let  $\Sigma$ and $\Sigma'$ be two generating sets for the group $G$. Then there is a constant $c$ such that
$\mathsf{K}_{G,\Sigma}(n) \le \mathsf{K}_{G,\Sigma'}(cn)$, and similarly for $\mathsf{E}_{G,\Sigma}(n)$. To see this, note that 
for every $a\in \Sigma'$ there is a word $w_a\in \Sigma^*$ such that $a$ and $w_a$ represent the same element in $G$. 
Then we can choose $c=\max \{ |w_a| \mid a\in \Sigma'\}$. Due to this fact, we do not have to specify
the generating set $\Sigma$ when we say that $\mathsf{K}_{G,\Sigma}$ (resp., $\mathsf{E}_{G,\Sigma}$) is
polynomially/exponentially bounded.

We will need the following simple lemma:
\begin{lemma}
\label{lem:exp-eq}
Let $H$ be knapsack-semilinear and
let $E = v_0 (u_1^{k_1})^{x_1} v_1 (u_2^{k_2})^{x_2} v_2 \cdots (u_d^{k_d})^{x_d} v_d$
be an exponent expression over $H$ where $k_1, \dots, k_d \le k$ and $|v_0 u_1 v_1 \cdots u_d v_d| = n$.
Then the magnitude of $\Sol_H(E)$ is $(n \cdot \max\{ \mathsf{K}_H(n), k \} + 1)^{\O(n)}$.
\end{lemma}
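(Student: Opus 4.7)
The plan is to rewrite $\Sol_H(E)$ as the projection of an intersection of two semilinear sets of controlled magnitude and then apply Lemma~\ref{lem:cap}. The key conceptual point is to avoid treating $u_i^{k_i}$ as a single word, which would blow up the length of the knapsack expression by a factor of $k$ and only yield the weaker bound $\mathsf{K}_H(kn)$.

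First I would replace the repeated variables in $E$ by fresh ones: introduce $y_1, \dots, y_d$ and consider the \emph{knapsack} expression $E_0 = v_0 u_1^{y_1} v_1 \cdots u_d^{y_d} v_d$. By construction $|E_0| = n$, so by knapsack-semilinearity of $H$ we have $\|\Sol_H(E_0)\| \le \mathsf{K}_H(n)$. Write $X$ for the variable set of $E$ and let $\phi \colon \{1,\dots,d\} \to X$ be the function telling us which variable occurs in each power. Then a valuation $\nu \in \N^X$ lies in $\Sol_H(E)$ iff the vector $(k_1\nu(\phi(1)),\dots,k_d\nu(\phi(d)))$ lies in $\Sol_H(E_0)$.

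Next I would encode the relation ``$y_i = k_i \nu(\phi(i))$'' as a linear set $\Gamma \subseteq \N^X \times \N^d$, with offset $0$ and one period $(e_j,\sum_{i:\phi(i)=j} k_i e_i)$ per variable $j \in X$ (the $e_j$ being standard basis vectors); this gives $\|\Gamma\| \le k$. The set $M := \N^X \times \Sol_H(E_0)$ is semilinear with $\|M\| \le \mathsf{K}_H(n)$, and $\Sol_H(E)$ is exactly the image of $\Gamma \cap M$ under the projection to the $\N^X$-factor. Since projecting a linear set onto a subset of coordinates just deletes those entries in each offset and period, this projection does not increase magnitude.

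Finally, I would apply Lemma~\ref{lem:cap} to $\Gamma$ and $M$: both live in dimension $|X| + d \le 2n$ (recall $d \le n$ because each $u_i$ is non-empty), and both have magnitude at most $s := \max\{k,\mathsf{K}_H(n)\}$. Lemma~\ref{lem:cap} then yields
\[
\|\Gamma \cap M\| \;\le\; (s \cdot 2 \cdot 2n + 1)^{\O(n)} \;=\; (n \cdot \max\{\mathsf{K}_H(n), k\} + 1)^{\O(n)},
\]
and this bound transfers to $\Sol_H(E)$ by the remark above. There is no real obstacle: once the setup is in place, the exponent $\O(n)$ (rather than $\O(n^2)$) comes out for free because we intersect only two sets, keeping the product of ``number of sets'' and ``dimension'' linear in $n$. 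The only step requiring care is the encoding of the constraints $y_i = k_i \nu(\phi(i))$ as a single low-magnitude linear set $\Gamma$; doing this jointly, rather than one hyperplane per $i$, is what keeps the number of intersected sets equal to two.
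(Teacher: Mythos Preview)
Your proof is correct and follows essentially the same approach as the paper: introduce fresh variables $y_1,\dots,y_d$ to obtain a genuine knapsack expression of length $n$, intersect its solution set with a semilinear constraint of magnitude at most $k$ encoding $y_i=k_i\nu(x_i)$, project, and apply Lemma~\ref{lem:cap}. Your write-up is in fact more explicit than the paper's (which leaves the construction of the constraint set and the ambient space implicit), in particular the observation that the constraint can be realized as a \emph{single} linear set $\Gamma$ so that only two sets are intersected.
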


\begin{proof}
Let $X = \{x_1, \dots, x_d\}$ (some of the variables $x_i$ might be equal)
and $Y = \{y_1, \dots, y_d\}$ be a set of $d$ {\em distinct} variables.
Then $\nu \colon X \to \N$ is a solution of $E = 1$ if and only if
$\mu \colon Y \to \N$ is a solution of
$E' = v_0 u_1^{y_1} v_1 u_2^{y_2} v_2 \cdots u_d^{y_d} v_d = 1$
where $\mu(y_i) = k_i \nu(x_i)$.
Notice that $E'$ is a knapsack expression.
Hence $\Sol_H(E)$ can be obtained as a projection of the intersection of
$\Sol_H(E')$ with a semilinear set of magnitude $\le k$ (it has to ensure that $\mu(y_i)$ is a multiple of $k_i$ and that
$\mu(y_i)/k_i = \mu(y_j)/k_j$ whenever $x_i = x_j$).
Therefore $\|\Sol_H(E)\| = (n \cdot \max\{ \mathsf{K}_H(n), k \} + 1)^{\O(n)}$.
\end{proof}
Important for us is also the following result from \cite{GanardiKLZ18}:
\begin{theorem}[\cite{GanardiKLZ18}] \label{thm:closure-wreath}
If $G$ and $H$ are knapsack-semilinear then also $G \wr H$ is knapsack-semilinear.
\end{theorem}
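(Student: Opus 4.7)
The plan is to reduce knapsack over $G \wr H$ to a combination of knapsack constraints over $H$ (governing the cursor trajectory) and exponent-equation constraints over $G$ (governing the pointwise products at each visited position). Let $E = w_0 u_1^{x_1} w_1 \cdots u_d^{x_d} w_d$ be a knapsack expression over $G \wr H$. A valuation $\nu$ satisfies $\nu(E) = 1$ in $G \wr H$ if and only if (i) $\sigma(\nu(E)) = 1$ in $H$, and (ii) $\tau(\nu(E))(h) = 1$ in $G$ for every $h \in H$. Condition (i) is captured by the knapsack expression $\sigma(E) = \sigma(w_0)\sigma(u_1)^{x_1} \cdots \sigma(u_d)^{x_d}\sigma(w_d)$ over $H$, whose solution set $S_H \subseteq \N^d$ is effectively semilinear by assumption.

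For condition (ii), I would view the expansion of $\nu(E)$ as a sequence of \emph{drop events}: each $\Sigma$-letter of $\nu(E)$ contributes a generator of $G$ at the current cursor position, while each $\Gamma$-letter only advances the cursor (by Lemma~\ref{lem:tau}, this decomposition is exactly what governs $\tau$). Group drop events by letter occurrence in $E$ together with the iteration index within its block $u_i^{x_i}$: event $(i,k,j)$ (with $0 \le k < x_i$ and $j$ a letter position inside $u_i$) occurs at cursor position of the form $\sigma(w_0) \sigma(u_1)^{x_1} \cdots \sigma(w_{i-1}) \sigma(u_i)^k \sigma(u_i[1..j-1])$ in $H$, and analogously for letters of $w_i$. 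The product, in the correct order, of all drops landing at a common position $h$ must equal $1_G$.

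The central step is to partition $\N^d$ into finitely many semilinear pieces so that on each piece the \emph{coincidence pattern} among drop events is fixed. Whether two events $(i,k,j)$ and $(i',k',j')$ land at the same position reduces to an equation between products of powers of the $\sigma(u_\ell)$ in $H$, i.e. a knapsack-type query over $H$; its solution set is semilinear by assumption on $H$. Within each $u_i^{x_i}$ block, the indices $k \in [0, x_i)$ that drop at a fixed position form an arithmetic progression (or are all distinct), and Lemma~\ref{lem:interval-types} organizes the resulting interval decomposition of iteration indices into finitely many homogeneous pieces. Having fixed the pattern, the product of drops at each distinct position becomes an exponent expression over $G$ whose exponents are affine functions of the $x_i$ (with the periods of the arithmetic progressions absorbed into block sizes as in Lemma~\ref{lem:exp-eq}); its solution set is semilinear by knapsack-semilinearity of $G$.

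Intersecting $S_H$ with the semilinear constraints from the $G$-side and taking the union over the finitely many coincidence patterns yields an effective semilinear representation of $\mathrm{Sol}_{G \wr H}(E)$. The main obstacle is the coincidence analysis in the previous paragraph: the set of cursor positions during $u_i^{x_i}$ is an arithmetic progression in $H$ with step $\sigma(u_i)$, and one must describe precisely when two such progressions, whose base points themselves depend semilinearly on $(x_1,\dots,x_d)$, intersect. This case analysis is delicate when some $\sigma(u_i)$ have finite order or share nontrivial commensurability relations, and managing the combinatorial explosion while keeping the resulting partition semilinear is where the bulk of the technical work lies.
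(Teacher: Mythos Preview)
The paper does not prove this theorem; it simply quotes the result from \cite{GanardiKLZ18}. The only information the paper gives about that proof is the remark (just before Section~\ref{sec:knapsack-bundles}) that the construction in \cite{GanardiKLZ18} produces a Presburger formula \emph{with quantifier alternation}, and that the present paper's contribution is to avoid such alternation in the special case of abelian left factor and orderable right factor.

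Your outline has the right skeleton: split into the $\sigma$-condition (a knapsack expression over $H$) and the $\tau$-condition, parametrize cursor positions by addresses $(i,k)$ with $0\le k<x_i$, observe that two addresses collide precisely when a certain exponent equation over $H$ holds, and reduce the $G$-constraint at each position to an exponent expression over $G$. All of this matches the approach of \cite{GanardiKLZ18}.

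The gap is in the sentence ``partition $\N^d$ into finitely many semilinear pieces so that on each piece the coincidence pattern among drop events is fixed.'' The events are indexed by $(i,k,j)$ with $k$ ranging over $[0,x_i)$, so there are unboundedly many of them, and the coincidence pattern is not a finite combinatorial object that can be fixed by a finite case split on $\nu$. What actually makes the argument go through is not a finite partition but the closure of semilinear sets under \emph{all} first-order operations: one writes ``for every address $(i,k)$ with $0\le k<x_i$ the product of the contributions at $h(i,k)$ equals $1$ in $G$,'' where the product is determined by which other addresses $(j,l)$ satisfy $h(j,l)=h(i,k)$ (an existential condition over $H$) and which do not (the negation thereof). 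This yields a genuine $\forall\exists$ Presburger formula; semilinearity follows because Presburger-definable equals semilinear, not because one can flatten the formula into a disjunction of existential pieces. Your attempt to do the latter directly is essentially what the paper carries out in Section~5, and there it requires both the abelianness of the left factor (so that contributions at a position commute and can be analyzed via periodic functions and \cref{prop:nilpotent-recurrence}) and orderability of $H$ (so that progressions can be organized into intervals via Lemma~\ref{lem:interval-types} and packed bundles). For arbitrary $G$ and $H$ those tools are unavailable, and the quantifier alternation is the mechanism that \cite{GanardiKLZ18} uses instead.
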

The proof of this result in \cite{GanardiKLZ18} does not yield a good bound of $\mathsf{K}_{G\wr H}(n)$
in terms of $\mathsf{K}_{G}(n)$ and $\mathsf{K}_{H}(n)$ (and similarly for the $\mathsf{E}$-function).
One of our main achievements will be such a bound for the special case that the left factor $G$ is f.g.~abelian.
For $\mathsf{E}_G(n)$ we then have the following bound,  which follows
from well-known bounds on solutions of linear Diophantine equations \cite{vZGS78}:
\begin{lemma}
\label{lem:abelian}
If $G$ is a f.g.~abelian group then $\mathsf{E}_G(n) \le 2^{n^{\O(1)}}$.
\end{lemma}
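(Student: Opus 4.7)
The plan is to reduce a general exponent expression over $G$ to a system of linear Diophantine equations whose semilinear solution set has well-controlled magnitude by classical results on integer programming.

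Fix a generating set $\Sigma$ for $G$, and use the structure theorem to write $G \cong \Z^r \oplus T$ for some finite abelian torsion group $T$ of order $N$ (both $r$ and $N$ depending only on $G$). Let $E = v_0 u_1^{x_1} v_1 \cdots u_d^{x_d} v_d$ be an exponent expression with $|E|\le n$ and $\Sol_G(E)\neq\emptyset$. Since $G$ is abelian, I can collect occurrences of the same variable: letting $y_1,\dots,y_m$ enumerate the distinct variables and $\bar{w}_j := \prod_{i:x_i=y_j}\bar{u}_i\in G$, $\bar{h}:=\bar{v}_0\bar{v}_1\cdots\bar{v}_d\in G$, the equation $E=1$ is equivalent in $G$ to $\prod_{j=1}^m \bar{w}_j^{y_j}=\bar{h}^{-1}$.

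Next, I project this single $G$-equation to the coordinates of $\Z^r\oplus T$. The $\Z^r$-part yields $r$ linear Diophantine equations, and the $T$-part yields a bounded number ($\le\log_2 N=O(1)$) of linear congruences with moduli dividing $N$. Crucially, every coefficient of a variable $y_j$ is the signed count of some fixed generator occurring in the word $\prod_{i:x_i=y_j}u_i$, hence has absolute value at most $n$; the right-hand sides are similarly bounded. Each congruence $\sum_j b_j y_j \equiv c \pmod{N'}$ is eliminated by introducing an auxiliary integer variable $z$ via $\sum_j b_j y_j - N' z = c$, and writing $z = z^+ - z^-$ with $z^\pm\in\N$. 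After these reductions I obtain a linear Diophantine system $A\bm{y}=\bm{b}$, $\bm{y}\in\N^{m+O(1)}$, with $O(1)$ equations, at most $m+O(1)=O(n)$ variables, and $\|A\|,\|\bm{b}\| \le Cn$ for a constant $C$ depending only on $G$.

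Now I invoke the standard bounds from \cite{vZGS78} (or equivalently Pottier's theorem): the set of non-negative integer solutions of such a system is semilinear and admits a representation whose basis vectors have entries bounded by $(n\cdot Cn)^{O(n)} = 2^{O(n\log n)} = 2^{n^{O(1)}}$. Projecting onto the $y_j$-coordinates, and then pulling back to $\N^X$ by duplicating coordinates according to the identification $x_i=y_j$, preserves this magnitude bound, yielding $\|\Sol_G(E)\|\le 2^{n^{O(1)}}$.

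The only technical point to be careful about is the elimination of torsion congruences while keeping the coefficient growth polynomial in $n$; this works because $N$ is a constant determined by $G$, so the extra variables and coefficients introduced do not affect the asymptotics. Everything else is a direct application of the abelian rewriting and the classical Diophantine bound.
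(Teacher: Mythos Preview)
Your proof is correct and follows essentially the same approach the paper indicates: the paper does not give a detailed argument but simply states that the lemma ``follows from well-known bounds on solutions of linear Diophantine equations \cite{vZGS78}''; your write-up just fills in those details (structure theorem, collecting variables, handling torsion via auxiliary variables, then invoking the classical bounds). One minor remark: since the number of equations in your system is $r+O(1)=O(1)$, the Pottier/von zur Gathen--Sieveking bound is actually polynomial in $n$, not $(n\cdot Cn)^{O(n)}$; your stated bound is needlessly loose but of course still within $2^{n^{O(1)}}$.
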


\subsection{Power word problem}
  
A \emph{power word} (over $\Sigma$) is a tuple $(u_1,k_1,u_2,k_2,\ldots,u_d,k_d)$ where 
$u_1, \dots, u_d \in \Sigma^*$ are words over the group generators (called the periods of the power word)
and $k_1, \dots, k_d\in \mathbb{Z}$ are integers that are given in binary notation. Such a power word represents the 
word $u_1^{k_1} u_2^{k_2}\cdots u_d^{k_d}$. Quite often, we will identify the power word $(u_1,k_1,u_2,k_2,\ldots,u_d,k_d)$
with the word $u_1^{k_1} u_2^{k_2}\cdots u_d^{k_d}$. Moreover, if $k_i=1$, then we usually omit the exponent $1$ in a power word. 
The \emph{power word problem} for the f.g.~group $G$, $\PowWP(G)$ for short, is defined as follows:
\begin{description}
\item[Input] A power word $(u_1,k_1,u_2,k_2,\ldots,u_d,k_d)$.
\item[Question] Does $u_1^{k_1} u_2^{k_2}\cdots u_d^{k_d}=1$ hold in $G$?
\end{description}
Due to the binary encoded exponents, a power word can be seen as a succinct description 
of an ordinary word. We have the following simple lemma:

\begin{lemma} \label{lemma-powWP-to-ExpEq}
If the f.g.~group $G$ is knapsack-semilinear,
$\mathsf{E}_{G}(n)$ is exponentially bounded, and $\PowWP(G)$ belongs to $\NP$ then $\ExpEq(G)$ belongs to $\NP$.
\end{lemma}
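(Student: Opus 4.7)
The plan is to reduce $\ExpEq(G)$ to guessing a valuation of polynomial bit-length and then verifying each equation via the $\NP$ algorithm for $\PowWP(G)$. Given an input to $\ExpEq(G)$ consisting of exponent expressions $E_1,\ldots,E_n$ of total size $N$ over a set of variables $X$ with $|X|\le N$, I will show that if the conjunction is solvable at all, then there is a solution where every value has polynomially many bits.

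First I bound the magnitude of the joint solution set. By knapsack-semi\-linearity, each $\Sol_G(E_i)$ is a semilinear subset of $\N^X$, and by hypothesis its magnitude is at most $\mathsf{E}_G(|E_i|)\le 2^{N^{O(1)}}$. The joint solution set equals $S=\bigcap_{i=1}^n \Sol_G(E_i)$, so by Lemma~\ref{lem:cap} we get
\[
\|S\|\le \bigl(2^{N^{O(1)}}\cdot n\cdot |X| + 1\bigr)^{O(n\cdot |X|)} = 2^{N^{O(1)}}.
\]
Consequently, if $S\neq\emptyset$, then $S$ contains some $\nu\in\N^X$ whose components are bounded by $2^{N^{O(1)}}$, i.e.\ each $\nu(x)$ has polynomially many bits in binary.

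The $\NP$ algorithm then proceeds as follows. Nondeterministically guess a valuation $\nu\colon X\to\N$ with polynomially many bits per component, together with $\NP$-certificates $c_1,\ldots,c_n$ for the power word problem. For each $i$, the word $\nu(E_i)$ obtained by substituting $\nu$ into $E_i=v_0 u_1^{x_1}v_1\cdots u_d^{x_d}v_d$ is literally the power word $v_0 u_1^{\nu(x_1)}v_1\cdots u_d^{\nu(x_d)}v_d$, whose periods and constant parts have total length $|E_i|$ and whose binary-encoded exponents have polynomial bit-length; hence the description of $\nu(E_i)$ as a power word has size polynomial in $N$. Using the polynomial-time verifier for $\PowWP(G)$ on input $\nu(E_i)$ with certificate $c_i$, we check $\nu(E_i)=1$ in $G$ for every $i$. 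Accept iff all checks succeed.

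The main obstacle is the magnitude bound, which requires care because $\ExpEq$ allows a conjunction rather than a single expression, and Lemma~\ref{lem:cap} blows up multiplicatively in the number of intersected sets. However, since the exponent-in-the-exponent pattern $(2^{N^{O(1)}})^{O(N^2)}$ remains $2^{N^{O(1)}}$, the bound stays in the regime where the witness fits in polynomially many bits. Correctness is immediate: if some solution exists, then by the magnitude bound a polynomial-bit one exists and will be guessed; conversely, any guessed $\nu$ that passes all power-word checks is by definition an element of $\bigcap_i \Sol_G(E_i)$.
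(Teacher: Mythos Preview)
Your proof is correct and follows essentially the same approach as the paper: bound the magnitude of $\bigcap_i \Sol_G(E_i)$ via Lemma~\ref{lem:cap}, guess a small solution in binary, and verify it through $\PowWP(G)$. You are in fact slightly more careful than the paper in spelling out the magnitude arithmetic and in explicitly guessing the $\NP$-certificates for the power-word checks (the paper's phrase ``verify in polynomial time'' glosses over the fact that $\PowWP(G)$ is only assumed to be in $\NP$).
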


\begin{proof}
Let us consider a list $E_1, \ldots, E_k$ of exponent expressions over the group $G$
and let $n = \sum_{i=1}^k |E_i|$ be the total input length.
With Lemma~\ref{lem:cap} it follows that $\bigcap_{i=1}^k \Sol_G(E_i) \neq \emptyset$ if and only if there is some 
$\nu \in \bigcap_{i=1}^k \Sol_G(E_i)$ with $\nu(x) \leq 2^{n^{\O(1)}}$ for all variables $x$.
We can therefore guess the binary encodings of all numbers $\nu(x)$ in polynomial time and then verify
in polynomial time whether $\nu(E_i)=1$ (which is an instance of $\PowWP(G)$) for all $1 \leq i \leq k$.
\end{proof}

\section{Wreath products of nilpotent groups and $\pmb{\mathbb{Z}}$}
  
The {\em lower central series} of a group $G$ is the sequence of groups
$(G_i)_{i\geq 0}$ with $G_0 = G$ and $G_{i+1} = [G_i,G]$.
 The group $G$ is called {\em nilpotent} if there exists a $c \geq 0$ such that $G_c=1$;
 in this case the minimal number $c$ with $G_c=1$ is called the {\em nilpotency class}
 of $G$. In this section we prove Theorems~\ref{thm:nilpotent-power} and \ref{thm:nilpotent-KP} from the introduction.
 For the proofs of Theorems~\ref{thm:nilpotent-power} and \ref{thm:nilpotent-KP} we first have to consider periodic words over $G$
that were also used in \cite{GanardiKLZ18}.

\subsection{Periodic words over groups} \label{sec-periodic}

Let $G = \langle \Sigma \rangle$ be a f.g.~group.
Let $G^\omega$ be the set of all functions $f \colon \N \to G$,
which forms a group by pointwise multiplication $(fg)(t) = f(t) \cdot g(t)$.
A function $f \in G^\omega$ is \emph{periodic} if there exists a number $d \geq 1$ 
such that $f(t) = f(t+d)$ for all $t \geq 0$.
The smallest such number $d$ is called the {\em period} of $f$.
If $f \in G^\omega$ has period $d$ and $g \in G^\omega$ has period $e$
then $fg$ has period at most $\mathrm{lcm}(d,e)$.
A periodic function $f \in G^\omega$ with period $d$ can be specified by its initial $d$ elements
$f(0), \ldots, f(d-1)$ where each element $f(t)$ is given as a word over
the generating set $\Sigma$. The {\em periodic words problem} $\textsc{Periodic}(G)$ over $G$ is defined as follows:

\begin{description}
\item[Input] Periodic functions $f_1, \ldots, f_m \in G^\omega$ and a binary encoded number $T$.
\item[Question] Does the product $f = \prod_{i=1}^m f_i$ satisfy $f(t) = 1$ for all $t \le T$? 
\end{description}

The main result of this section is:

\begin{theorem} \label{thm:nilpotent-membership}
If $G$ is a f.g.~nilpotent group
then $\textsc{Periodic}(G)$ belongs to $\TC^0$.
\end{theorem}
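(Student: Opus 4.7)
The plan is to reduce $\textsc{Periodic}(G)$ to coordinate-wise polynomial identity checks via Mal'cev normal forms, and to aggregate each pointwise identity into a single $\TC^0$-computable sum. First I would fix a Mal'cev basis $a_1, \dots, a_n$ of $G$ with relative orders $r_1, \dots, r_n \in \N \cup \{\infty\}$, so that every $g \in G$ has a unique normal form $g = a_1^{\pi_1(g)} \cdots a_n^{\pi_n(g)}$. Since $G$ is fixed, rewriting each word $f_i(t)$ into its Mal'cev coordinates $e_{i,k}(t) = \pi_k(f_i(t)) \in \Z$ is a $\TC^0$ preprocessing step, after which each $e_{i,k}$ is a periodic sequence with period $d_i$ and polynomially bounded entries.

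The key algebraic fact is that iterated products in a nilpotent group of class $c$ admit Mal'cev polynomial descriptions of degree at most $c$: there exist fixed polynomials $P_j \in \mathbb{Q}[X_{i,k}]$ with
\[
    \pi_j(g_1 \cdots g_m) \;=\; P_j(\pi_1(g_1), \dots, \pi_n(g_m))
\]
for all tuples $g_1, \dots, g_m \in G$. Setting $E_j(t) := P_j(e_{1,1}(t), \dots, e_{m,n}(t))$, the condition $f(t) = 1$ reduces to $E_j(t) = 0$ in $\Z$ (if $r_j = \infty$) or $E_j(t) \equiv 0 \pmod{r_j}$ (otherwise) for each coordinate $j$. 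Since $c$ depends only on $G$, the polynomial $P_j$ has only $\O((mn)^c)$ monomials, each a product of at most $c$ of the $e_{i,k}(t)$'s---polynomially many in the input size.

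For an infinite-order coordinate, I would use the nonnegativity trick: $E_j(t) = 0$ for all $t \le T$ iff $\sum_{t=0}^T E_j(t)^2 = 0$ in $\Z$. Expanding the square yields a polynomial of degree at most $2c$ with $\O((mn)^{2c})$ monomials of the shape $\prod_{s=1}^{\le 2c} e_{i_s, k_s}(t)$. Any such monomial is periodic in $t$ with period $L \le (\max_i d_i)^{2c}$, which is polynomial in the input size. Its sum over $t \in [0, T]$ can therefore be computed in $\TC^0$: evaluate the one-period sum $S = \sum_{t=0}^{L-1}$ by iterated addition, divide $T+1 = qL + r$ in $\TC^0$, and return $qS + \sum_{t=0}^{r-1}$. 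Iterated addition across the polynomially many monomials then gives $\sum_t E_j(t)^2$ in $\TC^0$, and comparing it to zero settles the coordinate.

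The main obstacle is the finite-order coordinates, where the sum-of-squares trick does not detect divisibility by $r_j$ over a finite ring. My intended route is to split the problem through the finite characteristic torsion subgroup $\tau(G) \trianglelefteq G$: the torsion-free quotient $G/\tau(G)$ falls under the preceding argument, while the residual check lives in the fixed finite group $\tau(G)$, whose size and exponent are absolute constants. For the finite part I would count, for each of the constantly many $v \neq 1$ in $\tau(G)$, the number of $t \le T$ with $f(t) = v$ via a bounded-degree character-sum aggregate in the residues $e_{i,k}(t) \bmod |\tau(G)|$; the delicate point is to argue that each such aggregate still involves only polynomially many monomials of polynomial period, so that the one-period-sum technique from above applies uniformly.
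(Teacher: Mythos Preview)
Your sum-of-squares argument for the torsion-free coordinates is on the right track, though you should say how to produce the explicit monomial expansion of $P_j$ in $\TC^0$ as $m$ varies; the polynomial is not ``fixed'' once and for all, since it has $mn$ variables. One clean route is an embedding $G/\tau(G)\hookrightarrow UT_N(\Z)$, where the entries of an $m$-fold product are sums over increasing index tuples of length $<N$ and hence have an explicit $\TC^0$-enumerable monomial structure.

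The torsion part, however, has a genuine gap that your character-sum sketch does not close. Already for $G=\Z/2\Z$ you must decide whether $\sum_i f_i(t)\equiv 0\pmod 2$ for all $t\le T$; the natural aggregate is
\[
\sum_{t\le T}\Bigl[\,\bigoplus_i f_i(t)=0\,\Bigr]=\frac{T+1}{2}+\frac12\sum_{t\le T}\prod_{i=1}^m\bigl(1-2f_i(t)\bigr),
\]
and the product $\prod_i(1-2f_i(t))$ has period $\mathrm{lcm}(d_1,\ldots,d_m)$, which is exponential when the $d_i$ are distinct primes. The phenomenon is general: a character turns the sum $E_j(t)=\sum_\alpha c_\alpha M_\alpha(t)$ into the \emph{product} $\prod_\alpha \chi(M_\alpha(t))^{c_\alpha}$, and products of polynomially many periodic factors can have exponential period. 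Interpolating the indicator $[\,\cdot\equiv 0\bmod r_j\,]$ by a polynomial over $\mathbb{Q}$ instead forces degree growing with $m$, so again each monomial touches $\Theta(m)$ of the $f_i$ and has exponential period. I do not see how to rescue the finite part within your framework.

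The paper sidesteps all of this. It proves (Proposition~\ref{prop:nilpotent-recurrence}) that every element of $G^{\omega,n}$, the subgroup of $G^\omega$ generated by elements of period $\le n$, satisfies a recurrence of order polynomial in $n$; the argument is by induction on the nilpotency class, using that $[G^{\omega,n},G^{\omega,n}]\subseteq [G,G]^{\omega,n^{2c}}$ and the classical abelian recurrence bound. Hence $f=f_1\cdots f_m$ vanishes on $[0,T]$ iff it vanishes on $[0,\min(T,d-1)]$ for a polynomially bounded $d$, and one simply solves those polynomially many word-problem instances in $\TC^0$ via~\cite{MyasnikovW17}. No coordinate-wise analysis, no torsion split.
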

Previously it was proven that $\textsc{Periodic}(G)$  belongs to $\TC^0$
if $G$ is abelian \cite{GanardiKLZ18}.
As an introduction let us reprove this result.

Let $\rho \colon G^\omega \to G^\omega$ be the {\em shift}-operator,
i.e. $(\rho(f))(t) = f(t+1)$, which is a group homomorphism.
For a subgroup $H$ of $G^{\omega}$, we denote by $H^{(n)}$ the smallest subgroup
of $G^{\omega}$ that contains $\rho^{0}(H), \rho^{1}(H), \ldots, \rho^n(H)$.
Note that $(H^{(m)})^{(n)}=H^{(m+n)}$ for any $m,n\in\N$.
A function $f \in G^\omega$ {\em satisfies a recurrence of order $d \geq 1$}
if $\rho^d(f)$ is contained in the subgroup $\langle f\rangle^{(d-1)}$ of $G^\omega$.
If $f$ has period $d$ then $f$ clearly satisfies a recurrence of order $d$.

Let us now consider the case that $G$ is abelian. Then, also $G^\omega$ is abelian
and we use the additive notation for $G^\omega$.
The following lemma is folklore:

\begin{lemma}[cf. \cite{Herlestam85}]
	\label{lem:abelian-recurrence}
	Let $G$ be a f.g.~abelian group.
	If $f_1, \dots, f_m \in G^\omega$ satisfy recurrences of order $d_1, \dots, d_m \geq 1$
	respectively,
	then $\sum_{i=1}^m f_i$ satisfies a recurrence of order $\sum_{i=1}^m d_i$.
\end{lemma}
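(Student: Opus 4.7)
The plan is to identify the right algebraic structure that makes this a statement about annihilators in a commutative polynomial ring. Specifically, I would view $G^\omega$ as a $\Z[X]$-module, where the indeterminate $X$ acts by the shift $\rho$. Since $G$ is abelian and written additively, the subgroup $\langle f\rangle^{(d-1)}$ is precisely the $\Z$-linear span of $f, \rho(f), \dots, \rho^{d-1}(f)$. Hence the condition that $f$ satisfies a recurrence of order $d$ translates into the existence of integers $c_0, \dots, c_{d-1}$ with
\[
\rho^d(f) \;=\; \sum_{i=0}^{d-1} c_i\,\rho^i(f),
\]
which is exactly the statement that $f$ is annihilated by the monic polynomial $p_f(X) = X^d - \sum_{i=0}^{d-1} c_i X^i \in \Z[X]$ of degree $d$.

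Given this reformulation, I would pick for each $f_i$ a monic annihilating polynomial $p_i(X) \in \Z[X]$ of degree $d_i$, and set
\[
p(X) \;=\; \prod_{i=1}^m p_i(X),
\]
which is monic of degree $D = \sum_{i=1}^m d_i$. Since $\Z[X]$ is commutative and the $\Z[X]$-action on $G^\omega$ is well-defined, for each $i$ we can write $p(X) = q_i(X)\,p_i(X)$ with $q_i(X) = \prod_{j\ne i} p_j(X)$, so $p(X)\cdot f_i = q_i(X)\cdot\bigl(p_i(X)\cdot f_i\bigr) = 0$. Summing over $i$ gives $p(X)\cdot g = 0$ for $g = \sum_{i=1}^m f_i$. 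Writing $p(X) = X^D + \sum_{j=0}^{D-1} b_j X^j$, this reads
\[
\rho^D(g) \;=\; -\sum_{j=0}^{D-1} b_j\,\rho^j(g) \;\in\; \langle g\rangle^{(D-1)},
\]
so $g$ satisfies a recurrence of order $D$, which is what we want.

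There is no real obstacle here; the only point worth emphasizing is the passage from the subgroup-theoretic definition of a recurrence to the annihilator-polynomial formulation, which genuinely uses that $G$ (and hence $G^\omega$) is abelian so that $\langle f\rangle^{(d-1)}$ is a $\Z$-module rather than an arbitrary subgroup, and that the polynomial in question is monic of the correct degree. Once that translation is in place, the closure of annihilators under products in the commutative ring $\Z[X]$ makes the rest of the argument essentially a one-liner.
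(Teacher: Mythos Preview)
Your proof is correct and follows essentially the same approach as the paper: view $G^\omega$ as a $\Z[X]$-module via the shift, translate ``recurrence of order $d$'' into ``annihilated by a monic polynomial of degree $d$'', and then multiply the annihilating polynomials. The paper's argument is the same, just written a bit more tersely.
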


\begin{proof}
Observe that $G^\omega$ is a $\Z[x]$-module with
scalar multiplication
\begin{equation} \label{eq-shift-module}
\sum_{i=0}^d a_i x^i \cdot f \mapsto \sum_{i=0}^d a_i \rho^i(f).
\end{equation}
Then $f \in G^\omega$ satisfies a recurrence of order $d \geq 1$ if and only if
there exists a monic polynomial $p \in \Z[x]$ of degree $d$ (where monic means that
the leading coefficient is one) such that $pf = 0$.
Therefore, if $p_1, \dots, p_m \in \Z[x]$ 
such that $\mathrm{deg}(p_i) = d_i \geq 1$ and $p_if_i = 0$ 
then $\prod_{i=1}^m p_i \sum_{j=1}^m f_j =
\sum_{j=1}^m (\prod_{i=1}^m p_i) f_j = 0$.
Since $\prod_{i=1}^m p_i$ is a monic polynomial of degree $d := \sum_{i=1}^m d_i$,
$\sum_{i=1}^m f_i$ satisfies a recurrence of order $d$. 
\end{proof}
The above lemma implies that $\sum_{i=1}^m f_i = 0$ if and only if  $\sum_{i=1}^m f_i(t) = 0$ for all $0 \le t \le d-1$,
where $d$ is the sum of the periods of the $f_i$.

Let us now turn to the nilpotent case.
For $n\in\N$, let $G^{\omega,n}$ be the subgroup of $G^{\omega}$
generated by all elements with period at most $n$. Then $G^{\omega,n}$
is closed under shift.
The key fact for showing \cref{thm:nilpotent-membership} is the following.
\begin{proposition}\label{prop:nilpotent-recurrence}
  If $G$ is a f.g.~nilpotent group, then there is a polynomial $p$
  such that every element of $G^{\omega,n}$ satisfies a recurrence of
  order $p(n)$.
\end{proposition}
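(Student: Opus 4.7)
My plan is to prove the proposition by induction on the nilpotency class $c$ of $G$, constructing polynomials $p_c$ such that every element of $G^{\omega,n}$ satisfies a recurrence of order $p_c(n)$. The base case $c=1$ (abelian) follows from Lemma~\ref{lem:abelian-recurrence}: given $f \in G^{\omega,n}$, write $f = \sum_i f_i$ as a finite sum of periodic functions with periods $d_i \le n$, and regroup by common period as $g_d = \sum_{i : d_i = d} f_i$, so that $f = \sum_{d=1}^n g_d$ with $g_d$ of period dividing $d$ and hence a recurrence of order $\le d$. Lemma~\ref{lem:abelian-recurrence} then yields a recurrence for $f$ of order $\sum_{d=1}^n d = \O(n^2)$, which suffices for the base case.

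For the inductive step, assume the bound for class $c-1$ via a polynomial $p_{c-1}$, and let $G$ be nilpotent of class $c$. Set $Z = G_{c-1}$, which is central and abelian, so that $G/Z$ has class $c-1$. For $f \in G^{\omega,n}$, the image $\bar f \in (G/Z)^{\omega,n}$ under the pointwise projection satisfies a recurrence of order $p_{c-1}(n)$ by induction. Lifting the witnessing relation back to $G^\omega$ produces an identity $\rho^{p_{c-1}(n)}(f) = h \cdot z$, where $h$ lies in the subgroup of $G^\omega$ generated by $f, \rho(f), \ldots, \rho^{p_{c-1}(n)-1}(f)$ and $z \in Z^\omega$ (which is central in $G^\omega$ since $Z$ is central in $G$).

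The crux is to control $z$: the plan is to show $z \in Z^{\omega,q(n)}$ for some polynomial $q$, so that the abelian base case applied to $Z$ gives a recurrence for $z$ of order $r(n) := p_1(q(n))$. One then shifts the lifting identity by $\rho^{r(n)}$ and replaces $\rho^{r(n)}(z)$ by the expansion of its recurrence; since each $\rho^i(z) = \rho^i(h)^{-1} \rho^{p_{c-1}(n)+i}(f)$ lies in the subgroup generated by $f, \rho(f), \ldots, \rho^{p_{c-1}(n)+i}(f)$, this realises $\rho^{p_{c-1}(n)+r(n)}(f)$ as an element of the subgroup generated by $f, \rho(f), \ldots, \rho^{p_{c-1}(n)+r(n)-1}(f)$, and yields the polynomial bound $p_c(n) = p_{c-1}(n) + r(n)$. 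The hard part is establishing $z \in Z^{\omega,q(n)}$: I would expand the lift using standard commutator identities such as $[uv,w] = [u,w]^v[v,w]$, exploit that the pointwise commutator of periodic functions with periods $d_1, d_2 \le n$ is periodic of period at most $\mathrm{lcm}(d_1,d_2) \le n^2$, and regroup the resulting commutator terms by period within each lower central quotient $G_i/G_{i+1}$ (which is abelian), terminating the expansion at depth $c$ by nilpotency. This should yield the needed polynomial bound $q(n) = n^{\O(c)}$.
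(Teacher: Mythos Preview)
Your induction is dual to the paper's: you pass from $G$ to the quotient $G/Z$ with $Z=G_{c-1}$, whereas the paper passes to the subgroup $[G,G]$. Concretely, the paper first applies the $\Z[x]$-module argument (essentially your abelian base case) inside the abelianization $G^{\omega,n}/[G^{\omega,n},G^{\omega,n}]$ to obtain $\rho^d(f)\in\langle f\rangle^{(d-1)}\cdot[G^{\omega,n},G^{\omega,n}]$ with $d=n(n+1)/2$, then proves the structural inclusion $[G^{\omega,n},G^{\omega,n}]\subseteq[G,G]^{\omega,n^{2c}}$ (left-normed commutators of weight $\le c$ generate, and each has period at most $n^{2c}$), and finally recurses on $[G,G]$, which has class $c-1$. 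Your combining step (shift by $r(n)$ and substitute the recurrence for $z$) is exactly the paper's final argument, so that part is fine.

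The gap lies in your ``hard part''. The word $h$ you lift is whatever happens to witness the recurrence for $\bar f$ provided by the inductive hypothesis; its length as a word in the $\rho^i(f)$ is not bounded in terms of $n$. Consequently $z=h^{-1}\rho^{p_{c-1}(n)}(f)$ is only known to lie in $Z^\omega\cap G^{\omega,n}$, and you must establish $Z^\omega\cap G^{\omega,n}\subseteq Z^{\omega,q(n)}$. Your sketch (collect commutators, regroup by period in each abelian slice $G_i/G_{i+1}$) heads in the right direction, but it does not explain how a regrouping carried out in $G_i/G_{i+1}$ is then realised by an actual product in $G_i^\omega$ with controlled periods before descending to the next slice; this is exactly where the unbounded word length must be tamed. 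One way to complete the argument is via Mal'cev (or polycyclic) coordinates: each coordinate of a product is a polynomial of degree $\le c$ in the coordinates of the factors, so every coordinate of $z$ is a $\Z$-linear combination of pointwise products of at most $c$ period-$\le n$ integer functions, hence lies in $\Z^{\omega,n^c}$ after regrouping by period; since $z\in Z^\omega$ forces the non-$Z$ coordinates to vanish, one gets $z\in Z^{\omega,n^{O(c)}}$. The paper's route sidesteps this entirely because its correction term is, by construction, already an element of $[G^{\omega,n},G^{\omega,n}]$, where the period bound follows from a one-line observation about left-normed commutators.
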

Let $H \le G^\omega$ be a subgroup which is closed under shifting,
i.e. $\rho(H) \subseteq H$. Since the shift is a homomorphism,
the commutator subgroup $[H,H]$ is closed under shifting as well.
We will work in the abelianization $H' = H/[H,H]$ where we write $\bar f$ for the coset $f [H,H]$.
We also define $\rho \colon H' \to H'$ by $\rho(\bar f) = \overline{\rho(f)}$.
This is well-defined since $fg^{-1} \in [H,H]$
implies $\rho(f)\rho(g)^{-1} = \rho(fg^{-1}) \in [H,H]$
and hence $\overline{\rho(f)} = \overline{\rho(g)}$.
As an abelian group $H'$ is a $\Z$-module and, in fact,
$H'$ forms a $\Z[x]$-module using the shift-operator.
By the above remark (see \eqref{eq-shift-module}) we have the following
(where we use the multiplicative notation for $H'$):

\begin{lemma}\label{zx-module}
	$H'$ is a $\Z[x]$-module with the scalar multiplication
	$\sum_{i=0}^d a_i x^i \cdot \bar f \mapsto \prod_{i=0}^d \rho^i(\bar f)^{a_i}$.
\end{lemma}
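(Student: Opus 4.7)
The strategy is to view the claim as a special case of the general fact that any endomorphism of an abelian group endows it with a $\Z[x]$-module structure by letting $x$ act as that endomorphism. Here the abelian group is $H'$, and the endomorphism is the induced shift.

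First I would record the setup. Since $H \le G^\omega$ is closed under $\rho$ and $\rho$ is a group homomorphism, $\rho([H,H]) \subseteq [H,H]$, so (as the paper already notes in the paragraph preceding the lemma) $\rho$ descends to a well-defined group homomorphism $\rho \colon H' \to H'$. Because $H'$ is abelian, $\rho$ is automatically $\Z$-linear, i.e.\ an element of $\mathrm{End}_\Z(H')$. The group $H'$ already carries its canonical $\Z$-module structure $\bar f \mapsto \bar f^a$ for $a \in \Z$.

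Next I would define the action and verify it is well defined. For a polynomial $p = \sum_{i=0}^d a_i x^i \in \Z[x]$ and $\bar f \in H'$, set $p \cdot \bar f := \prod_{i=0}^d \rho^i(\bar f)^{a_i}$. This is well defined because $\rho$ is well defined on $H'$ and the product is taken in the abelian group $H'$, so the order of the factors is irrelevant; the expression depends only on the coefficients $a_i$ and on the class $\bar f$.

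Finally I would check the module axioms, each of which is a direct calculation using only that $H'$ is abelian and that $\rho$ is a homomorphism. The constant polynomial $1$ acts as $\bar f \mapsto \bar f$. Additivity in the scalar, $(p+q)\cdot \bar f = (p\cdot \bar f)(q\cdot \bar f)$, follows by collecting exponents coordinatewise in $\prod_i \rho^i(\bar f)^{a_i+b_i}$. Additivity in the module element, $p\cdot(\bar f \bar g)=(p\cdot \bar f)(p\cdot \bar g)$, uses that $\rho^i$ is a homomorphism and that $H'$ is abelian, which allows one to rearrange $\prod_i(\rho^i(\bar f)\rho^i(\bar g))^{a_i}$. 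The key axiom is associativity, $(pq)\cdot \bar f = p\cdot(q\cdot \bar f)$: expanding both sides yields $\prod_{i,j}\rho^{i+j}(\bar f)^{a_i b_j}$, which after reindexing $k=i+j$ matches $(pq)\cdot \bar f$. No step is genuinely difficult; the only care needed is the bookkeeping that relies on commutativity of $H'$ to freely reorder factors. This is what makes the lemma both routine and the reason the $\Z[x]$-module framework from the abelian case (as in Lemma~\ref{lem:abelian-recurrence}) can be imported wholesale to $H'$, setting up the recurrence-order machinery needed for Proposition~\ref{prop:nilpotent-recurrence}.
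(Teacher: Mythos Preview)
Your proposal is correct and follows the same approach as the paper: the paper's own ``proof'' is just the sentence preceding the lemma, which points back to the remark around \eqref{eq-shift-module} that an abelian group equipped with a shift endomorphism is a $\Z[x]$-module. You have simply spelled out the routine verification of the module axioms that the paper leaves implicit.
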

Our first step for proving \cref{prop:nilpotent-recurrence} is to show that
every element of $G^{\omega,n}$ satisfies a polynomial-order recurrence,
modulo some element in $[G^{\omega,n},G^{\omega,n}]$.
\begin{lemma}\label{recurrence-up-to-commutator}
  For every $f\in G^{\omega,n}$, we have
  $\rho^d(f)\in \langle f\rangle^{(d-1)}
  [G^{\omega,n},G^{\omega,n}]$ for $d=n(n+1)/2$.
\end{lemma}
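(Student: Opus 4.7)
The plan is to pass to the abelianization $H' := H/[H,H]$ of $H := G^{\omega,n}$ and reduce the claim to an application of (the proof of) \cref{lem:abelian-recurrence} inside $H'$. Since $H$ is closed under the shift $\rho$, so is the characteristic subgroup $[H,H]$, hence $\rho$ descends to $H'$ and, by \cref{zx-module}, equips $H'$ with a $\Z[x]$-module structure. In this language, the statement to be proved is exactly that $\bar f \in H'$ is annihilated by some monic polynomial of degree $d = n(n+1)/2$: such an annihilator $x^d + \sum_{i=0}^{d-1} a_i x^i$ yields $\rho^d(\bar f) = \prod_{i=0}^{d-1}\rho^i(\bar f)^{-a_i}$ in $H'$, which immediately lifts to $\rho^d(f) \in \langle f\rangle^{(d-1)}\,[H,H]$ in $H$.

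To build such a monic polynomial I would first write $f = g_1^{\epsilon_1}\cdots g_m^{\epsilon_m}$ with each $g_i$ of period $k_i \le n$ and $\epsilon_i \in \{\pm 1\}$, then regroup these factors in $H'$ by exact period: for each $p \in \{1,\ldots,n\}$ set $\bar h_p := \prod_{i\,:\,k_i = p} \bar g_i^{\,\epsilon_i}$. Since $H'$ is abelian the reordering is legitimate, and $\bar f = \prod_{p=1}^n \bar h_p$. Moreover $\rho^p(\bar g_i) = \bar g_i$ whenever $k_i = p$, so $\rho^p(\bar h_p) = \bar h_p$, i.e., $\bar h_p$ is annihilated by the monic polynomial $x^p - 1$ of degree $p$. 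The proof of \cref{lem:abelian-recurrence} uses nothing beyond the $\Z[x]$-module structure and so applies verbatim to $H'$: the product $q := \prod_{p=1}^n (x^p - 1)$ is monic of degree $1 + 2 + \cdots + n = d$ and annihilates each $\bar h_p$, hence also $\bar f$, which is the required annihilator.

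The only point that really needs identification --- rather than being a routine transfer from the abelian setting --- is the regrouping step in the middle. This is what replaces the a priori unbounded number of factors in a decomposition of $f$ by exactly $n$ ``representatives'' $\bar h_p$, one per possible period, and therefore bounds the order of the resulting recurrence by $1 + 2 + \cdots + n$ independently of how many factors $f$ actually decomposes into. Nilpotency of $G$ plays no role at this step; it is used to kill the residual commutator factor only in the subsequent iteration of this lemma along the lower central series.
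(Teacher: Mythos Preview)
Your proof is correct and follows essentially the same approach as the paper: pass to the abelianization $H' = G^{\omega,n}/[G^{\omega,n},G^{\omega,n}]$, use the $\Z[x]$-module structure from \cref{zx-module}, and observe that the monic polynomial $\prod_{p=1}^n(x^p-1)$ of degree $n(n+1)/2$ annihilates $\bar f$.

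The only difference is that your intermediate regrouping into the elements $\bar h_p$ is unnecessary, and your emphasis on it as ``the only point that really needs identification'' is misplaced. In the abelianization one simply has $\bar f = \prod_i \bar g_i^{\,\epsilon_i}$, and since each $\bar g_i$ is annihilated by $x^{k_i}-1$, which divides $q = \prod_{p=1}^n(x^p-1)$, the polynomial $q$ annihilates every $\bar g_i$ and therefore their product $\bar f$, irrespective of how many factors there are. The paper proceeds in exactly this direct way.
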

\begin{proof}
  Suppose $f=f_1\cdots f_m$ such that $f_1,\ldots,f_m\in G^{\omega}$
  are elements of period $\le n$. According to \cref{zx-module}, we
  consider $G^{\omega,n}/[G^{\omega,n},G^{\omega,n}]$ as a
  $\Z[x]$-module.
  
  If $g \in G^\omega$ has period $q$ then $\rho^q(g)g^{-1} = 1$ and
  thus $(x^q-1)\bar g = \rho^q(\bar g) \bar g^{-1} = 1$.  Define the
  polynomial $p(x) = \prod_{i=1}^n (x^i-1) = \sum_{i=0}^d a_i x^i$ of
  degree $d = n(n+1)/2$ satisfying $a_d = 1$.  Since all functions
  $f_1, \dots, f_m$ have period at most $n$ we have $p \bar{f} = 1$.
  Written explicitly we have
  \[
    1 = p \bar{f} = \prod_{i=0}^d \rho^i(\bar f)^{a_i} =
    \overline{\prod_{i=0}^d \rho^i(f)^{a_i}} \] where the order in
  the product $\prod_{i=0}^d \rho^i(f)^{a_i}$ is arbitrary.
  Noticing that $a_d=1$, we can write $\rho^d(f) = gh$ for some
  $g\in\langle f\rangle^{(d-1)}$ and
  $h\in [G^{\omega,n},G^{\omega,n}]$, which has the desired form.
\end{proof}
The following \lcnamecref{lem:HH-generators} gives us control over the remaining factor from $[G^{\omega,n},G^{\omega,n}]$.
\begin{lemma}
	\label{lem:HH-generators}
	Let $G$ be a group with nilpotency class $c$. Then
        $[G^{\omega,n},G^{\omega,n}]\subseteq [G,G]^{\omega,n^{2c}}$.
\end{lemma}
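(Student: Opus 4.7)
The plan is to show that every generating commutator $[f,g]$ of the subgroup $[G^{\omega,n},G^{\omega,n}]$ lies in $[G,G]^{\omega,n^{2c}}$, by expanding $[f,g]$ into a finite product of left-normed iterated commutators in periodic functions of period at most $n$, and then using the class-$c$ hypothesis to bound both the pointwise image and the period of each surviving factor.

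First I would observe that $[G^{\omega,n},G^{\omega,n}]$ is generated as a subgroup by commutators $[f,g]$ with $f,g \in G^{\omega,n}$, so it suffices to treat one such commutator. Writing $f = f_1 \cdots f_m$ and $g = g_1 \cdots g_\ell$ where each $f_i,g_j$ (or its inverse) is a periodic function of period at most $n$, I would iteratively apply the standard commutator collection identities
\begin{equation*}
	[xy,z] = [x,z]\,[[x,z],y]\,[y,z], \qquad [x,yz] = [x,z]\,[x,y]\,[[x,y],z],
\end{equation*}
together with $x^y = x[x,y]$ to clear any conjugations that appear. This rewrites $[f,g]$ as a finite product of left-normed iterated commutators $[u_{i_1},\ldots,u_{i_k}]$ whose letters $u_{i_j}$ are drawn from $\{f_1^{\pm 1},\ldots,f_m^{\pm 1},g_1^{\pm 1},\ldots,g_\ell^{\pm 1}\}$. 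Since $G$ has nilpotency class $c$, any iterated commutator of weight $k > c$ is pointwise trivial in $G$ and hence equals the identity of $G^\omega$, so only factors of weight $2 \le k \le c$ remain.

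Each surviving factor evaluates pointwise into $G_{k-1} \subseteq G_0 \cap G_1 = [G,G]$, and as a function of $t$ is periodic with period dividing $\mathrm{lcm}(\mathrm{per}(u_{i_1}),\ldots,\mathrm{per}(u_{i_k})) \le n^k \le n^c \le n^{2c}$. Hence each factor lies in $[G,G]^{\omega,n^{2c}}$, and so does $[f,g]$, which gives the claim.

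The main obstacle is to make the collection step rigorous: one must verify that the iterated application of the expansion identities (including those needed to eliminate conjugations) really terminates in a product of left-normed iterated commutators of weight at most $c$. The cleanest route is to appeal to Hall's theory of basic commutators in nilpotent groups, which provides a normal form for elements of the free nilpotent group of class $c$ as a product of basic commutators; alternatively one can argue by a double induction on the total number of factors in $f$ and $g$ and, secondarily, on the commutator weight, truncating at weight $c$ by nilpotency. The generous slack between the natural period bound $n^c$ and the asserted bound $n^{2c}$ leaves comfortable room for coarser bookkeeping, e.g.\ separately accounting for up to $c$ factors originating from each of $f$ and $g$ before invoking nilpotency.
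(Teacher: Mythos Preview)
Your approach is essentially the same as the paper's, but the paper sidesteps the ``main obstacle'' you identify by invoking a classical fact directly rather than rederiving it. Specifically, the paper cites \cite[Lemma~2.6]{clement2017theory}: for any group $F$ with generating set $\Gamma$, the commutator subgroup $[F,F]$ is generated by all left-normed commutators $[g_1,\ldots,g_k]$ with $g_i \in \Gamma \cup \Gamma^{-1}$ and $k \ge 2$. Applying this with $F = G^{\omega,n}$ and $\Gamma$ the set of periodic functions of period at most $n$ immediately gives the desired generating set for $[G^{\omega,n},G^{\omega,n}]$, without any need for an explicit collection procedure, Hall's basic commutators, or a double induction. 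Nilpotency of class $c$ then truncates the weight to $k \le c$, and each surviving left-normed commutator is a word in at most $2k \le 2c$ functions of period $\le n$, hence lies in $[G,G]^{\omega,n^{2c}}$.

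So your proposal is correct; the only simplification is that what you flag as the hardest step is a known lemma you can just cite. (Incidentally, your sharper period bound $n^c$ is also correct, since $g_i$ and $g_i^{-1}$ share a period; the paper simply does not optimize the exponent.)
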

\begin{proof}
We need the fact that the commutator subgroup $[F,F]$ of a group $F$ with generating set $\Gamma$ is generated by all left-normed commutators
\[
	[g_1, \dots, g_k] := [[ \dots [[g_1,g_2],g_3], \dots], g_k ]
\]
where $g_1, \dots, g_k \in \Gamma \cup \Gamma^{-1}$ and $k \ge 2$,
cf. \cite[Lemma~2.6]{clement2017theory}.
Therefore $[G^{\omega,n},G^{\omega,n}]$ is generated by all left-normed commutators
$[g_1, \dots, g_k]$ where $k \ge 2$ and $g_1, \dots, g_k \in G^\omega$ have period at most $n$.
Furthermore, we can bound $k$ by $c$ since any left-normed commutator
$[g_1, \dots, g_{c+1}]$ is trivial (recall that $G$ is nilpotent of class $c$).

A left-normed commutator $[g_1, \dots, g_k]$ with $2 \leq k \leq c$ and 
$g_1, \dots, g_k$ periodic with period at most $n$ is 
a product containing at most $2k \le 2c$ distinct functions of period at most $n$
(namely, the $g_1, \dots, g_k$ and their inverses).
Hence $[G^{\omega,n},G^{\omega,n}]$ is generated by functions $g \in [G,G]^\omega$ of period at most $n^{2c}$.
\end{proof}
We are now ready to prove \cref{prop:nilpotent-recurrence}.
\begin{proof}
  The \lcnamecref{prop:nilpotent-recurrence} is proved by induction on
  the nilpotency class of $G$.  If $G$ has nilpotency class 0 then $G$
  is trivial and the claim is vacuous.  Now suppose that $G$ has
  nilpotency class $c \ge 1$. According to
  \cref{recurrence-up-to-commutator}, we have
  $\rho^d(f)\in\langle f\rangle^{(d-1)} h$ for some
  $h\in [G^{\omega,n},G^{\omega,n}]$. By \cref{lem:HH-generators}, we
  have $[G^{\omega,n},G^{\omega,n}]\subseteq
  [G,G]^{\omega,n^{2c}}$. Since the group $[G,G]$ has nilpotency class at most
  $c-1$,\footnote{We could not find a proof for this fact in the literature, so let us provide 
  the argument: Define $G_0 = G$ and $G_{i+1} = [G_i,G]$ and 
  $H_0 = [G,G]$ and $H_{i+1} = [H_i,[G,G]]$. It suffices to show that $H_i \leq G_{i+1}$ for all $i \geq 0$.
  For $i=0$ this is follows from the definition. For the induction step let $i>0$. We get
  $H_i =  [H_{i-1},[G,G]] \leq [G_i,G] = G_{i+1}$.}
   we may apply induction. Thus, we know that
  $\rho^e(h)\in \langle h\rangle^{(e-1)}$ for some $e=e(n^{2c})$.
  We claim that then $\rho^{d+e}(f)\in\langle f\rangle^{(d+e-1)}$.
  Note that
  \[ \rho^{d+e}(f)\in \rho^{e}(\langle f\rangle^{(d-1)} h)\subseteq \rho^e(\langle f\rangle^{(d-1)})\rho^e(h)\subseteq\langle f\rangle^{(d+e-1)}\cdot\rho^e(h). \]
  Therefore, it suffices to show that
  $\rho^e(h)\in \langle f\rangle^{(d+e-1)}$. Since
  $\rho^d(f)\in\langle f\rangle^{(d-1)}h$ we have $h\in \langle f\rangle^{(d)}$
  and thus   $\rho^e(h)\in \langle h\rangle^{(e-1)}\subseteq (\langle f\rangle^{(d)})^{(e-1)}=\langle f\rangle^{(d+e-1)}$.
\end{proof}

\begin{proof}[Proof of \cref{thm:nilpotent-membership}]
	Given periodic functions $f_1, \dots, f_m \in G^\omega$
	with maximum period $n$, and a number $T \in \N$.
	By \Cref{prop:nilpotent-recurrence}
	the product $f = f_1 \cdots f_m$ satisfies a recurrence of order $d$,
	where $d$ is bounded polynomially in $n$.
	Notice that $f = 1$ if and only if $f(t) = 1$ for all $t \le d-1$.
	Hence, it suffices to verify that $f_1(t) \cdots f_m(t) = 1$ for all $t \le \min\{d,T\}$.
	This can be accomplished by solving in parallel a polynomial number
	of instances of the word problem over $G$, which is contained in $\TC^0$ by \cite{MyasnikovW17}.
\end{proof}

 \subsection{Proofs of Theorems~\ref{thm:nilpotent-power} and \ref{thm:nilpotent-KP}}
 
 Let us start with the proof of Theorem~\ref{thm:nilpotent-power}.
The following result is from \cite{LoWe19}.
 
\begin{proposition}[\cite{LoWe19}] \label{prop:TC0-reduction}
For every f.g.~group $G$,  the problem $\PowWP(G \wr \mathbb{Z})$ belongs to 
$\TC^0(\textsc{Periodic}(G),\PowWP(G))$.
\end{proposition}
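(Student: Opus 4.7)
The plan is to reduce $\PowWP(G\wr\Z)$ to the two oracles by checking separately the two conditions that encode $w = 1$ in $G\wr\Z$: namely $\sigma(w)=0$ in $\Z$, and $\tau(w)(n)=1_G$ for every $n\in\Z$. For the input $w = v_1^{k_1}\cdots v_d^{k_d}$, the first condition is $\sum_{j=1}^d k_j\sigma(v_j)=0$, which is iterated binary addition and hence in plain $\TC^0$. Along the way I also precompute in $\TC^0$ the partial sums $s_j := \sigma(v_1^{k_1}\cdots v_j^{k_j})$ for $0\le j\le d$.

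For the second condition, Lemma~\ref{lem:tau} yields $\tau(w)(n) = \prod_{j=1}^d C_j(n)$ with $C_j(n):= \tau(s_{j-1}\, v_j^{k_j})(n)$. Set $q_j:=\sigma(v_j)$ and let $\pi_j(t)\in G$ be the product (in order of occurrence) of those $G$-generator letters of $v_j$ whose cursor offset within $v_j$ is $t$. A direct computation gives $C_j(n) = \prod_{k=0}^{k_j-1}\pi_j(n-s_{j-1}-kq_j)$ (after replacing $v_j$ by $v_j^{-1}$ and $k_j$ by $|k_j|$ if $k_j<0$). When $q_j=0$ this collapses to $\pi_j(n-s_{j-1})^{k_j}$, supported on the polynomial-size set $s_{j-1}+\supp(\pi_j)$. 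When $q_j\ne 0$, $C_j$ is nontrivial on an interval of length $\O(k_j|v_j|)$; shifting the product index by one shows that $C_j(n+q_j) = C_j(n)$ whenever both $n+q_j-s_{j-1}$ and $n-s_{j-1}-(k_j-1)q_j$ lie outside $\supp(\pi_j)$. Consequently $C_j$ is periodic in $n$ with period $|q_j|\le|v_j|$ away from $\O(|v_j|)$-size transition zones at either end of its support.

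Using this, I partition $\Z$ via polynomially many singleton breakpoints: every support position $s_{j-1}+t$ for the $q_j=0$ blocks, and every position in the two transition zones of every $q_j\ne 0$ block. These positions are computable from the $s_j$'s by $\TC^0$ arithmetic and sortable in $\TC^0$~\cite{ChandraSV84}. At each singleton breakpoint $n$, the product $\prod_{j=1}^d C_j(n)$ is a concrete power word over $G$ of polynomial length (the only binary exponents arise from the $q_j=0$ blocks; each $q_j\ne 0$ factor contributes at most $|\supp(\pi_j)|$ ordinary terms, whose iteration indices $k$ are located by $\TC^0$ division with remainder), so I query $\PowWP(G)$. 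On each open interval between consecutive breakpoints only the $q_j\ne 0$ blocks can contribute, and each contributing $C_j$ restricts to a periodic $G$-valued function of period $|q_j|\le|v_j|$ whose one period I can write down directly; after translating the interval to start at $0$, I query $\textsc{Periodic}(G)$ with these functions and threshold $T$ equal to the interval length. All polynomially many queries are issued in parallel, and the circuit accepts iff $\sigma(w)=0$ and every oracle answer is positive.

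The main obstacle is sizing the transition zones of the $q_j\ne 0$ blocks: they must be wide enough (roughly the diameter of $\supp(\pi_j)$ plus $|q_j|$) that the ``interior'' of each block's support is genuinely $|q_j|$-periodic in $n$, but narrow enough ($\O(|v_j|)$ positions per side) to keep the total number of singleton breakpoints polynomial, so that the entire collection of oracle calls fits inside a $\TC^0$ circuit. Everything else---iterated addition and division with remainder on binary integers, sorting of polynomially many numbers of polynomial bit-length, and the assembly of the power-word and periodic-word oracle queries---is routine $\TC^0$.
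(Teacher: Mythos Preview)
The paper does not give its own proof of this proposition; it is quoted from \cite{LoWe19} without argument. Your proposal is correct and is the natural strategy (almost certainly the one taken in \cite{LoWe19}): write $\tau(w)(n)=\prod_j C_j(n)$ via Lemma~\ref{lem:tau}, observe that each $C_j$ with $q_j\ne 0$ satisfies the one-step identity $C_j(n+q_j)=C_j(n)$ except on the two sets $s_{j-1}-q_j+\supp(\pi_j)$ and $s_{j-1}+(k_j-1)q_j+\supp(\pi_j)$ of total size at most $2|v_j|$, take these together with the polynomial-size supports of the $q_j=0$ blocks as singleton breakpoints, and then handle each singleton with a $\PowWP(G)$ query and each open interval in between with a $\textsc{Periodic}(G)$ query using the $|q_j|$-periodic restrictions of the $C_j$. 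All the arithmetic (partial sums $s_j$, locating the contributing indices $k$ by division with remainder, sorting the breakpoints) is standard $\TC^0$, so the reduction goes through as you describe.
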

The following proposition is from \cite{GanardiKLZ18}  (see the proof of Proposition~7.2 in \cite{GanardiKLZ18}).

 \begin{proposition}[\cite{GanardiKLZ18}] \label{prop:NP-reduction}
Let $G$ be a f.g.~group. There is a non-deterministic polynomial time Turing machine $M$ that takes as input
a knapsack expression $E$ over $G \wr \mathbb{Z}$ and outputs in each leaf of the computation tree the following data:
\begin{itemize}
\item an instance of $\ExpEq(G)$ and
\item a finite list of instances of $\textsc{Periodic}(G)$.
\end{itemize}
Moreover, the input expression $E$ has a $(G \wr \mathbb{Z})$-solution if and only if there is a leaf in the computation tree of $M$
such that all instances that $M$ outputs in this leaf are positive.
\end{proposition}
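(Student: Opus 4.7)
My plan is to separate the equation $\nu(E)=1$ in $G \wr \mathbb{Z}$ into its $\sigma$-image (a linear Diophantine equation in $\mathbb{Z}$) and its $\tau$-image (the pointwise conditions $\tau(\nu(E))(h) = 1_G$ for every $h \in \mathbb{Z}$), and then reduce the latter to polynomially many $\textsc{Periodic}(G)$ instances via a polynomial-size nondeterministic guess of the cursor trajectory.

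Writing $E = u_1^{x_1} v_1 \cdots u_d^{x_d} v_d$ and $\alpha_i = \sigma(u_i)$, the cursor position just before the $j$-th copy of $u_i$ inside $u_i^{x_i}$ is an integer-linear expression in $x_1, \ldots, x_d$. Lemma~\ref{lem:tau} factors $\tau(\nu(E))$ as a product of shifted single-letter contributions, and grouping these per block shows that if $\alpha_i \neq 0$ the block $u_i^{x_i}$ contributes to $\tau(\nu(E))$, as a function of $h \in \mathbb{Z}$, a shift of an $|\alpha_i|$-periodic word $f_i \in G^\omega$ supported on an interval of length $x_i |\alpha_i|$, where the period-word of $f_i$ is a fixed product of $G$-letters of $u_i$ arranged according to the sign of $\alpha_i$; whereas if $\alpha_i = 0$ the block deposits at finitely many fixed positions single elements of the form $w^{x_i}$ for some $w \in G$.

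The machine guesses the signs of the $\alpha_i$ together with a linear order (with equalities) of the $O(d)$ interval endpoints coming from the $\alpha_i \neq 0$-blocks and the fixed positions coming from the $\alpha_i = 0$-blocks and the letters of the $v_i$'s. By Lemma~\ref{lem:interval-types} this partitions $\mathbb{Z}$ into $O(d)$ subintervals on each of which the list of active periodic strands, their shifts, and their left-to-right order in the Lemma~\ref{lem:tau}-product are determined. For each subinterval the product of the restricted shifted periodic functions must equal $1_G$ pointwise, which is an instance of $\textsc{Periodic}(G)$ whose period-words come directly from the guess and whose threshold is the subinterval's length in binary; the machine emits these as the $\textsc{Periodic}(G)$ output list at the current leaf. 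The remaining data---the $\sigma$-equation, the linear-ordering consistency of the endpoint expressions, and the exponent expressions $w^{x_i}$ at singleton positions contributed by $\alpha_i = 0$-blocks---is bundled into a single $\ExpEq(G)$ instance with variables $x_1, \ldots, x_d$, using an encoding of linear $\mathbb{Z}$-constraints inside $\ExpEq(G)$ (via an element of infinite order of $G$ when available, or via auxiliary equations when $G$ is torsion).

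The main obstacle is that $G$ is non-abelian in general, so the order in which letter-contributions at a position $h$ are multiplied---an order that depends on the cursor trajectory---must be recorded globally: the endpoint-ordering guess is exactly what achieves this, making the pointwise product on each subinterval a well-defined $\textsc{Periodic}(G)$ instance with a fixed multiplication order. Soundness and completeness then follow from the fact that every solution $\nu$ of $E$ induces a unique consistent guess leaf, and conversely any leaf whose outputs are all positive provides a consistent system whose simultaneous solvability produces valid values for $x_1, \ldots, x_d$.
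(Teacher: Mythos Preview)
The paper does not prove this proposition; it is quoted from \cite{GanardiKLZ18}. Your outline is in the right spirit, but as written it has three genuine gaps.

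\textbf{Thresholds are not yet numbers.} You say each $\textsc{Periodic}(G)$ instance has as threshold ``the subinterval's length in binary''. But a subinterval length is a linear expression in the unknown $x_i$'s, not a number the machine can write down. The machine must output concrete instances at a leaf, before any $x_i$'s are fixed. The remedy in \cite{GanardiKLZ18} is to distinguish long intervals (length at least the lcm $P$ of the active periods) from short ones: for long intervals the threshold is the fixed number $P$ (if the product is $1$ on a full period it is $1$ everywhere), and for short intervals the machine guesses the exact length in binary (bounded by $P$, hence polynomially many bits).

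\textbf{Phases are not determined by the order alone.} You claim that once the endpoint order is guessed, ``the list of active periodic strands, their shifts, and their left-to-right order \ldots are determined''. The set of active strands and their multiplication order are indeed determined, but the phase of strand $i$ on a subinterval is the residue of its starting cursor position modulo $|\alpha_i|$, and that residue depends on $x_j \bmod |\alpha_i|$ for $j<i$, not on the endpoint order. The machine must additionally guess these residues (polynomially many bits).

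\textbf{Linear $\Z$-constraints do not fit into $\ExpEq(G)$.} You propose to put the $\sigma$-equation and the endpoint-order consistency into the $\ExpEq(G)$ instance, ``via an element of infinite order of $G$ when available, or via auxiliary equations when $G$ is torsion''. For a torsion group $G$ (e.g.\ $G$ finite, which is exactly the case needed in Theorem~\ref{thm:nilpotent-KP}) an exponent equation over $G$ can only constrain the $x_i$'s modulo element orders; it cannot express an equality or inequality over $\N$. The way out is a decoupling you almost have: after normalizing so that $u_i = a_i t^{\alpha_i}$, a variable $x_i$ with $\alpha_i \neq 0$ contributes only to cursor positions and interval lengths (pure $\Z$-data), whereas $x_i$ with $\alpha_i = 0$ contributes only a deposit $a_i^{x_i}$ at a single position (pure $G$-data). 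Hence the $\Z$-constraints involve one set of variables and the $\ExpEq(G)$ instance the other; the NP machine checks feasibility of the integer linear system itself (standard bounds give a polynomial-size certificate), and the $\ExpEq(G)$ instance carries only the $\alpha_i=0$ variables. With these three fixes your argument goes through and matches the reduction in \cite{GanardiKLZ18}.
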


\begin{proof}[Proof of Theorem~\ref{thm:nilpotent-power}]
By \cite{LoWe19} the power word problem for a f.g. nilpotent group belongs to $\TC^0$ and by
Theorem~\ref{thm:nilpotent-membership}, $\textsc{Periodic}(G)$ belongs to $\TC^0$.
The theorem follows from Proposition~\ref{prop:TC0-reduction}.
\end{proof}

\begin{proof}[Proof of Theorem~\ref{thm:nilpotent-KP}]
Let $G$ be a finite nontrivial nilpotent group. By Theorem~\ref{thm:NP-hard-KP}, knapsack for
$G \wr \mathbb{Z}$ is $\NP$-hard. Moreover, $\textsc{Periodic}(G)$ belongs to $\TC^0$ and
$\ExpEq(G)$ belongs to $\NP$ (this holds for every finite group). 
Proposition~\ref{prop:NP-reduction} implies that $\KP(G \wr \mathbb{Z})$ belongs to $\NP$.
\end{proof}

 \section{Wreath products with abelian left factors}

In this section we prove Theorems~\ref{thm:iterated-pwp} and \ref{thm:iterated-kp}.
For this, we prove two transfer results.
For a finitely generated group $G = \langle \Sigma \rangle$
we define the \emph{power compressed power problem} $\PowPP(G)$
as the following computational problem.
\begin{description}
\item[Input] A word $u \in \Sigma^*$ and a power word $(v_1,k_1, \ldots, v_d,k_d)$ over $\Sigma$.
\item[Output] A binary encoded number $z \in \Z$ with $u^z = v$
where $v = v_1^{k_1} \dots v_d^{k_d}$, or \textbf{no} if $u^z = v$ has no solution. 
\end{description}
Notice that if $G$ is torsion-free then $u^x = v$ has at most one solution whenever $u \neq 1$.

We say that a group $G = \langle \Sigma \rangle$ is {\em tame with respect to commensurability},
or short {\em c-tame}, if there exists a number $d \in \N$ such that
for all commensurable elements $g,h \in G$ having infinite order
there exist numbers $s,t \in \Z \setminus \{0\}$ such that
$g^s = h^t$ and $|s|,|t| \le \O((|g|+|h|)^d)$.

\begin{theorem}\label{thm:ppp-wr}
Let $H$ and $A$ be f.g.~groups where $A$ is abelian
and $H$ is c-tame and torsion-free.
Then $\PowPP(A \wr H)$ is $\TC^0$-reducible to $\PowPP(H)$.
\end{theorem}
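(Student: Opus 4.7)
The plan exploits the semidirect-product structure $A \wr H = A^{(H)} \rtimes H$. First, in $\TC^0$ I project to the $H$-coordinate: $\sigma(u)$ is a word in $H$, and $\sigma(v_1)^{k_1}\cdots\sigma(v_d)^{k_d}$ is a power word in $H$. A single call to the $\PowPP(H)$ oracle returns ``no'' (propagated to the output) or a candidate $z$ with $\sigma(u)^z = \sigma(v)$. Since $H$ is torsion-free, if $\sigma(u) \neq 1$ then $z$ is unique; in the degenerate case $\sigma(u) = 1$ (which also forces $\sigma(v) = 1$), I read off $z$ from the scalar equation $z \cdot \tau(u)(h) = \tau(v)(h)$ at some fixed $h \in \supp(\tau(u))$ (or set $z = 0$ and drop to the check $\tau(v) = 0$ if $\tau(u) = 0$). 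Either way, the remaining task is to verify $\tau(u^z) = \tau(v)$ in $A^{(H)}$.

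For this $\tau$-verification, Lemma~\ref{lem:tau} gives the decomposition
\[
\tau(u^z) = \sum_{i=0}^{z-1} h_u^{i} \cdot f_u,
\qquad
\tau(v) = \sum_{j=1}^{d}\sum_{i=0}^{k_j-1} \pi_j\beta_j^{i} \cdot \alpha_j,
\]
where $h_u = \sigma(u)$, $f_u = \tau(u)$, $\alpha_j = \tau(v_j)$, $\beta_j = \sigma(v_j)$, and $\pi_j$ is the cumulative $\sigma$-prefix. Flattening over the supports of $f_u$ and each $\alpha_j$ produces polynomially many ``atom types'' of the form $(\pi,\beta,s,a,e)$ with the meaning: place the fixed value $a \in A$ at each position $\pi\beta^{i} s \in H$ for $0 \le i < e$. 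Determining whether a given probe point $h \in H$ lies in the support of a given type is the discrete-logarithm equation $\beta^i = \pi^{-1} h s^{-1}$ together with the range check $0 \le i < e$, i.e.\ exactly one $\PowPP(H)$ query. Consequently, for any single $h$ the total $A$-valued contribution of all atom types can be computed in $\TC^0$ using polynomially many oracle queries.

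The main obstacle is the construction of a polynomial-size probe set $P \subseteq H$ whose vanishing certifies $\tau(u^z) = \tau(v)$ globally. I would populate $P$ with the endpoints of every atom-type progression together with, for every pair of types, the (polynomially many) ``candidate coincidence'' positions computed via the oracle. Soundness of $P$ rests on c-tameness of $H$: two atom types with commensurable bases $\beta,\beta'$ admit polynomially bounded exponents $s,t$ with $\beta^s = (\beta')^t$, so their interaction projects to a one-dimensional sliding-window problem on a common cyclic subgroup, to which Lemma~\ref{lem:interval-types} applies to cut the parameter range into constantly many intervals of uniform behaviour; two types with non-commensurable bases meet in at most one point, which is already in $P$. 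Piecing these cases together into a uniform $\TC^0$ procedure, and arguing that no nonzero contribution can escape detection on $P$, is the delicate final step.
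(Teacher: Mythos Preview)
Your overall decomposition is the same as the paper's: find the candidate exponent $z$ via the $\sigma$-projection (Lemma~\ref{lemma:cyclic-red-pwp} in the paper), then verify $u^z v^{-1} = 1$ by checking that its $\tau$-part vanishes. The verification step is precisely $\PowWP(A\wr H)$, and the paper handles it via the same atom-type/progression viewpoint you describe, using Lemma~\ref{lem:tau}, the parallel/non-parallel dichotomy (Lemma~\ref{lem:rays-cap}), and c-tameness to compute intersections of parallel rays as arithmetic progressions (Lemma~\ref{lem:intersection}).

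However, your probe-set construction has a genuine gap. You propose to populate $P$ with endpoints plus ``the (polynomially many) candidate coincidence positions'' for each pair of types. For two \emph{parallel} atom types, the number of coincidence positions is not polynomially bounded --- it can be as large as $\min(e,e')$, which is exponential in the input. Reducing to a one-dimensional problem via c-tameness and invoking Lemma~\ref{lem:interval-types} does split the parameter range into $O(d)$ intervals of uniform type, but on each interval the resulting function is a sum of periodic $A$-valued sequences restricted to an exponentially long window; it may be nonzero at exponentially many points, and you cannot afford to probe them all. The paper closes this with two additional ideas you do not mention: (i)~the recurrence argument of Lemma~\ref{lem:abelian-recurrence} (packaged as Lemma~\ref{lem:rays}), which shows that on each uniform interval the sum is either identically zero or already nonzero somewhere among the first polynomially many positions, hence one can either compute the full nonzero set $T_i$ of a parallel class or certify $|T_i|\ge d+1$; and (ii)~a pigeonhole step: if $|T_i|\ge d+1$, then since each non-parallel ray can cross $\bm{p}_i$ at most once (Lemma~\ref{lem:rays-cap}), some $t\in T_i$ is uncrossed and witnesses $\tau(u)\neq 0$ without any probing. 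Only when every $T_i$ is small does one fall back to probing at the positions in $\bigcup_i T_i$ (plus the singleton supports for $i\notin R$), which is then polynomially many.

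A smaller issue: in the degenerate case $\sigma(u)=1=\sigma(v)$ you cannot always ``read off $z$'' from a single scalar equation, because $A$ may be pure torsion (e.g.\ $A=\Z/2\Z$), in which case $\tau(u)(h)$ carries no information about $z$. The paper's Lemma~\ref{lemma:cyclic-red-pwp} splits this further: if all values $\tau(u)(h)$ lie in the torsion part $B$ of $A$, then $u$ has order at most $|B|$ and one checks all $z<|B|$ in parallel; otherwise one uses a nonzero $\Z^m$-coordinate exactly as you suggest.
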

Later, we will show how to derive Theorem~\ref{thm:iterated-pwp} from 
Theorem~\ref{thm:ppp-wr}. For Theorem~\ref{thm:iterated-kp} we need the 
following transfer theorem (recall the definition of an orderable group from Section~\ref{sec-groups} and the definition
of the function $\mathsf{E}_G(n)$ from \eqref{def-E} in Section~\ref{sec-knapsack-semilinear}):

\begin{theorem}
	\label{thm:abelian-wr}
	Let $H$ and $A$ be f.g.~groups
	where $A$ is abelian and $H$ is orderable and knapsack-semilinear.
	If $\mathsf{E}_H(n)$ is exponentially bounded then so is $\mathsf{E}_{A \wr H}(n)$.
\end{theorem}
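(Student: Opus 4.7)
I will prove Theorem~\ref{thm:abelian-wr} by refining the construction that establishes knapsack-semilinearity of $A \wr H$ in Theorem~\ref{thm:closure-wreath}, and carefully tracking the magnitudes of the semilinear sets that arise. Given an exponent expression $E = v_0 u_1^{x_1} v_1 \cdots u_d^{x_d} v_d$ over $A \wr H$ of length $n$, a valuation $\nu$ is an $(A\wr H)$-solution of $E$ if and only if (i) $\sigma(\nu(E)) = 1$ in $H$ and (ii) $\tau(\nu(E))(p) = 0$ in $A$ for every $p \in H$. The goal is to exhibit $\Sol_{A\wr H}(E)$ as an intersection of polynomially many (in $n$) semilinear sets each of magnitude $2^{n^{\O(1)}}$; Lemma~\ref{lem:cap} then yields the claimed bound $\|\Sol_{A\wr H}(E)\| \le 2^{n^{\O(1)}}$.

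The $\sigma$-part is immediate: $\sigma(E)$ is an exponent expression over $H$ of length at most $n$, so $\Sol_H(\sigma(E))$ has magnitude at most $\mathsf{E}_H(n) \le 2^{n^{\O(1)}}$ by hypothesis. For the $\tau$-part, Lemma~\ref{lem:tau} combined with the abelianness of $A$ gives the explicit formula
\[
  \tau(\nu(E))(p) = c(\nu,p) + \sum_{i=1}^d \sum_{k=0}^{\nu(x_i)-1} \tau(u_i)\bigl(\sigma(u_i)^{-k}\, h_i(\nu)^{-1}\, p\bigr),
\]
where $h_i(\nu) = \sigma(v_0 u_1^{\nu(x_1)} v_1 \cdots v_{i-1})$ is the prefix $\sigma$-value before the $i$-th power and $c(\nu,p) \in A$ collects the drops from the fixed letters of the $v_j$. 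The support of $\tau(\nu(E))$ is contained in the finite union of orbit segments $h_i(\nu)\{\sigma(u_i)^k q : 0 \le k < \nu(x_i)\}$ for $q \in \supp(\tau(u_i))$, and condition (ii) becomes one $A$-equation per position in this support.

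The decisive structural step uses orderability of $H$. I will partition the infinite-order elements among $\sigma(u_1),\dots,\sigma(u_d)$ into commensurability classes $C_1,\dots,C_m$ (indices with $\sigma(u_i) = 1$ are handled separately: each such power only shifts a fixed $A^{(H)}$-element by an integer multiple of $\nu(x_i)$). By Lemma~\ref{lemma-commensurable-cyclic}, for each class $C_j$ the subgroup $\langle\sigma(u_i) : i \in C_j\rangle$ is cyclic, generated by some $t_j \in H$ with $\sigma(u_i) = t_j^{n_{i,j}}$. Using Lemma~\ref{lemma-2dim} on the two-variable knapsack equations $\sigma(u_i)^s = \sigma(u_j)^t$ together with the hypothesis on $\mathsf{E}_H$, the relating exponents $|n_{i,j}|$ can be taken to be at most $2^{n^{\O(1)}}$. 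Within a single commensurability class the orbit segments sweep through a common cyclic-coset structure in $H$, and the cancellation conditions along such a coset unfold into linear equations over $A$ in $\nu(x_1), \dots, \nu(x_d)$ with coefficients bounded by $2^{n^{\O(1)}}$. Interactions between orbit segments from different commensurability classes are governed by solving equations of the form $\sigma(u_i)^k = h \, \sigma(u_j)^{k'} \, h'$ in $H$, which are knapsack-like and whose solution sets again have magnitude $2^{n^{\O(1)}}$ by hypothesis. Applying Lemma~\ref{lem:abelian} through Lemma~\ref{lem:exp-eq} to the resulting polynomial-size linear system over the abelian group $A$ yields a semilinear description of the $\tau$-solution set of magnitude $2^{n^{\O(1)}}$.

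Intersecting the $\sigma$- and $\tau$-solution sets via Lemma~\ref{lem:cap} produces the claimed exponential bound. \textbf{The main obstacle} is the combinatorial bookkeeping in step (ii): the positions in $H$ where cancellations must occur depend on the unknown valuation $\nu$ through the prefix values $h_i(\nu)$, and the interactions between orbit segments from different commensurability classes require careful case analysis to be reduced to polynomially many linear conditions with exponentially-bounded, computable coefficients. Orderability of $H$ is essential throughout --- both to obtain the cyclic structure of each commensurability class via Lemma~\ref{lemma-commensurable-cyclic}, and to exploit torsion-freeness and the unique roots property in order to rule out pathological algebraic identities between non-commensurable elements.
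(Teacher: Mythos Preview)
Your high-level outline matches the paper's approach: split into the $\sigma$-part and the $\tau$-part, partition the non-torsion periods $\sigma(u_i)$ into commensurability classes, and use Lemma~\ref{lemma-commensurable-cyclic} to get a cyclic generator $t_j$ (the paper calls it $h_C$) for each class with exponentially bounded relating exponents. But the two places where you write ``unfold into linear equations'' and ``governed by solving equations of the form $\sigma(u_i)^k = h\,\sigma(u_j)^{k'}\,h'$'' are exactly where the real work lies, and your sketch does not carry it out. Acknowledging this as ``the main obstacle'' is honest, but it leaves the proof incomplete.

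Concretely, two ingredients are missing. First, interactions between commensurability classes are \emph{not} handled by solving cross-class equations over $H$. Instead one must \emph{refine} each progression $\sigma_\nu(r,0,\nu(x_r)-1)$ into $\O(d^6)$ sub-progressions so that the resulting subbundles are pairwise disjoint and each subbundle is either \emph{stacking} (all progressions supported at a single point) or \emph{packed} into a common ray with period $h_C$ (Lemmas~\ref{lem:decomp} and~\ref{lem:h-packed}, assembled in Proposition~\ref{prop:main}). Disjointness is what decouples the classes; without it you cannot separate the $\tau$-condition into per-class constraints. Second, even inside a single packed subbundle the number of $H$-positions to check is $\ell+1$, which can be exponential in $n$. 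The reduction to polynomially many conditions goes through the periodic-functions recurrence: after packing, the $\tau$-sum along the ray becomes $\sum_i f_{\bm{\gamma},i}$ where each $f_{\bm{\gamma},i}$ is $\beta_{r_i}$-periodic, and Lemma~\ref{lem:abelian-recurrence} (or Proposition~\ref{prop:nilpotent-recurrence}) bounds the recurrence order so that vanishing on $[0,\ell]$ is equivalent to a single inequality $\ell \le b_{\bm{\gamma}}$ with $b_{\bm{\gamma}} \le 2^{n^{\O(1)}}$. This is the step that turns an unbounded family of $A$-equations into a bounded one, and it is absent from your proposal.

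Finally, the magnitude bound comes not from intersecting ``the $\sigma$-solution set'' with ``the $\tau$-solution set'' via Lemma~\ref{lem:cap}, but from expressing $\Sol_{A\wr H}(E)$ as a (huge) disjunction over choices of decomposition $D$ and partition $\{D_i\}$, each disjunct being an existential projection of a conjunction of $\O(n^6)$ semilinear constraints (the formulas $\mathsf{Stack}_\theta$ and $\mathsf{Pack}_\theta$ of Lemmas~\ref{lem:stack} and~\ref{lem:pack}) of magnitude $2^{n^{\O(1)}}$. Disjunction and existential projection do not increase magnitude, so Lemma~\ref{lem:cap} applied to each disjunct gives the bound. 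Your write-up does not exhibit this formula structure, and without it one cannot invoke Lemma~\ref{lem:cap} in the required way.
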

Using Theorem~\ref{thm:iterated-pwp}  and \ref{thm:abelian-wr} we can prove 
Theorem~\ref{thm:iterated-kp}: let us fix an iterated wreath product $W = W_{m,r}$ for some $m \geq 0$, $r \geq 1$
(recall that $W_{m,r} = \Z^r$ and $W_{m+1,r} = \Z^r \wr W_{m,r}$).
Since $\Z^m$ is orderable, Theorem~\ref{thm-wreath-orderable} implies that $W$ is orderable. 
Moreover, by Theorem~\ref{thm:closure-wreath}, $W$ is also knapsack-semilinear.
Since by Lemma~\ref{lem:abelian}, $\mathsf{E}_{A}(n)$ is exponentially bounded for every
f.g.~abelian group $A$, it follows from Theorem~\ref{thm:abelian-wr} that $\mathsf{E}_{W}(n)$
is exponentially bounded as well. By Theorem~\ref{thm:iterated-pwp} and Lemma~\ref{lemma-powWP-to-ExpEq},
$\ExpEq(W)$ belongs to $\NP$. Finally, $\NP$-hardness of $\ExpEq(W)$ follows from the fact that the question whether 
a given system of linear Diophantine equations with unary encoded numbers has a solution in $\N$
is $\NP$-hard.

Before we start the proofs of Theorems~\ref{thm:ppp-wr} and \ref{thm:abelian-wr} we show some simple
normalization results and introduce the concept of a progression in a torsion-free group.

\subparagraph{Normalization.}

Consider a wreath product  $G=A \wr H$, where $A$ is abelian. 
We will show how to bring an exponent expression (resp., a power word) 
into a particular form that will be useful later.

An exponent expression $E = v_0 u_1^{x_1} v_1 u_2^{x_2} v_2 \cdots u_d^{x_d} v_d$ over $G$
is {\em normalized} if
\begin{enumerate}[(i)]
\item $u_i \in AH$ for all $1 \le i \le d$ (here $AH$ is $\{ ah \mid a \in A, h \in H\}$)
\item $v_i \in H$ for all $0 \le i \le d$, and
\item $v_0 = 1$.
\end{enumerate}
By the following lemma we can assume normalized exponent expressions
in order to prove Theorem~\ref{thm:abelian-wr}.

\begin{lemma} \label{lemma-normalize2}
Let $E$ be an exponent expression over $G=A \wr H$ of length $n$ and assume that $H$ is knapsack-semilinear.
There exists a normalized exponent expression $E''$ such that
$\|\Sol_G(E)\| = (2 (n+1)  \|\Sol_G(E'') \|+1)^{\O(n)}$ and $|E''| \leq \O(n^2)$.
\end{lemma}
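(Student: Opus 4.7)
The plan is to build $E''$ in three stages: (1) eliminate $v_0$ by conjugation; (2) rewrite each power $u_i^{x_i}$ as a sequence of $AH$-powers interleaved with $H$-constants, exploiting that $A^{(H)}$ is abelian; and (3) normalize each surviving constant $v_1,\ldots,v_d$ by introducing fresh auxiliary variables. The solution sets of $E$ and $E''$ will be related by a projection of a low-magnitude intersection, so that \cref{lem:cap} delivers the magnitude bound.

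For (1), since $v_0^{-1} E v_0 = u_1^{x_1} v_1 \cdots u_d^{x_d} (v_d v_0)$ has the same solution set as $E$, I can absorb $v_0$ into the trailing constant and assume $v_0 = 1$. For (2), I would parse each $u_i$ in alternating form $u_i = a_{i,1} h_{i,1} a_{i,2} h_{i,2} \cdots a_{i,k_i} h_{i,k_i}$ where $a_{i,j} \in A \cup \{1\}$ and $h_{i,j} \in H \cup \{1\}$ are single generators or identities (so $k_i \le |u_i|$), and define $q_{i,j} = h_{i,1} \cdots h_{i,j}$, $w_i = q_{i,k_i} \in H$, $\tilde{a}_{i,j} = q_{i,j-1} a_{i,j} q_{i,j-1}^{-1} \in A^{(H)}$. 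Then $u_i = \tilde{a}_{i,1} \cdots \tilde{a}_{i,k_i} w_i$, and since the $\tilde{a}_{i,j}$ and their shifts by powers of $w_i$ all live in the abelian group $A^{(H)}$, they commute pairwise. Hence
\[
u_i^{x_i} = \prod_{j=1}^{k_i} \prod_{t=0}^{x_i-1} w_i^{t}\, \tilde{a}_{i,j}\, w_i^{-t} \cdot w_i^{x_i} = \prod_{j=1}^{k_i} \bigl((\tilde{a}_{i,j} w_i)^{x_i} w_i^{-x_i}\bigr) \cdot w_i^{x_i}.
\]
Setting $w'_{i,j} = q_{i,j-1}^{-1} w_i q_{i,j-1} \in H$ gives $\tilde{a}_{i,j} w_i = q_{i,j-1}(a_{i,j} w'_{i,j}) q_{i,j-1}^{-1}$ with $a_{i,j} w'_{i,j} \in AH$, so $u_i^{x_i}$ expands into a sequence of $(AH)$-powers interleaved with $H$-constants. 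The length of this expansion is $\O(k_i^2)$, summing to $\O(n^2)$.

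For (3), I would parse each constant $v_i$ as a word in the generators of $A \wr H$ and, for each occurrence of an $A$-letter $a$ in $v_i$, introduce a fresh variable $y$ and replace this occurrence by the power $(a)^y$ (with period $a \in A \subseteq AH$); the $H$-letters of $v_i$ remain as $H$-constants. The assembled $E''$ is normalized with $|E''| = \O(n^2)$ and uses at most $s \le n$ fresh variables. For any valuation $\mu$ on the extended variable set with $\mu(y) = 1$ for every fresh $y$, we have $\mu(E'') = \mu|_X(E)$ in $G$, so $\Sol_G(E) = \pi(\Sol_G(E'') \cap L)$, where $\pi$ is projection onto the original variables and $L$ is the semilinear set enforcing $y = 1$ for every fresh $y$. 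Since $\|L\| = 1$, the total dimension is $\O(n)$, and the number of intersected sets is $k = 2$, \cref{lem:cap} yields $\|\Sol_G(E'') \cap L\| \le (\|\Sol_G(E'')\| \cdot \O(n) + 1)^{\O(n)}$; projection does not increase magnitude, so the claimed bound follows.

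The hard part will be clause (ii) for the constants, since the $v_i$ genuinely contain $A^{(H)}$-parts that resist direct removal by conjugation or absorption into adjacent powers without blowing up the exponent structure. Trading an equality of solution sets for a projection of a low-magnitude intersection of semilinear sets is precisely the flexibility afforded by the magnitude formula in the statement, and this is what lets the auxiliary-variable trick go through cleanly.
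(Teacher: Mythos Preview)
Your proposal is correct and follows essentially the same three-stage strategy as the paper: conjugate away $v_0$, rewrite each $u_i^{x_i}$ as a product of $(AH)$-powers interleaved with $H$-constants using commutativity of $A^{(H)}$, and finally replace $A$-letters in the constants by powers with fresh variables constrained to $1$, then bound the magnitude via \cref{lem:cap}. The only cosmetic differences are that the paper parses each $u$ as $g_0 g_1\cdots g_\ell$ with $g_0\in H$, $g_i\in AH$ and uses the identity $u^x=\bigl(\prod_{i}\sigma_{0,i-1}(g_i\sigma_{i+1,\ell}\sigma_{0,i-1})^x\sigma_{0,i-1}^{-1}\sigma(u)^{-x}\bigr)\sigma(u)^x$, whereas you parse letter-by-letter and use $u^x=\prod_j\bigl((\tilde a_j w)^x w^{-x}\bigr)\cdot w^x$; and the paper introduces a single fresh variable $y$ (reused for all $A$-letters in all $v_i$) rather than one per occurrence, but both choices keep the total variable count $\O(n)$ and lead to the same magnitude estimate.
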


\begin{proof}
Note that by Theorem~\ref{thm:closure-wreath} also $G$ is knapsack-semilinear.
Property (iii) can always be established by conjugating with $v_0$.
Hence we can focus on properties (i) and (ii).

We first explain how to achieve property (i) for a given power $u^x$.
Since $u$ is given as a word over the generators of $A$ and $H$ we can
factorize $u$ as $u = g_0 g_1 \cdots g_\ell$ where $g_0 \in H$ and $g_1, \dots, g_\ell \in AH$.
Let us write $\sigma_{i,j} = \sigma(g_i \cdots g_j)$ for $i \leq j$
and $\sigma_{i,j} = 1$ for $i > j$.
Then, for every $x$ we have
\begin{align}
	u^x & = \left(\prod_{i=1}^\ell (\sigma_{0,i-1} \, g_i \, \sigma_{i+1,\ell})^x \sigma(u)^{-x}\right) \sigma(u)^x \nonumber \\ \label{eq:u-tilde}
	& = \left(\prod_{i=1}^\ell \sigma_{0,i-1} (g_i \, \sigma_{i+1,\ell} \sigma_{0,i-1})^x \sigma_{0,i-1}^{-1}  \sigma(u)^{-x}\right) \sigma(u)^x
\end{align}
Notice that $g_i \, \sigma_{i+1,\ell} \sigma_{0,i-1}\in AH$
and $\sigma(u) \in H \subseteq AH$.
Let $\tilde u$ be the expression from \eqref{eq:u-tilde},
which has length $\O(|u|^2)$.

Now let $E = v_0 u_1^{x_1} v_1 u_2^{x_2} v_2 \cdots u_d^{x_d} v_d$ be an exponent expression
over $G$.
We construct the exponent expression
\begin{equation}
\label{exp-E'}
E' = v_0  \tilde u_1  v_1  \tilde u_2  \cdots  \tilde u_d  v_d.
\end{equation}
We have $\Sol_G(E) = \Sol_G(E')$.
Notice that the $E$ and $E'$ use the same variables and that the length of $E'$ is bounded 
by $\O(n^2)$. 

For condition (ii) from the lemma observe that every element $v \in G$  in \eqref{exp-E'} that occurs
between two consecutive powers or before (after) the first (last) power 
is given as a word over the generators of $A$ and $H$, say $v = a_1 h_1 a_2 h_2 \cdots a_k h_k$
where $a_j \in A$, $h_j \in H$, $1 \le j \le k$.
We replace $v$ by the expression
$\hat v = a_1^{y} h_1 a_2^{y} h_2 \cdots a_k^{y} h_k$ for a fresh variable $y$
and enforce $y = 1$ by a semilinear constraint.
Applying this to every such word $v$ in \eqref{exp-E'} yields an exponent expression $E''$
with at most $n+1$ variables and length $\O(n^2)$. 

We have $\Sol_G(E) = \Sol_G(E') = \pi(\Sol_G(E'') \cap C)$ where $C$ is the semilinear constraint saying that $y=1$,
and $\pi$ is the projection to the original variables of $E$.
By Lemma~\ref{lem:cap} we have 
\[ \|\Sol_G(E)\| =  \|\Sol_G(E'') \cap C\| \leq (2 (n+1)  \|\Sol_G(E'') \|+1)^{\O(n)} .
\]
This concludes the proof.
\end{proof}
A power word $(u_1, k_1, \ldots, u_d,k_d)$ over $G$
is {\em normalized} if $u_i \in AH$ for all $1 \le i \le d$.

\begin{lemma} \label{lemma-normalize-power-word}
From a given power word over $G$ one can compute in $\TC^0$ a normalized power word
that evaluates to the same group element of $G$.
\end{lemma}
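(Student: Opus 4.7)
The strategy is to adapt the construction from the proof of Lemma~\ref{lemma-normalize2} to the setting of power words: we process each factor $u_i^{k_i}$ of the input power word independently, expand it via the identity~\eqref{eq:u-tilde}, and concatenate the resulting pieces.

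Fix $i$ and write $u = u_i$, $k = k_i$ for brevity. Scanning $u$ and locating its $A$-letters yields a factorization $u = g_0 g_1 \cdots g_\ell$ where $g_0 \in H$ is the maximal initial subword of $u$ consisting only of $H$-letters, and each $g_j$ (for $j \ge 1$) starts at an $A$-letter of $u$ and runs up to (but not including) the next $A$-letter, so that $g_j \in AH$. Writing $\sigma_{p,q} = \sigma(g_p \cdots g_q)$ for $p \le q$ and $\sigma_{p,q} = 1$ for $p > q$, identity~\eqref{eq:u-tilde} gives
\[
u^k \;=\; \left(\prod_{j=1}^\ell \sigma_{0,j-1}\, (g_j\, \sigma_{j+1,\ell}\, \sigma_{0,j-1})^k\, \sigma_{0,j-1}^{-1}\, \sigma(u)^{-k}\right) \sigma(u)^k.
\]
Each $\sigma_{p,q}$ lies in $H \subseteq AH$, each inner base $g_j\, \sigma_{j+1,\ell}\, \sigma_{0,j-1}$ lies in $AH \cdot H \cdot H = AH$, and $\sigma(u) \in H \subseteq AH$. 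Hence every factor on the right-hand side is a power (with exponent $1$, $k$, or $-k$) whose period, viewed as a word over the generators of $G$, represents an element of $AH$. Concatenating these sequences over $i = 1, \ldots, d$ yields a normalized power word representing the same element of $G$ as the input.

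For the $\TC^0$ bound, observe that the output has size $\O(n^2)$ where $n$ is the length of the input, and each output symbol is determined by a simple local operation on the input: locating the $A$- and $H$-letter positions within each $u_i$ (to determine the factorization), extracting the $H$-projection of a contiguous prefix or suffix of $u_i$ (to produce the word $\sigma_{p,q}$), and copying one of the binary exponents $k_i$, $-k_i$, or $1$. The hardest step is the extraction of $H$-letters from a contiguous range, which reduces to counting and ranking letters and is standard in $\mathsf{DLOGTIME}$-uniform $\TC^0$. Consequently the whole transformation is performed by a constant-depth polynomial-size threshold circuit, as required.
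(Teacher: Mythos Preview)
Your proof is correct and follows essentially the same approach as the paper, which simply points back to the construction of Lemma~\ref{lemma-normalize2} with the variables replaced by the given exponents. You spell out the factorization and the $\TC^0$ argument in slightly more detail than the paper does, and you correctly observe that the second normalization step (the one introducing the fresh variable $y$ to handle the $v_i$'s) is unnecessary here since a power word has no intermediate factors $v_i$ and the words $\sigma_{0,j-1}^{\pm 1}$ already lie in $H \subseteq AH$.
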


\begin{proof}
We apply the same construction as in the proof of Lemma~\ref{lemma-normalize2} (where of course the variables
in the exponents are replaced by the numbers from the power word). The new variable $y$ in the above proof is of 
course replaced by the exponent $1$. Finally, notice that the 
transformation from $E$ to $E''$ can be carried out in $\TC^0$. 
\end{proof}

\subparagraph{Progressions.}
A {\em progression} over a torsion-free group $H$ is a non-empty finite sequence $\bm{p} = (p_i)_{0 \le i \le k}$
of the form $p_i = ab^i$ where $a,b \in H$.
We call $a$ the {\em offset},
$b$ the {\em period}\footnote{A progression of length two or more has a unique period; progressions of length one are assigned
a fixed but arbitrary period.}
and define $\supp(\bm{p}) = \{ p_i \mid 0 \le i \le k \}$.
The {\em length} of $\bm{p}$ is $|\bm{p}|=k+1$ and its {\em endpoints} are $p_0$ and $p_k$.
A progression whose period is nontrivial is called a {\em ray}. 
Since $H$ is torsion-free, all entries of a ray are pairwise distinct.
Two rays are {\em parallel} if their periods are commensurable (see Section~\ref{sec-commens}).

\begin{lemma}
	\label{lem:rays-cap}
	If two rays $\bm{p}$ and $\bm{q}$ are not parallel
	then $|\supp(\bm{p}) \cap \supp(\bm{q})| \le 1$.
\end{lemma}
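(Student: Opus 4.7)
The plan is to argue by contrapositive: assume that $\supp(\bm{p}) \cap \supp(\bm{q})$ contains at least two distinct elements, and deduce that the periods of $\bm{p}$ and $\bm{q}$ are commensurable, so $\bm{p}$ and $\bm{q}$ are parallel.

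Write $\bm{p} = (a_1 b_1^i)_{0 \le i \le k}$ and $\bm{q} = (a_2 b_2^j)_{0 \le j \le \ell}$ with $b_1, b_2 \ne 1$. Pick two distinct common elements, i.e. indices $(i_1, j_1) \ne (i_2, j_2)$ with
\[
a_1 b_1^{i_1} = a_2 b_2^{j_1}, \qquad a_1 b_1^{i_2} = a_2 b_2^{j_2}.
\]
Multiplying the inverse of the first equation by the second gives the key identity
\[
b_1^{i_2 - i_1} = b_2^{j_2 - j_1},
\]
after cancellation of $a_1^{-1}a_1$ and $a_2^{-1}a_2$ respectively. Set $s = i_2 - i_1$ and $t = j_2 - j_1$.

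Next, I would use that $H$ is torsion-free to rule out the degenerate cases. If $s = 0$, the identity reads $1 = b_2^t$, which forces $t = 0$ because $b_2 \neq 1$ and $H$ is torsion-free; but then $(i_1,j_1)=(i_2,j_2)$, contradicting distinctness. The case $t=0$ is symmetric. Hence $s, t \in \Z \setminus \{0\}$, and $b_1^s = b_2^t$ shows that the periods $b_1$ and $b_2$ are commensurable. By definition this means $\bm{p}$ and $\bm{q}$ are parallel, completing the contrapositive.

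There is no real obstacle here; the only subtle point is making sure to invoke torsion-freeness to eliminate the degenerate possibility $s=0$ or $t=0$, which otherwise would spoil the commensurability conclusion. Notably, the argument does not need $b_1$ and $b_2$ to commute, since we obtain $b_1^s = b_2^t$ directly by left-cancellation of $a_1$ and $a_2$ rather than by rearranging a product in the group.
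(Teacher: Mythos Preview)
Your proof is correct and follows the same contrapositive approach as the paper: from two common points you cancel offsets to obtain $b_1^{s} = b_2^{t}$, then use torsion-freeness to ensure $s,t \neq 0$. The only cosmetic difference is that the paper appeals to the earlier remark that rays have pairwise distinct entries to get $i \neq i'$ and $j \neq j'$ directly, whereas you handle the degenerate cases $s=0$ and $t=0$ explicitly via torsion-freeness; both are the same argument at heart.
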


\begin{proof}
	Let $p_i = ab^i$ and $q_j = gh^j$.
	Suppose that $|\supp(\bm{p}) \cap \supp(\bm{q})| \ge 2$.
	Then there are numbers $i \neq i'$ and $j \neq j'$ such that
	$ab^i = gh^j$ and $ab^{i'} = gh^{j'}$.
	This implies $b^{i-i'} = h^{j'-j}$, which means that $\bm{p}$ and $\bm{q}$ are parallel -- a contradiction.
\end{proof}

\subsection{Proofs of Theorem~\ref{thm:ppp-wr}}

In this section we prove Theorem~\ref{thm:ppp-wr}.
In Section~\ref{section-21->3} we then deduce 
Theorem~\ref{thm:iterated-pwp} from Theorem~\ref{thm:ppp-wr}.

\subsubsection{Reducing $\PowWP(A \wr H)$ to $\PowPP(H)$}

For the rest of this section we fix a finitely generated abelian group $A = \langle \Gamma\rangle$
and a finitely generated torsion-free group $H= \langle \Sigma\rangle$.

A {\em power-compressed ray} over $H$
is a triple $(u,v,\ell)$ where $u$ is a power word over $\Sigma$, $v \in \Sigma^*$ is a word
with $v \neq 1$ in $H$
and $\ell \in \N$ is a binary encoded number. Such a power-compressed ray $(u,v,\ell)$
defines the ray $(uv^i)_{0 \le i \le \ell}$.
We will identify the triple with the ray itself.
Define the intersection set $\Int(\bm{p},\bm{q})$ of two rays
$\bm{p},\bm{q}$ by
\[
	\Int(\bm{p},\bm{q}) = \{ i \in [0,|\bm{p}|-1] \mid \exists j \in [0,|\bm{q}|-1] \colon p_i = q_j \}.
\]
If $\bm{p},\bm{q}$ are parallel rays and $H$ is c-tame
then one can reduce the computation of $\Int(\bm{p},\bm{q})$
to $\PowPP(H)$.

\begin{lemma}
\label{lem:intersection}
If $H$ is c-tame and torsion-free then the following problem
is $\TC^0$-reducible to $\PowPP(H)$:
given two parallel power-compressed rays $\bm{p},\bm{q}$ over $H$,
decide whether $\Int(\bm{p},\bm{q})$ is non-empty and, if so,
compute an arithmetic progression $\bm{s}$ such that
$\Int(\bm{p},\bm{q}) = \supp(\bm{s})$.
\end{lemma}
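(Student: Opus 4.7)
The plan is to translate the intersection problem into solving the two-variable equation $u_1 v_1^i = u_2 v_2^j$ over $\Z^2$, analyse its solution set as a coset of a cyclic subgroup via Lemma~\ref{lemma-2dim}, and then intersect with the bounds $i \in [0,\ell_1]$, $j \in [0,\ell_2]$ coming from the two power-compressed rays $\bm{p}=(u_1,v_1,\ell_1)$, $\bm{q}=(u_2,v_2,\ell_2)$. Since $H$ is torsion-free and $v_1 \ne 1$ (as $\bm{p}$ is a ray), Lemma~\ref{lemma-2dim}(i) applied to $g_1=v_1$, $g_2=v_2^{-1}$ shows that $C := \{(i,j) \in \Z^2 \mid v_1^i = v_2^j\}$ is a cyclic subgroup of $\Z^2$. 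Parallelism of $\bm{p}$ and $\bm{q}$ means $v_1,v_2$ are commensurable elements of infinite order, so $C$ is nontrivial, say $C = \Z\cdot(a,b)$; since $H$ is torsion-free, $a=0$ would force $b=0$ and vice versa, so both are nonzero, and I choose $a>0$.

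First I would compute $(a,b)$ using the $\PowPP(H)$ oracle. By c-tameness there exist $s \in \{1,\ldots,T\}$ and $t$ with $|t| \le T$ satisfying $v_1^s = v_2^t$, where $T = \O((|v_1|+|v_2|)^d)$ is polynomial in the input size. Hence the minimal positive $s$ with $v_1^s \in \langle v_2\rangle$, which is precisely $a$, lies in $\{1,\ldots,T\}$. In parallel for each $s \in \{1,\ldots,T\}$ I invoke $\PowPP(H)$ with input word $v_2$ and power word $(v_1,s)$; each call either returns a binary integer $t_s$ with $v_2^{t_s}=v_1^s$ or \textbf{no}. Setting $a$ to the smallest $s$ with a non-\textbf{no} answer and $b := t_a$ yields a generator of $C$. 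Producing polynomially many queries and taking the minimum of their indices is feasible in $\TC^0$.

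Next I would find a particular solution $(a_0,b_0)$ of $u_1 v_1^{a_0} = u_2 v_2^{b_0}$. By Lemma~\ref{lemma-2dim}(ii), the solution set in $\Z^2$ is either empty or a coset $(a_0,b_0)+C$, so if a solution exists then one exists with $a_0 \in \{0,1,\ldots,a-1\}$. For every such $i$ in parallel I query $\PowPP(H)$ with input word $v_2$ and power word $(u_2^{-1},1,u_1,1,v_1,i)$, which returns $j$ with $v_2^j = u_2^{-1} u_1 v_1^i$, equivalently $u_1 v_1^i = u_2 v_2^j$. If every call returns \textbf{no}, then $\Int(\bm{p},\bm{q}) = \emptyset$; otherwise any successful $i$ furnishes the desired $(a_0,b_0)$.

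Finally I would assemble the progression. The full set of integer solutions is $\{(a_0+na,\, b_0+nb) \mid n \in \Z\}$, and the two range conditions $0 \le a_0+na \le \ell_1$ and $0 \le b_0+nb \le \ell_2$ each cut out an interval of $n$'s (using $a,b\ne 0$), whose intersection is an interval $[n_{\min},n_{\max}]$ computable in $\TC^0$ via division with remainder on binary inputs. The output is then the arithmetic progression with offset $a_0+n_{\min}a$ in binary, period $a$ in unary (polynomially bounded thanks to c-tameness, as required by the encoding of progressions), and length $n_{\max}-n_{\min}+1$ in binary; its support is exactly $\Int(\bm{p},\bm{q})$. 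The main obstacle is the first step: one must exploit c-tameness precisely to guarantee that a generator of $C$ is discoverable within a polynomial window of oracle queries, so that the whole reduction really lies in $\TC^0(\PowPP(H))$ and in particular the period $a$ of $\bm{s}$ fits into the required unary encoding.
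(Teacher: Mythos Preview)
Your proposal is correct and follows essentially the same approach as the paper: both use Lemma~\ref{lemma-2dim} to identify the solution set of $u_1 v_1^i = u_2 v_2^j$ as a coset of a cyclic subgroup, use c-tameness to bound the search for a generator by polynomially many oracle calls, locate a particular solution via $\PowPP(H)$, and finish by intersecting with the two range constraints using integer division in $\TC^0$. The only cosmetic differences are that the paper finds the generator by word-problem queries over a two-dimensional window (rather than $\PowPP$ queries over a one-dimensional one) and searches for the particular solution along the other coordinate.
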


\begin{proof}
Suppose that $\bm{p} = (ab^i)_{0 \le i \le k}$
and $\bm{q} = (gh^j)_{0 \le j \le \ell}$.
By c-tameness
there exists $s,t \in \Z \setminus \{0\}$
such that $\{ (i,j) \in \Z^2 \mid b^i = h^j \} = \langle (s,t) \rangle$
and $|s|,|t|$ are polynomially bounded in $|b|+|h|$.
We compute the unary encodings of such numbers $s,t$
by checking all identities $b^s = h^t$
for $|s|,|t| \leq (|b|+|h|)^{\O(1)}$
(the word problem of $H$ is a special case of $\PowPP(H)$).

Since $b\neq 1\neq h$ and $H$ is torsion-free
we must have $s \neq 0 \neq t$.
We can enforce $t > 0$ by inverting the generator $(s,t)$ if necessary.
Since $ab^i = gh^j$ is equivalent to $b^i h^{-j} = a^{-1}g$, Lemma~\ref{lemma-2dim} implies that
$\{ (i,j) \in \Z^2 \mid ab^i = gh^j \}$
is either empty or a coset of $\langle (s,t) \rangle$.
Therefore, if $ab^i = gh^j$ has any solution, then it has a solution $(i,j) \in \Z^2$
where $0 \le j \le t - 1$.
For all $0 \le t_0 \le t - 1$ we solve the $\PowPP(H)$-instance $ab^x = gh^{t_0}$.
If there is no solution for any $0 \le t_0 \le t - 1$ we can conclude $\Int(\bm{p},\bm{q}) = \emptyset$.
Otherwise let $0 \le t_0 \le t - 1$ and $s_0 \in \Z$ with $ab^{s_0} = gh^{t_0}$. We obtain the integer
$s_0$ in binary encoding.
Then $\Int(\bm{p},\bm{q})$ is the projection to the first component of the set
\[
	\big((s_0,t_0) + \langle (s,t) \rangle\big) \cap ([0,k] \times [0,\ell]).
\]
Next we compute the interval
$Y = \{ y \in \Z  \mid 0 \le t_0 + y t \le \ell \}$.
Since $t > 0$ we have
\begin{align*}
	y \in Y \iff 0 \le t_0 + y t \le \ell 
	\iff -\frac{t_0}{t} \le y \le \frac{\ell-t_0}{t}.
\end{align*}
Hence the endpoints of $Y$ are $y_1 = \lceil -t_0/t \rceil$
and $y_2 = \lfloor (\ell-t_0)/t \rfloor$,
which can be computed in $\TC^0$
since integer division is in $\TC^0$ (here, we only need
the special case, where we divide by a unary encoded integer).
If $y_1 > y_2$ then $Y$ is empty and also $\Int(\bm{p},\bm{q})$ is empty.
Otherwise, we compute $\Int(\bm{p},\bm{q})$ using the fact that
\[
	\Int(\bm{p},\bm{q}) = \{ s_0 + s y \mid y \in Y \} \cap [0,k].
\]
We transform $Y = [y_1,y_2]$ into the arithmetic progression $(s_0 + s y)_{y_1 \le y \le y_2}$
and intersect it with the interval $[0,k]$.
\end{proof}

\begin{lemma}
\label{lem:eval}
Let $H$ be torsion-free.
Then the following problem is $\TC^0$-reducible to $\PowPP(H)$:
given a power word $u$ over $\Gamma \cup \Sigma$ representing an element in $A \wr H$,
and a power word $v$ over $\Sigma$ representing an element in $H$,
compute a power word for $\tau(u)(v)$.
\end{lemma}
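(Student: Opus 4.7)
The plan is to first apply Lemma~\ref{lemma-normalize-power-word} to bring $u$ into the normalized form $u = (a_1 h_1)^{k_1} (a_2 h_2)^{k_2} \cdots (a_d h_d)^{k_d}$ with $a_i \in \Gamma^*$ and $h_i \in \Sigma^*$. I then apply Lemma~\ref{lem:tau} to the fully expanded word: since $h_i \in H$ satisfies $\tau(h_i) = 1$, only the $a_i$-atoms contribute to $\tau(u)$. The $j$-th copy of $a_i$ (for $j \in [0, k_i - 1]$) is preceded by a word whose $\sigma$-value is
\[ H_{i,j} \;=\; h_1^{k_1} h_2^{k_2} \cdots h_{i-1}^{k_{i-1}} \cdot h_i^j, \]
so that atom contributes the function on $H$ supported at $H_{i,j}$ with value $a_i \in A$. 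Evaluating at $v$ and using commutativity of $A$, all these contributions collapse into
\[ \tau(u)(v) \;=\; \prod_{i=1}^d a_i^{n_i}, \qquad n_i = \bigl|\{ j \in [0, k_i-1] \mid H_{i,j} = v \text{ in } H\}\bigr|. \]

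What remains is to compute each $n_i$ in $\TC^0$ with oracle access to $\PowPP(H)$. Rewriting the condition $H_{i,j}=v$ as $h_i^j = w_i$ with $w_i = (h_1^{k_1} \cdots h_{i-1}^{k_{i-1}})^{-1} v$, we note that $w_i$ is a power word over $\Sigma$ of polynomial length, assembled from the input in $\TC^0$ by reversing the prefix and negating its exponents. If $h_i \neq 1$ in $H$, torsion-freeness of $H$ forces the equation to have at most one integer solution; a single call to $\PowPP(H)$ on input $(h_i, w_i)$ returns this solution in binary (or \textbf{no}), after which a $\TC^0$-comparison against $0$ and $k_i-1$ determines $n_i \in \{0,1\}$. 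If $h_i = 1$ in $H$, the equation reduces to the question whether $w_i = 1$ in $H$, so $n_i \in \{0,k_i\}$. Both tests $h_i \stackrel{?}{=} 1$ and $w_i \stackrel{?}{=} 1$ are instances of the power word problem for $H$, which is $\TC^0$-reducible to $\PowPP(H)$: for any fixed nontrivial generator $s$, the oracle on input $(s, p)$ returns $z = 0$ precisely when $p = 1$, since $H$ is torsion-free.

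Finally, concatenating the $d$ single-period power words $a_i^{n_i}$ yields a power word over $\Gamma$ that represents $\tau(u)(v) \in A$; the $d$ oracle calls and the surrounding bookkeeping (constructing the $w_i$, comparing binary integers, and assembling the output) all fit into a single $\TC^0(\PowPP(H))$ circuit, since oracle gates may be arranged in parallel at constant depth. The main conceptual step is the decoupling argument: once normalization and Lemma~\ref{lem:tau} are combined with commutativity of $A$, the global evaluation of $\tau(u)$ at $v$ splits into $d$ independent questions of the form ``for how many $j$ does $h_i^j = w_i$ hold in $H$?'', which is exactly what $\PowPP(H)$ is designed to answer. The remaining bookkeeping (handling the degenerate case $h_i = 1$, forming $w_i$ from a compressed inverse, and interval testing) is routine.
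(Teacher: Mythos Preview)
Your proof is correct and follows essentially the same approach as the paper: normalize $u$, use Lemma~\ref{lem:tau} to express $\tau(u)(v)$ as a sum $\sum_i a_i^{n_i}$ where each $n_i$ counts solutions of $h_i^j = (h_1^{k_1}\cdots h_{i-1}^{k_{i-1}})^{-1}v$ in the range $[0,k_i-1]$, and then determine each $n_i$ via a single $\PowPP(H)$-query (or a power-word-problem check when $h_i=1$). The only cosmetic difference is that you expand all the way down to individual atoms whereas the paper first groups into blocks $u_i^{k_i}$ and then analyzes $\tau(u_i^{k_i})(v_i)$; you are also a bit more explicit than the paper about why the tests $h_i\stackrel{?}{=}1$ and $w_i\stackrel{?}{=}1$ reduce to $\PowPP(H)$.
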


\begin{proof}
Let  $u = u_1^{k_1} \cdots u_d^{k_d}$.
By Lemma~\ref{lemma-normalize-power-word} we normalize $u$ in $\TC^0$ so that
for every $i$, $u_i = a_i \, \sigma(u_i)$ for some $a_i \in A$.
By Lemma~\ref{lem:tau} we have
\[
	\tau(u)(v) = \sum_{i=1}^d \tau(\sigma(u_1^{k_1} \dots u_{i-1}^{k_{i-1}}) \, u_i^{k_i})(v)
	= \sum_{i=1}^d \tau(u_i^{k_i})(\sigma(u_1^{k_1} \dots u_{i-1}^{k_{i-1}})^{-1} v).
\]
Hence it suffices to compute $\tau(u_i^{k_i})(v_i)$
for the power word $v_i =  \sigma(u_{i-1})^{-k_{i-1}} \dots \sigma(u_1)^{-k_1} v$.
If $\sigma(u_i) = 1$ then $\tau(u_i^{k_i})(v_i) = k_i \cdot a_i$ if $v_i = 1$
and $\tau(u_i^{k_i})(v_i) = 0$ otherwise.
If $\sigma(u_i) \neq 1$ we compute a solution $x \in \Z$ for
$\sigma(u_i)^x = v_i$ (this is an instance of $\PowPP(H)$).
If there is no solution or $x < 0$ or $x \ge k_i$ then $\tau(u_i^{k_i})(v_i) = 0$;
otherwise $\tau(u_i^{k_i})(v_i) = a_i$.
\end{proof}
If $\bm{s}$ is an arithmetic progression and $a \in A$
then we define $f_{\bm{s},a} \colon \N \to A$ by
$f_{\bm{s},a}(t) = a$ if $t \in \supp(\bm{s})$
and $f_{\bm{s},a}(t) = 0$ otherwise.

\begin{lemma}
\label{lem:rays}
Given a unary encoded number $b \in \N$ and
finite multiset $M$ of pairs $(\bm{s},a)$,
where $\bm{s}$ is an arithmetic progression and $a \in \Gamma^*$ is an element of $A$,
we can compute in $\TC^0$ the following:
\begin{itemize}
\item the set $T = \{ t \in \N \mid \sum_{(\bm{s},a) \in M} f_{\bm{s},a}(t) \neq 0 \}$ in case $|T| < b$,
\item $\bot$ in case $|T|\geq b$.
\end{itemize}
\end{lemma}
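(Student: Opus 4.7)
The plan is to decompose $\phi(t) := \sum_{(\bm{s},a) \in M} f_{\bm{s},a}(t)$ into pieces on which it is purely periodic, then isolate the nonzero positions using the polynomial budget $b$.

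First, I would sort in $\TC^0$ the $\O(|M|)$ endpoints of the progressions in $M$ and apply \cref{lem:interval-types}, with the color at $t$ being the subset of $M$ whose support range contains $t$. This partitions the relevant range $[0, L]$ (where $L$ is the maximum endpoint of any progression) into $\O(|M|)$ sub-intervals $I_1, \dots, I_K$; on each $I_k$ a fixed subset $M_k \subseteq M$ is ``range-active''. Within $I_k$ we have
\[
\phi|_{I_k}(t) = \sum_{(\bm{s},a) \in M_k} a \cdot [t \equiv o_{\bm{s}} \pmod{p_{\bm{s}}}],
\]
a periodic residue sum whose period $P_k$ is the lcm of the active $p_{\bm{s}}$'s, and whose value at $t$ depends only on the residue vector $(t \bmod p_{\bm{s}})_{(\bm{s},a) \in M_k}$.

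Next, I would determine $T \cap I_k$ for each sub-interval. Since each $p_{\bm{s}}$ is unary, each individual modular membership test is in $\TC^0$, so $\phi(t)$ can be evaluated in $\TC^0$ at any specific $t$. The set $T \cap I_k$ is a union of arithmetic progressions coming from the residue patterns whose associated sum in $A$ is nonzero. Under the assumption $|T| < b$, one only needs to produce at most $b$ positions in total: I would guess in parallel (up to the budget $b$) candidate positions and verify each by direct evaluation of $\phi$, while also certifying that no further position is in $T$ by checking the structure of the residue patterns against the active periods; as soon as more than $b$ positions accumulate across the sub-intervals, output $\bot$.

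The main obstacle is avoiding an exponential blow-up: the period $P_k$ can be as large as $\prod p_{\bm{s}}$ even though each $p_{\bm{s}}$ is polynomial, so residue classes modulo $P_k$ cannot be enumerated naively. I expect the resolution to exploit the structure of the finitely generated abelian group $A$ together with the budget $b$: rather than iterating over residues modulo $P_k$, one works with residue patterns modulo the individual $p_{\bm{s}}$'s (each of polynomial size), using the bound $|T| < b$ to halt as soon as the candidate list either certifies $T$ or exceeds the budget, so that the whole procedure stays within constant depth and polynomial size.
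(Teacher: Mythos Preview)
Your setup is correct and matches the paper: partition via \cref{lem:interval-types} into sub-intervals on which a fixed subset $M_k$ is active, so that $\phi$ restricted to each sub-interval is a sum of purely periodic functions. You also correctly identify the obstacle, namely that the common period $P_k=\mathrm{lcm}\{p_{\bm{s}}\}$ may be exponential even though each $p_{\bm{s}}$ is unary.

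The gap is in your resolution of this obstacle. ``Guessing candidate positions up to the budget $b$'' and ``checking the structure of the residue patterns'' does not yield a certificate that \emph{no further} nonzero positions exist in a long sub-interval: the number of residue vectors $(t\bmod p_{\bm{s}})_{(\bm{s},a)\in M_k}$ is still exponential, and you give no mechanism to rule them all out in constant depth. The budget $b$ alone cannot close this; you must actually prove that the function vanishes on the rest of $I_k$ without scanning it.

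The missing idea is \cref{lem:abelian-recurrence}: a sum of periodic functions into an abelian group with periods $d_1,\dots,d_m$ satisfies a recurrence of order $p:=\sum_i d_i$, which is polynomial (the periods are unary). Consequently, if the sum vanishes on $p$ consecutive points it vanishes everywhere. The paper exploits this as follows: on a sub-interval $J$ with $|J|\ge bp$, evaluate $\phi$ only on the first $bp$ points. If at least $b$ of these are nonzero, output $\bot$. Otherwise the at most $b-1$ nonzero points split $[j,j+bp-1]$ into at most $b$ zero-blocks totalling at least $bp-(b-1)$ points, so by pigeonhole some block has length $\ge p$; the recurrence then forces $\phi|_J\equiv 0$ and hence $T\cap J=\emptyset$. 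Short sub-intervals ($|J|<bp$) are handled by brute force. This replaces the exponential $\mathrm{lcm}$ by the polynomial \emph{sum} of periods, which is exactly the step your proposal does not supply.
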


\begin{proof}
Let  $\bm{s} = (d+ie)_{0 \leq i \leq \ell}$ be an arithmetic progression with $d \in \N$ and 
$e \in \N \setminus \{0\}$. We define the interval $I(\bm{s}) = [d,d+\ell e]$ 
and for $a \in A$ we define the mapping $g_{\bm{s},a} \colon \N \to A$ by
\[
	g_{\bm{s},a}(t) = \begin{cases}
	a, &\text{if } t \equiv d \!\!\pmod{e}, \\
	0, &\text{otherwise}.
	\end{cases}
\]
It is easy to verify that $f_{\bm{s},a}(t) = g_{\bm{s},a}(t)$ for all $t \in I(\bm{s})$.
Let $n \in \N$ be the maximal number in any interval $I(\bm{s})$ for $(\bm{s},a) \in M$.
By Lemma~\ref{lem:interval-types} we can compute in $\TC^0$ a partition $\mathcal{J}$
of $[0,n]$ into intervals such that for all $J \in \mathcal{J}$ and all $(\bm{s},a) \in M$
we have either $J \subseteq I(\bm{s})$ or $J \cap I(\bm{s}) = \emptyset$.
In the following we will show how to either compute $T \cap J$ 
or establish that $|T \cap J| \ge b$ for all $J \in \mathcal{J}$.
Then the statement follows because
if $|T \cap J| \ge b$ for some $J \in \mathcal{J}$ then $|T| \ge b$;
otherwise we can compute $T = \bigcup_{J \in \mathcal{J}} (T \cap J)$
and return $T$ if $|T| < b$.

Let $J \in \mathcal{J}$ be arbitrary.
Then for all $t \in J$ we have
\begin{equation}
\label{eq:interval-sum}
\begin{aligned}
	\sum_{(\bm{s},a) \in M} f_{\bm{s},a}(t) 
	= \sum_{\substack{(\bm{s},a) \in M \\ J \subseteq I(\bm{s})}} f_{\bm{s},a}(t) +
	\sum_{\substack{(\bm{s},a) \in M \\ J \cap I(\bm{s}) = \emptyset}} \underbrace{f_{\bm{s},a}(t)}_{=0}
	= \sum_{(\bm{s},a) \in M_J} g_{\bm{s},a}(t) ,
\end{aligned}
\end{equation}
where $M_J = \{ (\bm{s},a) \in M \mid J \subseteq I(\bm{s}) \}$.
Recall that $g_{\bm{s},a}(t) = a$ if $t$ is congruent to the offset of $\bm{s}$
modulo its period, and otherwise $g_{\bm{s},a}(t) = 0$.
Hence, for a given input $t \in \N$ we can compute the value \eqref{eq:interval-sum}
in $\TC^0$ (input as well as output are binary encoded).
Let $p$ be the sum of all periods of all arithmetic progressions $\bm{s}$
occurring in $M$, which is linear in the input size.
If $|J| < bp$ then we can compute $T \cap J$ in $\TC^0$.
If $|J| \ge bp$ and $j = \min J$ we compute
\begin{equation}
	\label{eq:tj}
	T_J = \{ t \in [j,j+bp-1] \mid \sum_{(\bm{s},a) \in M} f_{\bm{s},a}(t) \neq 0 \},
\end{equation}
which is a subset of $T$.
If $|T_J| \ge b$, we have established $|T|\ge b$ and we can output $\bot$.
If $|T_J| < b$ then $[j,j+bp-1] \setminus T_J$
contains at least $bp - (b-1) = b(p-1) + 1$ many elements
and $[j,j+bp-1] \setminus T_J$ is a disjoint union of at most $b$ intervals.
Hence there exists an interval $I \subseteq [j,j+bp-1] \setminus T_J$ 
containing at least $p$ elements.
This implies that
\[
	0 = \sum_{(\bm{s},a) \in M} f_{\bm{s},a}(t) = \sum_{(\bm{s},a) \in M_J} g_{\bm{s},a}(t)
\]
for all $t \in I$.
Since $\sum_{(\bm{s},a) \in M_J} g_{\bm{s},a}$ satisfies a recurrence of order at most $p$
by Lemma~\ref{lem:abelian-recurrence} we know that in fact
$\sum_{(\bm{s},a) \in M_J} g_{\bm{s},a} = 0$.
By \eqref{eq:interval-sum} we have $\sum_{(\bm{s},a) \in M} f_{\bm{s},a}(t) = 0$ for all $t \in J$,
and thus we can output $T \cap J = \emptyset$.
This concludes the proof.
\end{proof}
We now come to the main reduction of this subsection:

\begin{proposition}
If the group $H$ is c-tame and torsion-free then $\PowWP(A \wr H) \in \TC^0(\PowPP(H))$.
\end{proposition}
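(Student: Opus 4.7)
My plan is to reduce the identity test for $w = u_1^{k_1} \cdots u_d^{k_d}$ in $A \wr H$ to $\TC^0$ computation with $\PowPP(H)$ queries. I would first normalize $w$ via Lemma~\ref{lemma-normalize-power-word} so that each $u_i = a_i h_i$ with $a_i \in A$ and $h_i \in H$. Then $w = 1$ in $A \wr H$ splits into $\sigma(w) = 1$ in $H$ and $\tau(w) = 0$ in $A^{(H)}$. The first check is an instance of $\PowWP(H)$ and reduces to $\PowPP(H)$: picking any fixed $u \in H \setminus \{1\}$, $\sigma(w) = 1$ iff the query ``$u^z = \sigma(w)$'' returns $z = 0$.

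For the second check, Lemma~\ref{lem:tau} writes $\tau(w) = \sum_{i=1}^d \tau(P_i (a_i h_i)^{k_i})$ with $P_i = h_1^{k_1} \cdots h_{i-1}^{k_{i-1}} \in H$ computable as a power word. Each term contributes the value $a_i \in A$ at every position of the ray $\bm{p}_i = (P_i h_i^j)_{0 \le j < k_i}$ (or a singleton of value $k_i a_i$ at $P_i$ when $h_i = 1$), so $\tau(w) = 0$ iff at every $x \in H$ the sum of $a_i$'s over rays passing through $x$ vanishes in $A$. I would group the indices with $h_i \neq 1$ into parallelism classes using c-tameness: commensurability of $h_i$ and $h_j$ is decided in $\TC^0(\PowPP(H))$ by testing all identities $h_i^s = h_j^t$ with $|s|,|t|$ polynomially bounded. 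Within each class I further identify its ``line components'' (maximal sets of rays that pairwise intersect, detectable via Lemma~\ref{lem:intersection}); within each line component, fixing a reference ray and applying Lemma~\ref{lem:intersection} to each other ray produces an arithmetic progression in $\N$ recording where that ray's support sits in the reference's coordinate system. Feeding this multiset $\{(\bm{s}_i,a_i)\}$ into Lemma~\ref{lem:rays} with threshold $b = d^2 + 1$ returns either the set $T$ of positions with nonzero component-local sum or the verdict $|T| \ge b$.

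By Lemma~\ref{lem:rays-cap} any two rays from distinct parallelism classes meet in at most one point, so cross-class cancellation touches only $\O(d^2)$ positions. Consequently, if any component reports $|T| \ge b$, then $\tau(w) \neq 0$; otherwise the candidate positions (union of component-local $T$'s, the $\O(d^2)$ cross-class intersection points, and the singletons $P_i$) form a polynomial-size set, at each of which I evaluate the total $A$-contribution using Lemma~\ref{lem:eval} and verify vanishing. The main obstacle will be implementing the partition into parallelism classes and line components uniformly in $\TC^0$ without orderability of $H$, since computing connected components of a graph generally lies above $\TC^0$; fortunately the rigid structure of parallel rays (sharing a coset of a common cyclic subgroup) lets us identify line components directly from pairwise intersections in constant depth, given the oracle.
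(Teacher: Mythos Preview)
Your overall strategy---normalize, split into $\sigma$- and $\tau$-parts, group rays into parallelism classes, reduce the parallel case to Lemma~\ref{lem:rays} via Lemma~\ref{lem:intersection}, and handle cross-class interactions with the single-point bound of Lemma~\ref{lem:rays-cap}---is exactly the paper's approach. The difference lies in how you organize the computation within a parallelism class, and this is where a genuine gap appears.

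You propose to pick \emph{one} reference ray per line component and express all other rays of that component as arithmetic progressions in the reference's index set $[0,|\bm{p}_{\mathrm{ref}}|-1]$. But Lemma~\ref{lem:intersection} only returns $\Int(\bm{p}_{\mathrm{ref}},\bm{p}_j)\subseteq[0,|\bm{p}_{\mathrm{ref}}|-1]$; it does not see the part of $\bm{p}_j$ lying outside the reference's support. Concretely, take three parallel rays on the same coset covering index ranges $[0,10]$, $[5,15]$, $[8,20]$ with labels $a_1,a_2,a_3$, and choose the first as reference. At the point corresponding to index~$15$ (outside the reference) the component-local sum is $a_2+a_3$; if this is nonzero your Lemma~\ref{lem:rays} call never sees it, the point is not a cross-class intersection, and your algorithm would wrongly declare $\tau(w)=0$. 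The ``line component'' decomposition thus buys you nothing and costs you coverage---and it also forces you to confront the connected-components obstacle you yourself flag.

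The paper sidesteps both problems by dispensing with line components entirely: for \emph{every} $i\in R$ it treats $\bm{p}_i$ as its own reference, computes $\Int(\bm{p}_i,\bm{p}_j)$ for all $j\parallel i$, and applies Lemma~\ref{lem:rays} to obtain $T_i=\{t\in[0,k_i-1]:\sum_{j\parallel i}\tau(\hat u_j)(\sigma(i,t))\neq 0\}$ (or the verdict $|T_i|\ge d+1$). Every point of every ray is now covered by some $T_i$; no grouping into components is needed, so the $\TC^0$ concern about connectivity disappears. The crossing argument also sharpens: a single non-parallel ray meets $\bm{p}_i$ in at most one point, so the threshold $d+1$ (rather than your $d^2+1$) already suffices to guarantee an uncrossed witness.
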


\begin{proof}
Take a power word $u = u_1^{k_1} \cdots u_d^{k_d}$ over $(\Gamma \cup \Sigma)^*$.
By Lemma~\ref{lemma-normalize-power-word} we normalize $u$ in $\TC^0$ so that
for every $i$, $u_i = a_i \, \sigma(u_i)$ for some $a_i \in \Gamma^*$.
To test $u = 1$ we need to verify both $\sigma(u) = 1$ and $\tau(u) = 0$.
The former equation is an instance of $\PowWP(H)$.

For $1 \le i \le d$ we define
$\hat u_i = \sigma(u_1^{k_1} \cdots u_{i-1}^{k_{i-1}}) \, u_i^{k_i}$.
By Lemma~\ref{lem:tau} we know that $\tau(u) = \sum_{i = 1}^d \tau(\hat u_{i})$.
For $1 \le i \le d$ and $0 \le k < k_i$ we define
\[
	\sigma(i,k) = \sigma(u_1^{k_1} \cdots u_{i-1}^{k_{i-1}} u_i^k).
\]
Notice that $\bm{p}_i = (\sigma(i,k))_{0 \le k < k_i}$ is a power-compressed progression,
and, since $u$ is normalized, we have $\supp(\tau(\hat u_i)) \subseteq \supp(\bm{p}_i)$ (we have equality
if $a_i \neq 0$).
Hence it suffices to test whether $\tau(u)(h) = 0$ for all $h \in \supp(\bm{p}_i)$ and $i \in [1,d]$.

Let $R = \{ i \in [1,d] \mid \sigma(u_i) \neq 1 \}$
and define the equivalence relation $\parallel$ on $R$
by $i \parallel j$ if and only if $\sigma(u_i)$ and $\sigma(u_j)$
are commensurable, or equivalently if the rays $\bm{p}_i$
and $\bm{p}_j$ are parallel.
For all $i,j \in R$ with $i \parallel j$ we compute
$\Int(\bm{p}_i,\bm{p}_j)$ as an arithmetic progression $\bm{s}_{i,j}$.
By Lemma~\ref{lem:intersection} this can be accomplished by a $\TC^0$-reduction to $\PowPP(H)$.
If $t \in \Int(\bm{p}_i,\bm{p}_j)$ then $\sigma(i,t) \in \supp(\bm{p}_j)$
and therefore $f_{\bm{s}_{i,j},a_j}(t) = a_j = \tau(\hat u_j)(\sigma(i,t))$.
If $t \in [0,k_i-1] \setminus \Int(\bm{p}_i,\bm{p}_j)$
then $\sigma(i,t) \notin \supp(\bm{p}_j)$ and therefore
$f_{\bm{s}_{i,j},a_j}(t) = 0 = \tau(\hat u_j)(\sigma(i,t))$.
Hence we have shown that
\begin{equation}
 f_{\bm{s}_{i,j},a_j}(t) = \tau(\hat u_j)(\sigma(i,t)), \quad \text{for all } 0 \le t \le k_i - 1.
\end{equation}
For all $i \in R$ we define
\begin{equation}
	\label{eq:ts}
	T_i = \{ t \in [0,k_i-1] \mid \sum_{i \parallel j} \tau(\hat u_j)(\sigma(i,t)) \neq 0 \}.
\end{equation}
By Lemma~\ref{lem:rays} for all $i \in R$ we can either compute the set $T_i$
or conclude that $|T_i| \ge d+1$.

If there exists $i \in R$ with $|T_i| \ge d+1$ then we claim that $\tau(u) \neq 0$:
We say that an index $j \in [1,d]$ {\em crosses} $t \in T_i$ if
$i \nparallel j$ and $\sigma(i,t) \in \supp(\bm{p}_j)$ (note that if $j \notin R$ then $i \nparallel j$ holds).
Notice that a single index $j \in [1,d]$ can cross at most one element $t \in T_i$
since otherwise $|\supp(\bm{p}_i) \cap \supp(\bm{p}_j)| \ge 2$,
which contradicts Lemma~\ref{lem:rays-cap}.
This implies that $T_i$ contains at most $d$ crossed elements
and therefore at least one uncrossed element, say $t \in T_i$.
Since $\sigma(i,t) \notin \supp(\bm{p}_j) \supseteq \supp(\tau(\hat u_j))$
for all $j \in [1,d]$ with $i \nparallel j$ we obtain
\[
	\tau(u)(\sigma(i,t)) = \sum_{j = 1}^d \tau(\hat u_j)(\sigma(i,t)) =
	\underbrace{\sum_{i \parallel j} \tau(\hat u_j)(\sigma(i,t))}_{\text{$\neq 0$ by \eqref{eq:ts} }} + 
	\underbrace{\sum_{i \nparallel j} \tau(\hat u_j)(\sigma(i,t))}_{\text{$=0$}} \neq 0,
\]
which shows the claim.

In the other case we have computed all sets $T_i$ for $i \in R$.
Using Lemma~\ref{lem:eval} we test in $\TC^0$ whether
\begin{equation}
	\label{eq:x1}
	\tau(u)(\sigma(i,t)) = 0, \quad \text{for all } t \in T_i \text{ and } i \in R
\end{equation}
and whether
\begin{equation}
	\label{eq:x2}
	\tau(u)(\sigma(i,0)) = 0, \quad \text{for all } i \in [1,d] \setminus R
\end{equation}
holds.
If any of the equalities in \eqref{eq:x1} and \eqref{eq:x2} does not hold we know that $\tau(u) \neq 0$.
Otherwise we can verify that $\tau(u) = 0$:
Let $h \in \bigcup_{1 \le i \le d} \supp(\bm{p}_i)$.
If $h = \sigma(i,0)$ for some $i \in [1,d] \setminus R$
or $h = \sigma(i,t)$ for some $t \in T_i$ and $i \in R$
we are done by \eqref{eq:x1} and \eqref{eq:x2}.
Now assume the contrary.
Then we know $\tau(\hat u_j)(h) = 0$ for all $j \in [1,d] \setminus R$.
We have
\begin{equation}
	\label{eq:tau-sum}
	\tau(u)(h) = \sum_{j \in R} \tau(\hat u_j)(h) + 
	\sum_{j \in [1,d] \setminus R} \tau(\hat u_j)(h) = \sum_{C \text{ a } \parallel\text{-class}} \ \sum_{j \in C} \tau(\hat u_j)(h),
\end{equation}
and we claim that $\sum_{j \in C} \tau(\hat u_j)(h) = 0$
for all $\parallel$-classes $C$.
Consider a $\parallel$-class $C$.
If $h = \sigma(i,t)$ for some $i \in C$ and $t \in [0,k_i-1]$
then
\[
	\sum_{j \in C} \tau(\hat u_j)(h) = \sum_{i \parallel j} \tau(\hat u_j)(\sigma(i,t)) = 0,
\]
since $t \notin T_i$.
Otherwise $h \notin \{ \sigma(i,t) \mid i \in C, \, t \in [0,k_i-1] \} = \bigcup_{i \in C} \supp(\bm{p}_i)$,
and therefore $\tau(\hat u_j)(h) = 0$ for all $j \in C$.
By \eqref{eq:tau-sum} we conclude that $\tau(u) = 0$.
\end{proof}

\subsubsection{Reducing $\PowPP(A \wr H)$ to $\PowWP(A \wr H)$ and $\PowPP(H)$}

\begin{lemma} \label{lemma:cyclic-red-pwp}
If the finitely generated group $H$ is torsion-free then $\PowPP(A \wr H)$ belongs to $\TC^0(\PowWP(A \wr H), \PowPP(H))$.
\end{lemma}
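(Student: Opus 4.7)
The plan is to reduce to $H$ by projecting via $\sigma$ and then split on whether $\sigma(u)=1$ holds in $H$, using $\PowPP(H)$ to pin down $z$ in the easy case and reducing to the abelian structure of $A^{(H)}$ in the other case, with $\PowWP(A\wr H)$ for final verification.

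First, I would compute $\sigma(u)\in\Sigma^{*}$ from the word $u$ and a power word for $\sigma(v)$ by simple projection, and decide whether $\sigma(u)=1$ in $H$; note that the word problem of $H$ is a trivial special case of $\PowWP(A\wr H)$ because $H$ embeds in $A\wr H$. If $\sigma(u)\neq 1$, then since $H$ is torsion-free the equation $\sigma(u)^{z}=\sigma(v)$ in $H$ has at most one integer solution, which can be obtained (or ruled out) by a single $\PowPP(H)$ query. I would then form the obvious power word for $u^{z}v^{-1}$ over $\Gamma\cup\Sigma$ and use one $\PowWP(A\wr H)$ query to check $u^{z}=v$; output $z$ if this holds and \textbf{no} otherwise.

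If $\sigma(u)=1$, then $u$ and all powers $u^{z}$ lie in $A^{(H)}$, so a solution forces $\sigma(v)=1$ (again a $\PowWP(A\wr H)$ query); if this fails, return \textbf{no}. Otherwise both $u$ and $v$ live in the abelian group $A^{(H)}$ and we need $z$ with $z\cdot\tau(u)=\tau(v)$. I would normalize $u$ via Lemma~\ref{lemma-normalize-power-word} to the form $u=a_{1}h_{1}\cdots a_{n}h_{n}$ with $a_{j}\in\Gamma^{*}$ and $h_{j}\in\Sigma^{*}$, and consider the candidate support positions $U_{j}:=\sigma(h_{1}\cdots h_{j-1})\in\Sigma^{*}$. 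Running $\PowWP(A\wr H)$ on each word $U_{i}U_{j}^{-1}$ buckets the indices by the $H$-value of $U_{j}$, and for each bucket with representative $U$ one computes $\tau(u)(U)=\sum_{j\colon U_{j}=U}a_{j}\in A$ directly. For each such $U$, Lemma~\ref{lem:eval} (reducible to $\PowPP(H)$) yields a power word for $\tau(v)(U)\in A$, which is then evaluated in $\TC^{0}$ since $A$ is f.g.\ abelian. Writing $A=\Z^{r}\oplus T$ with $T$ the finite torsion part: if some bucket yields $\tau(u)(U)$ with nonzero $\Z^{r}$-projection, then $z$ is uniquely determined by the resulting linear equation in $\Z^{r}$; otherwise every $\tau(u)(U)$ is torsion, so $u$ itself has order dividing the constant $e_{A}:=\exp(T)$, and I would try every $z\in\{0,1,\ldots,e_{A}-1\}$ as a candidate. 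In all cases the candidate $z$ is confirmed or rejected by a final $\PowWP(A\wr H)$ query on $u^{z}v^{-1}$.

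The main obstacle is the subcase $\sigma(u)=1$: although $A^{(H)}$ is abelian, it is infinitely generated over $H$, so we cannot list the support of $\tau(u)$ merely by inspecting $u$. The workaround is to read off candidate support positions from the normalized form of $u$, bucket them by $H$-equality with $\PowWP(A\wr H)$, and compute $\tau(v)$ pointwise via Lemma~\ref{lem:eval}; the decomposition $A=\Z^{r}\oplus T$ then reduces the search for $z$ either to a linear Diophantine equation in $\Z^{r}$ or to an $O(1)$-size enumeration, both of which are comfortably handled in $\TC^{0}$.
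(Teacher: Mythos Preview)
Your proposal is correct and follows essentially the same approach as the paper: split on whether $\sigma(u)=1$, in the nontrivial case use $\PowPP(H)$ to pin down the unique candidate $z$ and verify with $\PowWP(A\wr H)$, and in the trivial case read off $\supp(\tau(u))$ from $u$, evaluate $\tau(v)$ there via Lemma~\ref{lem:eval}, and use the decomposition $A\cong\Z^{r}\oplus T$ to either determine $z$ from a single $\Z^{r}$-coordinate or enumerate the constantly many residues. Your final paragraph slightly misstates the obstacle (one \emph{can} list $\supp(\tau(u))$ directly from the plain word $u$; it is $\tau(v)$ that is the issue), but your actual argument does exactly the right thing.
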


\begin{proof}
We want to solve $u^x=v$ in $A \wr H$, where $u \in (\Gamma \cup \Sigma)^*$ 
and $v$ is a power compressed word, namely $v=v_1^{k_1} \cdots v_d^{k_d}$
with binary encoded integers $k_j$ and $v_j \in (\Gamma \cup \Sigma)^*$.
We check whether $\sigma(u)=1$, which is an instance of $\PowWP(H)$, and make a case distinction:

\subparagraph{Case 1.} $\sigma(u)\neq 1$:
Since $H$ is torsion-free the equation $\sigma(u)^x = \sigma(v)$ has at most one solution.
We can solve it using the oracle for $\PowPP(H)$. 
If $\sigma(u)^x = \sigma(v)$ has no solution then also $u^x = v$ has no solution.
Otherwise we obtain a binary encoded $z \in \Z$ with $\sigma(u)^z = \sigma(v)$.
It remains to check whether $u^z = v$ in $A \wr H$, i.e. whether
$v_1^{k_1} \cdots v_d^{k_d} u^{-z} = 1$ in $A \wr H$.
This is an instance of $\PowWP(A \wr H)$.

\subparagraph{Case 2.} $\sigma(u)=1$.
We first check whether $\sigma(v)=1$ in $H$, which is an instance of $\PowWP(H)$.
If  $\sigma(v)\neq 1$ then we output {\bf no}.
Now assume that $\sigma(u)=\sigma(v)=1$.
We can compute $\supp(\tau(u))$ as well as $\Gamma$-words for all $\tau(u)(h)$ ($h \in \supp(\tau(u))$)
 in $\TC^0$ by going over all prefixes of the word $u$
(see Section~\ref{sec-wreath}).

Since $\sigma(u)=\sigma(v)=1$,
the equation $u^x = v$ is equivalent to 
$x \cdot \tau(u) = \tau(v)$. The f.g.~abelian group $A$ can be written as $A = \Z^m \times B$
for some finite abelian group $B$ and $m \in \N$. If $\tau(u)(h) \in B$ for all $h \in \supp(\tau(u))$
then $u$ has order at most $|B|$. Hence, 
there is a $z \in \Z$ with $u^z = v$ if and only if there is $0 \leq z < |B|$
with $u^z = v$. Using the oracle for $\PowWP(A \wr H)$ we can check
all such $z$ in parallel. Now assume that there is  $h \in \supp(\tau(u))$
such that $\tau(u)(h) = (\bm{a},b)$ for some $\bm{a} = \Z^m \setminus \{\bm{0}\}$.
From the word for $\tau(u)(h)$ we can compute the vector
$\bm{a}$ in unary notation. Moreover, 
using Lemma~\ref{lem:eval} we compute in $\TC^0$ a power word for
$\tau(v)(h) \in A$ in $\TC^0$. Let $\tau(v)(h) = (\bm{b},c)$. From the computed
power word we can compute (using simple arithmetic) the binary encoding of the vector
$\bm{b}$. 

Every solution $z$ for $u^x=v$ has to satisfy $z \cdot \bm{a} = \bm{b}$. 
The only candidate for this is $z = b_i/a_i$ (recall that integer division is in $\TC^0$) where $a_i$ is a non-zero entry 
of the vector $\bm{a} \neq \bm{0}$ and $b_i$ is the corresponding entry of $\bm{b}$.
If $z$ is not an integer, then $u^x=v$ has no solution. 
Otherwise, if $z \in \Z$, we check
whether $u^z = v$ using the oracle for $\PowWP(A \wr H)$. 
\end{proof}
  
\subsection{Proof of Theorem~\ref{thm:iterated-pwp}} \label{section-21->3}

In this section we deduce Theorem~\ref{thm:iterated-pwp}
 from Theorem~\ref{thm:ppp-wr}. Recall the definition of the iterated wreath products $W_{m,r}$. 
Every $W_{m,r}$ is orderable, and hence is torsion-free and has the unique roots property, see Section~\ref{sec-groups}.
The main point is that all groups $W_{m,r}$ are c-tame.
We start with the following easy lemma which covers the case $m=0$ (i.e., $W_{m,r} = \Z^r$).

\begin{lemma} \label{easy-integer-equations}
For vectors $\bm{a},\bm{b} \in \Z^n \setminus \{\bm{0}\}$ there exist numbers $s,t \in \Z$
with $|s| \leq \|\bm{b}\|$, $|t| \leq \|\bm{a}\|$ such that
$\{ (x,y) \in \Z^2 \mid x \bm{a} = y \bm{b} \} = \langle (s,t) \rangle$.
\end{lemma}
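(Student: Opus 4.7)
The plan is to recognize $S := \{(x,y) \in \Z^2 \mid x\bm{a} = y\bm{b}\}$ as a subgroup of $\Z^2$ (being the kernel of the $\Z$-linear map $(x,y) \mapsto x\bm{a} - y\bm{b}$) and then do a case split on whether $\bm{a}$ and $\bm{b}$ are linearly dependent over $\Q$. Since every subgroup of $\Z^2$ is free abelian of rank at most $2$ and $S$ is contained in a line through the origin (as vectors in $\Q^2$), $S$ is cyclic; so the task is to identify a generator and bound its entries.

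If $\bm{a}$ and $\bm{b}$ are linearly independent over $\Q$, then $x\bm{a} = y\bm{b}$ in $\Q^n$ forces $x=y=0$, so $S = \{(0,0)\} = \langle (0,0)\rangle$, and $(s,t)=(0,0)$ satisfies the bounds vacuously. Otherwise, since $\bm{a}, \bm{b} \neq \bm{0}$, there exist uniquely $p \in \Z\setminus\{0\}$ and $q \in \Z_{>0}$ with $\gcd(p,q)=1$ and $q\bm{a} = p\bm{b}$. A direct calculation shows that $(x,y) \in S$ iff $xp = yq$; using $\gcd(p,q)=1$, this forces $q \mid x$ and then $y = (x/q)p$, so $S = \langle (q,p) \rangle$, and I would set $s := q$, $t := p$.

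For the bounds I would choose an index $i$ with $b_i \neq 0$; then from $q\bm{a} = p\bm{b}$ we get $q a_i = p b_i$, and since $\gcd(p,q)=1$ the standard divisibility argument yields $q \mid b_i$ and $p \mid a_i$ (note $a_i \neq 0$ since $p, b_i \neq 0$). Hence $|s| = q \leq |b_i| \leq \|\bm{b}\|$ and $|t| = |p| \leq |a_i| \leq \|\bm{a}\|$, as required.

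There is no real obstacle here: the whole argument is routine integer linear algebra, and the only point that needs a moment's care is the divisibility step justifying that the coprime-reduced coefficients $p, q$ are entrywise bounded by coordinates of $\bm{a}$ and $\bm{b}$.
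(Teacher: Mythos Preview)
Your proposal is correct and follows essentially the same approach as the paper's proof: both do a case split on linear dependence over $\Q$, handle the independent case trivially, and in the dependent case use a coprime relation $s\bm{a}=t\bm{b}$ (your $q\bm{a}=p\bm{b}$) to identify the generator and obtain the divisibility bounds. The only cosmetic difference is that the paper notes $s$ divides \emph{all} entries of $\bm{b}$, whereas you pick one nonzero entry; both yield the required bound.
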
 

\begin{proof}
If $\bm{a}$ and $\bm{b}$
are linearly independent over $\mathbb{Q}$
then $(0,0)$ is the only rational and hence integer solution.
Otherwise there exist coprime integers $s,t \in \Z \setminus \{0\}$ with $s \bm{a} = t \bm{b}$.
Since $s$ divides all entries in $\bm{b}$ we must have $|s| \le \|\bm{b}\|$,
and similarly for $t$ and $\bm{a}$.
Observe that $x \bm{a} = y \bm{b}$ is equivalent to $t x = s y$.

Therefore, every vector in $\langle (s,t)\rangle$ is a solution. Conversely,
if $tx=sy$, then $s$ divides $x$ (due to coprimality of $s$ and $t$) and thus
$(x,y)=\tfrac{x}{s}(s,t)$ is an integer multiple of $(s,t)$.
Hence the solution set is $\langle (s,t) \rangle$.
\end{proof}
Recall the definition of the iterated wreath products $W_{m,r}$. 
Every $W_{m,r}$ is orderable, and hence is torsion-free
and has the unique roots property, see Section~\ref{sec-groups}.

\begin{proposition} \label{prop:wmr-ctame}
For all $r \ge 1$, $m \ge 0$ the groups $W_{m,r}$ and $S_{m,r}$ are c-tame.
\end{proposition}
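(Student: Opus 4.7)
The plan is to prove c-tameness of $W_{m,r}$ by induction on $m$ and then transfer the property to $S_{m,r}$ using the Magnus embedding $S_{m,r} \hookrightarrow W_{m,r}$. The base case $m=0$ is handled directly by \cref{easy-integer-equations}: for commensurable nonzero $g,h \in \Z^r$, viewed as integer vectors, the lemma provides nonzero $s,t$ with $sg=th$ and $|s|\le \|h\| \le |h|$, $|t|\le \|g\| \le |g|$, a linear bound.

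For the inductive step, let $G=W_{m+1,r}=A\wr H$ with $A=\Z^r$ and $H=W_{m,r}$. By \cref{thm-wreath-orderable}, $G$ is orderable, hence torsion-free with the unique roots property. Take commensurable $g,h\in G$ of infinite order. By \cref{lemma-commensurable-cyclic}, $\langle g,h\rangle=\langle a\rangle$ for some $a\in G$; writing $g=a^p$, $h=a^q$ and $a=g^k h^\ell$ gives $a^{kp+\ell q}=a$, so $kp+\ell q=1$, forcing $\gcd(p,q)=1$ and $p,q\neq 0$. I would then split on $\sigma(a)$. If $\sigma(a)=1$, then $a \in A^{(H)}$, so $g$ and $h$ are integer-valued vectors on the finite index set $(\supp(g)\cup\supp(h))\times\{1,\dots,r\}$ whose coordinates are bounded in absolute value by $|g|,|h|$ respectively; applying \cref{easy-integer-equations} gives $s,t\neq 0$ with $g^s=h^t$ and $|s|,|t|$ linear in $|g|+|h|$. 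If $\sigma(a)\neq 1$, then $\sigma(g)=\sigma(a)^p$ and $\sigma(h)=\sigma(a)^q$ are commensurable nontrivial elements of $H$ (using that $H$ is torsion-free and $p,q\ne 0$), so the inductive hypothesis supplies $s_0,t_0 \in \Z\setminus\{0\}$ with $\sigma(g)^{s_0}=\sigma(h)^{t_0}$ and $|s_0|,|t_0|\le\O((|g|+|h|)^{d_m})$. From $\sigma(a)^{ps_0-qt_0}=1$ and the torsion-freeness of $H$ it follows that $ps_0=qt_0$; combining with $\gcd(p,q)=1$ we obtain $q\mid s_0$ and $p\mid t_0$, so $|p|\le|t_0|$ and $|q|\le|s_0|$ are polynomially bounded. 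Setting $(s,t)=(q,p)$ yields $g^s=a^{pq}=h^t$ with the desired bound.

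For $S_{m,r}$ I would observe that c-tameness passes to finitely generated subgroups with the right choice of constants: if $S_{m,r}$ is generated inside $W_{m,r}$ by a finite set $\Sigma_S$ and $C=\max_{\sigma\in\Sigma_S}|\sigma|_{W_{m,r}}$, then $|g|_{W_{m,r}}\le C|g|_{S_{m,r}}$ for every $g\in S_{m,r}$, so any polynomial bound in $|g|_{W_{m,r}}+|h|_{W_{m,r}}$ translates into a polynomial bound in $|g|_{S_{m,r}}+|h|_{S_{m,r}}$. The main difficulty is Case 2 of the inductive step: orderability is what permits compressing the commensurability datum into a single cyclic subgroup $\langle a\rangle$, and the divisibility relations $q\mid s_0$, $p\mid t_0$ coming from $\gcd(p,q)=1$ are precisely what allow the polynomial witness $(s_0,t_0)$ in $H$ to be converted into a polynomial witness $(q,p)$ in $G$ without any blowup in the degree.
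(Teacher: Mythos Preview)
Your proof is correct and reaches the same conclusion with the same degree bound, but the route differs from the paper's in the main case of the inductive step.

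The paper works directly with the solution subgroups $U=\{(x,y)\in\Z^2\mid g^x=h^y\}$ and $V=\{(x,y)\in\Z^2\mid \sigma(g)^x=\sigma(h)^y\}$ of $\Z^2$. When $\sigma(g)\neq 1\neq\sigma(h)$, \cref{lemma-2dim} makes $V$ cyclic, say $V=\langle(s,t)\rangle$, and the inductive hypothesis bounds $|s|,|t|$ polynomially. Since $U\le V$ is generated by some multiple $a(s,t)$, one has $g^{as}=h^{at}$, and the unique roots property of $W_{m,r}$ immediately gives $g^s=h^t$, hence $U=V$ and the same small witness works in $G$. Your argument instead invokes \cref{lemma-commensurable-cyclic} to extract a generator $a$ of $\langle g,h\rangle$, writes $g=a^p$, $h=a^q$ with $\gcd(p,q)=1$, and then uses the inductive witness $(s_0,t_0)$ for $\sigma(g),\sigma(h)$ together with the divisibility relations $p\mid t_0$, $q\mid s_0$ to bound $|p|,|q|$. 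This is a legitimate alternative: in effect you are reproving ``$U=V$'' by identifying the primitive generator $(q,p)$ of $U$ and bounding it against the inductive witness, whereas the paper obtains $U=V$ in one stroke from unique roots. The paper's version is a little leaner (no need to materialize $a$ or appeal to \cref{lemma-commensurable-cyclic}), while yours makes the role of coprimality explicit. Your treatment of the case $\sigma(a)=1$ and the passage to $S_{m,r}$ via the Magnus embedding match the paper's (the paper leaves the subgroup argument implicit in ``It suffices to show the statement for $W_{m,r}$'').
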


\begin{proof}
It suffices to show the statement for $W_{m,r}$.
We fix the rank $r$ and the prove the claim by induction on $m$.
If $m = 0$ then $W_{0,r} = \Z^r$ and the statement follows from Lemma~\ref{easy-integer-equations}.
Now assume that $m \ge 1$ and
let $u,v$ be words over the generators of $W_{m,r}$ with $u \neq 1 \neq v$ in $W_{m,r}$.
Let $U=\{(x,y) \in \Z^2 \mid u^x =v^y \}$ and
$V=\{(x,y) \in \Z^2 \mid \sigma(u)^x=\sigma(v)^y\}$.
By Lemma~\ref{lemma-2dim} the sets $U,V$ are subgroups of $\Z^2$ and $U \leq V$.

\subparagraph{Case 1.} $\sigma(u) \neq 1 \neq \sigma(v)$:
Then $V$ is cyclic by Lemma~\ref{lemma-2dim},
say $V= \langle (s, t) \rangle$.
By the induction hypothesis, $|s|$ and $|t|$ are polynomially bounded in $|\sigma(uv)| \leq |uv|$.
Since $U \leq V$ we can write $U$ as $U = \langle a\cdot (s, t) \rangle$ for some $a>0$. 
We obtain the identity $u^{as}=v^{at}$ in $W_{m,r}$.
Since $W_{m,r}$ has the unique roots property we get
$u^{s}=v^{t}$ and hence $U=V = \langle (s,t) \rangle$.
Since  $|s|$ and $|t|$ are polynomially bounded in $|uv|$, we are done.

\subparagraph{Case 2.} $\sigma(u)=1$ or $\sigma(v)=1$ in $W_{m-1,r}$.
If exactly one of these projections is $1$, then we only have the trivial solution $(0,0)$ for the equation $u^x=v^y$,
since $W_{m-1,r}$ and $W_{m,r}$ are torsion-free.
If $\sigma(u)= \sigma(v)=1$ then $\tau(u) \neq 0 \neq \tau(v)$ by $u \neq 1 \neq v$.
The equation $u^x =v^y$ is equivalent to $x \cdot \tau(u) = y \cdot \tau(v)$
in the abelian group $(\Z^r)^{(W_{m-1,r})}$.
Since the absolute values of the integers that appear in the images of $\tau(u)$ and $\tau(v)$
are linearly bounded by $|uv|$ we can conclude the proof with Lemma~\ref{easy-integer-equations}.
\end{proof}
We can now show Theorem~\ref{thm:iterated-pwp}:

\begin{proof}[Proof of Theorem~\ref{thm:iterated-pwp}]
We will prove by induction on $m \in \N$
that $\PowPP(W_{m,r})$ and hence also $\PowWP(W_{m,r})$ belongs to $\TC^0$.
If $m = 0$ then $\PowPP(W_{0,r})$ is the problem of solving a system
of $r$ linear equations $a_i x = b_i$ where $a_i$ is given in unary encoding
and $b_i$ is given in binary encoding for $1 \le i \le r$.
Since integer division belongs to $\TC^0$
(here, we only have to divide by the unary encoded integers $a_i$) this problem can be solved
in $\TC^0$.
The inductive step follows from Theorem~\ref{thm:ppp-wr} and the fact that all groups $W_{m,r}$
are c-tame (Proposition~\ref{prop:wmr-ctame}) and torsion-free.
\end{proof}

 \subsection{Proof of Theorem~\ref{thm:abelian-wr}}
 
 For the rest of this section fix the groups $H$ and $A$ from Theorem~\ref{thm:abelian-wr}. Hence, $A$ is f.g.~abelian
and $H$ is orderable and knapsack-semilinear with $\mathsf{E}_H(n) = 2^{n^{\O(1)}}$. By Theorem~\ref{thm:closure-wreath}, 
also $A \wr H$ is knapsack-semilinear.

The main idea for the proof of Theorem~\ref{thm:abelian-wr} is to describe the solution set $\Sol_{A \wr H}(E)$ for a given exponential expression
$E$ by a Presburger formula (Section~\ref{sec-constructing-formula}). This formula is an exponentially long 
disjunction of existential Presburger formulas. For bounding the magnitude 
of the solution set, the disjunction (leading to a union of semilinear sets) as well as the existential quantifiers (leading to a projection
of a semilinear set) have no influence. The remaining formula is a polynomially large conjunction of semilinear constraints of 
exponential magnitude. With Lemma~\ref{lem:cap} we then obtain an exponential bound on the magnitude of the solution set.

A crucial fact is that our Presburger formula for $\Sol_{A \wr H}(E)$ does not involve quantifier alternations. This is in contrast to 
the Presburger formulas constructed in \cite{GanardiKLZ18} for showing that the class of knapsack-semilinear groups is closed under wreath products.
We can avoid quantifier alternations since we restrict to wreath products $A \wr H$ with $A$ abelian.
Let us also remark that we do not have to algorithmically construct the Presburger formula for the solution set. Only its existence is 
important, which yields an exponential bound on the size of a solution. 

Before we construct the Presburger formula for the set of solutions, we first have to introduce a 
certain decomposition of solutions that culminates in Proposition~\ref{prop:main}.

\subsubsection{Decomposition into packed bundles}

In this section, we will only work with the orderable group $H$.
A {\em bundle} $P$ is a finite multiset of progressions over $H$.
A {\em refinement} of a progression $\bm{p} = (p_i)_{0 \le i \le m}$
is a bundle 
$\ms{ (p_i)_{m_{k-1} \le i \le m_k-1} \mid 1 \le k \le \ell }$
for some $0 = m_0 < m_1 < \dots < m_\ell = m+1$.
A bundle $Q$ is a {\em refinement} of a bundle $P$
if one can decompose $Q = \bigcup_{\bm{p} \in P} Q_{\bm{p}}$ such that each $Q_{\bm{p}}$ is a refinement of $\bm{p}$.
We emphasize that a union of bundles is always understood as the union of multisets, and that $|Q|$  (for a bundle $Q$)
refers to the size of $Q$ as a multiset.

Two progressions $\bm{p}_1, \bm{p}_2$ are {\em disjoint} if $\supp(\bm{p}_1) \cap \supp(\bm{p}_2) = \emptyset$.
Two bundles $P,Q$ are {\em disjoint} if any two progressions $\bm{p} \in P$, $\bm{q} \in Q$ are disjoint.
A bundle $P$ is {\em stacking} if there exists $h \in H$
such that $\supp(\bm{p}) = \{h\}$ for all $\bm{p} \in P$.

\begin{lemma}
	\label{lem:decomp}
	For every bundle $P$
	there exists a refinement $Q$ of $P$ of size $|Q| = \O(|P|^3)$
	and a partition $Q = \bigcup_k Q_k$ into pairwise disjoint subbundles $Q_k$ such that
	each bundle $Q_k$ consists of parallel rays or is stacking.
\end{lemma}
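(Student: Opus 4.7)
The plan is to split the bundle $P$ into a set $R$ of rays and a set $S$ of single-point progressions (those with support of size one), then to cut each ray apart at the few points where it crosses a progression from a different ``parallelism class'' or an element of $S$. Within each parallelism class the rays are allowed to overlap freely, so we leave them intact; the crossing points get peeled off as singleton pieces and gathered into stacking subbundles. The key leverage is Lemma~\ref{lem:rays-cap}, which says that two non-parallel rays share at most one support point, so the number of ``bad'' cut indices on any ray is small.

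More concretely, let $\parallel$ be the parallelism equivalence relation on $R$ and let $C_1, \ldots, C_m$ be its equivalence classes. For each ray $\bm{p} \in C_i$ define the set of bad indices
\[
  B_{\bm{p}} \;=\; \{\, j \,\mid\, p_j \in \supp(\bm{q}) \text{ for some } \bm{q} \in S \cup \bigcup_{k \ne i} C_k \,\}.
\]
By Lemma~\ref{lem:rays-cap} every foreign ray contributes at most one index to $B_{\bm{p}}$, and every single-point progression contributes at most one, so $|B_{\bm{p}}| < |P|$. Refine $\bm{p}$ by using the indices in $B_{\bm{p}}$ as cut points, isolating each of them as a length-one sub-progression; the in-between blocks yield at most $|B_{\bm{p}}|+1$ further pieces, so $\bm{p}$ contributes at most $2|B_{\bm{p}}|+1 = O(|P|)$ sub-progressions. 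Summing over $\bm{p} \in R$ and adding the $|S|$ progressions of $S$ unchanged gives a refinement $Q$ of $P$ with $|Q| = O(|P|^2) \le O(|P|^3)$.

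To partition $Q$, put every non-isolated sub-progression coming from the refinement of some $\bm{p} \in C_i$ into a bundle $Q_i$ (length-one leftover pieces inherit the parent's period, so $Q_i$ consists of pairwise parallel rays), and for each point $h$ that arises either as the value of an isolated bad-index piece or as the support of some $s \in S$, put all such singleton pieces together into a stacking bundle $Q_h$. Disjointness is then easy to verify in three cases: between $Q_i$ and $Q_k$ with $i \ne k$, any shared point would be a bad index for rays on both sides and hence removed; between $Q_i$ and $Q_h$, the point $h$ lies in the support of some progression outside $C_i$, so it is bad for any ray of $C_i$ that contains it and thus excised; and distinct $Q_h, Q_{h'}$ sit at distinct support points.

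The main obstacle I anticipate is a bookkeeping one rather than a conceptual one: making sure that the pieces classified as belonging to a parallelism class $Q_i$ really do satisfy the ``parallel rays'' clause when some of them are singletons (which have an arbitrary period by convention), and that every potentially shared support point is routed into exactly one stacking subbundle. Once one fixes the convention that singleton pieces inherit their parent ray's period and that each bad point spawns a single stacking subbundle aggregating contributions from all classes and from $S$, the verification reduces to the three disjointness cases above and the $|B_{\bm{p}}| < |P|$ bound from Lemma~\ref{lem:rays-cap}.
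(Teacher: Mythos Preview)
Your proof is correct and in fact yields the slightly better bound $|Q|=\O(|P|^2)$, since each ray acquires fewer than $|P|$ cut indices. The paper takes a more symmetric route: it first collects the set $S$ of \emph{all} size-one intersection points $\supp(\bm p)\cap\supp(\bm q)$ over every pair $\bm p,\bm q\in P$ (so $|S|\le |P|^2$), and then refines \emph{every} progression at \emph{every} point of $S$; afterwards any two pieces whose supports meet in exactly one point must both be singletons with equal support, and the bundling rule becomes ``length $\ge 2$ and parallel'' versus ``singleton with equal support''. Your approach is more targeted --- you only cut a ray where it meets something from a different parallelism class or a singleton --- which is why you save a factor of $|P|$. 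The paper's version has the minor advantage that it never places a length-one piece into a ``parallel rays'' bundle, so it does not need the convention that a singleton sub-progression inherits its parent's period; but since the paper explicitly allows an arbitrary period for length-one progressions, your convention is perfectly legitimate. The disjointness verification you sketch (in particular the observation that any point appearing in some $Q_h$ is automatically a bad index for every ray in every $C_i$ that passes through it) is sound.
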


\begin{proof}
	Let $S$ be the union of all intersections $\supp(\bm{p}) \cap \supp(\bm{q})$
	of size one over all $\bm{p},\bm{q} \in P$, which contains at most $|P|^2$ elements.
	We refine each progression $\bm{p} = (p_i)_{0 \le i \le m}$ into progressions $\bm{p}^{(j)}$ such that
	$|\supp(\bm{p}^{(j)})| = 1$ or $\supp(\bm{p}^{(j)}) \cap S = \emptyset$ as follows.
	Define the following relation on $[0,m]$:
	Let $i_1 \sim i_2$ if either (i) there exists $h \in S$ such that $p_i = h$ for all $i_1 \le i \le i_2$
	or (ii) $p_i \notin S$ for all $i_1 \le i \le i_2$.
	Notice that this defines an equivalence relation,
	which partitions $[0,m]$ into at most $2|S|+1 \le 2|P|^2 + 1$ many intervals
	and in that way yields a refinement $Q_{\bm{p}}$ of $\bm{p}$ of size $2|P|^2 + 1$.
	Let $Q$ be the union of all bundles $Q_{\bm{p}}$ over all $\bm{p} \in P$,
	which contains $\O(|P|^3)$ many progressions.
	Notice that $S$ is still the union of all intersections $\supp(\bm{p}) \cap \supp(\bm{q})$
	of size one over all $\bm{p}, \bm{q} \in Q$.
	Therefore any two progressions $\bm{p}, \bm{q} \in Q$ with
	$|\supp(\bm{p}) \cap \supp(\bm{q})| = 1$ satisfy
	$|\supp(\bm{p})| = |\supp(\bm{q})| = 1$ and $\supp(\bm{p}) = \supp(\bm{q})$.
	
	Finally we define the subbundles $Q_k$.
	Two $\bm{p},\bm{q} \in Q$ are bundled together if
	\begin{enumerate}
	\item $|\supp(\bm{p})|,|\supp(\bm{q})| = 1$ and $\supp(\bm{p}) = \supp(\bm{q})$, or
	\item $|\supp(\bm{p})|,|\supp(\bm{q})| \ge 2$ and $\bm{p},\bm{q}$ are parallel.
	\end{enumerate}
	Let us verify that any two progressions $\bm{p}, \bm{q} \in Q$ which are not in the same bundle have disjoint supports.
	As observed above, if $|\supp(\bm{p}) \cap \supp(\bm{q})| = 1$
	then $\supp(\bm{p}) = \supp(\bm{q})$, which would mean that $\bm{p}, \bm{q}$ are in the same bundle.
	If $|\supp(\bm{p}) \cap \supp(\bm{q})| \ge 2$ then $\bm{p}$ and $\bm{q}$ are parallel rays by Lemma~\ref{lem:rays-cap},
	which contradicts the fact that they are in different bundles.
\end{proof}

A ray $\bm{p} = (ab^i)_{0 \le i \le m}$ is {\em packed} into a ray $\bm{q} = (gh^j)_{0 \le i \le \ell}$
if $b = h^d$ for some $d \in \Z \setminus \{0\}$
and $\supp(\bm{p}) = \supp(\bm{q}) \cap a\langle b \rangle$.
Intuitively, this means that $\bm{p}$ is contained in $\bm{q}$
and $\bm{p}$ cannot be extended in $\bm{q}$.
More explicitly, the latter condition states that
$i \in \Z$ and $ab^i \in \supp(\bm{q})$ implies $i \in [0,m]$
(we call this the {\em maximality condition}).
A bundle $P$ of rays is {\em packed} into $\bm{q}$ if every $\bm{p} \in P$
is packed into $\bm{q}$.

\begin{lemma}
\label{lem:pq}
Let $\bm{p} = (ab^i)_{0 \le i \le m}$, $\bm{q} = (gh^j)_{0 \le j \le \ell}$ be rays
with $b = h^d$ for some $d \in \Z \setminus \{0\}$.
\begin{enumerate}[(i)]
\item $\{ i \in \Z \mid ab^i \in \supp(\bm{q}) \}$ is an interval. \label{lem:is-interval}
\item $\bm{p}$ is packed into the ray $\bm{q}$ if and only if
$a,ab^m \in \supp(\bm{q})$ and
$ab^{-1}, ab^{m+1} \in \overline{\supp}(\bm{q})$
where $\overline{\supp}(\bm{q}) = \{ gh^j \mid j \in \Z \setminus [0,\ell] \}$. \label{lem:packed}
\item If a bundle $P$ is packed into $\bm{q}$
then $P$ is packed into a subray $\bm{q'}$ of $\bm{q}$
whose endpoints are endpoints of rays in $P$. \label{lem:endpoints}
\item If $\bm{p}$ is packed into $\bm{q}$
then $\supp(\bm{p}) = \{ gh^j \mid 0 \le j \le \ell, \, j \equiv t \pmod d \}$
for some unique remainder $0 \le t < |d|$. \label{lem:mod}
\end{enumerate}
\end{lemma}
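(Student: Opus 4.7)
My plan is to exploit a common parameterization. Since $H$ is orderable hence torsion-free, the map $j \mapsto gh^j$ from $\Z$ into the extended ray $\{gh^j \mid j \in \Z\}$ is injective; combined with $b = h^d$ this gives the clean formula $ab^i = gh^{t+id}$ whenever $a \in g\langle h\rangle$ with $a = gh^t$, and $a\langle b\rangle \cap g\langle h\rangle = \emptyset$ otherwise. With this setup I expect parts (i), (ii), (iv) to fall out routinely, and I see (iii) as the only step needing any real care.

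For (i), I will observe that $\{i \in \Z \mid ab^i \in \supp(\bm{q})\}$ is either empty or equals $\{i \in \Z \mid 0 \le t + id \le \ell\}$, which is an interval since $d \neq 0$. For (ii), I will note that the packing identity $\supp(\bm{p}) = \supp(\bm{q}) \cap a\langle b\rangle$ is equivalent to the interval $I$ from (i) being exactly $[0,m]$, which in turn is equivalent to $\{0,m\} \subseteq I$ and $\{-1,m+1\} \cap I = \emptyset$; translating the latter back through the parameterization, and using injectivity to identify ``not in $\supp(\bm{q})$'' with ``in $\overline{\supp}(\bm{q})$'' for elements of the extended ray, will yield the stated characterization. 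For (iv), I will write $a = gh^{j_1}$ with $j_1 \in [0,\ell]$, so $\supp(\bm{p}) = \{gh^{j_1+id} \mid 0 \le i \le m\}$; the packing condition then forces this to coincide with $\{gh^j \mid 0 \le j \le \ell, \; gh^j \in a\langle b\rangle\}$, and since $a\langle b\rangle = gh^{j_1}\langle h^d\rangle$, injectivity reduces $gh^j \in a\langle b\rangle$ to $j \equiv j_1 \pmod{|d|}$. Taking $t$ to be the residue of $j_1$ in $[0,|d|)$ will give both existence and uniqueness.

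The hard part will be (iii), where the candidate subray must be chosen compatibly with all rays in $P$ at once. I plan to let $J \subseteq [0,\ell]$ be the set of indices corresponding to endpoints of rays in $P$ (viewed as elements of $\supp(\bm{q})$) and take $\bm{q'} = (gh^j)_{\min J \le j \le \max J}$. By construction the endpoints of $\bm{q'}$ are endpoints of rays in $P$; the substantive check is that every $\bm{p} \in P$ remains packed into $\bm{q'}$. The inclusion $\supp(\bm{p}) \subseteq \supp(\bm{q'})$ will follow from the monotonicity of $i \mapsto j_1+id$ in $i$ (regardless of the sign of $d$): once both endpoint indices of $\bm{p}$ lie in $[\min J, \max J]$, so do the intermediate ones. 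The maximality condition $\supp(\bm{p}) = \supp(\bm{q'}) \cap a\langle b\rangle$ is inherited for free, since any element of $\supp(\bm{q'}) \cap a\langle b\rangle$ lies in $\supp(\bm{q}) \cap a\langle b\rangle = \supp(\bm{p})$ by the packing of $\bm{p}$ into $\bm{q}$. I do not anticipate any deeper obstacle beyond this monotonicity bookkeeping.
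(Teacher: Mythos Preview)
Your proposal is correct. Parts (i), (ii), and (iv) follow essentially the same route as the paper, just organized around the explicit parameterization $ab^i = gh^{t+id}$ that you set up at the outset; the paper rederives this relation inline in each part.

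The only genuine difference is in (iii). The paper argues iteratively: as long as an endpoint $q$ of $\bm{q}$ is not an endpoint of any ray in $P$, remove it; since $\overline{\supp}(\bm{q}) \subseteq \overline{\supp}(\bm{q'})$ and the removed point is not $a$ or $ab^m$ for any $\bm{p}\in P$, the characterization from (ii) is preserved. You instead construct $\bm{q'}$ in one shot as the subray over $[\min J,\max J]$ and verify packing directly from the definition, using monotonicity of $i\mapsto j_1+id$ for containment and the inclusion $\supp(\bm{q'})\subseteq\supp(\bm{q})$ for maximality. Both arguments are short; yours is arguably cleaner because it avoids the detour through (ii) and makes the resulting subray explicit, while the paper's version makes the induction-free ``peel off one endpoint at a time'' picture more visible.
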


\begin{proof}
For point \eqref{lem:is-interval} consider integers $i_1 \le i \le i_2$
and assume that $ab^{i_1}, ab^{i_2} \in \supp(\bm{q})$, 
i.e. there exist $j_1,j_2 \in [0,\ell]$ with $ab^{i_1} = gh^{j_1}$ and $ab^{i_2} = gh^{j_2}$.
Hence $ah^{di_1} = gh^{j_1}$ and $ah^{di_2} = gh^{j_2}$. From this we obtain
\[ h^{j_2-j_1} = (gh^{j_1})^{-1} gh^{j_2}=(ah^{di_1})^{-1}ah^{di_2}=  h^{d(i_2-i_1)} \]
and therefore $j_2-j_1 = d(i_2-i_1)$ (since $h$ has infinite order).
We claim that $ab^i = ab^{i_1 + (i-i_1)} = gh^{j_1 + d(i-i_1)}$ belongs to $\supp(\bm{q})$:
If $d>0$ then $j_1 \le j_1 + d(i-i_1) \le j_1+d(i_2-i_1) = j_2$,
thus, $ab^i \in \supp(\bm{q})$. 
The case $d<0$ is symmetric.

For point \eqref{lem:packed},
if $\bm{p}$ is packed into $\bm{q}$ then $a \in \supp(\bm{q})$ by definition,
i.e. $a = gh^j$ for some $j \in [0,\ell]$.
Therefore $ab^{-1} = gh^jb^{-1} = gh^{j-d}$ and, since $ab^{-1} \notin \supp(\bm{q})$,
we deduce that $j-d \notin [0,\ell]$ by the maximality condition.
Similarly $ab^m \in \supp(\bm{q})$ by definition, i.e. $ab^m = gh^{j'}$
for some $j' \in [0,\ell]$.
Therefore $ab^{m+1} = gh^{j'+d}$ and, since $ab^{m+1} \notin \supp(\bm{q})$
we know that $j'+d \notin [0,\ell]$.

For the direction from right to left assume that $a,ab^m \in \supp(\bm{q})$
and $ab^{-1},ab^{m+1} \in \overline{\supp}(\bm{q})$.
From point~\eqref{lem:is-interval} we get $\supp(\bm{p}) \subseteq \supp(\bm{q})$.
Moreover, if $ab^i \in \supp(\bm{q})$ for some $i \in \Z \setminus [0,m]$
then point~\eqref{lem:is-interval} would imply $ab^{-1} \in \supp(\bm{q})$ or $ab^{m+1} \in \supp(\bm{q})$,
which is a contradiction.

For point \eqref{lem:endpoints}
suppose that $q$ is an endpoint of $\bm{q}$ which is not the endpoint of any ray $\bm{p} \in P$.
If $\bm{q'}$ is obtained by removing $q$ from $\bm{q}$ then
the property from point~\eqref{lem:packed} is preserved for $\bm{q'}$
since $\overline{\supp}(\bm{q}) \subseteq \overline{\supp}(\bm{q'})$.
Hence we can remove endpoints of $\bm{q}$ until the desired property is satisfied.

For point \eqref{lem:mod} assume that $\bm{p}$ is packed into $\bm{q}$. Hence, we have
$\supp(\bm{p}) = \supp(\bm{q}) \cap a\langle b \rangle = \supp(\bm{q}) \cap a\langle h^d \rangle$. 
There exists $s \in [0,m]$ with $a = gh ^s$. Let $t = s \bmod d$. 
It suffices to show 
\[
\{ gh^j \mid 0 \le j \le \ell, \, j \equiv t \!\!\pmod d \} = \{ gh^j \mid 0 \le j \le \ell \} \cap a\langle h^d \rangle .
\]
First, consider some $j \in [0,\ell]$ with $j \equiv t \pmod d$. We have to show that $gh^j \in a\langle h^d \rangle$.
Since $j \equiv t \pmod d$ we have $j \equiv s \pmod d$. Let $j = s + r d$ for some $r \in \Z$.
We obtain $gh^j = gh^{s+rd} = gh^s h^{rd} = a h^{rd}$.

For the other inclusion let $j \in [0,\ell]$ such that $gh^j \in a\langle h^d \rangle$, i.e.
$gh^j = a h^{d i}$ for some $i \in \Z$. We have to show that $j \equiv t \pmod d$.
Since $a = gh^s$ we have
$gh^j = ah^{di}=g h^{s+d i}$, i.e. $j = s+di$. Hence, $j \equiv s \pmod d$, and therefore $j \equiv t \pmod d$.

The remainder is clearly unique since $h$ is nontrivial.
\end{proof}

\begin{lemma}
\label{lem:h-packed}
Let $h \in H$ and let $P$ be a bundle of parallel rays whose periods are contained in $\langle h \rangle$.
Then there exist a refinement $Q$ of $P$ of size $|Q| = \O(|P|^2)$
and a partition $Q = \bigcup_k Q_k$
into pairwise disjoint subbundles $Q_k$
such that each subbundle $Q_k$ is packed into a ray with period $h$.
\end{lemma}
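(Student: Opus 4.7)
The plan is to reduce the packing problem to arithmetic progressions in $\Z$ by exploiting the coset structure of $\langle h\rangle$, and then apply \cref{lem:interval-types} to the extents of the resulting APs. First I would partition $P$ according to the coset of $\langle h\rangle$ containing the offset of each ray: since every $\bm{p}\in P$ has period in $\langle h\rangle$, its support lies entirely in the coset $a\langle h\rangle$ where $a$ is its offset, so rays whose offsets lie in different cosets automatically have disjoint supports and can be treated independently. It therefore suffices to establish the result for each coset-class $P_c\subseteq P$ with a bound $\O(|P_c|^2)$; summing gives $|Q|\le\sum_c\O(|P_c|^2)\le\O(|P|^2)$. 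We may assume $h\neq 1$ (otherwise $P=\emptyset$ since rays have nontrivial period), so that $\langle h\rangle$ is infinite cyclic by torsion-freeness of $H$.

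Fix a coset $g\langle h\rangle$ and identify it with $\Z$ via $gh^j\leftrightarrow j$. Each ray $\bm{p}_i\in P_c$ then becomes an arithmetic progression $A_i=\{s_i+d_ik\mid 0\le k\le m_i\}\subseteq\Z$, where $h^{d_i}$ is its period and $gh^{s_i}$ its offset; after replacing $\bm{p}_i$ by its reverse if necessary (which preserves the support and keeps the period in $\langle h\rangle$) I may assume $d_i>0$, so $A_i$ has extent $[l_i,r_i]:=[s_i,s_i+d_im_i]$. I apply \cref{lem:interval-types} with color set $\Omega=P_c$ and coloring $\beta(t)=\{i\mid t\in[l_i,r_i]\}$ on a sufficiently large interval of $\Z$ to obtain a partition $J_1,\ldots,J_T$ with $T=\O(|P_c|)$ such that, for every $i$ and every $t$, either $J_t\subseteq[l_i,r_i]$ or $J_t\cap[l_i,r_i]=\emptyset$. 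For each ray $\bm{p}_i$ I refine its index set $[0,m_i]$ into the contiguous blocks determined by which $J_t$ contains $s_i+d_ik$; this is a legitimate refinement since consecutive elements of $A_i$ are monotone in $\Z$, and it produces at most $T$ sub-progressions per ray, so the resulting refinement $Q$ satisfies $|Q|=\O(|P|^2)$. Finally I define $Q_t$ to be the subbundle of sub-progressions whose support lies in $J_t$.

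The remaining task is to verify that each $Q_t$ is packed into the ray $\bm{q}_t$ of period $h$ and support $J_t$. For a sub-progression $\bm{p}_i^{(t)}\in Q_t$ coming from $\bm{p}_i$, the coset $a\langle h^{d_i}\rangle$ appearing in the packing condition corresponds under the identification to the two-sided progression $s_i+d_i\Z\subseteq\Z$, and the identity $\supp(\bm{p}_i^{(t)})=\supp(\bm{q}_t)\cap a\langle h^{d_i}\rangle$ unfolds to $J_t\cap[l_i,r_i]\cap(s_i+d_i\Z)=J_t\cap(s_i+d_i\Z)$. This holds because membership of $\bm{p}_i^{(t)}$ in $Q_t$ forces $J_t\subseteq[l_i,r_i]$ by the constant-colorset property, making the intersection with $[l_i,r_i]$ redundant. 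Pairwise disjointness of the $Q_t$'s within a fixed coset follows from the disjointness of the $J_t$'s, and across cosets from the first paragraph.

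The main obstacle I anticipate is obtaining the full packing equation $\supp(\bm{p})=\supp(\bm{q})\cap a\langle b\rangle$ rather than mere maximality: this is what dictates applying \cref{lem:interval-types} to the extent intervals rather than any coarser partition, so that the inclusion $J_t\subseteq[l_i,r_i]$ is available to cancel the intersection with $[l_i,r_i]$ and exhibit the restriction as exactly the intersection of $J_t$ with the infinite two-sided progression. Minor technicalities---fixing the declared period of a length-one ray to the canonical form $h^{d_i}$ as allowed by hypothesis, and handling the reversal step for negative $d_i$---are straightforward bookkeeping.
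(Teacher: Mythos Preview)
Your proposal is correct and follows essentially the same approach as the paper: split into cosets of $\langle h\rangle$, use the coloring $\beta(t)=\{\bm{p}\in P_c\mid t\text{ lies in the extent of }\bm{p}\}$ together with \cref{lem:interval-types} to partition the coset into $\O(|P_c|)$ intervals, refine each ray along these intervals, and verify packing from the constant-color property. The only cosmetic differences are that the paper works directly with the order $\le_h$ on the coset rather than identifying it with $\Z$, and handles both signs of the period exponent in place rather than reversing rays.
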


\begin{proof}
For every ray $\bm{p} \in P$ there exists a left coset $g\langle h \rangle$ which contains $\supp(\bm{p})$.
Therefore we can split $P$ into disjoint bundles $P_{g \langle h \rangle} = \{ \bm{p} \in P \mid \supp(\bm{p}) \subseteq g \langle h \rangle \}$
and treat each bundle $P_{g \langle h \rangle}$ individually.

Consider a left coset $K$ of $\langle h \rangle$ in $H$
and suppose that $\supp(\bm{p}) \subseteq K$ for all $\bm{p} \in P$.
Define the linear order $\le_h$ on $K$ by $h_1 \le_h h_2$ if $h_1 h^d = h_2$ for some $d \in \N$.
Define $\beta \colon K \to 2^P$ by
\[
	\beta(g) = \{ \bm{p} \in P \mid \exists g_1,g_2 \in \supp(\bm{p}) \colon g_1 \le_h g \le_h g_2 \}.
\]
Intuitively, $\beta(g)$ contains all rays $\bm{p}$ that cover the element $g$.
The mapping $\beta$ satisfies the condition of Lemma~\ref{lem:interval-types}
and hence we obtain a partition $\mathcal{J} = \{J_1, \dots, J_k \}$ of $K$ into at most $\O(|P|)$
many intervals (with respect to $\leq_h$)
and subsets $P_J \subseteq P$
such that $\beta(J) = \{ P_J \}$ for all $J \in \mathcal{J}$.

For $\bm{p} \in P$ and $J \in \mathcal{J}$ define the restriction
$\bm{p}|_J$ to those entries $p_i \in J$.
Notice that, if $\bm{p}|_J$ is non-empty, then it is a subray of $\bm{p}$
since the natural order on $\bm{p}$ respects $\le_h$ or $\ge_h$,
i.e. either $i \le j$ implies $p_i \le_h p_j$ or it implies $p_i \ge_h p_j$,
depending on whether the period of $\bm{p}$ is a positive or a negative power of $h$.
Furthermore, if $\bm{p} \in P \setminus P_J$ then $\bm{p} \notin \beta(g)$ for all $g \in J$
and thus $\bm{p}|_J$ is empty.

For every $J \in \mathcal{J}$ let $Q_J$ be the bundle containing all non-empty restrictions
$\bm{p}|_J$ for $\bm{p} \in P_J$.
Then $Q = \bigcup_{J \in \mathcal{J}} Q_J$ is a refinement of $P$ and the subbundles $Q_J$
are pairwise disjoint.
Its size is bounded by $|Q| \le |P||\mathcal{J}| = \O(|P|^2)$.
It remains to prove that every bundle $Q_J$ is packed into a ray with period $h$. 
Consider an interval $J \in \mathcal{J}$.
If $P_J = \emptyset$ then $Q_J$ is empty and the claim is vacuous.
If $P_J$ contains some ray $\bm{p'}$
then for all $g \in J$ there exist $g_1,g_2 \in \supp(\bm{p'})$ with $g_1 \le_h g \le_h g_2$.
Since $\supp(\bm{p'})$ is finite also $J$ must be finite.
Therefore we can write $J = \{ gh^j \mid 0 \le j \le \ell \}$ for some $g \in J$ and $\ell \in \N$.
We naturally view $J$ as the ray $\bm{q}_J = (gh^j)_{0 \le j \le \ell}$.
We claim that for all $\bm{p} \in P_J$ the restriction $\bm{p}|_J$ is packed into $\bm{q}_J$.

Suppose that $\bm{p} = (ab^i)_{0 \le i \le m}$
and that $\bm{p}|_J = (ab^i)_{s \le i \le t}$ for some $0 \le s \le t \le m$.
First observe that $b$ is a power of $h$, say $b = h^d$ for $d \in \Z \setminus \{0\}$,
and that $\supp(\bm{p}|_J) \subseteq J = \supp(\bm{q}_J)$.
Let $ab^i \in \supp(\bm{q}_J) = J$ be an arbitrary element with $i \in \Z$,
and thus $\bm{p} \in P_J = \beta(ab^i)$.
It follows that there exist $ab^{i_1},ab^{i_2} \in \supp(\bm{p})$ with
$0 \le i_1,i_2 \le m$ and $ab^{i_1} \le_h ab^i \le_h ab^{i_2}$.
Therefore either $ab^{i_1} \le_b ab^i \le_b ab^{i_2}$
or $ab^{i_1} \ge_b ab^i \ge_b ab^{i_2}$.
This implies $ab^i \in \supp(\bm{p})$ and thus $ab^i \in \supp(\bm{p}|_J)$.
This concludes the proof.
\end{proof}

\subsubsection{From knapsack to bundles} \label{sec:knapsack-bundles}

Fix a normalized exponent expression $E = u_1^{x_1} v_1 u_2^{x_2} v_2 \cdots u_d^{x_d} v_d$
over $A \wr H$ for the rest of this section where $|E| \le n$. Let $X  = \{ x_1, \ldots, x_d\}$ be the set of variables
appearing in $E$. 
Since $u_1, \dots, u_d \in AH$ there exist (unique) elements $a_1, \dots, a_d \in A$ such that $u_r = a_r \, \sigma(u_r)$
for all $1 \le r \le d$. For $1 \le r \le d$ and a fresh variable $y \notin X$ we define the exponent expression
\begin{equation}
	\label{def:sigma-ry}
	E_r(y) = u_1^{x_1} v_1 \cdots u_{r-1}^{x_{r-1}} v_{r-1} u_r^y.
\end{equation}
Let $1 \le r \le d$, $\nu \in \N^X$ and $k \in \N$. With $\nu_{[y/k]}$ we denote the valuation that
extends $\nu$ by $\nu_{[y/k]}(y) = k$.
We define $\sigma_{\nu}(r,k) = \nu_{[y/k]}(\sigma(E_r(y)))$ 
and $\tau_{\nu}(r,k) = \tau(\sigma_{\nu}(r,k) \, a_r)$.
Given $0 \le s \le t \le \nu(x_r)-1$ we define
$\sigma_{\nu}(r,s,t) = (\sigma_{\nu}(r,k))_{s \le k \le t}$
and $\tau_{\nu}(r,s,t) = \sum_{k = s}^t \tau_{\nu}(r,k)$.
Notice that $\sigma_{\nu}(r,s,t)$ is a progression with period $\sigma(u_r)$ by \eqref{def:sigma-ry}.
Furthermore we have
\begin{equation}
\begin{aligned}
\label{eq:rst-supp}
	\supp(\tau_{\nu}(r,s,t)) = \ & \supp\Big(\sum_{k = s}^t \tau_{\nu}(r,k)\Big)
	\subseteq \bigcup_{s \le k \le t} \supp( \tau_{\nu}(r,k)) \\
	 \subseteq \ & \{\sigma_{\nu}(r,k)) \mid s \le k \le t \} = \supp(\sigma_{\nu}(r,s,t)).
\end{aligned}
\end{equation}
If $r \in R$, i.e. $\sigma(u_r) \neq 1$, and $h \in \supp(\sigma_{\nu}(r,s,t))$
then there exists exactly one index $s \le k_h \le t$ such that $h = \sigma_{\nu}(r,k_h)$ and
\begin{equation}
\begin{aligned}
\label{eq:shift-sum}
	\tau_{\nu}(r,s,t)(h) = \sum_{k = s}^t \tau_{\nu}(r,k)(\sigma_{\nu}(r,k_h))
	= \tau_{\nu}(r,k_h)(\sigma_{\nu}(r,k_h)) = a_r.
\end{aligned}
\end{equation}
For a valuation $\nu \in \N^X$ we define a {\em $\nu$-decomposition}
to be a set $D \subseteq [1,d] \times \N^2$ such that
$\{ \{r\} \times [s,t] \mid (r,s,t) \in D \}$
is a partition of $\{ (r,k) \mid 1 \le r \le d, \, 0 \le k \le \nu(x_r)-1 \}$.

\begin{lemma}
	\label{lem:nu-decomp}
	For all $\nu \in \N^X$ and $\nu$-decompositions $D$ we have $\tau(\nu(E)) = \sum_{(r,s,t) \in D} \tau_{\nu}(r,s,t)$.
\end{lemma}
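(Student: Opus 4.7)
The plan is to expand $\nu(E)$ into a product of individual factors from the generating set $\{u_1, \ldots, u_d\} \cup \{v_1, \ldots, v_d\}$, apply Lemma~\ref{lem:tau} to this product, discard the factors with trivial $\tau$-contribution, identify the remaining summands with $\tau_\nu(r,k)$, and finally regroup according to the $\nu$-decomposition $D$. The main obstacle (though routine) is matching our summands against the definitions of $\sigma_\nu(r,k)$ and $\tau_\nu(r,k)$, while carefully using the normalization $u_r = a_r \sigma(u_r)$ and the fact that $v_r \in H$.

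First, I would write $\nu(E) = \prod_{r=1}^d \bigl( \prod_{k=0}^{\nu(x_r)-1} u_r \bigr) v_r$ as a product of $\sum_{r=1}^d (\nu(x_r)+1)$ many factors. Applying Lemma~\ref{lem:tau} yields
\[
\tau(\nu(E)) = \sum_{r=1}^d \sum_{k=0}^{\nu(x_r)-1} \tau(\sigma_\nu(r,k) \, u_r) + \sum_{r=1}^d \tau\bigl(\sigma_\nu(r,\nu(x_r)) \, v_r\bigr),
\]
where, for the $(k{+}1)$-th occurrence of $u_r$, the prefix multiplied before it is exactly $u_1^{\nu(x_1)} v_1 \cdots u_{r-1}^{\nu(x_{r-1})} v_{r-1} u_r^k$, whose $\sigma$-image is $\sigma_\nu(r,k)$ by the definition of $E_r(y)$ in \eqref{def:sigma-ry}; and analogously for the $v_r$-factor.

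Next, since $E$ is normalized we have $v_r \in H$, and thus $\sigma_\nu(r,\nu(x_r)) \, v_r \in H$, which gives $\tau(\sigma_\nu(r,\nu(x_r)) \, v_r) = 0$; the second sum vanishes. For the first sum, using $u_r = a_r \sigma(u_r)$ with $a_r \in A$ and $\sigma(u_r) \in H$, I would apply Lemma~\ref{lem:tau} once more to see that
\[
\tau(\sigma_\nu(r,k)\, u_r) \;=\; \tau(\sigma_\nu(r,k)\, a_r) \;+\; \tau\bigl(\sigma(\sigma_\nu(r,k) a_r)\, \sigma(u_r)\bigr) \;=\; \tau_\nu(r,k),
\]
since $\sigma(a_r)=1$ and the second summand is $\tau$ of an element of $H$, hence zero.

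Combining these observations gives $\tau(\nu(E)) = \sum_{r=1}^d \sum_{k=0}^{\nu(x_r)-1} \tau_\nu(r,k)$. Finally, by the definition of a $\nu$-decomposition, the sets $\{r\} \times [s,t]$ for $(r,s,t) \in D$ form a partition of $\{(r,k) \mid 1 \le r \le d,\ 0 \le k \le \nu(x_r)-1\}$, so I can regroup the double sum as
\[
\sum_{r=1}^d \sum_{k=0}^{\nu(x_r)-1} \tau_\nu(r,k) \;=\; \sum_{(r,s,t) \in D} \sum_{k=s}^{t} \tau_\nu(r,k) \;=\; \sum_{(r,s,t) \in D} \tau_\nu(r,s,t),
\]
where the last equality is the definition of $\tau_\nu(r,s,t)$. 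This completes the proof.
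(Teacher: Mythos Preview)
Your proof is correct and follows essentially the same approach as the paper: both apply Lemma~\ref{lem:tau} to the factorization of $\nu(E)$, use the normalization $v_r\in H$ and $u_r=a_r\sigma(u_r)$ to identify each nonzero summand with $\tau_\nu(r,k)$, and then regroup according to the $\nu$-decomposition. Your version is simply more explicit about the vanishing of the $v_r$-terms and the identification $\tau(\sigma_\nu(r,k)\,u_r)=\tau_\nu(r,k)$, which the paper compresses into ``follows easily from Lemma~\ref{lem:tau}.''
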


\begin{proof}
First we observe that
\[
\sum_{(r,s,t) \in D} \tau_{\nu}(r,s,t) = \sum_{(r,s,t) \in D} \sum_{s \le k \le t} \tau_{\nu}(r,k) = \sum_{1 \le r \le d} \sum_{0 \le k \le \nu(x_r)-1} \tau_{\nu}(r,k).
\]
For all $1 \le r \le d$ and $0 \le k \le \nu(x_r)-1$ we have
\begin{align*}
	\tau_{\nu}(r,k) &= \tau(\sigma_{\nu}(r,k) \, a_r) \\
	&= \tau(\sigma(u_1^{\nu(x_1)} v_1 \cdots u_{r-1}^{\nu(x_{r-1})} v_{r-1} u_r^k) \, a_r) \\
	&= \tau(\sigma(u_1^{\nu(x_1)} v_1 \cdots u_{r-1}^{\nu(x_{r-1})} v_{r-1} u_r^k) \, u_r)
\end{align*}
where the last equality follows from $u_r = a_r \, \sigma(u_r)$.
Then the statement follows easily from Lemma~\ref{lem:tau}.
\end{proof}
Let $R = \{ r \in [1,d] \mid \sigma(u_r) \neq 1 \}$
and define the equivalence relation $\parallel$ on $R$ by $r_1 \parallel r_2$
if $\sigma(u_{r_1})$ and $\sigma(u_{r_2})$ are commensurable.

\begin{lemma}
\label{lem:gen}
Let $C$ be a $\parallel$-class and
let $U = \{\sigma(u_r) \mid r \in C \}$.
Then there exist numbers $\alpha_{r},\beta_{r} \in \Z$ for $r \in C$ with $|\alpha_{r}|, |\beta_{r}| \leq 
(|C|+1) \cdot \mathsf{K}_H(n)^n$ and an element $h_C \in H$ such that
$h_C = \prod_{r \in C} \sigma(u_{r})^{\alpha_{r}}$ and $h_C^{\beta_{r}} = \sigma(u_{r})$ for all $r \in C$.
\end{lemma}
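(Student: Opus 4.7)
The plan is to combine the orderability of $H$ (to get a cyclic structure) with the knapsack-semilinearity of $H$ (to bound exponents). First I would invoke Lemma~\ref{lemma-commensurable-cyclic} on the pairwise commensurable, nontrivial elements $\{\sigma(u_r) : r \in C\}$: since $H$ is orderable, $\langle U \rangle$ is a cyclic subgroup of $H$, say $\langle h_C \rangle$. Writing $\sigma(u_r) = h_C^{\beta_r}$ yields unique integers $\beta_r$ with $\gcd_{r \in C}(\beta_r) = 1$, because $h_C$ generates $\langle U \rangle$. Since $\langle h_C \rangle$ is abelian, any Bezout tuple $(\alpha_r)_{r \in C}$ with $\sum_r \alpha_r \beta_r = 1$ satisfies $\prod_r \sigma(u_r)^{\alpha_r} = h_C^{\sum_r \alpha_r \beta_r} = h_C$. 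The whole problem thus reduces to bounding $|\beta_r|$ and $|\alpha_r|$.

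To bound $|\beta_r|$, I would fix some $r_0 \in C$ and, for each $r \in C \setminus \{r_0\}$, extract coprime integers $(p_r, q_r)$ with $\sigma(u_{r_0})^{p_r} = \sigma(u_r)^{q_r}$ via the knapsack equation $\sigma(u_{r_0})^x \sigma(u_r)^{-y} = 1$ (or one of its sign variants; orderability of $H$ guarantees that one variant has a nontrivial $\N^2$-solution). By Lemma~\ref{lemma-2dim}(i), applied in the torsion-free group $H$, the $\Z$-solution set is cyclic, so we obtain $|p_r|, |q_r| \le \mathsf{K}_H(n)$ and without loss of generality $q_r > 0$. The relation $p_r \beta_{r_0} = q_r \beta_r$ forces $q_r \mid \beta_{r_0}$, so I would set $\beta_{r_0} := L := \operatorname{lcm}_{r \neq r_0}(q_r)$ and $\beta_r := p_r L/q_r$; a prime-by-prime valuation argument using $\gcd(p_r, q_r) = 1$ then shows that $\gcd_{r \in C}(\beta_r) = 1$ holds automatically, so these $\beta_r$ are consistent with a valid choice of generator $h_C$. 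This gives the bound $|\beta_r| \le |p_r| \cdot L \le \mathsf{K}_H(n) \cdot \mathsf{K}_H(n)^{|C|-1} = \mathsf{K}_H(n)^{|C|} \le \mathsf{K}_H(n)^n$, using $|C| \le |E| \le n$.

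For $|\alpha_r|$, I would invoke the bound of von zur Gathen and Sieveking~\cite{vZGS78} on integer solutions to linear Diophantine systems: the single equation $\sum_r \alpha_r \beta_r = 1$ in $|C|$ unknowns, with coefficients bounded in absolute value by $\mathsf{K}_H(n)^n$, is solvable (since $\gcd_r \beta_r = 1$), hence has a solution with $\max_r |\alpha_r| \le (|C|+1) \cdot \mathsf{K}_H(n)^n$. The main obstacle here is that a naive iterated-Bezout approach over $|C|$ arguments would produce a bound of $(\max_r |\beta_r|)^{|C|-1}$, i.e., $\mathsf{K}_H(n)^{\Theta(n^2)}$, which badly exceeds the target; the rank-one case of~\cite{vZGS78} applied directly to $\sum_r \alpha_r \beta_r = 1$ is what keeps the exponent linear in $n$.
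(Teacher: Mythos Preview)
Your proposal is correct and follows essentially the same route as the paper's proof: both invoke Lemma~\ref{lemma-commensurable-cyclic} to obtain a cyclic group $\langle U\rangle\cong\Z$, fix a base index, extract coprime pairs $(p_r,q_r)$ from two-variable knapsack equations over $H$ with the bound $|p_r|,|q_r|\le\mathsf{K}_H(n)$, deduce $\beta_{r_0}=\pm\operatorname{lcm}_{r\neq r_0}(q_r)$ and $\beta_r=p_r\beta_{r_0}/q_r$, and then apply the von zur Gathen--Sieveking bound~\cite{vZGS78} to the single equation $\sum_r\alpha_r\beta_r=1$. Your prime-valuation argument that $\gcd_r\beta_r=1$ is exactly the paper's argument that its auxiliary divisor $\delta$ equals $1$, just phrased combinatorially rather than via the isomorphism $\psi\colon\langle U\rangle\to\Z$.
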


\begin{proof}
By Lemma~\ref{lemma-commensurable-cyclic}, the group $\langle U \rangle$ is cyclic of infinite order.
Let $\psi \colon \langle U \rangle \to \Z$ be any isomorphism.
First we ensure that all numbers $|\psi(\sigma(u_r))|$ are bounded exponentially.
Fix an element $s \in C$ and let $C' = C \setminus \{s\}$.
Because of commensurability for each $r \in C$ there exist numbers $p_{r},q_{r} \in \Z \setminus \{0\}$
such that $p_{r} \cdot \psi(\sigma(u_{s})) = q_{r} \cdot \psi(\sigma(u_{r}))$ (we can take $p_{s} = q_{s}=1$).
We can assume that the numbers $p_{r}$ and $q_{r}$ are coprime for all $r \in C$.
Let $\lambda\in\N$ be the least common multiple of the numbers $q_r$ for $r\in C'$.
Then $\psi(\sigma(u_{s}))$ is divided by $\lambda$,
say $\psi(\sigma(u_{s})) = \delta \cdot \lambda$.
In fact every number $\psi(\sigma(u_{r}))$ is divided by $\delta$
since we have
\[
	p_{r} \cdot \delta \cdot \lambda / q_{r} = \psi(\sigma(u_{r}))
\]
for $r \in C'$.
Hence every number in $\psi(\langle U \rangle) = \Z$ is divided by $\delta$
which implies $|\delta| = 1$.
Since $\sum_{r \in C} |\sigma(u_{r})|$ is bounded by $n$ we can further assume that
$|p_{r}|$ and $|q_{r}|$ are bounded by $\mathsf{K}_H(n)$ for all $r \in C$.
Let us define $\beta_{r} := \psi(\sigma(u_{r}))$ for all $r \in C$. We get
\[
	|\beta_{r}| = |\psi(\sigma(u_{r}))| \le |p_{r}| \cdot \lambda \le \mathsf{K}_H(n)^n.
\]
Since $1 \in \psi(\langle U \rangle)$ there exist numbers $\alpha_{r} \in \Z$ for $r \in C$ such that
\[
	\psi \left(\prod_{r \in C} \sigma(u_{r})^{\alpha_{r}}\right) = \sum_{r \in C} \alpha_{r} \cdot \psi(\sigma(u_{r})) = 1.
\]
By the standard bounds \cite{vZGS78} there exists such a solution where
\[|\alpha_{r}| \le (|C|+1) \cdot \max \{ |\psi(\sigma(u_{r}))| \mid r \in C \} \le (|C|+1) \cdot \mathsf{K}_H(n)^n.\]
Finally we set $h_C = \psi^{-1}(1)$.
\end{proof}
By Lemma~\ref{lem:gen} there exist numbers $\alpha_r,\beta_r \in \Z$ for $r \in R$
and elements $h_C \in H$ for all $\parallel$-classes $C$ such that the following holds
(recall that by assumption $\mathsf{E}_H(n)$ is exponentially bounded):
\begin{itemize}
\item $|\alpha_r|,|\beta_r| \le 2^{n^{\O(1)}}$ for all $r \in R$,
\item $h_C = \prod_{r \in C} \sigma(u_r)^{\alpha_r}$ for all $\parallel$-classes $C$,
\item $h_C^{\beta_r} = \sigma(u_r)$ for all $\parallel$-classes $C$ and $r \in C$.
\end{itemize}

\begin{proposition}
\label{prop:main}
A valuation $\nu \in \N^X$ satisfies
$\nu(E) = 1$ if and only if $\nu(\sigma(E)) = 1$ and
there exists a $\nu$-decomposition $D$ of size $\O(n^6)$ and a partition $\{ D_1, \dots, D_m \}$ of $D$
such that for all $1 \le i \le m$ we have:
\begin{itemize}
\item $\sum_{(r,s,t) \in D_i} \tau_{\nu}(r,s,t) = 0$ and
\item the bundle $Q_i = \ms { \sigma_{\nu}(r,s,t) \mid (r,s,t) \in D_i }$ is stacking
or $Q_i$ is a bundle of parallel rays
that is packed into a ray with period $h_C$ for some $\parallel$-class $C$.
\end{itemize}
\end{proposition}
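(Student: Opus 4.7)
The backward direction is immediate from Lemma~\ref{lem:nu-decomp}: if $\nu(\sigma(E))=1$ and a decomposition $D=\bigcup_i D_i$ as in the statement exists, then
\[
\tau(\nu(E)) = \sum_{(r,s,t)\in D}\tau_\nu(r,s,t) = \sum_{i=1}^m \sum_{(r,s,t)\in D_i}\tau_\nu(r,s,t) = 0,
\]
while $\sigma(\nu(E))=\nu(\sigma(E))=1$, so $\nu(E)=1$ in $A\wr H$.

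For the forward direction, assume $\nu(E)=1$, which gives $\tau(\nu(E))=0$ and $\nu(\sigma(E))=1$. I start from the trivial $\nu$-decomposition
\[
D_0 = \{(r,0,\nu(x_r)-1) \mid 1\le r\le d,\ \nu(x_r)\ge 1\}
\]
with associated bundle $P_0 = \ms{\sigma_\nu(r,0,\nu(x_r)-1) \mid (r,0,\nu(x_r)-1)\in D_0}$ of size at most $d\le n$, and refine $P_0$ in two stages. Applying Lemma~\ref{lem:decomp} to $P_0$ yields a refinement $Q'$ of size $\O(n^3)$ partitioned into pairwise disjoint subbundles $Q'_k$, each either stacking or consisting of parallel rays. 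For each parallel $Q'_k$ all rays have periods $\sigma(u_r)$ with $r$ ranging over a single $\parallel$-class $C$; by Lemma~\ref{lem:gen} these periods all lie in $\langle h_C\rangle$, so Lemma~\ref{lem:h-packed} with $h=h_C$ applies and refines $Q'_k$ internally into $\O(|Q'_k|^2)$ pairwise disjoint subbundles, each packed into a ray of period $h_C$. Stacking subbundles are kept as they are. Because each new subbundle has support contained in the support of some $Q'_k$, pairwise disjointness is preserved globally, yielding a refinement $Q=\bigcup_i Q_i$ of $P_0$ in which every $Q_i$ is either stacking or packed into a ray of period $h_C$ for some $\parallel$-class $C$. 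The size is bounded by
\[
|Q| \le |Q'| + \sum_{k} \O(|Q'_k|^2) \le \O(|Q'|^2) = \O(n^6).
\]
Splitting the interval $[0,\nu(x_r)-1]$ underlying each progression according to this refinement translates $Q$ into a $\nu$-decomposition $D=\bigcup_i D_i$ of size $\O(n^6)$ with $Q_i=\ms{\sigma_\nu(r,s,t)\mid (r,s,t)\in D_i}$ for every $i$.

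It remains to check $S_i := \sum_{(r,s,t)\in D_i}\tau_\nu(r,s,t) = 0$ for every $i$. By \eqref{eq:rst-supp} we have $\supp(\tau_\nu(r,s,t))\subseteq\supp(\sigma_\nu(r,s,t))$, and pairwise disjointness of the $Q_i$ implies that the sets $\bigcup_{(r,s,t)\in D_i}\supp(\sigma_\nu(r,s,t))$, and hence the supports of the $S_i$, are pairwise disjoint across $i$. By Lemma~\ref{lem:nu-decomp} we have $\sum_i S_i = \tau(\nu(E)) = 0$, and a sum of elements of $A^{(H)}$ with pairwise disjoint supports can vanish only when each summand vanishes, so $S_i=0$ for every $i$. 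The main obstacle is purely organisational: one must verify that the two successive refinement steps preserve pairwise disjointness, that combining the cubic bound of Lemma~\ref{lem:decomp} with the quadratic bound of Lemma~\ref{lem:h-packed} still gives a polynomial bound, and that the natural translation between bundle refinements and refinements of the $\nu$-decomposition respects the structural conditions required in the statement.
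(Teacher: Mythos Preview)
Your proof is correct and follows essentially the same approach as the paper: both start from the trivial bundle $P = \ms{\sigma_\nu(r,0,\nu(x_r)-1) \mid 1\le r\le d}$, apply Lemma~\ref{lem:decomp} followed by Lemma~\ref{lem:h-packed} to obtain the desired refinement of size $\O(d^6)\le\O(n^6)$, and then use pairwise disjointness of the subbundles together with \eqref{eq:rst-supp} and Lemma~\ref{lem:nu-decomp} to conclude that each partial sum $S_i$ vanishes. Your write-up is in fact slightly more explicit than the paper's about why disjointness survives the second refinement step and how the $\O(n^6)$ bound arises from composing the cubic and quadratic blow-ups.
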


\begin{proof}
By Lemma~\ref{lem:nu-decomp} we know
\begin{equation}
\label{eq:d-i-sum}
\tau(\nu(E)) = \sum_{(r,s,t) \in D} \tau_{\nu}(r,s,t) = \sum_{1 \le i \le m} \sum_{(r,s,t) \in D_i} \tau_{\nu}(r,s,t)
\end{equation}
since $\{D_1, \dots, D_m\}$ forms a partition of $D$.
Then the direction from right to left is easy
since \eqref{eq:d-i-sum} implies $\nu(\tau(E)) = 0$,
and together with $\nu(\sigma(E)) = 1$ we get $\nu(E) = 1$.

Conversely, if $\nu(E) = 1$ then clearly $\nu(\sigma(E)) = 1$.
Let us define the bundle
\[ P = \ms{\sigma_{\nu}(r,1,\nu(x_r)) \mid 1 \le r \le d }.
\]
The period of $\sigma_{\nu}(r,1,\nu(x_r))$ is $\sigma(u_r)$,
and if $r \in R$, then $\sigma(u_r) \in \langle h_{[r]} \rangle$, where $[r]$ denotes the $\parallel$-class of $r$.
By Lemma~\ref{lem:decomp} and Lemma~\ref{lem:h-packed}
there exists a refinement $Q$ of $P$ of size $\O(d^6) \le \O(n^6)$ and 
a partition $Q = \bigcup_{i=1}^m Q_i$ into pairwise disjoint subbundles $Q_i$ such that
each bundle $Q_i$ is stacking or is packed into a ray with period $h_C$ for some $\parallel$-class $C$.
The bundles $Q$ and $Q_1, \dots, Q_m$ induce a $\nu$-decomposition $D$ and a partition
$\{D_1, \dots, D_m\}$ of $D$ such that
\[ Q_i = \ms{ \sigma_{\nu}(r,s,t) \mid (r,s,t) \in D_i }\] 
for all $1 \le i \le m$.
By \eqref{eq:d-i-sum} we can derive
$\sum_{i=1}^m \sum_{(r,s,t) \in D_i} \tau_{\nu}(r,s,t) = 0$.
We claim that the summands $\sum_{(r,s,t) \in D_i} \tau_{\nu}(r,s,t)$ have disjoint supports
and thus each summand must be equal to 0.
Observe that
\[
	\supp\Big(\sum_{(r,s,t) \in D_i }\tau_{\nu}(r,s,t)\Big) \subseteq \bigcup_{(r,s,t) \in D_i} \supp(\tau_{\nu}(r,s,t))
	\subseteq \bigcup_{(r,s,t) \in D_i} \supp(\sigma_{\nu}(r,s,t))
\]
where the second inclusion follows from \eqref{eq:rst-supp}.
The claim follows from the fact that the subbundles $Q_i$ are pairwise disjoint.
\end{proof}

\subsubsection{Constructing the formulas} \label{sec-constructing-formula}

A {\em bundle descriptor} is a set $\theta = \{(r_1, y_1, z_1), \dots, (r_m, y_m, z_m)\}$
where $1 \le r_i \le d$ for all $1 \le i \le m$ and $y_1,z_1, \dots, y_m,z_m \notin X$ are $2m$ distinct fresh variables.
We define $V_\theta = \{y_1,z_1,\dots,y_m,z_m\}$
and the extended set of variables $X_\theta = X \cup V_\theta$.
A {\em $\theta$-valuation} is a valuation $\nu \in \N^{X_\theta}$ such that
$\nu(y_i) \le \nu(z_i) \le \nu(x_{r_i})$ for all $1 \le i \le m$.
We will use the numbers $\alpha_r$ and $\beta_r$ ($r \in R$) constructed in the previous subsection.

\begin{lemma}
\label{lem:stack}
Let $\theta = \{(r_1, y_1, z_1), \dots, (r_m, y_m, z_m)\}$ be a bundle descriptor.
There exists an existential Presburger formula $\mathsf{Stack}_\theta$
with free variables over $X_\theta$ such that a $\theta$-valuation $\nu \in \N^{X_\theta}$
satisfies $\mathsf{Stack}_\theta$ if and only if
\begin{enumerate}
\item The bundle $\ms{ \sigma_\nu(r_i,\nu(y_i),\nu(z_i)) \mid 1 \le i \le m }$ is stacking, and
\item $\sum_{i=1}^m \tau_\nu(r_i,\nu(y_i),\nu(z_i)) = 0$.
\end{enumerate}
Furthermore $\mathsf{Stack}_\theta$ defines a semilinear set with magnitude $2^{(n+m)^{\O(1)}}$.
\end{lemma}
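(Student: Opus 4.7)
The plan is to let $\mathsf{Stack}_\theta$ be the conjunction $\varphi_1 \wedge \varphi_2 \wedge \varphi_3$ of three existential Presburger formulas that together capture conditions (1) and (2): $\varphi_1$ forces every progression indexed by $r_i \in R$ to collapse to one point, $\varphi_2$ expresses that the resulting singleton supports all coincide in $H$, and $\varphi_3$ encodes the identity in $A$ obtained from summing the $\tau_\nu$-values at the common stacking point.

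First I would set $\varphi_1 := \bigwedge_{i \,:\, r_i \in R} (y_i = z_i)$. For $r_i \in R$ this makes the progression $\sigma_\nu(r_i,\nu(y_i),\nu(z_i))$ a single point (using that $H$ is torsion-free, so $\sigma(u_{r_i}) \neq 1$ yields pairwise distinct entries), while for $r_i \notin R$ the progression is automatically constant. Next, for each $2 \le j \le m$ I would let $\psi_j$ be a formula expressing $\sigma_\nu(r_1,\nu(y_1)) = \sigma_\nu(r_j,\nu(y_j))$ in $H$. Unfolding \eqref{def:sigma-ry}, this is a single exponent expression over $H$ of length $\O(n)$ in the variables $x_1,\ldots,x_d,y_1,y_j$; by knapsack-semilinearity of $H$ its solution set, viewed as a subset of $\N^{X_\theta}$, is a semilinear set of magnitude at most $\mathsf{E}_H(\O(n)) \le 2^{n^{\O(1)}}$, hence expressible by an existential Presburger formula (existentially quantifying the Parikh coefficients of a semilinear representation). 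I would then define $\varphi_2 := \bigwedge_{j=2}^m \psi_j$.

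For $\varphi_3$, observe that once $\varphi_1 \wedge \varphi_2$ holds the bundle is stacking at a common $h \in H$, and each $\tau_\nu(r_i,\nu(y_i),\nu(z_i))$ is concentrated at $h$ with value $a_{r_i}$ if $r_i \in R$ and $(z_i-y_i+1)\,a_{r_i}$ if $r_i \notin R$. Thus (2) becomes
\[
\sum_{i \,:\, r_i \in R} a_{r_i} \;+\; \sum_{i \,:\, r_i \notin R} (z_i-y_i+1)\,a_{r_i} \;=\; 0 \quad \text{in } A,
\]
which, by the structure theorem for f.g.~abelian groups, decomposes into a bounded number of linear Diophantine equations and congruences with coefficients of magnitude $\O(1)$; I take $\varphi_3$ to be the resulting existential Presburger formula. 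For the magnitude bound, Lemma~\ref{lem:cap} applied to the $m-1$ constraints of $\varphi_2$ (each of magnitude $\le 2^{n^{\O(1)}}$) together with the bounded-magnitude constraints $\varphi_1$ and $\varphi_3$ in dimension $|X_\theta| = \O(n+m)$ yields magnitude $\bigl(2^{n^{\O(1)}} \cdot m \cdot (n+m) + 1\bigr)^{\O(m(n+m))} = 2^{(n+m)^{\O(1)}}$.

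The main obstacle I anticipate is the correctness verification: one must check that $\varphi_1 \wedge \varphi_2$ really captures condition (1), so that the supports of the progressions are genuinely equal singletons (and not merely overlapping at one point), and that under this condition the total $\sum_i \tau_\nu(r_i,\nu(y_i),\nu(z_i))$ at the common $h$ matches $\varphi_3$ while vanishing at every other element of $H$. Both checks reduce to the explicit description of $\tau_\nu(r,k)$ as the function sending $\sigma_\nu(r,k)$ to $a_r$ and all other elements to the identity, combined with $H$ being torsion-free so that distinct exponents in a ray yield distinct elements.
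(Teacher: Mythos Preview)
Your approach is essentially identical to the paper's: the paper writes $\mathsf{Stack}_\theta$ as the conjunction of the constraints $y_i = z_i \vee r_i \notin R$ (your $\varphi_1$), the exponent equations $\sigma(E_{r_1}(y_1)) = \sigma(E_{r_i}(y_i))$ over $H$ (your $\varphi_2$), and the single equation $\sum_{i=1}^m (z_i - y_i + 1)\,a_{r_i} = 0$ over $A$ (your $\varphi_3$, which agrees with this once $\varphi_1$ is imposed). One small inaccuracy: the coefficients appearing in $\varphi_3$ are the $\Z$-coordinates of the $a_{r_i}$, which are bounded by $|a_{r_i}| \le n$ rather than $\O(1)$; the paper handles this by treating $\varphi_3$ as an exponent equation over $A$ of length $\O(mn)$ and invoking Lemma~\ref{lem:abelian}, but either way the final magnitude bound $2^{(n+m)^{\O(1)}}$ is unaffected.
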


\begin{proof}
Let $\nu$ be an $\theta$-valuation. For better readability we define $s_i = \nu(y_i)$
and $t_i = \nu(z_i)$ for $1 \le i \le m$.
By definition $\ms{ \sigma_\nu(r_i,s_i,t_i) \mid 1 \le i \le m }$ is stacking
if and only if there exists $h \in H$ such that for all $1 \le i \le m$ we have
$\{ \sigma_\nu(r_i,k) \mid s_i \le k \le t_i \} = \{h\}$.
Since $\sigma_\nu(r_i,k) = \sigma_\nu(r_i,s_i) \, \sigma(u_{r_i})^{k-s_i}$,
this is equivalent to the statement that for each $1 \le i \le m$,
we have (i)~$\sigma_\nu(r_1,s_1) = \sigma_\nu(r_i,s_i)$
and $s_i = t_i$ or (ii)~$\sigma(u_{r_i}) = 1$, i.e. $r_i \notin R$.
Under condition~1.~from the lemma, condition 2.~is equivalent to
\[
	0 = \sum_{i=1}^m \tau_\nu(r_i,s_i,t_i)(h) = \sum_{i=1}^m \sum_{k=s_i}^{t_i} \tau_\nu(r_i,k)(h) 
	= \sum_{i=1}^m \sum_{k=s_i}^{t_i} a_{r_i} = \sum_{i=1}^m (t_i-s_i+1) \cdot a_{r_i},
\]
where $h \in H$ is the unique element with $\sigma_\nu(r_i,k) = h$ for all $1 \le i \le m$, $s_i \le k \le t_i$.
This description can be directly expressed as the following formula (we use the exponent expressions from \eqref{def:sigma-ry}):
\begin{equation}
	\mathsf{Stack}_\theta = \bigwedge_{i=1}^m (\sigma(E_{r_1}(y_1)) = \sigma(E_{r_i}(y_i)) \wedge (y_i = z_i \vee r_i \notin R)) \wedge \sum_{i=1}^m (z_i-y_i+1) \cdot a_{r_i} = 0.
\end{equation}
It consists of $m$ exponent equations over $H$ of length $\O(n)$,
identities between variables, 
and an exponent equation over $A$ of length $\O(mn)$.
By Lemma~\ref{lem:abelian} and Lemma~\ref{lem:cap}
the semilinear set defined by $\mathsf{Stack}_\theta$ has magnitude $2^{(n+m)^{\O(1)}}$.
\end{proof}

\begin{lemma}
\label{lem:pack}
Let $\theta = \{(r_1, y_1, z_1), \dots, (r_m, y_m, z_m)\}$ be a bundle descriptor
such that $r_1, \dots, r_m \in C$ for some $\parallel$-class $C$.
There exists an existential Presburger formula $\mathsf{Pack}_\theta$
with free variables over $X_\theta$ such that a $\theta$-valuation $\nu \in \N^{X_\theta}$
satisfies $\mathsf{Pack}_\theta$ if and only if
there exists a ray $\bm{q}$ with period $h_C$ such that
\begin{enumerate}
\item The bundle $\ms{ \sigma_{\nu}(r_i,\nu(y_i),\nu(z_i)) \mid 1 \le i \le m }$ is packed into $\bm{q}$, and
\item $\sum_{i=1}^m \tau_{\nu}(r_i,\nu(y_i),\nu(z_i)) = 0$.
\end{enumerate}
Furthermore $\mathsf{Pack}_\theta$ defines a semilinear set with magnitude $2^{(n+m)^{\O(1)}}$.
\end{lemma}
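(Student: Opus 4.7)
The formula $\mathsf{Pack}_\theta$ is built as a disjunction over guesses of the endpoints of the enclosing ray $\bm{q}$. By Lemma~\ref{lem:pq}\eqref{lem:endpoints}, if the bundle is packed into some ray one may assume that the two endpoints of $\bm{q}$ are themselves endpoints of rays in the bundle. I would therefore disjoin over the $O(m^2)$ tuples $(i^*,\epsilon^*,j^*,\delta^*) \in [1,m] \times \{y,z\} \times [1,m] \times \{y,z\}$, taking the offset $g$ of $\bm{q}$ to be $\sigma_\nu(r_{i^*},\nu(\epsilon^*_{i^*}))$ and its last entry $gh_C^{\ell}$ to be $\sigma_\nu(r_{j^*},\nu(\delta^*_{j^*}))$ (writing $\epsilon^*_{i^*}$ for $y_{i^*}$ or $z_{i^*}$ according to $\epsilon^*$).

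Within each disjunct I would existentially introduce fresh Presburger variables $\bar y_i,\bar z_i,\ell$ for the $h_C$-exponents (relative to $g$) of the endpoints of $\bm{p}_i := \sigma_\nu(r_i,\nu(y_i),\nu(z_i))$ and of the right endpoint of $\bm{q}$, and enforce the identities $\sigma(E_{r_i}(y_i)) = \sigma(E_{r_{i^*}}(\epsilon^*_{i^*})) \cdot h_C^{\bar y_i}$ and their analogues for $\bar z_i$ and $\ell$; the linear relation $\bar z_i = \bar y_i + (z_i - y_i)\beta_{r_i}$; the pinning $\bar\epsilon^*_{i^*} = 0$ and $\bar\delta^*_{j^*} = \ell$; the range $0 \le \bar y_i,\bar z_i \le \ell$; and the maximality conditions from Lemma~\ref{lem:pq}\eqref{lem:packed}, i.e.\ $\bar y_i - \beta_{r_i}, \bar z_i + \beta_{r_i} \notin [0,\ell]$. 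Since the $\sigma(u_r)$ for $r \in C$ commute inside the cyclic subgroup $\langle h_C\rangle$ (Lemma~\ref{lemma-commensurable-cyclic}), I would rewrite each $h_C^{\bar y_i}$ as $\prod_{r \in C}\sigma(u_r)^{\alpha_r\bar y_i}$ and introduce fresh Presburger variables $t_{i,r} = \alpha_r \bar y_i$ to obtain genuine exponent equations over $H$ of length $O(n)$, whose solution sets have magnitude $\mathsf{E}_H(O(n)) = 2^{n^{O(1)}}$ by hypothesis; by Lemma~\ref{lem:gen} the coefficients $|\alpha_r|,|\beta_r|$ in the accompanying linear constraints are bounded by $2^{(n+m)^{O(1)}}$.

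For the zero-sum condition, Lemma~\ref{lem:pq}\eqref{lem:mod} combined with \eqref{eq:shift-sum} shows that, once packing holds, $\tau_\nu(r_i,\nu(y_i),\nu(z_i))(g h_C^j)$ equals $a_{r_i}$ precisely when $j \in [0,\ell]$ and $j \equiv \bar y_i \pmod{d_i}$, where $d_i := |\beta_{r_i}|$, and is $0$ otherwise. The condition therefore reduces to $\forall j \in [0,\ell]:\; \sum_{i\,:\, j \equiv \bar y_i \pmod{d_i}} a_{r_i} = 0$. Each periodic indicator $\bar f_i : \Z \to A$ (valued $a_{r_i}$ on $\bar y_i + d_i\Z$ and $0$ elsewhere) satisfies a recurrence of order $d_i$, hence by Lemma~\ref{lem:abelian-recurrence} the sum $\sum_i \bar f_i$ satisfies one of order $D := \sum_i d_i \le 2^{(n+m)^{O(1)}}$. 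Consequently the universal quantifier is equivalent to the finite conjunction
\[
\bigwedge_{j=0}^{D-1}\Bigl(\ell < j \;\lor\; \sum\nolimits_{i\,:\, j \equiv \bar y_i \pmod{d_i}} a_{r_i} = 0\Bigr),
\]
each $j$-clause further unfolded as a disjunction, over subsets $S \subseteq [1,m]$ with $\sum_{i \in S} a_{r_i} = 0$, of $\bigwedge_{i \in S}\bar y_i \equiv j \pmod{d_i} \,\land\, \bigwedge_{i \notin S}\bar y_i \not\equiv j \pmod{d_i}$.

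The main obstacle I anticipate is the magnitude bound, because a direct application of Lemma~\ref{lem:cap} to the $D$ conjuncts would give only doubly exponential magnitude. My plan is to bypass this by analyzing the zero-sum solution set structurally: it decomposes along the residues $\rho_i := \bar y_i \bmod d_i$ into at most $\prod_i d_i \le 2^{(n+m)^{O(1)}}$ cosets of $d_1\N \times \cdots \times d_m\N$, and within each coset the admissible values of $\ell$ form either all of $\N$ (if the induced periodic function $f_\rho := \sum_i \bar f_i$ vanishes identically) or an initial segment of length at most $D$. This yields a semilinear representation with $2^{(n+m)^{O(1)}}$ linear pieces, each of magnitude $2^{(n+m)^{O(1)}}$; intersecting with the $O(m)$ packing constraints via Lemma~\ref{lem:cap} and projecting to $X_\theta$ preserves the desired bound $2^{(n+m)^{O(1)}}$.
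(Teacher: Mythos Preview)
Your proposal is correct and arrives at essentially the same argument as the paper: guess an endpoint of $\bm{q}$ among the bundle endpoints (the paper guesses only the left endpoint and treats the length $\ell$ as an existential Presburger variable rather than pinning both ends), encode packing via Lemma~\ref{lem:pq}\eqref{lem:packed} as exponent equations over $H$ with $h_C^y$ expanded through the $\alpha_r$, and---this is the key step---handle the zero-sum constraint by disjuncting over the residue tuples $\bm{\gamma}$ (your $\rho$) so that the condition collapses to a single inequality $\ell \le b_{\bm{\gamma}}$ (your ``initial segment of length at most $D$''). Your intermediate $D$-fold conjunction is a detour the paper skips, but your own fix for it is precisely the paper's construction, and your magnitude accounting via Lemma~\ref{lem:cap} matches.
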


\begin{proof}
Let $\nu$ be an $\theta$-valuation and again define $s_i = \nu(y_i)$
and $t_i = \nu(z_i)$ for $1 \le i \le m$.
By Lemma~\ref{lem:pq}\eqref{lem:endpoints} we can restrict the choice of the left endpoint
$q_0$ of the ray $\bm{q}$ to the set 
$\{ \sigma_{\nu}(r_i,s_i), \sigma_{\nu}(r_i,t_i) \mid 1 \le i \le m \}$.
We guess an index $r_0 \in \{r_1, \dots, r_m\}$,
a variable $y_0 \in V_\theta$ and a length $\ell \in \N$,
and verify that the ray $\bm{q} = (q_i)_{0 \le j \le \ell}$
with period $h_C$ and $q_0 = \sigma_\nu(r_0, \nu(y_0))$
satisfies the two conditions.
The formula $\mathsf{Pack}_\theta$ has the form
\begin{equation}
	\label{eq:pack-formula}
	\bigvee_{r_0,y_0} \exists x \geq 0 : ( \phi_1 \wedge \phi_2 ).
\end{equation}
where $\phi_1$ and $\phi_2$ are constructed in the following,
stating that conditions 1.~and 2., respectively, hold for the ray $\bm{q}$.
Note that the variable $x$ in \eqref{eq:pack-formula} stands for the value $\ell$ in the ray $\bm{q} = (q_i)_{0 \le j \le \ell}$.

\subparagraph{Condition 1.}
By Lemma~\ref{lem:pq}\eqref{lem:packed} we can express that $\sigma_{\nu}(r_i,s_i,t_i)$
is packed into $\bm{q}$ by stating that
$\sigma_\nu(r_i,s_i)$ and $\sigma_\nu(r_i,t_i)$ belong to $\supp(\bm{q})$:
\begin{equation}
\label{eq:c1-a}
\begin{aligned}
	& \ \exists 0 \le y \le x  : \sigma(E_{r_0}(y_0))   \, h_C^{y} = \sigma(E_{r_i}(y_i)) \\
	 \wedge & \ \exists 0 \le y \le x : \sigma(E_{r_0}(y_0))  \, h_C^{y} = \sigma(E_{r_i}(z_i))
\end{aligned}
\end{equation}
and that $\sigma_\nu(r_i,s_i) \, \sigma(u_{r_i})^{-1}$ and $\sigma_\nu(r_i,t_i) \, \sigma(u_{r_i})$
belong to $\overline{\supp}(\bm{q})$:
\begin{equation}
\label{eq:c1-b}
\begin{aligned}
	\big(\, &\exists y < 0 :  && \!\!\! \sigma(E_{r_0}(y_0))  \, h_C^{y}  = \sigma(E_{r_i}(y_i)) \, \sigma(u_{r_i})^{-1} ~ \vee \\
  & \exists y > x : && \!\!\!  \sigma(E_{r_0}(y_0))  \, h_C^{y}  = \sigma(E_{r_i}(y_i)) \, \sigma(u_{r_i})^{-1}\big) \\
\wedge	~ \big(\, & \exists y < 0 :  && \!\!\! \sigma(E_{r_0}(y_0))  \, h_C^{y}  = \sigma(E_{r_i}(z_i)) \, \sigma(u_{r_i}) ~ \vee \\
 & \exists y > x : && \!\!\! \sigma(E_{r_0}(y_0))  \, h_C^{y}  = \sigma(E_{r_i}(z_i)) \, \sigma(u_{r_i})\big).
\end{aligned}
\end{equation}
Using the representation $h_C = \prod_{r \in C} \sigma(u_r)^{\alpha_r}$ we can write
the term $h_C^y$ as $\prod_{r \in C} \sigma(u_r)^{\alpha_r y}$ (recall that the $\sigma(u_r)$ for $r \in C$ 
pairwise commute). The formula $\phi_1$ is the conjunction of the above formulas \eqref{eq:c1-a} and  \eqref{eq:c1-b} for all $1 \leq i \leq m$.

\subparagraph{Condition 2.}
Next we will express condition 2.~under the assumption that condition 1.~already holds.
Let $1 \le i \le m$.
Since $\sigma_{\nu}(r_i,s_i,t_i)$ is packed into $\bm{q}$ and $\sigma(u_{r_i}) = h_C^{\beta_{r_i}}$,
by Lemma~\ref{lem:pq}\eqref{lem:mod} there exists a unique number $0 \le \gamma_i < \beta_{r_i}$
such that we have
\begin{equation}
	\label{eq:gamma-i}
	\supp(\sigma_{\nu}(r_i,s_i,t_i)) = \{ q_j \mid 0 \le j \le \ell, \, j \equiv \gamma_i \!\! \pmod{\beta_{r_i}} \},
\end{equation}
which is equivalent (due to condition 1.) to
\begin{equation}
	\label{eq:gamma-cond}
	q_{\gamma_i} = q_0 \, h_C^{\gamma_i} \in \supp(\sigma_{\nu}(r_i,s_i,t_i))
	= \{ \sigma_{\nu}(r_i,k) \mid s_i \le k \le t_i \}.
\end{equation}
Since we have the bound $\beta_{r_i}$ we can guess and verify these numbers $\gamma_i$.
Consider a tuple $\bm{\gamma} = (\gamma_1, \dots, \gamma_m) \in \N^m$.
For $1 \le i \le m$ we define the function $f_{\bm{\gamma},i} \colon \N \to A$ by
\begin{equation}
	f_{\bm{\gamma},i}(j) = \begin{cases} a_{r_i}, & \text{if } j \equiv \gamma_i \!\! \pmod{\beta_{r_i}}, \\
	0, & \text{otherwise.}\end{cases}
\end{equation}
By \eqref{eq:shift-sum} and \eqref{eq:gamma-i} we have for all $0 \le j \le \ell$:
\begin{equation}
\begin{aligned}
\tau_{\nu}(r_i,s_i,t_i)(q_j)
= \left. \begin{cases} a_{r_i}, & \text{if } q_j \in \supp(\sigma_{\nu}(r_i,s_i,t_i)) \\ 0, & \text{otherwise}\end{cases}
\right\} = f_{\bm{\gamma},i}(j)
\end{aligned}
\end{equation}
Hence, for all $0 \le j \le \ell$ we have
\begin{equation}
	\label{eq:tau-f}
	\sum_{i=1}^m \tau_{\nu}(r_i,s_i,t_i)(q_j) = \sum_{i=1}^m f_{\bm{\gamma},i}(j) =: f_{\bm{\gamma}}(j).
\end{equation}
Since $f_{\bm{\gamma},i}$ is $\beta_{r_i}$-periodic, Proposition~\ref{prop:nilpotent-recurrence} implies that the number
\[ b_{\bm{\gamma}} := \sup \{ j \in \N \mid f_{\bm{\gamma}}(j') = 0 \text{ for all } 0 \le j' \le j \}\]
is either infinite or bounded polynomially in $\max\{\beta_{r_i} \mid 1 \leq i \leq m\} \le 2^{n^{\O(1)}}$.
Hence, with \eqref{eq:gamma-i} and \eqref{eq:tau-f} we get
\begin{eqnarray*}
\sum_{i=1}^m \tau_{\nu}(r_i,s_i,t_i) = 0 & \iff & \forall j \in [0,\ell] : \sum_{i=1}^m \tau_{\nu}(r_i,s_i,t_i)(q_j) = 0 \\
& \iff & \forall j \in [0,\ell] : f_{\bm{\gamma}}(j) = 0 \\
& \iff & \ell \le b_{\bm{\gamma}} .
\end{eqnarray*}
The formula $\phi_2$ can now be defined by guessing the numbers $\gamma_i$ (bounded by $\beta_{r_i}-1$),
verifying them using \eqref{eq:gamma-cond} and testing that $\ell$ is at most $b_{\bm{\gamma}}$:
\begin{equation}
	\label{eq:c2}
	\phi_2 = \bigvee_{\bm{\gamma}} \Big(x \le b_{\bm{\gamma}} \wedge \bigwedge_{i=1}^m
	\exists z  \, \exists y_i \le y \le z_i : \big(\sigma(E_{r_0}(y_0)) \, h_C^z = \sigma(E_{r_i}(y))
	\wedge z = \gamma_i\big) \Big)
\end{equation}
Notice that at the atomic level the formula $\mathsf{Pack}_\theta$ 
consists of (in)equalities and exponent equations over $H$
(see \eqref{eq:pack-formula}, \eqref{eq:c1-a}, \eqref{eq:c1-b} and \eqref{eq:c2}).
The exponent equations over $H$ define semilinear sets with magnitude $2^{n^{\O(1)}}$
by Lemma~\ref{lem:exp-eq} (we need the exponents $k_i$ in Lemma~\ref{lem:exp-eq} because of the exponents $\alpha_r$
in $h_C$).
The coefficients in the (in)equalities are also bounded by $2^{n^{\O(1)}}$.
By pushing conjunctions inside 
we can transform $\mathsf{Pack}_\theta$ into a disjunction of existential formulas
of size with $\O(n+m)$ many variables and conjunctions of length $\O(m)$.
By Lemma~\ref{lem:cap} the semilinear set defined by $\mathsf{Pack}_\theta$
has magnitude $2^{(n+m)^{\O(1)}}$.
\end{proof}

\begin{proof}[Proof of Theorem~\ref{thm:abelian-wr}]
We express the statement from Proposition~\ref{prop:main} using 
Lemma~\ref{lem:stack} and Lemma~\ref{lem:pack}.
First we guess the total number $k = \O(n^6)$ of progressions.
Let $Y_k = \{y_1,z_1,\dots,y_k,z_k\}$ be a set of $2k$ distinct variables.
We then guess a set $\Theta$ of bundle descriptors such that
$\{V_\theta \mid \theta \in \Theta\}$ forms a partition of $Y_k$.
In particular, the size of $\Theta$ is bounded by $k = \O(n^6)$.
The final formula then is:
\[
	\sigma(E) = 1 \wedge \bigvee_{k,\Theta} \exists y_1 \exists z_1 \cdots \exists y_k \exists z_k
	\left( \mathsf{Decomp} \wedge \bigwedge_{\theta \in \Theta} (\mathsf{Stack}_\theta \vee \mathsf{Pack}_\theta) \right)
\]
Here the formula $\mathsf{Pack}_\theta$ should be interpreted as false
if the $r_i$-values in $\theta$ are not contained in a common $\parallel$-class.
The formula $\mathsf{Decomp}$ expresses that for all $1 \le r \le m$ 
the set $\{ [\nu(y),\nu(z)] \mid (r,y,z) \in \theta \in \Theta \}$
constitutes a partition of $[1,\nu(x_r)]$,
which is a semilinear constraint with constant magnitude.
\end{proof}

\section{Wreath products with difficult knapsack and power word problems}
  
In this section we will prove Theorems~\ref{cor:wreath-SENS-power} and \ref{cor:wreath-SENS} and present some applications.
We start with a formal definition of uniformly SENS groups \cite{BFLW20}.
	
\subsection{Strongly efficiently non-solvable groups} \label{sec-SENS}

Let us fix a f.g.~group $G = \langle \Sigma \rangle$. Following \cite{BFLW20} 
we need the additional assumption that the generating set $\Sigma$ contains the group identity $1$.
This allows to pad words over $\Sigma$ to any larger length without changing the group element 
represented by the word. One also says that $\Sigma$ is a {\em standard generating set} for $G$.
The group $G$ is called
  \emph{strongly efficiently non-solvable} \emph{(SENS)} if there is a constant $\mu\in\N$ 
  such that for every $d \in \mathbb{N}$ and $v \in \{0,1\}^{\leq d}$ there is a  word
  $w_{d,v} \in \Sigma^*$ with the following properties:
  \begin{itemize}
  \item $|w_{d,v}| = 2^{\mu d}$ for all $v \in \{0,1\}^{d}$,
  \item $w_{d,v} = [w_{d,v0}, w_{d,v1}]$ for all $v \in \{0,1\}^{< d}$ (here we take the commutator of words),
  \item $w_{d,\varepsilon} \neq 1$ in $G$.
  \end{itemize}
   The group $G$ is called \emph{uniformly strongly efficiently
    non-solvable} if, moreover,
  \begin{itemize}
  \item given $v\in \{0,1\}^d$, a binary number $i$ with $\mu d$ bits, and $a \in \Sigma$ one can decide in 
  linear time on a random access Turing-machine whether the $i$-th letter of $w_{d,v}$ is $a$.
  \end{itemize} 
Here are examples for uniformly SENS groups; see \cite{BFLW20} for details:
\begin{itemize}
\item finite non-solvable groups (more generally, every f.g.~group that has a finite non-solvable quotient),
\item f.g.~non-abelian free groups,
\item Thompson's group $F$,
\item weakly branched self-similar groups with a f.g.~branching subgroup (this includes several famous 
self-similar groups like the Grigorchuk group, the Gupta-Sidki groups and the Tower of Hanoi groups).
\end{itemize}

\subsection{Applications of Theorems~\ref{cor:wreath-SENS}}

Recall that Theorem~\ref{cor:wreath-SENS} states that $\KP(G \wr \Z)$ is $\Sigma^p_2$-hard
for every uniformly SENS group $G$. Before we prove this results we show some applications.
  
\begin{corollary}
For the following groups $G$, $\KP(G \wr \mathbb{Z})$ is $\Sigma_2^p$-complete:
\begin{itemize}
\item finite non-solvable groups,
\item non-elementary hyperbolic groups.\footnote{A hyperbolic group is non-elementary if it is not virtually cyclic. Every 
non-elementary hyperbolic group contains a non-abelian free group.}
\end{itemize}
\end{corollary}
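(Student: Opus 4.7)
The plan is to combine Theorem~\ref{cor:wreath-SENS} (for hardness) with the nondeterministic reduction of Proposition~\ref{prop:NP-reduction} (for membership); what remains in both cases is to verify (i) that $G$ is uniformly SENS, and (ii) that $\ExpEq(G)\in\NP$ and $\textsc{Periodic}(G)\in\coNP$.

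For hardness, finite non-solvable groups are already uniformly SENS by the list in Section~\ref{sec-SENS}. For a non-elementary hyperbolic group $G$ the plan is to exhibit effectively two elements $a,b\in G$ generating a free subgroup $F\leq G$ of rank $2$ (classical for non-elementary hyperbolic groups) and then transport the nested commutators $w_{d,v}$ from $F$ to $G$ by rewriting them as words over the generators of $G$ using the fixed words for $a$ and $b$. The resulting words remain non-trivial since $F$ embeds into $G$, and the required linear-time random access is preserved because each generator of $F$ is a fixed word over the generators of $G$. Theorem~\ref{cor:wreath-SENS} then yields $\Sigma^p_2$-hardness in both cases.

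For the upper bound, Proposition~\ref{prop:NP-reduction} reduces $\KP(G\wr\Z)$ (nondeterministically in polynomial time) to a single instance of $\ExpEq(G)$ together with polynomially many instances of $\textsc{Periodic}(G)$. Given $\ExpEq(G)\in\NP$ and $\textsc{Periodic}(G)\in\coNP$, the $\Sigma^p_2$-algorithm first existentially guesses a leaf of the reduction together with an $\NP$-witness for its $\ExpEq(G)$-instance, and then universally guesses, for each of the $\textsc{Periodic}(G)$-instances, a candidate counterexample $t$; a polynomial-time verifier confirms the $\ExpEq$-witness and rules out every counterexample. Both conditions are immediate for finite $G$: exponent equations have solutions bounded by $|G|$, and a refuting $t\leq T$ for a $\textsc{Periodic}(G)$-instance can be guessed in binary and verified in polynomial time by evaluating the periodic functions in $G$. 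For non-elementary hyperbolic $G$ they hold via knapsack-semilinearity with polynomial magnitude (fed into Lemma~\ref{lem:cap} to bound solutions of exponent equations exponentially) together with the fact that the word problem of a hyperbolic group is in $\Ptime$, which again lets us refute a $\textsc{Periodic}(G)$-instance by a binary-guessed $t$.

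The main technical point is in the hardness direction for hyperbolic groups, where the generators $a,b$ of the free subgroup $F\leq G$ need to be fixed as explicit words over $\Sigma$ so that the linear-time random access required for the words $w_{d,v}$ in $F$ lifts to linear-time random access for $w_{d,v}$ viewed as words over $\Sigma$; this is routine once such $a,b$ are chosen, but must be stated explicitly.
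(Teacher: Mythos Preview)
Your proposal is correct and follows essentially the same route as the paper: hardness via Theorem~\ref{cor:wreath-SENS} (using that finite non-solvable groups and free groups are uniformly SENS, the latter transported into a non-elementary hyperbolic group via a free subgroup), and membership via Proposition~\ref{prop:NP-reduction} together with $\ExpEq(G)\in\NP$ and $\textsc{Periodic}(G)\in\coNP$.

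One small point of difference: for the hyperbolic case the paper obtains $\ExpEq(G)\in\NP$ by citing \cite{Loh19hyp} for a direct reduction to existential Presburger arithmetic, whereas you derive it from a polynomial magnitude bound on knapsack solution sets plus Lemma~\ref{lem:cap}. Your route is fine, but note that after guessing an exponentially bounded solution you still need to verify it, which is an instance of $\PowWP(G)$ with binary exponents; you should either state that $\PowWP(G)\in\Ptime$ for hyperbolic $G$ (which is known, e.g.\ via the polynomial-time compressed word problem), or simply cite \cite{Loh19hyp} as the paper does, which sidesteps this verification issue entirely.
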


\begin{proof}
Finite non-solvable groups and  f.g.~non-abelian free groups are uniformly SENS \cite{BFLW20}.
By Theorem~\ref{cor:wreath-SENS}, $\KP(G \wr \mathbb{Z})$ is $\Sigma_2^p$-hard.
It remains to show that $\KP(G \wr \mathbb{Z})$ belongs to $\Sigma_2^p$.
According to \cref{prop:NP-reduction}, it suffices to show that $\textsc{Periodic}(G)$ and $\ExpEq(G)$ both belong to $\Sigma_2^p$.
The problem $\textsc{Periodic}(G)$ belongs to $\coNP$ (since the word problem for $G$ can be solved in polynomial time)
and $\ExpEq(G)$ belongs to $\NP$. For a finite group this is clear. If $G$ is hyperbolic, then one can reduce
$\ExpEq(G)$ to the existential fragment of Presburger arithmetic using \cite{Loh19hyp}.
\end{proof}
Theorem~\ref{cor:wreath-SENS} can be also applied to Thompson's group $F$. This is one of the most
well studied groups in (infinite) group theory due to its unusual properties, see e.g. \cite{CaFlPa96}.
It can be defined in several ways; let us just mention the following finite presentation:
$F = \langle x_0, x_1 \mid [x_0 x_1^{-1} \!\!\:,\,  x_0^{-1} x_1 x_0],  [x_0 x_1^{-1} \!\!\:,\, x_0^{-2} x_1 x^2_0]   \rangle$. 
Thompson's group $F$ is uniformly SENS \cite{BFLW20} and contains a copy of $F \wr \mathbb{Z}$ \cite{GubaSapir99}.
Theorem~\ref{cor:wreath-SENS} yields

\begin{corollary}
The knapsack problem for Thompson's group $F$ is $\Sigma_2^p$-hard.
\end{corollary}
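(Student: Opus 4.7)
The plan is to combine three ingredients: that Thompson's group $F$ is uniformly SENS, that $F$ contains an isomorphic copy of $F \wr \mathbb{Z}$, and Theorem~\ref{cor:wreath-SENS}. First, by the cited result from \cite{BFLW20}, $F$ is uniformly SENS, so Theorem~\ref{cor:wreath-SENS} immediately yields that $\KP(F \wr \mathbb{Z})$ is $\Sigma^p_2$-hard. The remaining task is to transfer this hardness from $F \wr \mathbb{Z}$ to $F$ itself, using the embedding $F \wr \mathbb{Z} \hookrightarrow F$ from \cite{GubaSapir99}.

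For the transfer, I would establish a general and routine lemma: if $H$ embeds into $G$ via $\iota \colon H \to G$, then $\KP(H)$ is logspace-reducible to $\KP(G)$. Given a knapsack expression $E = u_1^{x_1} v_1 \cdots u_d^{x_d} v_d$ over $H = \langle \Sigma_H \rangle$, fix words $\iota(a) \in \Sigma_G^*$ for each $a \in \Sigma_H$, and let $\iota(E)$ be the knapsack expression obtained by replacing every generator $a$ by $\iota(a)$. Since $\iota$ is injective, a valuation $\nu$ satisfies $\nu(E) = 1$ in $H$ if and only if $\nu(\iota(E)) = 1$ in $G$, so $\Sol_H(E) = \Sol_G(\iota(E))$. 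The mapping $E \mapsto \iota(E)$ is computable in logspace once the fixed images $\iota(a)$ are known.

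Applying this lemma to the embedding $F \wr \mathbb{Z} \hookrightarrow F$ reduces $\KP(F \wr \mathbb{Z})$ to $\KP(F)$, whence the $\Sigma^p_2$-hardness of the former transfers to the latter. The only point requiring care is that the embedding from \cite{GubaSapir99} must indeed be given by explicit words over the generators of $F$ (so that $\iota$ is effective), but this is exactly the content of that construction; no computational obstacle arises.
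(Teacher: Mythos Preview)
Your proposal is correct and follows essentially the same approach as the paper: use that $F$ is uniformly SENS to get $\Sigma^p_2$-hardness of $\KP(F\wr\Z)$ from Theorem~\ref{cor:wreath-SENS}, then transfer hardness along the embedding $F\wr\Z\hookrightarrow F$ from \cite{GubaSapir99}. The paper states this in a single sentence without spelling out the reduction-along-embeddings lemma, but your explicit justification of that step is sound and matches the intended argument.
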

We conjecture that the knapsack problem for $F$ is in fact $\Sigma_2^p$-complete.

\subsection{Proof of Theorems~\ref{cor:wreath-SENS}}

We prove Theorem~\ref{cor:wreath-SENS} in two steps. The second step works for every f.g.~group $G$.
Fix this group $G$ and let $\Sigma$ be a standard
generating set for $G$.
Let $\overline{X} = (X_1, \ldots, X_n)$ be a tuple of boolean variables. We identify $\overline{X}$ with the set $\{X_1, \ldots, X_n\}$
when appropriate.
A $G$-program with variables from $\overline{X}$ is a sequence
\[
P = (X_{i_1},a_1,b_1) (X_{i_2},a_2,b_2) \cdots (X_{i_\ell},a_\ell,b_\ell) \in (\overline{X} \times \Sigma \times \Sigma)^*.
\]
The length of $P$ is $\ell$.
For a mapping $\alpha : \overline{X} \to \{0,1\}$ (called an assignment) we define
$P(\alpha) \in G$ as the group element $c_1 c_2 \cdots c_\ell$, where $c_j = a_j$ if $X_{i_j} = 1$ and 
$c_j = b_j$ if $X_{i_j} = 0$ for all $1 \leq j \leq \ell$.
We define the following computational problem $\exists\forall$-\SAT$(G)$:
\begin{description}
\item[Input] A $G$-program $P$ with variables from $\overline{X} \cup \overline{Y}$, where $\overline{X}$ and $\overline{Y}$ are disjoint.
\item[Question] Is there an assignment $\alpha : \overline{X} \to \{0,1\}$
such that for every assignment $\beta : \overline{Y} \to \{0,1\}$ 
we have $P(\alpha \cup \beta) = 1$ (we write $\exists \overline{X} \forall \overline{Y} : P=1$ for this)?
\end{description}

\begin{lemma} \label{lemma:SAT(G)}
 Let the f.g.~group $G = \langle \Sigma\rangle$ be uniformly SENS. Then, $\exists\forall$-\SAT$(G)$ is $\Sigma^p_2$-hard.
\end{lemma}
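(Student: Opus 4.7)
The plan is to reduce from the canonical $\Sigma^p_2$-complete problem $\exists\forall$-$3$-SAT: given an instance $\exists\overline X\forall\overline Y\phi(\overline X,\overline Y)$ with $\phi$ in 3-CNF, I will construct in polynomial time a $G$-program $P$ over the combined variable set $\overline X\cup\overline Y$ with the property that, for every assignment $\nu$, $P(\nu)=1$ in $G$ if and only if $\phi(\nu)$ evaluates to true. This immediately yields the equivalence $\exists\overline X\forall\overline Y:P=1$ iff $\exists\overline X\forall\overline Y:\phi$, establishing the reduction.

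To build $P$, I would first apply Spira's balancing theorem to transform $\neg\phi$ into an equivalent Boolean formula $F$ of depth $d=O(\log|\phi|)$ in negation normal form, and then perform a Barrington-style simulation with the SENS words $w_{d,v}$ as the non-identity targets. For each subformula $F'$ sitting at tree node $v\in\{0,1\}^{\le d}$, I would inductively build a $G$-program $P^{F'}_v$ satisfying the invariant $P^{F'}_v(\nu)=w_{d,v}$ when $F'(\nu)$ is true and $P^{F'}_v(\nu)=1$ otherwise. At a literal leaf, the program writes $w_{d,v}$ letter by letter via triples of the form $(X_i,a_j,1)$ (or $(X_i,1,a_j)$ for a negated literal); by the uniformity hypothesis, each letter $a_j$ is produced in linear time, so the whole literal program is polynomial time. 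At an internal AND node, the children are combined through the commutator
\[
P^{F_1\wedge F_2}_v := P^{F_1}_{v0}\,P^{F_2}_{v1}\,(P^{F_1}_{v0})^{-1}(P^{F_2}_{v1})^{-1},
\]
which by the SENS identity $[w_{d,v0},w_{d,v1}] = w_{d,v}$ satisfies the invariant.

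The main obstacle will be OR gates (and, dually, the negation of AND), which the bare SENS commutator identity does not directly cover. The plan is to maintain in parallel a dual program $\bar P^{F'}_v$ that outputs $w_{d,v}$ iff $F'$ is false; duals compose by the same SENS identity at OR nodes via $[\bar P^{F_1}_{v0},\bar P^{F_2}_{v1}]$. The crossovers — a dual program for an AND or a primal program for an OR — are obtained from the opposite-type commutator by applying de Morgan and appending a constant correction word $w_{d,v}^{-1}$, which is the standard Barrington NOT trick; this replaces the target by a signed variant $w_{d,v}^{\pm 1}$. Nontriviality of every subsequent commutator $[w_{d,v0}^{\pm 1},w_{d,v1}^{\pm 1}]$ is preserved because $[g,h]=1\Leftrightarrow[g^{\pm 1},h^{\pm 1}]=1$ and $w_{d,v}\neq 1$ for every $v\in\{0,1\}^{\le d}$ (inductively from $w_{d,\varepsilon}\neq 1$ together with the fact that a trivial commutator factor would force the parent $w$-value to be trivial). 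Each internal composition quadruples the length, so the root program has length $4^d\cdot 2^{\mu d} = n^{O(1)}$ and can be written down in polynomial time. Setting $P:=P^F_\varepsilon$, we obtain $P(\nu)=1$ iff $F(\nu)$ is false iff $\phi(\nu)$ is true, completing the reduction.
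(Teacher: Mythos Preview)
Your overall strategy---reducing from a quantified Boolean problem and simulating a balanced formula by a Barrington-style $G$-program built out of the SENS words $w_{d,v}$---is exactly the approach the paper takes (by invoking \cite[Remark~34]{BFLW20}). However, there are two genuine issues.

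\textbf{Wrong source problem.} The problem ``$\exists\overline{X}\,\forall\overline{Y}:\phi$'' with $\phi$ in $3$-CNF is \emph{not} $\Sigma^p_2$-complete; it lies in $\NP$. Indeed, for a fixed $\overline{X}$-assignment $\alpha$, a CNF clause $C$ is true for all $\overline{Y}$ iff some $\overline{X}$-literal in $C$ is satisfied by $\alpha$ or $C$ contains a complementary pair $Y_j,\neg Y_j$; both are checkable in polynomial time. The canonical $\Sigma^p_2$-complete version has $\phi$ in $3$-DNF (equivalently, one asks $\forall\overline{Y}$ of a DNF, which is $\coNP$-hard). The paper accordingly starts from a DNF formula. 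Your construction goes through unchanged once you swap CNF for DNF.

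\textbf{Gap in the OR/NOT step.} After your NOT trick (append $w_{d,v}^{-1}$), the target becomes $w_{d,v}^{-1}$. But then the next commutator is $[w_{d,v0}^{\pm 1},w_{d,v1}^{\pm 1}]$, which is in general only a \emph{conjugate} of $w_{d,v}^{\pm 1}$ (for instance $[g^{-1},h]=g[g,h]^{-1}g^{-1}$), not $w_{d,v}^{\pm 1}$ itself. Your invariant therefore does not propagate as stated, and the argument ``$[g,h]=1\Leftrightarrow[g^{\pm1},h^{\pm1}]=1$'' only gives nontriviality at one level, not at subsequent ones (conjugates of $w_{d,v0}$ and $w_{d,v1}$ may well commute). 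The standard fix is easy: whenever a child's target is $w^{-1}$, feed $P^{-1}$ rather than $P$ into the commutator (so the inputs always have target exactly $w_{d,v0}$ and $w_{d,v1}$), or equivalently realise NOT as $w_{d,v}\cdot P^{-1}$, which keeps the target equal to $w_{d,v}$. With that correction, the length estimate $4^d\cdot 2^{\mu d}=n^{O(1)}$ remains valid and the argument is complete.
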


\begin{proof}
We prove the lemma by a reduction from the following $\Sigma^p_2$-complete problem: given a 
boolean formula $F = F(\overline{X}, \overline{Y})$ in disjunctive normal form, where $\overline{X}$ and $\overline{Y}$ are disjoint tuples of boolean variables, 
does the quantified boolean formula $\exists \overline{X} \forall \overline{Y} : F$ hold?
Let us fix such a formula $F(\overline{X}, \overline{Y})$. 
We can write $F$ as a fan-in two boolean circuit of depth $\mathcal{O}(\log |F|)$. 
 By \cite[Remark~34]{BFLW20} we can compute in logspace
from $F$ a $G$-program $P$ over the variables $\overline{X} \cup \overline{Y}$ 
of length polynomial in $|F|$ such that for every assignment $\gamma : \overline{X} \cup \overline{Y} \to \{0,1\}$ the following two
statements are equivalent:
\begin{itemize}
\item $F(\gamma(\overline{X}), \gamma(\overline{Y}))$ holds.\label{Cval}
\item  $P(\gamma) = 1$ in $G$. \label{Pval}
\end{itemize}
Hence, $\exists \overline{X} \forall \overline{Y} : F$ holds if and only if $\exists \overline{X} \forall \overline{Y} : P=1$
holds.
\end{proof}

\begin{lemma} \label{lemma:reduction}
For every f.g.~nontrivial group $G$, $\exists\forall$-\SAT$(G)$ is logspace many-one reducible to $\KP(G \wr \mathbb{Z})$.
\end{lemma}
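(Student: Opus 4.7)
The plan is to construct in logspace, from a $G$-program $P$ over $\overline X \cup \overline Y$, a polynomial-size knapsack expression $E$ over $G \wr \mathbb{Z}$ such that $E = 1$ has an $\mathbb{N}$-solution iff $\exists \overline X \forall \overline Y\colon P = 1$. The idea is to identify each assignment $\beta\colon \overline Y \to \{0,1\}$ with its binary code in $[0, N-1]$ (where $N = 2^{|\overline Y|}$), viewed as a position on the $\mathbb{Z}$-axis of $G \wr \mathbb{Z}$, and to design $E$ so that any $\mathbb{N}$-solution yields the element whose $\tau$-component places $P(\alpha, \beta) \in G$ at the position coding $\beta$ for every $\beta$ and whose cursor returns to $0$. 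Then $E = 1$ becomes equivalent to the inner universal statement.

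To keep the constraint gadgets for the knapsack variables separated from the main computation, I would first embed $G^{m+1} \wr \mathbb{Z}$ into $G \wr \mathbb{Z}$ via Lemma~\ref{lemma-wreath-embedding} (where $m = |\overline X|$) and work in $G^{m+1} \wr \mathbb{Z}$, reserving coordinate $0$ of $G^{m+1}$ for the $G$-computation and coordinate $l$ for the constraint on the $l$-th existential variable. For each $X_l$ I introduce knapsack variables $x_l, y_l$ with the intended meaning that $\alpha(X_l) = 1$ corresponds to $(x_l, y_l) = (1,0)$ and $\alpha(X_l) = 0$ to $(x_l,y_l) = (0,1)$; since $x_l, y_l \in \mathbb{N}$ it suffices to enforce the single constraint $x_l + y_l = 1$. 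Using a fixed nontrivial $g \in G$, the element $g_l t \in G^{m+1} \wr \mathbb{Z}$ (where $g_l$ is $g$ placed in coordinate $l$) has infinite order, so $(g_l t)^{x_l}(g_l t)^{y_l}(g_l t)^{-1} = (g_l t)^{x_l + y_l - 1}$ equals the identity iff $x_l + y_l = 1$; because different $l$'s use independent coordinates of $G^{m+1}$, the constraints can be imposed simultaneously without interfering with each other, even if their footprints on the $\mathbb{Z}$-axis overlap.

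In the main coordinate I would add factors for each instruction of $P$. A universal instruction $(Y_p, a_j, b_j)$ contributes a fixed element placing the period-$2^p$ pattern of $a_j/b_j$-blocks over $[0, N-1]$; this is a specific element of $G \wr \mathbb{Z}$ with a compact description such as $((b_j t)^{2^{p-1}} (a_j t)^{2^{p-1}})^{2^{k-p}} t^{-N}$, which I would realize inside the knapsack expression using nested powers with fresh auxiliary knapsack variables pinned down to the values $2^{p-1}, 2^{k-p}, N$ via their own cursor/content constraint gadgets (of the same flavor as above). Existential instructions $(X_l, a_j, b_j)$ contribute $a_j$ or $b_j$ uniformly at all positions depending on $\alpha(X_l)$. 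Since the knapsack restriction requires each variable to appear at most once while $X_l$ may occur in several instructions of $P$, I would merge all $X_l$-occurrences in $P$ into a single pair of factors $u_l^{x_l} v_l^{y_l}$ whose bases $u_l, v_l$ encode the $X_l$-true (resp.\ $X_l$-false) contributions at the correct relative positions of $P$; this merge exploits Lemma~\ref{lem:tau} to express the necessary cumulative-cursor conjugations that account for the intervening instructions, so that at each position the overall product is the correctly ordered $P(\alpha, \beta)$.

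The main obstacle is precisely this non-abelian reordering step: naively bundling all $X_l$-contributions into one factor breaks the instruction order of $P$ at each position, producing the wrong group element, and there is no second occurrence of $x_l$ we can use to restore the order. My plan to handle this is the conjugation trick sketched above, which converts the interleaving in $P$ into conjugates of the $X_l$-contributions by the cumulative products of the surrounding instructions --- all of which are computable from $P$ in logspace. Once this construction is in place, both directions of the equivalence are routine: a witness $\alpha$ with $P(\alpha,\beta) = 1$ for all $\beta$ yields the valid $\mathbb{N}$-solution $x_l = \alpha(X_l)$, $y_l = 1-\alpha(X_l)$; conversely any $\mathbb{N}$-solution of $E = 1$ must satisfy all coordinate-$l$ constraints (so the $(x_l,y_l)$'s encode a boolean assignment $\alpha$) and produce the identity in coordinate $0$ at every position of $[0, N-1]$ (so $P(\alpha, \beta) = 1$ for every $\beta$).
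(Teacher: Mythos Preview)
Your proposal has two genuine gaps.

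\textbf{Exponential periods.} The binary encoding of universal assignments forces, for each $Y_p$, a period-$2^p$ pattern along the $\Z$-axis. You propose to realise $((b_jt)^{2^{p-1}}(a_jt)^{2^{p-1}})^{2^{k-p}}$ by ``nested powers with fresh auxiliary knapsack variables pinned down to the values $2^{p-1},2^{k-p},N$''. But a knapsack expression has the shape $v_0 u_1^{x_1} v_1 \cdots u_d^{x_d} v_d$ with \emph{fixed words} $u_i,v_i$; an expression like $((b_jt)^{z_1}(a_jt)^{z_2})^{z_3}$ is simply not of this form. Nor can a gadget of the flavour $(g_lt)^{x}(g_lt)^{-c}$ pin $x$ to an exponential constant $c$, since the fixed word $(g_lt)^{-c}$ would itself have exponential length, and there is no evident doubling mechanism available inside a single knapsack expression. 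The paper avoids this completely by encoding assignments via the Chinese Remainder Theorem: each variable gets a small prime $q_i$ (the $(m+n)$-th prime is polynomial in $m+n$), so the period words $a_it(b_it)^{q_i-1}$ have polynomial length, and no variable ever needs to be pinned to a specific value---only equality and divisibility relations among variables are enforced, via pebble gadgets.

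\textbf{Bundling existential occurrences.} Your plan to merge all occurrences of an existential variable $X_l$ into one factor $u_l^{x_l}v_l^{y_l}$ and repair the instruction order by conjugation does not work in a non-abelian $G$. At a position $s$ the accumulated element must be $c_1(s)\cdots c_\ell(s)$ in the original order; processing an existential instruction $I_i$ out of order and correcting it would require conjugating by the product of the intervening $c_j(s)$, but these depend on $\beta_s$ (for universal $j$) and on $\alpha$ (for other existential $j$), so the conjugating element is not a fixed word and cannot be baked into $u_l$ or $v_l$. Lemma~\ref{lem:tau}, which you invoke, only accounts for cursor shifts in the $\Z$-coordinate; it says nothing about reordering non-commuting $G$-contributions at a fixed position. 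The paper sidesteps this by keeping the instructions in their original order and introducing a \emph{separate} pair of knapsack variables $x_i,x'_i$ for every existential instruction $i$; consistency across the occurrences of the same $X_l$ (your conditions ``$x_l+y_l=1$'' and ``same choice at every occurrence'') is then enforced by additional pebble gadgets placed in extra coordinates of $G^d$ via Lemma~\ref{lemma-wreath-embedding}.
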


  \begin{proof}
	Let us fix a $G$-program
	 \begin{equation} \label{eq-G-prog}
             P = (Z_{1},a_1,b_1) (Z_{2},a_2,b_2) \cdots (Z_{\ell},a_\ell,b_\ell) \in ((\overline{X} \cup \overline{Y}) \times \Sigma \times \Sigma)^*
         \end{equation}
         where $\overline{X}$ and $\overline{Y}$ are disjoint sets of variables.
          Let	$m = |\overline{X}|$ and $n = |\overline{Y}|$.
	 We want to construct a knapsack expression $E$ over $G \wr \mathbb{Z}$ which has a solution 
	if and only if there is an assignment $\alpha : \overline{X} \to \{0,1\}$
        such that $P(\alpha \cup \beta) = 1$ for every assignment $\beta : \overline{Y} \to \{0,1\}$.
        Let us choose a generator $t$ for $\mathbb{Z}$. Then $\Sigma \cup \{t,t^{-1}\}$ generates the wreath product $G \wr \mathbb{Z}$.	
        First, we compute in logspace the 
	$m+n$ first primes $p_1, \ldots, p_{m+n}$ and fix a bijection $p : \overline{X} \cup \overline{Y} \to \{p_1, \ldots, p_{m+n}\}$. 
	Moreover, let $M = \prod_{i=1}^{m+n} p_i$.

        Roughly speaking, the idea is as follows. Each assignment
        $\alpha\colon \overline{X}\to\{0,1\}$ will correspond to a
        valuation $\nu$ for our expression $E$. The resulting element
        $\nu(E)\in G\wr\Z$ then encodes the value $P(\alpha\cup\beta)$
        for each $\beta\colon\overline{Y}\to\{0,1\}$ in some position $s\in[0,M-1]$.  To be
        precise, to each $s\in[0,M-1]$, we associate the assignment
        $\beta_s\colon\overline{Y}\to\{0,1\}$ where $\beta_s(Y)=1$ if
        and only if $s\equiv 0\bmod{p(Y)}$. Then, $\tau(\nu(E))(s)$
        will be $P(\alpha\cup\beta_s)$.  This means, $\nu(E)=1$
        implies that $P(\alpha\cup\beta)=1$ for all assignments
        $\beta\colon\overline{Y}\to\{0,1\}$.

        Our expression implements this as follows. For each
        $i=1,\ldots,\ell$, it walks to the right to some position
        $M'\ge M$ and then walks back to the origin. On the way to the
        right, the behavior depends on whether $Z_i$ is an existential
        or a universal variable. If $Z_i$ is existential, we either
        place $a_i$ at every position (if $\alpha(Z_i)=1$) or $b_i$ at
        every position (if $\alpha(Z_i)=0$). If $Z_i$ is universal, we
        place $a_i$ in the positions divisible by $p(Z_i)$; and we
        place $b_i$ in the others. That way, in position
        $s\in[0,M-1]$, the accumulated element will be
        $P(\alpha\cup\beta_s)$.

	We define $I_\exists = \{ i \in [1,\ell] \mid Z_i \in \overline{X}\}$ and $I_\forall = \{ i \in [1,\ell] \mid Z_i \in \overline{Y} \}$.
	For an existentially quantified variable $X\in \overline{X}$ let $I_X = \{ i \in [1,\ell] \mid X = Z_i \}$ be the set of those positions in the $G$-program $P$,
         where the variable $X$ is queried.
	Moreover, let us write $q_i$ for the prime number $p(Z_i)$.
	We compute  for every $i \in I_\exists$ the words (over the wreath product $G\wr \mathbb{Z}$)
	\[
	u_{i} = (a_it)^{q_{i}} \text{ and } v_{i} = (b_it)^{q_{i}} 
	\]
        and for every $i \in I_\forall$ the word 
	\[
	w_i = a_it (b_it)^{q_{i}-1} .
	\]
	Let us now consider the knapsack expression 
	\[
	E_1 = \prod_{i=1}^\ell f_{i} t^{-1} (t^{-1})^{z_i} \text{ with } f_{i} = 
	\begin{cases} 
	   u_{i}^{x_{i}} v_{i}^{x'_{i}} & \text{ if } i \in I_\exists, \\
	   w_i^{y_i} & \text{ if } i \in I_\forall  .
	\end{cases}
      \]
      The idea is that in $E_1$, for each $i\in[1,\ell]$, we go to
      right with $f_i$ and then we go back to the origin with
      $t^{-1}(t^{-1})^{z_i}$. If $Z_i$ is existential, we use
      $f_i=u_i^{x_i}v_i^{x'_i}$ to either place $a_i$ at every
      position or $b_i$ at every position. If $Z_i$ is universal, we
      use $w_i$ to place $a_i$ at positions divisible by $q_i=p(Z_i)$
      and $b_i$ at the others.  Note that the expression itself cannot
      guarantee that, e.g., (i)~$(t^{-1})^{z_i}$ moves exactly onto
      the origin or (ii)~that we either use only $u_i$ or only $v_i$
      for each $i\in I_\exists$. Therefore, we ensure these properties
      temporarily by imposing additional linear equations (Claim~1). In a
      second step, we shall extend $E_1$ to get an expression in which
      a solution will automatically satisfy these linear equations
      (Claim~2).
      
      \medskip
	\noindent
	{\it Claim 1:} $\exists \overline{X} \forall \overline{Y} : P=1$ holds if and only if there exists a $(G \wr \mathbb{Z})$-solution $\nu$ for $E_1$ with the following
	properties:
	\begin{enumerate}[(a)]
        \item $q_i \cdot \nu(y_i) = \nu(z_i)+1$ for all $i \in I_\forall$,
        \item $q_i \cdot (\nu(x_{i}) + \nu(x'_{i})) = \nu(z_i)+1$ for all $i \in I_\exists$,
        \item $\nu(z_i) = \nu(z_j)$ for all $i,j \in [1,\ell]$ with $i \neq j$,
        \item $\nu(x_{i}) = 0$ or $\nu(x'_{i}) = 0$ for all $i \in I_\exists$,
        \item for all $X \in \overline{X}$ and all $i,j \in I_X$ 
         we have: $\nu(x_{i}) = 0$ if and only if $\nu(x_{j}) = 0$.
        \end{enumerate}
        
        \noindent
        {\it Proof of Claim 1:} Assume first that $\exists \overline{X} \forall \overline{Y} : P=1$ holds.
        Let  $\alpha :  \overline{X} \to \{0,1\}$ be an assignment such that for every assignment $\beta :  \overline{Y} \to \{0,1\}$, we have
        $P(\alpha \cup \beta)=1$ in $G$. 
        
        We have to find a $(G \wr \mathbb{Z})$-solution for $E_1$ such that the above properties (a)--(d) hold. 
        For this, we set:
        \begin{itemize}
        \item $\nu(z_i) = M-1$ for all $i \in [1,\ell]$,
        \item  $\nu(y_i) = M/q_i$ for all $i \in I_\forall$,
        \item  $\nu(x_i) = M/q_i$ and $\nu(x'_i) = 0$
        for all $i \in I_X$, $X \in \overline{X}$ such that $\alpha(X)=1$,
        \item $\nu(x'_i)= M/q_i$ and $\nu(x_i) = 0$
        for all $i \in I_X$, $X \in \overline{X}$ such that $\alpha(X)=0$.
        \end{itemize}
        Then, clearly, (a)--(e) hold. It remains to verify that $\nu$ is a $(G \wr \mathbb{Z})$-solution for $E_1$. 
        Let $h = \tau(\nu(E_1)) \in G^{(\Z)}$ and $k = \sigma(\nu(E_1)) \in \Z$.
	We have $k=0$ and $h(s) = 1$ for all $s \in \mathbb{Z} \setminus [0,M-1]$. 
	Moreover, for every $s \in [0,M-1]$ we have
	$h(s) = c_1 c_2 \ldots c_\ell$ where 
	\[ c_i = \begin{cases}
	    a_i & \text{ if } (i \in I_\forall \text{ and } s \equiv 0 \bmod q_i) \text{ or } (i \in I_X, X \in\overline{X} \text{ and } \alpha(X)=1) \\
	    b_i & \text{ if } (i \in I_\forall \text{ and } s \not\equiv 0 \bmod q_i) \text{ or } (i \in I_X, X \in\overline{X} \text{ and } \alpha(X)=0) .
	 \end{cases}  
	\]	
	Here, the $a_i$ and $b_i$ are from \eqref{eq-G-prog}.
	Hence, there is an assignment $\beta_s :  \overline{Y} \to \{0,1\}$ such that
	$h(s) = P(\alpha \cup \beta_s)$. Thus, $h(s) = 1$ for all $s \in [0,M-1]$, which implies 
	that $\nu(E_1)=1$ in $G \wr \mathbb{Z}$.
	
	For the other direction, assume that $\nu$ is a $(G \wr \mathbb{Z})$-solution for $E_1$ such that the properties
	(a)--(e) hold. Let $M' = \nu(z_1)+1>0$. We then have $M' = \nu(z_i)+1$ for all $i \in [1,\ell]$
	by property (c). By properties (a) and (b), $M'$ is divisible by the first $m+n$ primes. This
	implies that $M'$ is a multiple of $M$ and thus $M' \geq M$. 

	Let us define an assignment $\alpha :  \overline{X} \to \{0,1\}$ as follows, where $i \in I_\exists$: 
	\[
	\alpha(Z_i) = \begin{cases}
	  0 \text{ if } \nu(x_i) = 0 \\
	  1 \text{ if } \nu(x'_i) = 0
	\end{cases}
	\]
	By properties (d) and (e) this defines indeed an assignment $\alpha :  \overline{X} \to \{0,1\}$.
	Moreover, for every position $s \in [0,M'-1]$ we define the assignment $\beta_s : \overline{Y} \to \{0,1\}$
	by $\beta_s(Y) = 1$ if $s \equiv 0 \bmod p(Y)$ and  $\beta_s(Y) = 0$ otherwise.
	By the Chinese remainder theorem, for every $\beta : \overline{Y} \to \{0,1\}$ there exists
	$s \in [0,M'-1]$ with $\beta = \beta_s$. Moreover, the construction of $E_1$ implies that 
	$\nu(E_1)$ writes $P(\alpha \cup \beta_s)$ into position $s$. Since $\nu(E_1) = 1$ in $G \wr \mathbb{Z}$
	we have $P(\alpha \cup \beta_s) = 1$ for all $s \in  [0,M'-1]$, i.e., $P(\alpha \cup \beta) = 1$
	for all assignments $\beta : \overline{Y} \to \{0,1\}$. 
	We have shown Claim 1.
	
	\medskip
	\noindent
	In the rest of the proof we construct a knapsack expression $E_2$ such that 
	each of the variables from $E_1$ also occurs in $E_2$. Moreover, the following properties will hold:
	\begin{itemize}
        \item Every $(G \wr \mathbb{Z})$-solution of $E_1$ that satisfies the properties (a)--(e) extends to a $(G \wr \mathbb{Z})$-solution 
        of $E_2$.
        \item Every $(G \wr \mathbb{Z})$-solution of $E_2$ restricts to a  $(G \wr \mathbb{Z})$-solution of $E_1$ that satisfies the properties (a)--(e). 
        \end{itemize}
        This implies that $E_2$ has a $(G \wr \mathbb{Z})$-solution if and only if $E_1$ has a   
         $(G \wr \mathbb{Z})$-solution that satisfies the properties (a)--(e) if and only if $\exists \overline{X} \forall \overline{Y} : P=1$ holds.
         
         Let $g \in G$ be any nontrivial element. 
         To construct $E_2$ it is convenient to work in a wreath product $(\langle g \rangle^d \times G) \wr \mathbb{Z}$ for
         some $d$, whose unary encoding can be computed (in logspace) from the input formula $\exists \overline{X} \forall \overline{Y} : F$.
         By Lemma~\ref{lemma-wreath-embedding} we can compute in logspace an embedding of $(\langle g \rangle^d \times G) \wr \mathbb{Z}$ into $G \wr \mathbb{Z}$.
         Let $\zeta_i$ be the canonical embedding of $\langle g \rangle$ into $\langle g \rangle^d$ that maps $g$ to $(1,\ldots,1,g,1,\ldots,1)$,
         where in the latter, $g$ appears in the $i$-th coordinate. We assume that the coordinates are numbered from $0$ to $d-1$. 
         In the following, we write $g_i$ for $\zeta_i(g)$. We set $d = 2 \ell+1$.
         
         We then define the following knapsack expression $E_2 = E_{2,1} E_{2,2}$ where $z,z'$ and $\tilde{X}, \tilde{X}'$ for all $X \in \overline{X}$
         appear as fresh variables:
         \begin{eqnarray*}
         E_{2,1} &=& g_0 g_1 \cdots g_\ell \left( \prod_{X \in \overline{X}} \bigg( \prod_{i \in I_X} g_{\ell+i} \bigg)^{\! \tilde{X}'}\right) t t^z g_1 \cdots g_\ell 
         \left( \prod_{X \in \overline{X}} \bigg( \prod_{i \in I_X} g_{\ell+i} \bigg)^{\! \tilde{X}}\right) t^{-1} (t^{-1})^{z'} g_0^{-1}\\
         E_{2,2} &=&  \prod_{i =1}^\ell f_{i} g_i^{-1} t^{-1} (t^{-1})^{z_i} g_i^{-1}  
         \text{ with } f_{i} = 
	\begin{cases} 
	   u_{i}^{x_{i}} g_{\ell+i}^{-1} v_{i}^{x'_{i}} & \text{ if } i \in I_\exists, \\
	   w_i^{y_i} & \text{ if } i \in I_\forall .	   
	\end{cases}
         \end{eqnarray*}
         The idea of the construction is that the $g_i$ implement pebbles that can be put on different positions in $\mathbb{Z}$. At the end all pebbles have to be recollected.
         Note that we only use the pebbles $g_0, g_1, \ldots, g_\ell$ and $g_{\ell+i}$ for $i \in I_\exists$; hence we could reduce the dimension $2\ell+1$ to
         $\ell+1+|I_\exists|$ but this would make the indexing slightly more inconvenient.
         
         \medskip
         \noindent
         {\it Claim 2:}  Every $(G \wr \mathbb{Z})$-solution of $E_1$ that satisfies the properties (a)--(e) extends to a $(G \wr \mathbb{Z})$-solution 
        of $E_2$.
        
         \medskip
        \noindent
        {\it Proof of Claim 2:} 
        Let $\nu$ be a $(G \wr \mathbb{Z})$-solution of $E_1$  that satisfies the properties (a)--(e).
        Let $M' = \nu(z_1)+1>0$. Hence, $M' = \nu(z_i)+1$ for all $i \in [1,\ell]$.
        We then extend $\nu$ to the fresh variables in $E_2$ by: 
        \begin{itemize}
         \item $\nu(z) = \nu(z')=M'-1$,
         \item for all $X \in \overline{X}$ such that $x_i=0$ for some (and hence all) $i \in I_X$, we set $\nu(\tilde{X}')=1$ and $\nu(\tilde{X})=0$,
         \item  for all $X \in \overline{X}$ such that $x'_i=0$ for some (and hence all) $i \in I_X$, we set $\nu(\tilde{X}')=0$ and $\nu(\tilde{X})=1$.   
        \end{itemize}
	It is easy to check that this yields indeed a $(G \wr \mathbb{Z})$-solution 
        of $E_2$.

          \medskip
         \noindent
         {\it Claim 3:}  Every $(G \wr \mathbb{Z})$-solution of $E_2$ restricts to a  $(G \wr \mathbb{Z})$-solution of $E_1$ that satisfies the properties (a)--(e). 
                 
         \medskip
        \noindent
        {\it Proof of Claim 3:} Fix a $(G \wr \mathbb{Z})$-solution $\nu$ of $E_2$.  First of all, we must have $\nu(z)=\nu(z')$; otherwise the pebble $g_0$
        will not be recollected. Let $M' = \nu(z)+1>0$. The word $\nu(E_{2,1})$ leaves pebbles $g_1, \ldots, g_\ell$ at positions $0$ and $M'$ (it also leaves powers
        of the pebbles $g_{\ell+i}$ --- we will deal with those later) and puts the cursor back to position $0$. With the word $\nu(E_{2,1})$ the pebbles 
        at positions $0$ and $M'$ have to be recollected. This happens only if 
        $\nu(z_i) = M'-1$ for all $i \in [1,\ell]$, $q_i \cdot \nu(y_i) = M'$ for all $i \in I_\forall$,
        and $q_i \cdot (\nu(x_{i}) + \nu(x'_{i})) = M'$ for all $i \in I_\exists$. Hence, conditions (a)--(c) hold.
        
        Conditions (d) and (e) are enforced with the pebbles $g_{\ell+i}$ for $i \in I_\exists$. Consider an existentially quantified variable $X \in \overline{X}$.
        The word $\nu(E_{2,1})$ leaves for every $i \in I_X$ the 
        ``pebble powers'' $g_{\ell+i}^{\nu(\tilde{X}')}$ and $g_{\ell+i}^{\nu(\tilde{X})}$ at positions 
        $0$ and $M'>0$, respectively. With the word $\nu(E_{2,2})$ exactly
        one pebble $g_{\ell+i}$ is recollected. Therefore, exactly one of the following two cases has to hold:
        \begin{itemize}
         \item $g^{\nu(\tilde{X}')} = 1$ and  $g^{\nu(\tilde{X})} = g$ in $G$,
         \item $g^{\nu(\tilde{X}')} = g$ and  $g^{\nu(\tilde{X})} = 1$ in $G$.
        \end{itemize}
        Assume first that $g^{\nu(\tilde{X}')} = 1$ and  $g^{\nu(\tilde{X})} = g$ in $G$. Then $\nu(E_{2,1})$ places the pebble $g_{\ell+i}$ at position $M'$ (and it places this 
        pebble at no other position) for all $i \in I_X$.
        In order to recollect this pebble with $\nu(E_{2,2})$ we must have $\nu(x_i) = M'/q_i = M'/p(X)$ and $\nu(x'_i) = 0$ for all $i \in I_X$.
         If $g^{\nu(\tilde{X}')} = g$ and  $g^{\nu(\tilde{X})} = 1$ in $G$ then we must have $\nu(x'_i) = M'/q_i = M'/p(X)$ and $\nu(x_i) = 0$ for all $i \in I_X$.
         This shows that (d) and (e) holds and concludes the proof of Claim 3 and hence the proof of the lemma.
	\end{proof}
Theorem~\ref{cor:wreath-SENS} is now a direct corollary of Lemmas~\ref{lemma:SAT(G)} and \ref{lemma:reduction}.

\subsection{Wreath product with difficult power word problems}

In \cite{LoWe19} it was shown that $\PowWP(G\wr \Z)$ is $\coNP$-complete in case
$G$ is a finite non-solvable group or a f.g.~free group. The proof in \cite{LoWe19} immediately generalizes to the case
were $G$ is uniformly SENS. This yields Theorem~\ref{cor:wreath-SENS-power}.
Alternatively, one can prove Theorem~\ref{cor:wreath-SENS-power} by showing the following two facts:
\begin{itemize}
\item $\forall$-\SAT$(G)$ (the question whether for a given $G$-program $P$,
$P(\alpha)=1$ for all assignments)  is $\coNP$-hard if $G$ is uniformly SENS.
\item $\forall$-\SAT$(G)$ is logspace many-one reducible to $\PowWP(G \wr \mathbb{Z})$.
\end{itemize}
The proofs for these facts are in fact simplifications of the proofs for Lemmas~\ref{lemma:SAT(G)} and \ref{lemma:reduction}.

We can also show that for a large class of groups the power word problem is contained in $\coNP$.
Fix a f.g.~group $G = \langle\Sigma\rangle$. With 
$\WP(G,\Sigma)$ we denote the set of all words $w \in \Sigma^*$ such that $w=1$ in $G$ (the word problem
for $G$ with respect to $\Sigma$).
We say that $G$ is {\em co-context-free} if $\Sigma^* \setminus \WP(G,\Sigma)$
is context-free (the choice of $\Sigma$ is not relevant for this), see  \cite[Section~14.2]{HRR2017} for more details.

\begin{theorem} \label{thm-co-context-free-coNP}
The power word problem for a co-context-free group $G$ belongs to $\coNP$. 
\end{theorem}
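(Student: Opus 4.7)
Fix a pushdown automaton $M$ accepting $L := \Sigma^* \setminus \WP(G,\Sigma)$, which exists because $G$ is co-context-free. We show that the complement of $\PowWP(G)$ lies in $\NP$: given $\overline{w} = (u_1,k_1,\ldots,u_d,k_d)$, we nondeterministically certify that $w := u_1^{k_1} \cdots u_d^{k_d}$ is accepted by $M$. Since $|w|$ is exponential in the input size, we cannot simply guess an accepting run; instead, we exploit two features to produce a polynomial-size compressed witness: $M$ has constantly many states and stack symbols, and $w$ consists of $d$ blocks, each of which is a long power of a short word.

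For each block $u_i^{k_i}$ the plan is to use the following structural observation. A single reading of $u_i$ from a configuration $(q,\mathrm{stack})$ depends only on the PDA state and the $O(|u_i|)$ topmost stack symbols actually accessed; hence the ``one-iteration effect'' of $u_i$ is a polynomial-size relation $T_i$ on bounded-top configurations, computable in polynomial time from $u_i$ and $M$. The $k_i$-fold iteration of $T_i$ admits a succinct description obtained by decomposing the block into polynomially many \emph{monotone sub-phases} that, by the finiteness of bounded-top configurations together with a pumping-style argument, either purely grow the stack by a periodic pattern, purely shrink it, or rewrite only the current stack top within a bounded cycle.

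The witness comprises, per block: the PDA state at each boundary, the sub-phase decomposition (types plus binary-encoded lengths summing to $k_i$), and the per-iteration stack increment in each monotone sub-phase. Verification reduces to polynomially many local checks against the transition relation of $M$ on $u_i$, arithmetic checks relating sub-phase lengths to $k_i$, and checks that boundary states match across adjacent blocks and that the final configuration is accepting in $M$. Both the size of the witness and the verification run in polynomial time in the input $\overline{w}$.

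The main obstacle is establishing the polynomial bound on the number of sub-phases per block. The key fact to prove is that iterations of $u_i$ which pop below the stack height held at the start of the block can occur only $|Q|^{O(|u_i|)}$ times in succession before the finite set of relevant bounded-top configurations is exhausted; all remaining iterations then lie inside a single monotone sub-phase whose length is absorbed into a binary-encoded counter. A careful Kleene-style closure on the one-iteration relation on (state, top-segment) pairs, together with this exhaustion argument, yields the polynomial-size certificate and hence $\PowWP(G) \in \coNP$.
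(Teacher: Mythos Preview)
Your plan has a genuine gap precisely at the step you flag as ``the main obstacle.'' The exhaustion argument you sketch bounds the number of sub-phases by the number of ``bounded-top configurations'' of $M$, i.e.\ pairs (state, top $O(|u_i|)$ stack symbols). That set has size $|Q|\cdot|\Gamma|^{O(|u_i|)}$; since $M$ is fixed, $|Q|$ and $|\Gamma|$ are constants, but $|u_i|$ is part of the input, so your bound is $c^{O(|u_i|)}$ and hence exponential in the input length. The same issue already hits the one-iteration relation $T_i$: as you describe it, its domain is the set of bounded-top configurations, so $T_i$ is not a ``polynomial-size relation'' either. Without a genuinely polynomial bound on the number of sub-phases and on the description of each phase, the witness is not polynomial-size and the $\NP$ verification fails. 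A direct PDA-simulation argument of this flavor \emph{can} be made to work, but it requires substantially more machinery (essentially the theory of compressed membership in context-free languages); the pigeonhole-on-top-segments step you outline does not suffice.

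The paper sidesteps this entirely by a short reduction to Parikh images. Introduce fresh letters $a_1,\ldots,a_d$ and the morphism $h(a_i)=u_i$; then $u_1^{k_1}\cdots u_d^{k_d}\neq 1$ in $G$ iff $a_1^{k_1}\cdots a_d^{k_d}$ lies in the context-free language $K:=h^{-1}(L)\cap a_1^*a_2^*\cdots a_d^*$. Because $L$ is fixed, a push-down automaton (and then a context-free grammar) for $K$ is computable in polynomial time from the $u_i$ via the standard inverse-morphism and product constructions. Since $K\subseteq a_1^*\cdots a_d^*$, membership of $a_1^{k_1}\cdots a_d^{k_d}$ in $K$ is equivalent to membership of the vector $(k_1,\ldots,k_d)$ in the Parikh image of $K$, and uniform membership in Parikh images of context-free languages is in $\NP$ by Huynh's theorem. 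This yields $\PowWP(G)\in\coNP$ without any analysis of PDA runs on long periodic words.
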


\begin{proof}
Let $G = \langle\Sigma\rangle$ and let $(u_1, k_1, u_2, k_2, \ldots, u_d, k_d)$
be the input power word, where $u_i \in \Sigma^*$. We can assume that all $k_i$ are positive.
We have to check whether $u_1^{k_1} u_2^{k_2}\cdots u_d^{k_d}$ is trivial in $G$.
Let $L$ be the complement of $\WP(G,\Sigma)$, which is context-free. 
Take the alphabet $\{a_1, \ldots, a_d\}$ and define the morphism $h : \{a_1, \ldots, a_d\}^* \to \Sigma^*$
by $h(a_i) = u_i$.
Consider the language $K = h^{-1}(L) \cap a_1^* a_2^* \cdots a_d^*$. Since the context-free languages are closed under inverse 
morphisms and intersections with regular languages, $K$ is context-free too. Moreover, from the tuple $(u_1, u_2, \ldots, u_d)$
we can compute in polynomial time a context-free grammar for $K$: Start with a push-down automaton $M$ for $L$ (since $L$ is a fixed
language, this is an object of constant size). From $M$ one can compute in polynomial time a push-down automaton $M'$ for $h^{-1}(L)$:
when reading the symbol $a_i$, $M'$ has to simulate (using $\varepsilon$-transitions) $M$ on $h(a_i)$. Next, we construct in polynomial
time a push-down automaton $M''$ for $h^{-1}(L) \cap a_1^* a_2^* \cdots a_d^*$ using a product construction. Finally, we transform $M''$
back into a context-free grammar. This is again possible in polynomial time using the standard triple construction. It remains to check whether
$a_1^{k_1} a_2^{k_2} \cdots a_d^{k_d} \notin L(G)$. This is equivalent to $(k_1, k_2, \ldots, k_d) \notin  \Psi(L(G))$, where $\Psi(L(G))$ denotes
the Parikh image of $L(G)$. Checking $(k_1, k_2, \ldots, k_d) \in  \Psi(L(G))$ is an instance of the uniform membership problem for commutative
context-free languages, which can be solved in $\NP$ according to \cite{Hu83}. This implies that the power word problem for $G$ belongs to $\coNP$.
\end{proof}
Let us remark that the above context-free language $K$ was also used in \cite{KoenigLohreyZetzsche2015a} in order to show that the knapsack problem for 
a co-context-free group is decidable.

\begin{theorem}
For Thompson's group $F$,  the power word problem is $\coNP$-complete.
\end{theorem}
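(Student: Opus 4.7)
The statement of $\coNP$-completeness splits into matching upper and lower bounds, which I would establish independently.

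For $\coNP$-hardness I would chain two facts cited earlier in this paper: Thompson's group $F$ is uniformly SENS (from \cite{BFLW20}, listed in Section~\ref{sec-SENS}), and $F$ contains a subgroup isomorphic to $F \wr \Z$ (\cite{GubaSapir99}, referenced in the introduction). By Theorem~\ref{cor:wreath-SENS-power} with $G = F$, the problem $\PowWP(F \wr \Z)$ is already $\coNP$-hard. Fixing an explicit embedding $\iota \colon F \wr \Z \hookrightarrow F$, substituting every generator of $F \wr \Z$ by its image under $\iota$ (a fixed word over the generators of $F$) turns a power word over $F \wr \Z$ into a power word over $F$ evaluating to the same group element. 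This is a logspace many-one reduction from $\PowWP(F \wr \Z)$ to $\PowWP(F)$, proving $\coNP$-hardness of the latter.

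For the $\coNP$ upper bound the plan is to use the faithful action of $F$ on $[0,1]$ by orientation-preserving piecewise linear homeomorphisms with dyadic-rational breakpoints and slopes in $\{2^m \mid m \in \Z\}$. A group element $g \in F$ is trivial if and only if the induced PL map fixes every point of $[0,1]$. The certificate for non-triviality of a power word $w = u_1^{k_1} \cdots u_d^{k_d}$ is thus a dyadic rational $x \in [0,1]$ with $w(x) \neq x$. The nondeterministic algorithm for the complement of $\PowWP(F)$ guesses such an $x$ and verifies this inequality. Two things need to be checked: (a) that whenever $w \neq 1$, a witness $x$ of polynomial bit-length in $|w|$ exists, and (b) that $w(x)$ can be evaluated in polynomial time in $|w|$ and the bit-length of $x$.

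The main obstacle is (b), which I would reduce to computing $u^k(y)$ for a single word $u$ over the generators of $F$ (interpreted as a PL homeomorphism) and a dyadic rational $y$, in time polynomial in $|u|$, the bit-length of $y$, and $\log k$. The plan is to analyze the orbit of $y$ under the PL map of $u$: since $u$ is monotone increasing and fixes $0$ and $1$, the orbit is monotone between successive fixed points of $u$, it crosses at most $|u|^{\O(1)}$ breakpoints of $u$ before getting trapped in a piece where $u$ acts affinely with slope of the form $2^m$, and within that piece the iterates $u^k(y)$ admit an explicit closed form computable in time polynomial in $\log k$. Chaining these point-evaluations from right to left through the factors of the power word then yields the desired polynomial-time verifier, giving (b) and (together with the polynomial bit-length bound in (a), which follows because all breakpoints encountered during the evaluation are dyadic rationals of polynomial bit-length) the inclusion $\PowWP(F) \in \coNP$.
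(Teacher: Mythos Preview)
Your hardness argument is fine and matches the paper's: $F$ is uniformly SENS, $F\wr\Z$ embeds in $F$, and Theorem~\ref{cor:wreath-SENS-power} gives $\coNP$-hardness of $\PowWP(F\wr\Z)$, which transfers along the embedding.

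The $\coNP$ upper bound, however, has real gaps. The core difficulty is bit-length blow-up: already for a single generator $u$ of $F$ and a dyadic $y$, the point $u^k(y)$ can be a dyadic rational with denominator $2^{\Theta(k)}$, so with $k$ given in binary its ordinary binary expansion is exponentially long. Your closed form $p+2^{mj}(y-p)$ is valid within one linear piece, but the exit value from that piece becomes the entry value for the next piece (and then the input for the next factor $u_{i-1}^{k_{i-1}}$), and you never explain how to carry a compact representation through this chaining, nor how to compare such a representation to breakpoints or to the original $x$. This is not a detail one can wave away; without it the verifier is not polynomial-time. Your argument for (a) is also not sound: the breakpoints of $w=u_1^{k_1}\cdots u_d^{k_d}$ (and the endpoints of its fixed-point intervals) can themselves have exponential bit-length, so ``all breakpoints encountered are of polynomial bit-length'' is false, and it is not clear that a non-fixed dyadic $x$ of polynomial bit-length must exist at all.

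The paper sidesteps all of this. It first proves the general Theorem~\ref{thm-co-context-free-coNP}: for \emph{any} co-context-free group $G$, $\PowWP(G)\in\coNP$, by turning a power word $u_1^{k_1}\cdots u_d^{k_d}$ into the question whether $a_1^{k_1}\cdots a_d^{k_d}$ lies in the Parikh image of a context-free language (built in polynomial time from a fixed PDA for the complement of the word problem via inverse morphism and intersection with $a_1^*\cdots a_d^*$), and then invoking Huynh's $\NP$ bound for Parikh-image membership. Since $F$ is co-context-free by Lehnert--Schweitzer, the upper bound for $F$ is immediate. This route needs no control over orbits, breakpoints, or bit-lengths of iterated images.
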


\begin{proof} 
The upper bound follows from Theorem~\ref{thm-co-context-free-coNP} and the fact that $F$ is co-context-free \cite{LehSch07}.
The lower bound follows from Theorem~\ref{cor:wreath-SENS-power} and 
the facts that $F$ is uniformly SENS and that $F \wr \mathbb{Z} \leq F$. 
\end{proof}

 \bibliography{bib}

\end{document}